\newcommand{\ignore}[1]{}
\numberwithin{equation}{section}
\newtheorem{thm}{Theorem}[section]
\newtheorem{lemma}{Lemma}[section]
\newtheorem{condition}{Condition}[section]
\newtheorem{proposition}{Proposition}[section]
\newtheorem{remark}{Remark}[section]
\newtheorem{definition}{Definition}
\newcommand{\beq}{\begin{equation}}
\newcommand{\eeq}{\end{equation}}
\newcommand{\beas}{\begin{eqnarray*}}
\newcommand{\eeas}{\end{eqnarray*}}
\newcommand{\bea}{\begin{eqnarray}}
\newcommand{\eea}{\end{eqnarray}}
\newcommand{\bei}{\begin{itemize}}
\newcommand{\eei}{\end{itemize}}
\newcommand{\ben}{\begin{enumerate}}
\newcommand{\een}{\end{enumerate}}
\newcommand{\bet}{\begin{theorem}}
\newcommand{\eet}{\end{theorem}}
\newcommand{\bel}{\begin{lemma}}
\newcommand{\eel}{\end{lemma}}
\newcommand{\bep}{\begin{proposition}}
\newcommand{\eep}{\end{proposition}}
\newcommand{\bed}{\begin{definition}}
\newcommand{\eed}{\end{definition}}
\newcommand{\bec}{\begin{corollary}}
\newcommand{\eec}{\end{corollary}}
\newcommand{\bex}{\begin{example}}
\newcommand{\eex}{\end{example}}
\newcommand{\N}{\mathbb{N}}
\newcommand{\Z}{\mathbb{Z}}
\newcommand{\R}{\mathbb{R}}
\newcommand{\E}{\mathbb{E}}
\def\limsup{\mathop{\overline{\rm lim}}}
\def\saveenum{\xdef\@savedenum{\the\c@enumi\relax}}
\def\resetenum{\global\c@enumi\@savedenum}
\newcommand{\IE}{\mathbb{E}}
\newcommand{\IP}{\mathbb{P}}
\newcommand{\IB}{\mathbb{B}}
\begin{document}

\title{Gaussian Approximation For Non-stationary Time Series with Optimal Rate and Explicit Construction}%\thanksref{T1}}
\author{Soham Bonnerjee$^{1}$, Sayar Karmakar$^{2}$ and Wei Biao Wu$^{1}$}
\address{$^{1}$ Department of Statistics, University of Chicago,\\ $^{2}$ Department of Statistics, University of Florida}

\begin{abstract}Statistical inference for time series such as curve estimation for time-varying models or testing for existence of change-point have garnered significant attention. However, these works are generally restricted to the assumption of independence and/or stationarity at its best. The main obstacle is that the existing Gaussian approximation results for non-stationary processes only provide an existential proof and thus they are difficult to apply. In this paper, we provide two clear paths to construct such a Gaussian approximation for non-stationary series. While the first one is theoretically more natural, the second one is practically implementable. Our Gaussian approximation results are applicable for a very large class of non-stationary time series, obtain optimal rates and yet have good applicability. Building on such approximations, we also show theoretical results for change-point detection and simultaneous inference in presence of non-stationary errors. Finally we substantiate our theoretical results with simulation studies and real data analysis.
\end{abstract}

\begin{keyword}
\kwd{Gaussian approximation}
\kwd{non-stationary time series} 
\kwd{Simultaneous confidence band}
\kwd{change-point testing}
\end{keyword}

\maketitle

\section{Introduction}
\label{sec:intro}
Statistical inference for time series is an important topic that has garnered significant attention over the past several decades. There is a well-developed asymptotic theory of Gaussian approximation for stationary processes that in turn yields a solid foundation for doing asymptotic inference. However, in practice, non-stationary time series processes are more ubiquitous, and unfortunately, similar Gaussian approximation tools for non-stationary processes are either not sharp enough or difficult to apply. Our main goal in this paper is to establish optimal KMT-type Gaussian approximations for non-stationary time series that also provide an explicit construction strategy and thus enable asymptotic inference for such series. 

We now discuss some motivations for theoretical development for non-stationary time series. Stationarity is an idealized assumption for any real-life series observed over a long period of time. In the parlance of analyzing such long series, when parametric models are used, typically this translates to systematic deviation of the parameters. Even without such a parametric guide, one can observe intrinsic changes in how the dependence evolves over time. Apart from these, different external factors such as recession, war, politics, pandemic etc. affect time series and can introduce abrupt paradigm shifts. Such shifts could be of different types- either a shift in mean, or shock events that change a process that was varying slowly or in a more stationary way. These two approaches are captured in the literature of time-varying models and change-point analyses respectively.

The literature of time-varying models tries to address this issue by allowing model parameters to vary smoothly over time. See \cite{fan99}, \cite{fan2000}, \cite{hoover98}, \cite{huang04}, \cite{lin01}, \cite{ramsay05}, \cite{zhang02}, \cite{cai07} among others. The inference questions arise naturally while choosing a time-varying model in contrast of a time-constant one. Such hypothesis testing frameworks are discussed in \cite{regression2012}, \cite{regression2015}, \cite{chow60}, \cite{brown75}, \cite{nabeya88}, \cite{leybourne89}, \cite{nyblom89}, \cite{ploberger89}, \cite{andrews93} and \cite{lin99}. Moving from pointwise inference, \cite{zhouwu10}, \cite{zhibia}, \cite{karmakar2022simultaneous} discussed obtaining more challenging simultaneous confidence bands\ignore{ for linear and non-linear time series}. Such simultaneous inference requires Gaussian approximation beyond the central limit theorem, and motivates for KMT-type Gaussian approximations as spelled out in \eqref{eq:errorrate}. The second approach- the analysis of change-points, originated in quality control literature (\cite{page1954continuous, page1955test}), but has since become an integral part of a wide variety of fields, among them economics (\cite{perron2006dealing}), finance (\cite{andreou2009structural}), climatology (\cite{reeves2007review}) and engineering (\cite{stoumbos2000state}). Building on estimation techniques, these problems discuss different types of inference problems such as the existence of change-point or creating confidence bands for means of different pieces etc. The test statistic for testing existence of change-points may be viewed as two-sample tests adjusted for the unknown break location, thus leading to max-type procedures. Such tests also need a Gaussian approximation as mentioned in \eqref{eq:errorrate} to provide correct cut-off. For some useful references on these see \cite{bai1998estimating} and \cite{csorgo1997limit} among others. Structural break estimation can also be viewed as a model selection problem; see \cite{davis2006structural}, \cite{lu2010mdl} and \cite{robbins2011changepoints}. See also \cite{aue2013structural} and \cite{jandhyala2013inference} for excellent reviews on change-point inference literature.

However, in both of these paradigms, typically the error process is assumed to be stationary and thus the techniques involved do not go beyond what we already know for stationary series. In other words, the non-stationarity has generally been reflected only in the signal and not in the noise process. This posits a challenging but a fundamental problem. The literature on inference for non-stationary time series is sparse due to difficulty of obtaining a sharp, explicit Gaussian approximation. The existing results are either not as sharp as those for stationary processes, or are difficult to construct.

We now proceed to mathematically introduce the problem. For independent and identically distributed $X_i$ with $\mathbb E(X_i) = 0, \mathbb E(|X_i|^p) < \infty$, $p > 2$, \citet{MR0375412, MR0402883} obtained an optimal Gaussian approximation: for $S_i:= \sum_{j=1}^i X_j$,
\begin{eqnarray}\label{eq:errorrate}
 \max_{1 \leq j \leq n}|S_j'- \mathbb{B}( \mathbb E (S^2_j))| = o_{\rm a.s.}(\tau_n),
\end{eqnarray}
where $\mathbb E(S^2_j) = j \mathbb E (X_1^2)$, $\mathbb{B}(\cdot)$ is the standard Brownian motion and $S_n'$ is constructed on a richer space; such that $(S_i)_{i \ge 1} =_{\mathbb{D}} (S_i')_{i \ge 1}$, and the approximation rate $\tau_n = n^{1/p}$ is optimal when only finite $p$th moment is assumed. \color{black} Henceforth, throughout this paper, we will assume $p>2$ unless specified explicitly. \color{black} The Gaussian approximation \eqref{eq:errorrate} substantially generalizes the Central Limit Theorem ${S_n}/\sqrt{n} \Rightarrow N(0, \mathbb E (X_1^2))$, and it allows for a systematic study of statistical properties of estimators based on independent data. The optimal rate of $n^{1/p}$ was matched for a large class of stationary time series in the seminal work by \citet{kmt}. In the latter work, they assume the stationary causal representation for $X_i$, and are able to replace $\mathbb E(S^2_j) = j \mathbb E (X_1^2)$ in \eqref{eq:errorrate} by $j \sigma_{\infty}^2$ where $\sigma_{\infty}^2= \sum_{i \in \mathbb{Z}} \IE(X_0X_i)$ is the long-run variance of the time series. One can see that $\sigma_{\infty}^2= \lim_{n \to \infty} \IE(S_n^2)/n$ and thus $S_i$ being approximated by a Gaussian process with variance $i \sigma_{\infty}^2$ makes intuitive sense from the idea of preserving a second order property. Unfortunately, for a non-stationary process, one does not have the notion of such a long-run variance and thus the existing Gaussian approximation results are somewhat abstract and unclear.

To characterize the non-stationary process $(X_t)$, we view $X_t$ as outputs from a physical system with the following causal representation: 
\begin{equation}
\label{eq:model_Z}
X_{t}=g_t (\mathcal{F}_t), \ \text{with $\mathcal{F}_t=(\ldots, \varepsilon_{t-1}, \varepsilon_{t})$,}
\end{equation}
where $(\varepsilon_{i})_{i \in \Z}$ are i.\,i.\,d.\ inputs of this system and $g_t: \R^{\infty} \to \R$ are measurable functions. A Gaussian approximation for such non-stationary processes was obtained by \cite{zhouzhousinica}, with a suboptimal rate and only for $2 < p \leq 4$. On the other hand, for inferential procedures it is important to establish an approximation for the process $(S_i)_{i=1}^n$. They did provide a regularization $G_j=\sum_{i=1}^j\Sigma_i^{1/2} Z_i$, where $\Sigma_i = \operatorname{Var}(\sum_{k=i}^{\infty}(\IE(X_k|\mathcal{F}_i)-\IE(X_k|\mathcal{F}_{i-1})))$ and $Z_i$ are i.i.d. Gaussian; however, $\Sigma_i$'s are not naturally estimable quantities. This result was improved upon by \cite{KarmakarWu2020}, who obtained optimal rate $n^{1/p}$ rate for all $p>2$. However, even their approximating Gaussian process is not regularized as it only provides approximation for blocks of partial sums, and not all $S_j$ as \eqref{eq:errorrate} does. Moreover, the variance of the approximating Gaussian process was difficult to interpret and connect with that of the original process.\ignore{
A Gaussian approximation for non-stationary multiple series with optimal rate $n^{1/p}$ was obtained in \citet{KarmakarWu2020}. Their rate improves upon \citet{zhouzhousinica} and other similar works for multiple non-stationary series. However, upon careful examination one could see that the approximating Gaussian process is not regularized as it only provides approximation for blocks of partial sums, and not all $S_j$ as \eqref{eq:errorrate} does. However, for inferential procedures it is often important to establish an approximation for each $S_i$. Thus just the approximations for blocks, although fine mathematically, will fall short of application, if not regularized for every $S_i$. Such a block-based Gaussian approximation for non-stationary process was also obtained in an older paper \citet{zhouzhousinica}, albeit only for $2<p<4$. They did provide a regularization for each $S_j$, but it involved quantities that cannot be estimated in practice. Even more seriously this only attains a non-optimal rate.
}
Recently, \cite{mies1} used a local long-run covariance matrix as a proxy to the variance of the approximating Brownian motion. Their proof relies on martingale embedding strategy of \cite{eidan2020aop} to bound Wasserstein distance of the partial sums and their Gaussian analogues. Nonetheless, their rate is sub-optimal.\color{black}

\color{black}
Keeping the main goal of regularizing the approximating Gaussian process, we note that, it is possible to preserve the second order property without the notion of long-run variance if the approximating (of $S_j$) Gaussian process can be written as $G_i=\sum_{j \leq i} Y_i$ with $\IE(S_i^2)=\IE(G_i^2)$. We start with one such approximation which ensures this; in fact we are able to establish a Gaussian approximation that ensures $\operatorname{Cov}(X_i,X_j)=\operatorname{Cov}(Y_i,Y_j)$ which entails $\IE(S_i^2)=\IE(G_i^2)$. Assumption of Gaussianity is frequently used in many areas of statistics where, as further specification, one puts a covariance structure on $(X_i)$. Our Gaussian approximation provides theoretical validation that for non-stationary process, one can still obtain an approximating Gaussian process that matches the covariance at a modular level. To the best of our knowledge, such covariance-matching Gaussian approximations, despite being quite natural for non-stationary processes, are rarely discussed in the literature. In particular, for a possible non-stationarity in covariance, such second-order preserving approximation seems to be a first such result that additionally maintains optimal rate. 

Our first result is applicable in situations where the practitioner knows the covariance structure of the observed processes. However, for general non-stationary processes with unknown covariance structure, the practical implementation with this novel Gaussian approximation remains somewhat challenging. Our second set of Gaussian approximation results first embed the approximating Gaussian process in a Brownian motion with evolving variance and then regularize the latter. As expected, the variance generally does not increase linearly as it does in \cite{kmt} for the stationary case. However, in our approximation $S_j$ is approximated by a Brownian motion valued at $\IE(S_j^2)$, which is same as \eqref{eq:errorrate}. Unlike \cite{mies1}, the variance of our approximating Gaussian process is simply $\IE(S_i^2)$, which immediately suggests intuitive estimators of that variance. 

\color{black}
Next we address the issue of estimating the variance of the approximating Gaussian processes. We first derive a block version of our theoretical Gaussian approximation which in turn yields a conditional Gaussian approximation where estimated block variances are used to construct the variances of the approximating theoretical Gaussian process. We are able to achieve $n^{1/4+ \varepsilon}$ rate here which is nearly optimal when variances are to be estimated. This also means that to achieve such results, assumptions on only slightly higher than 4-th moments suffice. Here, we also reflect on an alternative estimation procedure, and show that our "Block-based Running Variance (BRV)" estimate gives better rates for all $p>2$. Finally, we apply our results to three prominent inference problems, namely the inference problem related to existence of change-point, the simultaneous confidence bands for non-stationary time series and asymptotic distribution of wavelet coefficient process. As mentioned above already, stationarity and/or Gaussianity were standard assumptions in all these literature throughout and this paper erases this barrier and establishes theoretical guarantees for a much larger class of time series. 
 
Our main contributions are summarized below.
\begin{itemize}
  \item We obtain the sharp KMT-type Gaussian approximations of the order $n^{1/p}$ for non-stationary time series with minimal conditions. In particular, 
  \begin{itemize}
    \item in our first result, we observe a novel Gaussian approximation which matches the covariance structure. Despite being intuitively very natural for non-stationary processes, ours is probably the first such approximation result in the literature.
    \item We also explore a second type of Gaussian approximation which involves embedding a Brownian motion much like \cite{kmt} or \cite{KarmakarWu2020}. Crucially, we recover the sharp $n^{1/p}$ rate modulo a logarithmic factor without the lower bound assumption of block variance needed in \cite{KarmakarWu2020}. 
  \end{itemize}
  \item We discuss estimation of the running variance of the approximating Brownian motion and show consistency of such estimators using uniform deviation inequalities. Such maximal deviation bounds for quadratic forms based on non-stationary processes may be of independent interest. 
  \item Finally, we show applications of such Gaussian approximation through the lens of three prominent inference problems, namely the inference problem related to change-point, the simultaneous confidence bands for non-stationary time series and asymptotic distributions of wavelet coefficient processes. As mentioned above already, stationarity and/or Gaussianity were standard assumptions in all these literature throughout and this paper overcomes these limitations to arrive at much more general results. 
  \item We also provide some simulations to corroborate our Gaussian approximations and an analysis of an interesting dataset that highlights our applications. 

  \end{itemize}

\subsection{Organization of the paper}
The rest of the paper is organized as follows. In Section \ref{subsec:fdm}, we discuss a functional dependence measure that allows us to encode dependence in a mathematically tractable way for a large class of non-stationary time series. We also discuss other general assumptions there. Sections \ref{subsec:thm:new_GA} and \ref{subsec:thm:GA} discuss the two Gaussian approximations, which are the main theoretical contributions of our paper. Next, Section \ref{ssc:estvar} is used to describe the block-bootstrap Gaussian approximation and related results, featuring a result on a novel deviation inequality for non-stationary quadratic forms. We discuss three important inference problems in Section \ref{sec:application}. The hypothesis testing related to test existence of change-point is discussed in \ref{subsec:cpt}. Subsequently, we discuss simultaneous confidence bands for non-stationary time series, which is deferred to Subsection \ref{subsec:scb}. Finally, the discussion on wavelet coefficient process is deferred till Section \ref{sec:wavelet}. Next, we use Section \ref{sec:simu} to demonstrate through simulations that we achieve better approximations with the regularization spelt out in theoretical results than the prototypical block-sum variance. We also show extensive simulation results for the first two of the above-mentioned applications. For space constraint, some of these simulations are deferred to Appendix Section \ref{appendix:simu}. Finally, we show advantage of our theory and estimates by analyzing a recent archaeological dataset in Section \ref{sec:datanalysis}. All the proofs are postponed to Appendix Sections \ref{sec:section2proofs}, \ref{sec2proofsnew}, \ref{sec:section3proofs} and \ref{appendix:scb}.

\color{black}

\ignore{
The $d$-variate normal distribution with mean $\mu$ and covariance matrix $\Sigma$ is denoted by $N(\mu, \Sigma)$. Denote by $I_d$ the $d \times d$ identity matrix. For a matrix $A= (a_{ij})$, we define its Frobenius norm as $|A|= (\sum a_{ij}^2)^{1/2}$. For a positive semi-definite matrix $A$ with spectral decomposition $A= Q D Q\tran$, where $Q$ is orthonormal and $D = (\lambda_1, \ldots, \lambda_d)$ with $\lambda_1 \ge \ldots \ge \lambda_d$, write the Grammian square root $A^{1/2}= Q D^{1/2} Q\tran$, $\rho_*(A) = \lambda_d$ and $\rho^*(A) = \lambda_1$.
}
\subsection{Notation}
For a random variable $Y$, write $Y \in \mathcal{L}_p, \ p > 0$, if $\|Y \|_p := \mathbb E(|Y |^p ) ^{1/p} < \infty$. For $\mathcal{L}_2$ norm write $\|\cdot \| =\| \cdot \|_ 2$. Throughout the text, we use $C$ for constants that might take different values in different lines unless otherwise specified. For two positive sequences $a_n$ and $b_n$, if $a_n/b_n\to 0$, write $a_n =o(b_n)$. Write $a_n \lesssim b_n$ or $a_n=O(b_n)$ if $a_n \leq C b_n$ for all sufficiently large $n$ and some constant $C<\infty$. Similarly for a sequence of random variables $(X_n)_{n \geq 1}$ and a positive sequence $y_n$, if $X_n / y_n \to 0$ in probability, we write $X_n =o_{\IP}(y_n)$, and if $X_n/y_n$ is stochastically bounded, we write $X_n=O_{\IP}(y_n)$. 

%\vspace{-0.5cm}
%\setcounter{page}{0}

\section{Gaussian approximation results}
Before we proceed to discuss the Gaussian approximation results for a general class of non-stationary time series, we first provide a concise introduction of similar results for independent random variables.
\color{black}Note that in principle such Gaussian approximations for random variables $(X_i)_{i=1}^n$ require a common, possibly enriched probability space $(\Phi_c, \mathcal{A}_c, \IP_c)$ on which the approximating Gaussian processes and random variables $(X_i^c)_{1 \leq i \leq n} =_{\mathbb{D}}(X_i)_{1 \leq i \leq n}$ can be defined. In order for better readability, we omit this technicality and simply state our results in terms of the original random variables $X_i$'s. \color{black}
\color{black}
\label{sec:main_thm}
\subsection{Gaussian approximation for independent random variables}
\label{ssc:sakhanenko}
For i.i.d. random variables, the mentioned result \eqref{eq:errorrate} by \cite{MR0375412, MR0402883} represented the culmination of a series of results on \textit{strong invariance principle} starting from \cite{erdoskac} and \cite{doob}. Subsequently, the seminal paper by \citet{Sakhanenko2006} essentially generalized the KMT-type Gaussian approximation for independent but possibly not identically distributed random variables. The following theorem follows easily from \cite{Sakhanenko2006}.
\begin{thm}\label{thm:ind}
Let $(X_i)_{1\leq i \leq n}$ be independent but possibly not identically distributed random variables with $\IE(X_i) = 0$ and for a $p > 2$, $\max_{1 \leq i \leq n} \|X_i\|_p = O(1)$, and there exists $\gamma \geq 2$ such that 
  \begin{equation} \label{shakhanenkocondn}
    \sum_{i=1}^n \IE[\min\{|X_i|^\gamma/{n^{\gamma/p}}, |X_i|^2/{n^{2/p}} \}] = o(1).
  \end{equation}
  Then, there exists a Brownian motion $\mathbb{B}(\cdot)$, such that the following holds
  \begin{equation}\label{sakhanenkoind}
      \max_{1 \leq i \leq n} | S_i - \mathbb{B}(\IE(S_i^2))|=o_{\IP}(n^{1/p}). 
  \end{equation}
\end{thm}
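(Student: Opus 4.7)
The plan is to deduce Theorem \ref{thm:ind} from Sakhanenko's Gaussian approximation for independent summands, and then upgrade the approximating random walk to a Brownian motion by an embedding argument. Throughout, I will work on a sufficiently enriched probability space as explained in the paragraph preceding the theorem.

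\textbf{Step 1: Recast condition \eqref{shakhanenkocondn} in Sakhanenko's form.} The main theorem of \cite{Sakhanenko2006} assumes a Lindeberg-type truncation condition of the form $\sum_{i=1}^n \IE[\min(|X_i|^{\gamma}/n^{(\gamma-2)/p}, |X_i|^2)] = o(n^{2/p})$. Multiplying the condition in \eqref{shakhanenkocondn} through by $n^{2/p}$ and factoring $|X_i|^2$ out of the second argument of the minimum shows that \eqref{shakhanenkocondn} is exactly this hypothesis. Together with $\IE(X_i)=0$ and $\max_i \|X_i\|_p = O(1)$, we have all of Sakhanenko's assumptions.

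\textbf{Step 2: Apply Sakhanenko's theorem.} On a suitable enriched space $(\Phi_c,\mathcal{A}_c,\IP_c)$, there exist independent Gaussian random variables $Y_i \sim N(0,\sigma_i^2)$ with $\sigma_i^2 := \IE(X_i^2)$, coupled to an equidistributed copy of $(X_i)_{1 \leq i \leq n}$ (which we identify with $X_i$ itself), such that
\begin{equation*}
\max_{1 \leq i \leq n} \bigl| S_i - T_i \bigr| = o_{\IP}(n^{1/p}), \qquad T_i := \sum_{j=1}^i Y_j.
\end{equation*}
This is the conclusion of \cite{Sakhanenko2006} stated for independent but not necessarily identically distributed centered summands with finite $p$th moment.

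\textbf{Step 3: Embed the Gaussian random walk into a Brownian motion.} Because the $Y_j$ are independent centered Gaussians, the partial sums $T_i$ form a Gaussian Markov chain with $\operatorname{Var}(T_i) = \sum_{j=1}^i \sigma_j^2 = \IE(S_i^2)$, the last identity using independence and zero mean of the $X_j$. Enlarging the probability space if needed, one can construct a standard Brownian motion $\mathbb{B}(\cdot)$ such that
\begin{equation*}
\mathbb{B}\bigl(\IE(S_i^2)\bigr) = T_i \qquad \text{for every } 1 \leq i \leq n,
\end{equation*}
simply by sampling independent Brownian bridges on each interval $[\IE(S_{i-1}^2), \IE(S_i^2)]$ and pasting them to the prescribed endpoint values $T_{i-1}, T_i$. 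This is a routine finite-dimensional Gaussian construction on a product extension of $\Phi_c$; the result is a genuine Brownian motion whose values at the deterministic times $\IE(S_i^2)$ coincide with the $T_i$ from Step 2.

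\textbf{Step 4: Combine.} Substituting the equality $\mathbb{B}(\IE(S_i^2)) = T_i$ into the bound of Step 2 gives
\begin{equation*}
\max_{1 \leq i \leq n} \bigl| S_i - \mathbb{B}(\IE(S_i^2)) \bigr| = \max_{1 \leq i \leq n} |S_i - T_i| = o_{\IP}(n^{1/p}),
\end{equation*}
which is exactly \eqref{sakhanenkoind}.

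The only nontrivial ingredient is Sakhanenko's theorem itself, which is invoked as a black box. The remaining work, namely matching the hypothesis \eqref{shakhanenkocondn} to the form Sakhanenko assumes and the Brownian embedding in Step 3, is bookkeeping; the embedding is possible precisely because the variance schedule of the approximating Gaussian random walk is $\{\IE(S_i^2)\}_{i=1}^n$, which is increasing and hence admissible as the time axis of a Brownian motion. I expect the main obstacle in a careful write-up to be presentational: explicitly tracking the successive enrichments of the probability space so that $X_i$, the approximating partial sums $T_i$, and the Brownian motion $\mathbb{B}$ all live on one common space, as alluded to in the paragraph preceding the theorem.
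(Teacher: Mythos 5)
Your proposal is correct and takes essentially the same route as the paper, which offers no proof beyond the remark that the statement ``follows easily from'' Sakhanenko (2006): you invoke Sakhanenko's coupling as the sole substantive ingredient and observe that condition \eqref{shakhanenkocondn} is exactly the statement $L_\gamma(n^{1/p})=o(1)$ for his truncated-moment functional, which forces the exceedance probability at level $\delta n^{1/p}$ to vanish. Your Step 3 is harmless but redundant in the form of Sakhanenko's Theorem 1 that the paper itself uses later (cf.\ the conditional approximation \eqref{condnGA}), where the conclusion is already phrased with a Brownian motion evaluated at the cumulative variances rather than a Gaussian random walk.
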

 The readers can look into \cite{zaitsev1998, zaitsev19982} and \cite{zaitsev19983} for a review of similar approximations for independent but possibly non-identically distributed random variables. For time series, \cite{kmt} represents the optimal result for stationary processes in this direction, while \cite{KarmakarWu2020} shows an optimal existential result for non-stationary multivariate processes. However, \cite{KarmakarWu2020} does not provide any result about the covariance structure of the approximating Gaussian processes, apart from them having independent increments. 
 \color{black} However, in the search for an explicit covariance regularization of the Gaussian approximations, it is natural to conjecture that the approximating Gaussian processes have the same second-order structure as that of the original non-stationary process $X_t$. To deal with such results, we need to characterize the dependency set-up of the wide class of the non-stationary processes we consider in \eqref{eq:model_Z}. This structural premise is laid out in the next section.

\subsection{Functional dependence measure for non-stationary processes}
\label{subsec:fdm}

To deal with the dependency structure of a non-stationary process, we employ the framework of functional dependence measure \cite{Wu2005}. We will work with \eqref{eq:model_Z}, which is quite general and arises naturally from writing the joint distribution of $(X_1, \ldots, X_n)$ in terms of compositions of conditional quantile functions of i.i.d. uniform random variables. \ignore{ ; see \citet{wu2010new} for details. To see this, for $U_1, \ldots, U_n {\sim}_{i.i.d.} U[0,1] $, one can write $X_1=_{\mathbb{D}}G_1(U_1)$ for the quantile function $G_1$. Similarly, $[X_2 | X_1=x]=_{\mathbb{D}}G_2'(x, U_2)$ for the conditional quantile function $G_2'$. Thus, 
\[ (X_1, X_2) =_{\mathbb{D}} (X_1, G_2(X_1, U_2)) =_{\mathbb{D}} (G_1(U_1), G_2'(g_1(U_1), U_2))=_{\mathbb{D}}  (G_1(U_1), G_2(U_1, U_2)) \]
where the last equality follows from suitably defining $G_2$ in terms of $G_2'$ and $G_1$. Generalizing the above argument, we arrive at 
\[ (X_1, \ldots, X_n) =_{\mathbb{D}} (G_1(U_1), G_2(U_1, U_2), \ldots, G_n(U_1, \ldots, U_n)). \]
This representation is the motivational idea behind \eqref{eq:model_Z}.} With this system, given $k \ge 0$, a time lag, we measure the dependence from how much the outputs $X_i$ of this system will change if we replace the input information at time $i - k$ with an i.i.d. copy $\varepsilon_{i-k}'$. For $p \ge 1$, define the uniform functional dependence \color{black}
\begin{align}\label{eq:fdm}
\delta_{p}(k) :=\sup_{i} (\E |X_{i}-X_{i,\{i-k\}}|^p)^{1/p}, \mbox{ where } X_{i,\{i-k\}} = g_i(\ldots,\varepsilon_{i-k-1}, \varepsilon_{i-k}',\varepsilon_{i-k+1}, \ldots, \varepsilon_{i} )
\end{align}
\color{black}
is a coupled version of $X_i$. We will assume $\IE(X_i)=0$. Note that $(\E |X_{i}-X_{i,\{i-k\}}|^p)^{1/p}$ encapsulates the dependence of $X_i$ in $\varepsilon_{i-k}$. Since $X_i$ is a non-stationary process, the physical mechanism process $g_i$ is allowed to be different for every $i$. Thus we have defined the functional dependence measure in a uniform manner, by taking supremum over all $i$. This measure \eqref{eq:fdm} is directly related to the data-generating mechanism, and we will express our dependence condition in terms of 
\vspace{-0.1in}
\begin{align}\label{theta}
  \Theta_{i,p}=\sum_{k=i}^{\infty} \delta_p(k) \ ,  i \geq 0. 
\end{align}
Observe that $\sup_i\|X_i\|_p \leq \Theta_{0,p}$. With this framework, we are able to conveniently propose conditions on temporal dependence for the non-stationary time series models we will use. \vspace{-0.1in}
\ignore{
\subsection{Central limit theorem for non-stationary time series}
\label{subsec:thm:clt}
Let $(X_t)_{t \in \Z}$ be a non-stationary process with mean zero, and let $S_j = \sum_{t =1}^j X_i$. $v_i := \operatorname{var} (S_j)$ is defined as the variance of this sequence of partial sums. To begin with, we want to propose a central limit theorem for non-stationary time series under our framework.

\begin{condition}
\label{cond:fdm1}
$(X_i)_{i=1}^n$ satisfies that the $p$-th moment of every $X_i$ is finite. Assume the summation of the uniform functional dependence measure of this non-stationary process is also finite:
$$\sum_{k=0}^{\infty} \delta_{p}(k) < \infty.$$
\end{condition}
}
\ignore{

\begin{thm}
\label{thm:clt_non-stationary}
Assume that Condition~\ref{cond:fdm1} holds with $p>2$ and there exists a constant $c > 0$ such that
\begin{equation}
  \label{cond:var_psum}
  \frac{v_n}{n} \geq c
\end{equation}
Then 
\begin{equation}
  \label{result:clt}
  \frac{S_n}{\sqrt{v_n}} \Rightarrow N(0,1).
\end{equation}
\end{thm}

\color{black}The proof of Theorem \ref{thm:clt_non-stationary} can be found in \cite{Wu_2004}. We would like to discuss about why Condition~(\ref{cond:var_psum}) is necessary for our result~(\ref{result:clt}). Intuitively, if (\ref{cond:var_psum}) didn't hold, $(X_n)$ could be degenerate, which makes it impossible for any CLT type result to stand. We do not have to worry about degeneration problem in classical CLT results for stationary process as the variance is constant.
\color{black}

We can give the following example with non-degenerate $X$, which also shows why the Condition \ref{cond:var_psum} is necessary. Consider $X_i=\varepsilon_i-\varepsilon_{i-1}$ for i.i.d. $\{ \varepsilon_i\}$'s with mean $0$ and variance $1$. Note that $S_n=\varepsilon_n-\varepsilon_0$ and thus $v_n=2$. Then clearly $\frac{S_n}{\sqrt{v_n}} \not \Rightarrow Z$. 

In fact, similar to what \cite{Wu_2004} mention in their argument involving martingale approximation of $S_n$ after conditioning on $X_0$, in general the CLT for non-stationary time series may fail to hold if $v_n \sim \sqrt{n} l(n)$ for a \textit{slowly varying function}\footnote{$l$ is a slowly varying function is $\frac{l(\lambda n)}{l(n)} \to 1$ for all $\lambda >0$ as $n \to \infty$} $l$ with $l(n) \to 0$ as $n \to \infty$. \cite{kmt} does not have such a condition as they are dealing with stationary time series with long-range variance $\sigma^2$, for which $v_n/n \to \sigma^2>0$ automatically. However, the above discussion shows the necessity of this condition for dealing with non-stationarity. Similar conditions have been previously proposed in \cite{KarmakarWu2020}, for non-stationary multiple time series. Technically, in that paper, such lower bound would be used to normalize quantities related to functional dependence measure, which are further used to determine the block size in the blocking approximation.

Theorem \ref{thm:clt_non-stationary} gives a CLT result which is typically useful for point-wise inference. For simultaneous inference, we need a strong-invariance type result involving Gaussian approximation of the entire time-spectrum, which is provided by the following Theorem \ref{thm:GA} and Theorem \ref{thm:improvedGA}, our main contributions of this paper.

}
\color{black}
\subsection{Gaussian approximation maintaining covariance structure}
\label{subsec:thm:new_GA}
As discussed in Section \ref{subsec:fdm}, to state our Gaussian approximation result, we need to properly control the temporal decay by putting mild assumptions on $\Theta_{i,p}$. In particular, we will need that $\Theta_{i,p}$ decays with a polynomial rate. 
\begin{condition}
\label{cond:fdm3}
Consider \eqref{eq:model_Z}. Suppose that $\Theta_{0,p} < \infty$ for some $p>2$. Assume there exists $A> 1$ and constant $C > 0$, such that the uniform dependency-adjusted norm 
\begin{align}\label{eq:Thetaip}
\mu_{p,A}:=\sup_{i\geq 0} (i+1)^{A}\Theta_{i,p} \leq C < \infty.
\end{align}

\end{condition} \noindent
Condition \ref{cond:fdm3} is satisfied by a large class of processes. Some examples are mentioned in Section \ref{ssc:examples}. The assumption $\Theta_{0,p} < \infty$ can be interpreted as the cumulative dependence of $(X_i)_{i \geq k}$ on $\varepsilon_k$ being finite. If it fails, the process can be long-range dependent, and in such cases the Brownian motion approximations of the partial sum processes may fail. Since the process $(X_i)_i$ is non-stationary, in order to better control its distributional behavior, we need a uniform integrability condition:

\begin{condition}
\label{cond:ui}
\color{black}For the same $p$ as in Condition \ref{cond:fdm3}, the series $(| X_i|^p)$ satisfies the truncated uniform integrability condition: 
\[ \text{For any fixed $a>0$, }\sup_{i} \IE\left(|X_i|^p\mathbb{I}_{\{|X_i|^p\geq an\}}\right) \to 0 \ \text{as} \ n \to \infty.\]
\end{condition}
The classical uniform integrability condition for $(|X_t|^p)_t$ is $\sup_{i} \IE (|X_i|^p \mathbb{I}_{\{|X_i|^p \geq k\} } ) \to 0$ as $k \to \infty$. Note that Condition \ref{cond:ui} is weaker.
\begin{comment}
\begin{remark}
\begin{cases}
  1 & \ \ \text{w.p. } 1/2- n^{-3/4}, \\
  -1 & \ \ \text{w.p. } 1/2- n^{-3/4}, \\
  n^{\frac{3}{4p}} & \ \ \text{w.p. } n^{-3/4},\\
  -n^{\frac{3}{4p}} & \ \ \text{w.p. } n^{-3/4}.\\
\end{cases}
Note that for any fixed $a>0$ we can find sufficiently large $N_a \in \N$ such that $n^{3/4} \leq an$. Therefore, Condition \ref{cond:ui} is satisfied by the sequence $(X_i)_{i \geq 1}$. That the sequence $(X_i)_{i \geq 1}$ is not uniformly integrable, is observed as follows:
\[ \text{for all $n \geq 2$, }\max_{1 \leq i \leq n}\IE(|X_i|^p \mathbb{I}\{|X_i|^p \geq \sqrt{n}\}) = 2.  \]

\end{remark}
\end{comment}
To avoid degeneracy we will also require a mild non-singularity condition on the block variance of the original process $(X_t)$. 
\begin{condition}
\label{cond:regularity_new}
For all sequences $(m_n) \in \N$ with $m_n \to \infty$ and $m_n < n$, the process $(X_i)$ satisfies that \color{black}$\lim_{n \to \infty} \min_{1 \leq i \leq n-m_n} \|X_i + \ldots + X_{i+m_n}\|^2 = \infty$\color{black}.
\end{condition}
This non-singularity condition is a very natural one. A simple counter-example may be given for the case where absence of such assumption entails failure of even the Central Limit Theorem. For $t \in \N$, consider the process $X_t=\varepsilon_t -\varepsilon_{t-1}$, and $\varepsilon_i$ are i.i.d.non-Gaussian with mean $0$ and variance $\sigma^2>0$. Then for $n \in \N$, clearly $S_i=\varepsilon_i- \varepsilon_0$ for $1 \leq i \leq n$, and thus both Condition \ref{cond:regularity_new} and Central Limit Theorem $S_n/\|S_n\| \Rightarrow N(0,1)$ fails to hold. With this condition, we begin by presenting a Gaussian approximation for the truncated partial sum process
\begin{equation}\label{trunc}
S_i^{\oplus}:=\sum_{j=1}^i (X_j^{\oplus}- \IE(X_j^{\oplus}) ), \text{ where $X_i^{\oplus}=T_{n^{1/p}}(X_i)$, $i=1, \cdots, n,$}
\end{equation}
with $T_b(w)=\max \{\min\{w,b\}, -b\}$. The following is the first main result of this paper. 
\begin{thm}\label{corol:new}
Let $p>2$. For the process $(X_t)_{t}$, assume Conditions \ref{cond:ui}, \ref{cond:regularity_new}, and \ref{cond:fdm3} with 
\begin{equation}\label{eq:Alowerbound}
A >A_0: =\max\left\{\frac{p^{2}-p-2+(p-2) \sqrt{p^{2}+10 p+1}}{4 p} \ , 1\right\}. 
\end{equation}
Then there exists a Gaussian process $Y_t$ with $\operatorname{Cov}(X_s, X_t)=\operatorname{Cov}(Y_s, Y_t)$, such that 
  \begin{equation}\label{corol_new}
     \max_{1 \leq i \leq n}|S_i - \sum_{j=1}^i Y_j|=o_{\IP}(n^{1/p}\sqrt{\log n}). 
  \end{equation}
  In fact, there also exists a Gaussian process $Y_t^{\oplus}$, with $\operatorname{Cov}(Y_s^{\oplus}, Y_t^{\oplus})=\operatorname{Cov}(X_s^{\oplus}, X_t^{\oplus})$, such that 
  \begin{equation}\label{corol_new_trunc}
     \max_{1 \leq i \leq n}|S_i - \sum_{j=1}^i Y_j^{\oplus}|=o_{\IP}(n^{1/p}). 
  \end{equation}
\end{thm}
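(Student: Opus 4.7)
My plan is to first establish the truncated approximation \eqref{corol_new_trunc} and then deduce \eqref{corol_new} via an additive Gaussian correction. For the truncation error, note that $|x|>n^{1/p}$ implies $|x|\le |x|^{p}/n^{(p-1)/p}$, so Condition \ref{cond:ui} gives
\[\sum_{i=1}^n \IE|X_i - X_i^{\oplus}| \le n^{-(p-1)/p}\sum_{i=1}^n \IE\bigl(|X_i|^p \mathbb{I}\{|X_i|^p > n\}\bigr) = o(n^{1/p}),\]
and analogously for the centering term $\IE X_i^{\oplus}$. By Markov's inequality, $\max_i|S_i - S_i^{\oplus}|=o_\IP(n^{1/p})$, reducing the problem to a Gaussian approximation of $S_i^{\oplus}$.

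The core step is a blocking argument. Partition $\{1,\ldots,n\}$ into consecutive blocks $I_1,\ldots,I_K$ of common size $b_n$, chosen to balance the polynomial decay of $\Theta_{i,p}$ (Condition \ref{cond:fdm3} with $A>A_0$) against the Sakhanenko approximation error. Set $H_k = \sum_{i\in I_k}(X_i^{\oplus} - \IE X_i^{\oplus})$. Using the functional dependence measure, couple each $H_k$ with a truncated version $\tilde H_k$ depending only on innovations inside a slightly enlarged window around $I_k$; a spacing argument then produces an independent sequence with total coupling cost governed by $\Theta_{b_n,p}$. Condition \ref{cond:regularity_new} rules out degenerate block variances, so Theorem \ref{thm:ind} applies and produces independent Gaussians $G_k$ with $\operatorname{Var}(G_k)=\operatorname{Var}(\tilde H_k)$ and block-sum approximation error $o_\IP(n^{1/p})$.

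The novel step is upgrading this block-level Gaussian approximation to one matching the full covariance at the individual coordinate level. I would construct a Gaussian vector $(Y_1^{\oplus},\ldots,Y_n^{\oplus})$ with $\operatorname{Cov}(Y_i^{\oplus},Y_j^{\oplus})=\operatorname{Cov}(X_i^{\oplus},X_j^{\oplus})$ on a suitably enlarged probability space, then couple its block sums $\sum_{i\in I_k} Y_i^{\oplus}$ with the $G_k$ by Gaussian-to-Gaussian transport at a cost controlled by the variance mismatch accumulated in the independent-block step. Within each block, the intra-block fluctuations of the partial sums of $Y_j^{\oplus}$ are Gaussian with standard deviation $O(\sqrt{b_n})$, so their maximum is $O_\IP(\sqrt{b_n\log n})$, which is $o(n^{1/p})$ for the chosen $b_n$. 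Assembling these errors yields \eqref{corol_new_trunc}.

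To deduce \eqref{corol_new}, define $Y_j = Y_j^{\oplus} + Z_j$, where $(Z_j)$ is an independent centered Gaussian vector with $\operatorname{Cov}(Z_i,Z_j)=\operatorname{Cov}(X_i,X_j)-\operatorname{Cov}(X_i^{\oplus},X_j^{\oplus})$ (adding a vanishing isotropic perturbation if needed to enforce positive semi-definiteness). Tail integrability combined with the decay of $\Theta_{i,p}$ gives $\max_i \operatorname{Var}(\sum_{j\le i} Z_j)=o(n^{2/p})$, and the Gaussian maximal inequality then yields $\max_i|\sum_{j\le i} Z_j|=O_\IP(n^{1/p}\sqrt{\log n})$, accounting for the extra logarithmic factor in \eqref{corol_new}. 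The main obstacle will be achieving the optimal $n^{1/p}$ rate in \eqref{corol_new_trunc} while simultaneously matching per-coordinate covariance; the lower bound on $A$ in \eqref{eq:Alowerbound} enters precisely when optimizing $b_n$ against the combined errors from the dependence truncation, Sakhanenko step, and intra-block Gaussian fluctuations.
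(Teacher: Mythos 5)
Your skeleton (truncation, $m$-dependence/blocking, Sakhanenko on independent blocks, then a covariance-matching construction) follows the paper's route, but the two steps you yourself flag as the crux are exactly where the argument has genuine gaps. The first is the upgrade from block-level to coordinate-level covariance matching. You propose to build a Gaussian vector $(Y_i^{\oplus})$ with the full covariance of $(X_i^{\oplus})$ and then ``couple its block sums with the $G_k$ by Gaussian-to-Gaussian transport at a cost controlled by the variance mismatch.'' But the block sums $\Xi_k=\sum_{i\in I_k}Y_i^{\oplus}$ are \emph{dependent} Gaussians (they inherit the cross-block covariances of $X^{\oplus}$) while the $G_k$ are independent, and the coupling must hold uniformly over all partial sums $\sum_{k\le K}$, not block by block; the obstruction is therefore the accumulated cross-covariance, not a per-block variance mismatch. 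The paper's resolution (Lemmas \ref{lemmaY} and \ref{lemmaYconstruct}) is to pass to the successive orthogonal projections $\xi_k=\IE[S_n^Y\mid\Xi_1,\dots,\Xi_k]-\IE[S_n^Y\mid\Xi_1,\dots,\Xi_{k-1}]$, which \emph{are} independent Gaussians whose partial sums track $\sum_{k\le K}\Xi_k$ within $O_{\IP}(\sqrt{m\log n})$ (this is where Condition \ref{cond:regularity_new} actually enters, to keep $\lambda_{\min}(U_{k,k})$ large, rather than to make Sakhanenko applicable), and then to run the construction \emph{backwards}: the process $Y^c$ is manufactured from the already-existing Brownian motion by expressing $\mathbb{B}(\sum_{j\le k}\mathcal{q}_j)$ in terms of i.i.d.\ normals and feeding them into the Gram--Schmidt representation \eqref{defnY}, so the coupling is exact by construction. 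Without some device of this kind, the coordinate-level statement does not follow from the block-level one.

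The second gap is your derivation of \eqref{corol_new} from \eqref{corol_new_trunc} via $Y_j=Y_j^{\oplus}+Z_j$ with $\operatorname{Cov}(Z)=\operatorname{Cov}(X)-\operatorname{Cov}(X^{\oplus})$. That difference of covariance matrices is not positive semi-definite in general (only its diagonal entries are guaranteed nonnegative), so the required $Z$ need not exist; and adding an ``isotropic perturbation'' destroys the exact identity $\operatorname{Cov}(Y_s,Y_t)=\operatorname{Cov}(X_s,X_t)$ that the theorem asserts. The paper instead proves \eqref{corol_new} directly, by running the projection construction with the covariances of the original process $X_t$ and absorbing the truncation discrepancy into the time change of the Brownian motion: Lemma \ref{lem:trunc-originalvariance} gives $\max_i|\IE(S_i^2)-\IE((S_i^{\oplus})^2)|=o(n^{2/p})$ (itself a nontrivial coupling-plus-H\"older argument, not an immediate consequence of tail integrability as you assert), and the maximum of $n$ Brownian increments over gaps of size $o(n^{2/p})$ is $o_{\IP}(n^{1/p}\sqrt{\log n})$ --- which is the actual source of the logarithmic factor in \eqref{corol_new}.
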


\color{black}

Here it is important to note that, although \eqref{corol_new_trunc} has a better rate than \eqref{corol_new}, the approximating process has covariance structure matched with the truncated value of the original process $X_i$. However, we still present this result since it shows that theoretically it is possible to achieve the optimal $n^{1/p}$ rate without the stronger non-singularity condition as \cite{KarmakarWu2020}. Proving such a result also necessisates novel techniques which are different compared to both \cite{KarmakarWu2020} and \cite{kmt}. 

Finally, if one were to assume non-singularity condition as written below, we show that it is possible to achieve $n^{1/p}$ rate even with the approximating process matching covariances exactly with the original $(X_t)$ process.
\color{black}
\begin{condition}
\label{cond:regularity}
The series $(X_i)$ satisfies the following condition: There exists a constant $c>0$ and $l_0 \in \mathbb N$, such that for all $l \ge l_0$, \color{black}$\min_{1\le j \le n-l+1} \|X_{j}+\ldots + X_{j+l-1}\|^2 / l \geq c$.\color{black}
\end{condition}

\noindent At the cost of making this extra assumption, we are also able to improve the decay rate condition on $\Theta_{i,p}$ from that in Theorem \ref{corol:new}, matching exactly the optimal cut-off given in \cite{KarmakarWu2020}.

\begin{thm}\label{thm:multGA}
Assume the process $(X_t)_{t \geq 1}$ satisfies Conditions \ref{cond:ui}, \ref{cond:regularity} and \ref{cond:fdm3} with
\begin{eqnarray}\label{eq:Alowerbound2}
A > A_0':= \max\Bigg\{\frac{p^{2}-4+(p-2) \sqrt{p^{2}+20 p+4}}{8 p}, 1\Bigg\}.
\end{eqnarray} 
Then, there exists a Gaussian process $(Y_t)$ with $\operatorname{Cov}(Y_s, Y_t):=\operatorname{Cov}(X_s, X_t)$, such that 
  \begin{equation}\label{eq:multGA_nocondn}
    \max_{1 \leq i \leq n}|S_i - \sum_{j=1}^i Y_j|=o_{\IP}(n^{1/p}). 
  \end{equation}
\end{thm}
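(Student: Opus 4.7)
The plan is to leverage the already-proved truncated statement \eqref{corol_new_trunc} of Theorem \ref{corol:new} and upgrade it to cover the original process $(X_t)$ by invoking the stronger lower bound provided by Condition \ref{cond:regularity}. Since \eqref{corol_new_trunc} already supplies a Gaussian process $Y_j^{\oplus}$ matching the covariance of the truncated sequence $X_j^{\oplus}$ with rate $n^{1/p}$, the work left is two-fold: (i) absorb the truncation residual $S_i - S_i^{\oplus}$ into the target rate, and (ii) adjust the approximating Gaussian process so that its covariance matches that of the untruncated $X_t$ exactly rather than that of the truncated surrogate. The linear lower bound $\|X_j+\cdots+X_{j+l-1}\|^2\gtrsim l$ will be the workhorse that keeps the perturbation bounded and lets the block-level Sakhanenko estimates deliver the sharp $n^{1/p}$ rate without the spurious $\sqrt{\log n}$ picked up in \eqref{corol_new}.

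First I would bound the truncation gap. Writing $R_i = S_i - S_i^{\oplus} = \sum_{j=1}^{i}(X_j-X_j^{\oplus})$, the centering and $\mathcal{L}_p$ control of $X_j\mathbb{I}\{|X_j|>n^{1/p}\}$ via Condition \ref{cond:ui} and the summability of $\Theta_{0,p}$ give, through a functional-dependence-measure maximal inequality, $\max_{i\le n}|R_i|=o_{\IP}(n^{1/p})$. Combined with \eqref{corol_new_trunc} this yields a Gaussian process $Y_j^{\oplus}$ with $\max_i|S_i-\sum_{j\le i}Y_j^{\oplus}|=o_{\IP}(n^{1/p})$ but with the wrong covariance. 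To repair the covariance I would construct a centered Gaussian process $(\Delta_j)_{j\ge 1}$, independent of $(Y_j^{\oplus})$, whose covariance function equals $\operatorname{Cov}(X_s,X_t)-\operatorname{Cov}(X_s^{\oplus},X_t^{\oplus})$, and then define
\begin{equation*}
Y_j := Y_j^{\oplus} + \Delta_j.
\end{equation*}
Positive semi-definiteness of the difference is the content of a separate lemma: using the decomposition $X_j=X_j^{\oplus}+(X_j-X_j^{\oplus})$ and Cauchy--Schwarz with the tail bound $\|X_j-X_j^{\oplus}\|_2 = o(1)$, one shows that the residual covariance kernel is bounded and summable, hence extends to a valid covariance. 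To make $\sum_{j\le i}\Delta_j$ negligible uniformly, I would apply a maximal inequality for partial sums of centered Gaussians, using that the variance of $\sum_{j\le i}\Delta_j$ is $\|S_i\|^2-\|S_i^{\oplus}\|^2$, which is $o(n^{2/p})$ by the truncation argument, so that $\max_i|\sum_{j\le i}\Delta_j|=o_{\IP}(n^{1/p})$.

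The block/Sakhanenko machinery enters when deriving \eqref{corol_new_trunc} at the sharper exponent $A_0'$. I would partition $\{1,\dots,n\}$ into consecutive blocks of size $m=m_n\asymp n^{\alpha}$ with big and small alternating blocks (the standard big/small construction), use $m$-dependent coupling controlled by $\Theta_{m,p}\le Cm^{-A}$ to decouple non-adjacent big blocks, and then apply Theorem \ref{thm:ind} to the resulting independent block sums. Condition \ref{cond:regularity} ensures each big block sum has variance $\gtrsim m$, so Sakhanenko's moment condition \eqref{shakhanenkocondn} is satisfied with room to spare; this is precisely what lets us avoid the $\log n$ factor that appeared in \eqref{corol_new}. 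Balancing (a) the small-block contribution $O(nm^{-A-1/2})$, (b) the $m$-dependent coupling error $\sum\Theta_{m,p}$, (c) the Sakhanenko rate $(n/m)^{1/p}m^{1/2}$ on the big-block sum of Gaussian approximates, and (d) the truncation error $n^{1/p}$ against the target $n^{1/p}$ leads, after optimizing over $\alpha$, to the precise cut-off $A > A_0'$ in \eqref{eq:Alowerbound2}.

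The main obstacle I anticipate is Step (ii), namely verifying that the residual kernel $\operatorname{Cov}(X_s,X_t)-\operatorname{Cov}(X_s^{\oplus},X_t^{\oplus})$ is positive semi-definite and the associated Gaussian correction $\sum\Delta_j$ is uniformly negligible. Positive semi-definiteness does not follow from a pointwise bound; it requires an integral representation, for example by writing the untruncated covariance as the covariance of the original process and the truncated covariance as that of a measurable functional of the same underlying innovations $(\varepsilon_t)$, so that the difference is the covariance of the "tail" process $X_j-X_j^{\oplus}$, which is automatically positive semi-definite. Once this representation is secured, the uniform bound on $\sum_{j\le i}\Delta_j$ follows from a Gaussian maximal inequality combined with the telescoping variance identity $\operatorname{Var}(\sum_{j\le i}\Delta_j)=\|S_i-S_i^{\oplus}\|^2$, closing the argument at rate $o_{\IP}(n^{1/p})$.
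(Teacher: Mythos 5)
Your plan diverges substantially from the paper's (which builds the covariance-matching process directly from the Brownian motion of Theorem \ref{thm:GA2} via the projection machinery of Lemmas \ref{lemmaY} and \ref{lemmaYconstruct}), and it contains two gaps that I do not see how to close.

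First, the additive covariance repair rests on the claim that the kernel $K(s,t):=\operatorname{Cov}(X_s,X_t)-\operatorname{Cov}(X_s^{\oplus},X_t^{\oplus})$ is positive semi-definite because it "is the covariance of the tail process $X_j-X_j^{\oplus}$." This is algebraically false: writing $X_j=X_j^{\oplus}+(X_j-X_j^{\oplus})$ gives
\begin{equation*}
K(s,t)=\operatorname{Cov}(X_s^{\oplus},X_t-X_t^{\oplus})+\operatorname{Cov}(X_s-X_s^{\oplus},X_t^{\oplus})+\operatorname{Cov}(X_s-X_s^{\oplus},X_t-X_t^{\oplus}),
\end{equation*}
and the two cross terms have no sign, so $K$ need not be a valid covariance kernel and the process $(\Delta_j)$ need not exist. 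No integral representation rescues this; the difference of two covariance kernels is simply not the covariance of the difference. (Your own closing line, equating $\operatorname{Var}(\sum_{j\le i}\Delta_j)$ with $\|S_i-S_i^{\oplus}\|^2$ rather than $\IE(S_i^2)-\IE((S_i^{\oplus})^2)$, is a symptom of the same confusion.)

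Second, even granting the existence of $(\Delta_j)$, the bound $\max_{1\le i\le n}|\sum_{j\le i}\Delta_j|=o_{\IP}(n^{1/p})$ does not follow from $\max_i\operatorname{Var}(\sum_{j\le i}\Delta_j)=o(n^{2/p})$: the partial sums of $\Delta_j$ do not have independent increments, so you cannot use Doob, and a union bound over $i$ costs a factor $\sqrt{\log n}$. Lemma \ref{lem:trunc-originalvariance} gives only an unquantified $o(n^{2/p})$, so you cannot absorb the logarithm, and you would land exactly on the $o_{\IP}(n^{1/p}\sqrt{\log n})$ rate of \eqref{corol_new} — the rate the theorem is meant to improve. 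The paper avoids this by never forming an additive correction: it couples the two approximating Gaussian processes through the \emph{same} i.i.d.\ normals $Z_l^{*}$, uses Condition \ref{cond:regularity} to bound $(\sqrt{\tilde v_l}-\sqrt{|v_l|})^2\le(\tilde v_l-|v_l|)^2/(cm)$ blockwise, sums to get an $o(n^{2/p})$ squared $\mathcal{L}_2$-distance for a process that \emph{does} have independent increments, and then applies Doob. Your proposal has no analogue of this mechanism, and Condition \ref{cond:regularity} plays no role in your final step, which is precisely where it is needed.
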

\color{black}

\ignore{
\noindent Observe that in Theorem \ref{thm:GA} we assume a decay rate that is slightly higher than that in Theorem \ref{thm:multGA}. This higher decay rate allows us to do away with the non-singularity condition. In fact, we can go all the way and connect the approximating Gaussian process with the variance of the partial sum of $X_i$ process itself, without ever requiring Condition \ref{cond:regularity}. In the following theorem, we provide one such result, although we incur a penalty of $\sqrt{\log n}$ in our rate. 
}
\color{black}

\vspace{-0.35in}
\subsection{Gaussian approximation with independent increments}
\label{subsec:thm:GA}
In addition to having a natural interpretation, the Gaussian approximations in the previous Section \ref{subsec:thm:new_GA} also enjoy applicability when information about the covariance structure of the original process is available, such as for stationary processes \cite{xiaowu} or processes from a defined parametric structure. However, for a general non-stationary processes, the precise correlation structure of $X_t$ process may not be available, and therefore simulating the $Y_t$ process becomes a challenge. Therefore, it is important to investigate if we can further obtain a Gaussian approximation of the form \eqref{sakhanenkoind}, i.e. involving Brownian motion with independent increments, where the involved $\IE(S_i^2)$ is estimable. The following two theorems address these issues and yield Gaussian approximations with this desired structure. Our first result is analogous to Theorem \ref{corol:new}. However, in this result, we no longer require any non-singularity condition, and yet we almost recover the optimal $n^{1/p}$ rate (up to a $\log$ factor). Again, we recover the exact optimal rate if our Gaussian approximation involves the moments of the truncated process. \color{black}
\begin{thm}
\label{thm:GA}
For the process $(X_t)_{t \geq 1}$, assume Conditions \ref{cond:ui} and \ref{cond:fdm3} with $A>A_0$; see \eqref{eq:Alowerbound}. Then there exists a Brownian motion $\mathbb{B}(\cdot)$, such that 
\begin{equation}
  \label{eq:GA}
  \max_{1\leq j \leq n} |S_j - \mathbb{B}(\IE(S_j^{\oplus^2}))| = \operatorname{o}_{\IP} (n^{1/p}).
\end{equation}
Further, it holds that 
\begin{equation}
  \label{eq:improvedGA}
  \max_{1\leq j \leq n} |S_j - \mathbb{B}(\IE(S_j^{2}))| = \operatorname{o}_{\IP} (n^{1/p}\sqrt{\log n}).
\end{equation}
\end{thm}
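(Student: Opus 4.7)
The plan is to leverage Theorem \ref{corol:new} and reduce each of \eqref{eq:GA} and \eqref{eq:improvedGA} to a pure Gaussian-to-Brownian embedding problem. By \eqref{corol_new_trunc}, under the same hypotheses there exists a Gaussian process $(Y_t^\oplus)$ with $\operatorname{Cov}(Y_s^\oplus, Y_t^\oplus) = \operatorname{Cov}(X_s^\oplus, X_t^\oplus)$ and $\max_{1 \le i \le n}|S_i - \sum_{j \le i}Y_j^\oplus| = o_\IP(n^{1/p})$; and by \eqref{corol_new} an analogous process $(Y_t)$ matching the untruncated covariance at rate $o_\IP(n^{1/p}\sqrt{\log n})$. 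Writing $G_i^\oplus := \sum_{j \le i}Y_j^\oplus$, we have $\IE(G_i^{\oplus 2}) = \IE(S_i^{\oplus 2})$ by covariance matching, so \eqref{eq:GA} reduces to constructing a Brownian motion $\mathbb{B}$ on an enlarged probability space such that $\max_i |G_i^\oplus - \mathbb{B}(\IE(S_i^{\oplus 2}))| = o_\IP(n^{1/p})$. The analogous reduction via $(Y_t)$ handles \eqref{eq:improvedGA}.

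The heart of the argument is thus a blocking-plus-coupling step for the dependent Gaussian sequence $(Y_j^\oplus)$. I would pick a mesoscopic block length $L_n \to \infty$ satisfying $\sqrt{L_n \log n} = o(n^{1/p})$, partition $\{1, \ldots, n\}$ into consecutive blocks of length $L_n$, and form block sums $B_k := G_{kL_n}^\oplus - G_{(k-1)L_n}^\oplus$. Condition \ref{cond:fdm3} with $A > A_0$ forces the cross-block covariances $|\operatorname{Cov}(B_k, B_{k'})|$ to decay polynomially in the gap $|k-k'|L_n$. Replacing $(B_k)$ by an \emph{independent} Gaussian sequence $(\tilde B_k)$ with matched marginal variances can then be done at a cost $o(n^{1/p})$ in sup-norm, using an interpolation between the two joint Gaussian laws together with a Gaussian maximal inequality applied to the derivative process. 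Because $(\tilde B_k)$ has independent increments, its cumulative sums can be realized exactly as a Brownian motion sampled at the telescoping variances $\IE(S_{kL_n}^{\oplus 2})$, which furnishes the required $\mathbb{B}$ at block endpoints. Within each block, the maximal fluctuation of both $G^\oplus$ and $\mathbb{B}$ is $O_\IP(\sqrt{L_n \log n})$ by standard Gaussian maximal inequalities, so interpolating from the endpoints to general $i$ costs only $o(n^{1/p})$ as well.

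The main obstacle I anticipate is pushing the cross-block Gaussian coupling all the way down to the sharp $o(n^{1/p})$ rate: the threshold $A_0$ in \eqref{eq:Alowerbound} is dictated precisely by this step, since the aggregated cross-covariance slack must be absorbed into $n^{1/p}$ after summing over $\asymp n/L_n$ blocks, and a slightly faster decay is needed than that used in \cite{KarmakarWu2020}. Once \eqref{eq:GA} is established, \eqref{eq:improvedGA} follows either by re-running the same construction through $(Y_t)$ and paying the $\sqrt{\log n}$ factor inherited from \eqref{corol_new}, or equivalently by passing from $\mathbb{B}(\IE(S_j^{\oplus 2}))$ to $\mathbb{B}(\IE(S_j^2))$ and controlling the resulting time shift through $|\IE(S_j^2) - \IE(S_j^{\oplus 2})|$, which is bounded by the truncation moments under Condition \ref{cond:ui}; the non-monotonicity of $j \mapsto \IE(S_j^{\oplus 2})$ that can arise from negative serial correlations is a minor nuisance handled by embedding into a two-sided Brownian motion.
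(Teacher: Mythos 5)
Your proposal inverts the paper's logical order and in doing so opens a genuine gap. In the paper, Theorem \ref{corol:new} is \emph{derived from} Theorem \ref{thm:GA}: its proof begins by taking the Brownian motion furnished by Theorem \ref{thm:GA} and then constructs the covariance-matching process $(Y_t)$ via the projection machinery of Lemmas \ref{lemmaY} and \ref{lemmaYconstruct}. You cannot therefore invoke \eqref{corol_new} and \eqref{corol_new_trunc} as inputs without supplying an independent proof of them, which you do not. Worse, even granting Theorem \ref{corol:new} as a black box, it assumes the non-singularity Condition \ref{cond:regularity_new}, whereas Theorem \ref{thm:GA} deliberately does not; your route would only prove a strictly weaker statement. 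This is not incidental: the non-degeneracy enters precisely at the step you sketch most loosely, namely replacing the dependent Gaussian block sums $B_k$ by independent Gaussians with matched variances. In the paper's analogous argument (Lemma \ref{lemmaY}) the control of this replacement rests on $\lambda_{\min}(U_{k,k}) \to \infty$, i.e., on block variances being bounded away from degeneracy; your "interpolation between the two joint Gaussian laws" gives no mechanism for absorbing near-singular block covariance matrices at cost $o(n^{1/p})$.

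The paper instead proves \eqref{eq:GA} directly from the raw process: truncation at level $n^{1/p}$ (Proposition \ref{prop1}), $m$-dependence approximation (Proposition \ref{prop2}), a two-layer blocking scheme (Proposition \ref{prop3}), conditioning on the boundary innovations so that Sakhanenko's theorem applies to the resulting conditionally independent blocks, lifting the conditioning with a second application of Sakhanenko, and finally regularizing the variance down to $\IE(S_i^{\oplus 2})$. No Gaussian-to-Gaussian coupling of dependent block sums is ever needed, which is exactly how Condition \ref{cond:regularity_new} is avoided. Your closing observation — that \eqref{eq:improvedGA} follows from \eqref{eq:GA} by bounding $\max_i|\IE(S_i^2)-\IE(S_i^{\oplus 2})|$ and using Brownian increments — is the right idea and matches the paper's Lemma \ref{lem:trunc-originalvariance}, but that lemma itself requires a nontrivial coupling argument (via $X_{s,\{t\}}$ and H\"older) to get the $o(n^{2/p})$ bound, not merely "the truncation moments under Condition \ref{cond:ui}."
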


\color{black} 

\color{black}
A similar remark to the one following Theorem \ref{corol:new} is in order. Note that, in Theorem \ref{thm:GA}, again using the moments of the original process in the Gaussian approximation entails a penalty of $\sqrt{\log n}$ in our rate. However, it turns out that under the more stringent non-singularity condition of Theorem \ref{thm:multGA}, we are not only able to recover the optimal rate of $n^{1/p}$ from using the $X_t$ process itself, but also able to relax the decay rate. 
\color{black}

%In Remark \ref{remark:weakercondition} following our next theorem, we state that one can further relax this non-singularity assumption and still obtain the optimal rate maintaining the same decay condition. 

\begin{thm}
\label{thm:GA2}
Under conditions of Theorem \ref{thm:multGA}, there exists a Brownian motion $\mathbb{B}(\cdot)$ such that 
\begin{equation}
  \label{result:GA_ver2}
  \max_{1\leq j \leq n} \left|S_j - \mathbb{B}(\IE(S_j^{2}))\right| = \operatorname{o}_{\IP} (n^{1/p}).
\end{equation}

\end{thm}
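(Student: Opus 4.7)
The plan is to combine Theorem~\ref{thm:multGA} with a Gaussian-to-Brownian-motion coupling, obtained through a blocking argument, that exploits the non-singularity Condition~\ref{cond:regularity} to avoid any $\sqrt{\log n}$ penalty. By Theorem~\ref{thm:multGA}, on an enriched probability space one can construct a Gaussian sequence $(Y_t)$ with $\operatorname{Cov}(Y_s,Y_t)=\operatorname{Cov}(X_s,X_t)$ satisfying
\[
\max_{1\le j\le n}\bigl|S_j-W_j\bigr|=o_{\IP}(n^{1/p}),\qquad W_j:=\sum_{k\le j}Y_k,
\]
so by the triangle inequality it suffices to exhibit a Brownian motion $\mathbb{B}$ on a further enrichment with
\[
\max_{1\le j\le n}\bigl|W_j-\mathbb{B}(\tau_j)\bigr|=o_{\IP}(n^{1/p}),\qquad \tau_j:=\IE(S_j^{2})=\operatorname{Var}(W_j).
\]

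To construct $\mathbb{B}$, I would pick a block length $m=m_n$ with $m\to\infty$ and $m=o(n^{2/p})$, set $\ell_k=km$, and study the Gaussian block increments $T_k:=W_{\ell_k}-W_{\ell_{k-1}}$. Note $\operatorname{Var}(T_k)=\|X_{\ell_{k-1}+1}+\cdots+X_{\ell_k}\|^{2}$, so Condition~\ref{cond:regularity} forces $\operatorname{Var}(T_k)\ge cm$ for large $m$, while Condition~\ref{cond:fdm3} with $A>A_0'$ makes the cross-block covariances $\operatorname{Cov}(T_j,T_k)$ decay polynomially in $|j-k|m$; together these say that $(T_k)$ is a centred Gaussian sequence that is nearly independent. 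A joint-Gaussian coupling (implemented, for example, via Gram--Schmidt orthogonalisation of $(T_k)$ inside $L^{2}(\Omega)$ together with auxiliary independent Gaussians) then produces independent $Z_k\sim N(0,\tau_{\ell_k}-\tau_{\ell_{k-1}})$ on the same space with $\sum_k\IE(T_k-Z_k)^{2}$ controlled by the residual cross-covariance; crucially, $\tau_{\ell_k}-\tau_{\ell_{k-1}}>0$ and of order $m$ by Condition~\ref{cond:regularity}, making this variance assignment legitimate. Setting $\mathbb{B}(\tau_{\ell_k})=\sum_{i\le k}Z_i$ and interpolating by independent Brownian bridges on each $[\tau_{\ell_{k-1}},\tau_{\ell_k}]$ yields the required Brownian motion.

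The approximation error is then handled at two scales. At block endpoints, $\max_k|W_{\ell_k}-\mathbb{B}(\tau_{\ell_k})|=\max_k|\sum_{i\le k}(T_i-Z_i)|$ is controlled by a Gaussian maximal inequality applied to the accumulated coupling discrepancy, whose second moment is bounded via the polynomial covariance decay from Condition~\ref{cond:fdm3}. Within each block, both $W_j-W_{\ell_{k-1}}$ and $\mathbb{B}(\tau_j)-\mathbb{B}(\tau_{\ell_{k-1}})$ are centred Gaussians of variance $O(m)$, so the maximum over the $n$ indices is $O_{\IP}(\sqrt{m\log n})$. The scaling $m=o(n^{2/p})$ handles the within-block term, while the endpoint term shrinks as $m$ grows because the cross-block dependence weakens; an appropriate choice of $m$ (polynomial in $n$, just below $n^{2/p}$) drives both contributions to $o(n^{1/p})$.

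The principal obstacle is the quantitative endpoint coupling: one needs the accumulated variance mismatch $\sum_{j\ne k}|\operatorname{Cov}(T_j,T_k)|$ between the dependent sequence $(T_k)$ and its independent counterpart $(Z_k)$ to translate into a coupling error of order $o(n^{1/p})$. It is precisely the linear-in-$m$ lower bound on block variances from Condition~\ref{cond:regularity} that supplies the slack to absorb this mismatch without the logarithmic factor appearing in Theorem~\ref{thm:GA}, and the exponent threshold $A>A_0'$ in \eqref{eq:Alowerbound2} is what balances the endpoint-coupling error against the within-block Gaussian fluctuation bound, yielding the optimal rate $n^{1/p}$ with variance clock $\IE(S_j^{2})$ rather than its truncated analogue.
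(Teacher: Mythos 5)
Your reduction runs in the opposite direction from the paper's logic, and this creates a circularity you could not have seen: in the paper, Theorem \ref{thm:GA2} is proved first (by truncation, $m$-dependence, two-layer blocking, and two applications of Sakhanenko's theorem), and Theorem \ref{thm:multGA} is then \emph{derived from} the Brownian motion constructed in that proof (via Lemmas \ref{lemmaY} and \ref{lemmaYconstruct}). Taking Theorem \ref{thm:multGA} as your starting point therefore cannot replace the paper's argument — the only available proof of your hypothesis already contains the conclusion you are trying to establish. Where Condition \ref{cond:regularity} actually earns the removal of the $\sqrt{\log n}$ factor in the paper is also different from what you describe: the two candidate clocks $\sigma_i^2$ and $\sum_l |v_l|$ are realised through the \emph{same} standard normals $Z_l^{*}$, so their difference is a Gaussian process with independent increments of total variance $\sum_l(\sqrt{\tilde v_l}-\sqrt{|v_l|})^2 \le \sum_l(\tilde v_l-|v_l|)^2/(cm) = o(n^{2/p})$, and Doob's inequality finishes without any union bound; the threshold $A_0'$ comes from balancing truncation, $m$-dependence and the Sakhanenko step on the non-Gaussian process, not from the Gaussian-to-Brownian coupling.

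Even granting Theorem \ref{thm:multGA}, your endpoint coupling as sketched does not close for all $p>2$. The adjacent-block covariances $\IE(B_kB_{k+1})$ are genuinely of constant order (Lemma \ref{lemmacp}), so the best joint coupling of the dependent blocks $(T_k)$ with independent $Z_k\sim N(0,\tau_{\ell_k}-\tau_{\ell_{k-1}})$ has per-block cost $\IE(T_k-Z_k)^2\asymp 1/m$ and hence $\sum_k\IE(T_k-Z_k)^2\asymp n/m^2$. Feeding this into a maximal inequality for the accumulated discrepancy yields at best $O_{\IP}(\sqrt{n}/m)$, which is $o(n^{1/p})$ only when $m\gg n^{1/2-1/p}$; combined with your within-block requirement $m\log n=o(n^{2/p})$ this forces $1/2-1/p<2/p$, i.e.\ $p<6$. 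The theorem is claimed for every $p>2$. The missing idea is that one must choose $Z_k$ as the successive orthogonal-projection increments $\xi_k=\IE[W_n\mid T_1,\dots,T_k]-\IE[W_n\mid T_1,\dots,T_{k-1}]$, for which the accumulated discrepancy $W_{\ell_k}-\sum_{i\le k}\xi_i=-\IE[W_n-W_{\ell_k}\mid T_1,\dots,T_k]$ has variance $(\boldsymbol{u}_k+\boldsymbol{r}_k)^{T}U_{k,k}^{-1}(\boldsymbol{u}_k+\boldsymbol{r}_k)=O(1)$ \emph{uniformly in $k$} — the errors telescope rather than accumulate (this is exactly the paper's Lemma \ref{lemmaY}, and it is where $\lambda_{\min}(U_{k,k})\ge cm$ from Condition \ref{cond:regularity} enters). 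The residual clock shift $|\sum_{j\le k}\mathcal{q}_j-\tau_{\ell_k}|=O(1)$ is then absorbed by the Brownian modulus of continuity at cost $O_{\IP}(\sqrt{\log n})$. Without identifying this cancellation, the quantity you propose to control — $\sum_k\IE(T_k-Z_k)^2$ together with a generic Gaussian maximal inequality — is simply the wrong functional, and no choice of $m$ rescues the rate for large $p$.
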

\begin{remark}\label{remark:uiexample}
Necessity of the truncated uniform integrability Condition \ref{cond:ui}:
{\rm We show that the uniform integrability condition is necessary as otherwise the Gaussian approximation might fail. Let $n>2$. \color{black} Let $X_1, X_2, \ldots$ be independent with $\IP(X_i = \pm (i+1)^{1/p})=1/(i+1)$ and $\IP(X_i =\pm 1)= 1/2 - 1/(i+1)$. Note that, Condition \ref{cond:ui} is violated since $\max_{1\leq i \leq n}\IE[|X_i|^p \mathbb{I}\{|X_i|^p> n/2\}] = 2.$ For the sake of contradiction, suppose the Gaussian approximation \eqref{result:GA_ver2} holds, which implies
 \begin{equation} \label{contra}
\max_{1 \leq i \leq n}|X_i - (\mathbb{B}(\IE(S_i^2)) - \mathbb{B}(\IE(S_{i-1}^2)))|=o_{\IP}(n^{1/p}).
 \end{equation}
 Since $X_i$'s are independent, and $\max_{1 \leq i \leq n}\IE(X_i^2)\leq 2^{2/p}+1$, therefore, by property of increments of Brownian motion, $ \max_{1 \leq i \leq n} | \mathbb{B}(\IE(S_i^2)) - \mathbb{B}(\IE(S_{i-1}^2))| =O_{\IP}( (\log n)^{1/2})$. Thus, if one assumes that \eqref{contra} is true, then we will have
  $ 
    \max_{1 \leq i \leq n} |X_i| =o_{\IP}(n^{1/p}).
  $ 
Now we show that the latter is false. Clearly, $|X_i|\leq n^{1/p}/2$ w.p. 1 if $i \leq n/2^p -1$, and therefore
\begin{align*}
  \IP\left(\max_{1 \leq i \leq n} |X_i| > \frac{n^{1/p}}{2} \right) = 1 - \! \!\! \!\!\!\prod_{i=\lceil n/2^p \rceil \vee 1}^n \! \! \! \IP\left(|X_i| \leq \frac{n^{1/p}}{2}\right) &\geq 1-\left(1-\frac{2}{n+1}\right)^{n(1-\frac{1}{2^{p-1}})} \\ &\to 1- e^{2^{2-p}-2},
\end{align*}
as $n \to \infty$. This contradiction shows that Theorem \ref{thm:GA2} fails to hold. This vouches for the necessity of our uniform integrability condition; clearly, the reason the Gaussian approximation fails to hold in this example is due to Condition \ref{cond:ui} not being satisfied. It can be noted that, in this example, Theorem \ref{thm:ind} does not apply; \eqref{shakhanenkocondn} can be verified to be violated in this case. 
\color{black}
\ignore{Let $X_1, \ldots, X_n$ be i.i.d. with $\IP(X_i = \pm n^{1/p})=1/n$ and $\IP(X_i =\pm 1)= 1/2 - 1/n$, $1 \le i \le n$. Note that, Condition \ref{cond:ui} is violated since $\IE[|X_i|^p \mathbb{I}\{|X_i|^p> n/2\}] = 2.$ For the sake of contradiction, suppose the Gaussian approximation \eqref{result:GA_ver2} holds, which implies
 \begin{equation} \label{contra}
\max_{1 \leq i \leq n}|X_i - (\mathbb{B}(\IE(S_i^2)) - \mathbb{B}(\IE(S_{i-1}^2)))|=o_{\IP}(n^{1/p}).
 \end{equation}
Since $\IE(S_i^2) = i \IE(X_1^2)$ and $\IE(X_1^2) = 1 - 2/n + 2 n^{2/p -1} \to 1$, by property of increments of Brownian motion, $ \max_{1 \leq i \leq n} | \mathbb{B}(\IE(S_i^2)) - \mathbb{B}(\IE(S_{i-1}^2))| =O_{\IP}( (\log n)^{1/2}).$ Thus, if one assumes that \eqref{contra} is true, then we will have
  $ 
    \max_{1 \leq i \leq n} |X_i| =o_{\IP}(n^{1/p}).
  $ 
However, the latter is false, since as $n \to \infty$
  \[ \IP\left(\max_{1 \leq i \leq n} |X_i| > \frac{n^{1/p}}{2} \right) = 1 - (1 - \frac{2}{n})^n \to 1 - e^{-2}. \]
This shows that Theorem \ref{thm:GA2} fails to hold. \color{black}This vouches for the necessity of our uniform integrability condition; clearly the reason the Gaussian approximation fails to hold in this example is due to Condition \ref{cond:ui} not being satisfied. \color{black}}
 } 
\end{remark}
\subsection{Examples}\label{ssc:examples}
We now show some examples of non-stationary time series which satisfy Condition \ref{cond:fdm3}. \color{black} For $t \in \Z$, let $\mathcal{F}_t = (\ldots, \varepsilon_{t-1}, \varepsilon_t)$, where $\varepsilon_t$ are i.i.d. random variables. Consider the model 
\begin{equation} \label{generalnon-stationary}
  X_t= g(\theta_t, \mathcal{F}_t), \ 1\leq t\leq n,
\end{equation}\color{black}
where $\theta_t \in \Gamma$, a parameter space, and \color{black}$g(\cdot, \mathcal{F}_t):\Gamma \to \R$ is a progressively measurable function such that the process $X_t(\theta)= g(\theta, \mathcal{F}_t)$ is well-defined\color{black}. {{We can view (\ref{generalnon-stationary}) as a general modulated stationary process. \cite{adak1998time} and \cite{zhao2013inference} considered the special case of multiplicative modulated stationary processes with a linear form.}} Define the functional dependence measures as
\begin{align}
  \delta^{\Gamma}_{p}(k) := \sup_{\theta \in \Gamma} \|g(\theta, \mathcal{F}_t) -g(\theta, \mathcal{F}_{t, \{t-k\}}) \|_p \geq 
  \sup_t \| g(\theta_t, \mathcal{F}_t)- g(\theta_t, \mathcal{F}_{t, \{t-k\}}) \|_p =: \delta^X_p(k). 
\end{align}
Thus, we only need to assume that $\Theta_{i,p}^{\Gamma}:= \sum_{k=i}^{\infty} \delta_p^{\Gamma}(k)$ satisfies Condition \eqref{cond:fdm3}.
We mention a couple of examples from the general class of non-stationary processes satisfied by \eqref{generalnon-stationary}.
\vspace{-0.1in}
\subsubsection{{Cyclostationary process}}
Taking $\theta_t= \phi_{t\text{ mod }T}$ in \eqref{generalnon-stationary} for some period $T$, and $\{ \phi_t\}_{t=1}^T\in \Gamma$, yields cyclostationary process. These can be thought of as generalizations of stationary processes, incorporating periodicity in its properties, and were introduced as a model of communications systems in \cite{Bennet} and \cite{franks1969signal}. Apart from communication and signal detection, cyclostationary processes have enjoyed wide use in econometrics \cite{PARZEN1979137}, atmospheric sciences \cite{atmos} and across many other disciplines- the reader is encouraged to look into \cite{Gardner1994CyclostationarityIC}, \cite{NAPOLITANO2016385}, and the references therein for an introduction and a comprehensive list of all its applications. Despite this huge literature, there is no unified asymptotic distributional theory for the cyclostationary processes. Our Gaussian approximation result allows a systematic study of asymptotic distributions of statistics of such processes.
{\ignore{
\color{black}
Let $\varepsilon_i$ be i.i.d. random elements and $T \in \mathbb{N}$. 
Consider the special case of autoregressive cyclostationary process
\begin{equation}\label{eq:cyclostationary}
  X_i=F(\phi_{i \mod T}, X_{i-1}, \varepsilon_i).
\end{equation}
Let $X_i(\theta)=F(\theta, X_{i-1}(\theta), \varepsilon_i)$. Assume that  $\sup_{\theta \in \Gamma} \|K(X_1(\theta))\|_p < \infty$, where
\begin{equation}\label{eq:lipscitz}
 K(x)=\sup_{\theta \neq \theta'}\frac{F(\theta, x, \varepsilon_0) - F(\theta', x, \varepsilon_0)}{|\theta - \theta'|}.
\end{equation}
 Then one can generalize arguments in Lemma 4.5 of \citet{richterdahlhaus2017} to show that the process \eqref{eq:cyclostationary} is well-approximated by $\{X_i(\phi_{i \mod T})\}$, which is a process of the form \eqref{generalnon-stationary}. 
 }} 
\color{black}
\subsubsection{Locally stationary process}
In \eqref{generalnon-stationary}, let $\Gamma=[0,1]$. Assume that $g$ is stochastic Lipschitz continuous for some constant $L>0$, such that for all $\theta, \theta',$
\begin{equation}
   \| g(\theta, \mathcal{F}_t)- g(\theta', \mathcal{F}_t)\|_p \leq L | \theta- \theta'|.
\end{equation}
Then, the processes $X_{t,n}:= g(t/n, \mathcal{F}_t)$ are locally stationary in view of the approximation\[ \|X_{t, n}- X_t(\theta)\|_p \leq L |t/n- \theta| \text{ if } t/n \in (\theta- \color{black}\Delta\color{black}, \theta+\color{black}\Delta)\ \text{for some $\Delta>0$}\color{black}.\]
\citet{dalhaus97, dalhaus200a} introduced locally stationary processes in terms of time-varying spectrum. \cite{richterdahlhaus2017} provided a general asymptotic theory for such processes. For further examples, see \cite{zhang&wu2021}. 

Consider the special case of locally stationary version of Volterra processes, defined as follows:
  \begin{equation}\label{volterra}
    X_t = \sum_{0 \leq j_1 < \ldots < j_i} a(j_1, \ldots, j_i, \frac{t}{n}) \ \varepsilon_{t-j_1} \ldots \varepsilon_{t-j_i},
  \end{equation}
  where $\varepsilon_i$'s are i.i.d. with mean $0$, $\|\varepsilon_0\|_p < \infty$, $p>2$, and $a: \R^i \times [0,1] \to \R$ are called $i$-th order Volterra kernels. Then elementary calculations show that for a constant $c_p$ depending only on $p$, 
  \begin{equation}
    \delta_p(l)^2 \leq c_p \|\varepsilon_0\|_p^{2i} \sup_{k} A_{k,l,i}, \text{ where } A_{k,l,i}= \sum_{\substack{{0 \leq j_1 < \ldots < j_i}, \, {l \in \{j_1, \ldots, j_i\}}}} a^2(j_1, \ldots, j_i, \frac{k}{n}) < \infty. 
  \end{equation}

\ignore{
Let $\varepsilon_i$ be i.i.d. random elements and $T \in \mathbb{N}$. 
Consider the special case of autoregressive cyclostationary process
\begin{equation}\label{eq:cyclostationary}
  X_i=F_i(\phi_{i \mod T}, X_{i-1}, \varepsilon_i).
\end{equation}
 Assume that for some $x_0$, $\sup_{\theta \in \Theta}\sup_{1 \leq i \leq T} \|F_i(\theta, x_0, \varepsilon_1) \|_p < \infty$, and
\begin{equation}\label{eq:lipscitz}
  \sup_{1 \leq i \leq T}\| L_i(\varepsilon)\|_p <1 \ \text{where } L_i(\varepsilon)=\sup_{\theta \in \Theta}\sup_{x \neq y} \frac{F_{i}(\theta, x, \varepsilon_i)-F_{i}(\theta, y, \varepsilon_i)}{|x-y|}.
\end{equation}

 Under \eqref{eq:lipscitz} for a fixed $\theta \in \Theta$, following \citet{wushao2004}'s argument, we have the Geometric Moment Contraction: $\delta_{p,\theta}(j)=O(\rho^j)$ where $\rho=\sup_{1 \leq i \leq T}\| L_i(\varepsilon)\|_p$. Our Condition \ref{cond:fdm3} is much weaker than the much more common Geometric Moment Contraction (GMC) property (see \cite{Wu2005} Theorem 1). 
This shows that \eqref{eq:cyclostationary} satisfies our assumption \eqref{eq:Thetaip}, and therefore, Theorem \ref{thm:improvedGA} is applicable to yield the optimal $n^{1/p}$ rate of Gaussian approximation.

\subsubsection{\textbf{Locally stationary Volterra processes}}
  Consider the Volterra processes, which are non-linear processes of fundamental importance. We consider a locally stationary version of Volterra processes, defined as follows:
  \begin{equation}\label{volterra}
    X_t = \sum_{i=1}^{\infty} \sum_{0 \leq j_1 < \ldots < j_i} a_i(j_1, \ldots, j_i, \frac{t}{n}) \ \varepsilon_{t-j_1} \ldots \varepsilon_{t-j_i},
  \end{equation}
  where $\varepsilon_i$'s are i.i.d. with mean $0$, $\|\varepsilon_0\|_p < \infty$, $p>2$, and $a_i: \R^i \times [0,1] \to \R$ are called $i$-th order Volterra kernels. Let 
  \[ A_{k,l,i}= \sum_{\substack{{0 \leq j_1 < \ldots j_i}\\{l \in \{j_1, \ldots, j_i\}}}} a_i^2(j_1, \ldots, j_i, \frac{k}{n}) < \infty. \]
  Then elementary calculations show that for a constant $c_p$ depending only on $p$, 
  \begin{equation}
    \delta_p(l)^2 \leq c_p \sum_{i=1}^{\infty} \|\varepsilon_0\|_p^{2i} \sup_{k} A_{k,l,i}.
  \end{equation}
  Assume that for some $A>A_0$ and $B$, 
  \[\sum_{i=1}^{\infty}\|\varepsilon_0\|_p^{2i} \sup_k \sum_{\substack{{0 \leq j_1 < \ldots j_i}\\{j_i \geq m}}} a_i^2(j_1, \ldots, j_i, \frac{k}{n}) = O(m^{-1-2A} (\log m)^{-2B}). \]
  Then, 
  \[ \sum_{l=m}^{\infty} \delta_p(l)^2 = O(m^{-1-2A} (\log m)^{-2B}), \]
  which implies that \eqref{eq:Thetaip} is satisfied. That $\{|X
_t|^p\}_{t \geq 1}$ are uniformly integrable follows from noting that $\varepsilon_i$'s are i.i.d. and 
\[ |X_t| \leq \sum_{i=1}^{\infty} \sum_{0 \leq j_1 < \ldots < j_i} |\varepsilon_{t-j_1} \ldots \varepsilon_{t-j_i}| \sup_{s \in [0,1]} a_i(j_1, \ldots, j_i, s) .\]
Thus the sufficient conditions for Theorem \ref{thm:improvedGA} are true and one can obtain an optimal Gaussian approximation. 
}

\subsection{Outline of the proof of theorems}\label{ssc:outline}
Our proofs are quite involved and are given in Sections \ref{sec:section2proofs} and \ref{sec2proofsnew}. In particular, Theorems \ref{corol:new} and \ref{thm:GA} are based on similar assumptions (in fact Theorem \ref{thm:GA} works with a weaker set of conditions); and in the same vein, Theorems \ref{thm:multGA} and \ref{thm:GA2} require exactly the same conditions. Therefore, these two pairs of theorems are proven with each other. In particular, all the four theorems follow a general recipe of the proof outlined below.
\begin{itemize}
  \item \textbf{Truncation:} In Proposition \ref{prop1}, we truncate our process at level $n^{1/p}$ in order to exploit the uniform integrability condition, which is necessary due to non-stationarity.

\item \textbf{$m$-dependence:} In the second step, we use the $m$-dependence approximation in Proposition \ref{prop2} where $m$ increases with $n$. This limits the arbitrary non-stationary dependency structure to those only up to $m$ lags, and enables us to treat our series much like a stationary time series. We provide an optimal choice of $m$ so that the error rate of $n^{1/p}$ is achieved. 

\item \textbf{Blocking:} Our blocking step in Proposition \ref{prop3} is quite different from that in \cite{KarmakarWu2020} as well as \cite{kmt}; we consider a two-step blocking, with an inner layer of blocks of size $m$ being then combined into an outer layer of blocks of size $3$. This enables us to do the required mathematical manipulation to obtain an explicit form of the variance in terms of $m$-dependent processes.

\item \textbf{Conditional and Unconditional Gaussian approximation: } With the blocking step as mentioned above, we condition on the shared $\varepsilon$'s between the outer blocks (that occur at both the boundaries of each block). This results in conditional independence and thus we can use \cite{Sakhanenko2006}'s Theorem 1. Then we lift the conditioning random variables (the boundary $\varepsilon$'s) by taking another expectation over them, and apply the Theorem 1 from \cite{Sakhanenko2006} again to obtain the unconditional Gaussian approximation.

\item \textbf{Regularization of Variance: } From the variance in terms of $m$-dependent blocked processes as mentioned above, in order to obtain the variance approximation in a practically usable form as mentioned in the theorem, in this step we approximate it by $\IE( (S_i^{\oplus})^2)$ or by variances of sum of blocks in terms of original random process. 

\item \textbf{Final Gaussian approximation: } In this final step, we connect the approximated variance $\IE( (S_i^{\oplus})^2)$ to the new Gaussian process $(Y_i)_{i=1}^n$ (for Theorems \ref{corol:new} and \ref{thm:multGA}), via Propositions \ref{lemmaY} and \ref{lemmaYconstruct}, or to the final variance $\IE(S_i^2)$ (for Theorems \ref{thm:GA} and \ref{thm:GA2}). 
\end{itemize}

\section{Estimating the variance of the approximating Gaussian process}\label{ssc:estvar}

In this section, we address estimating the variance of the approximating process. It is well-known in the time series literature that $S_i^2$ is a poor estimate for $\IE(S_i^2)$. The usual practice is to use a kernel function or a particular weighing-mechanism. Such methods have been used throughout the literature to estimate spectral density matrices for one-dimensional or low-dimensional cases. For stationary processes, we recommend works by \citet{newey-west}, \citet{priestley1981spectral} and \citet{liu_wu_2010} among others for a comprehensive review of research in this direction. As a special case of kernel-based estimates, blocking techniques have been particularly popular in this area. \citet{carlstein} used non-overlapping blocks to consistently estimate $\IE(S_i^2)$ for a stationary process. From a bootstrap perspective, \citet{romano1994} uses non-overlapping blocks of random sizes to define a `stationary bootstrap'. Using the `flat-top kernel' methods of \cite{romano1995}, \cite{politis-white} obtains $O(n^{1/3})$ for the expected optimal block size for the stationary bootstrap. For detailed discussion, readers are encouraged to look into \citet{Lahiri2003ResamplingMF}, which combines ideas from \cite{HALL1985231}, \cite{carlstein}, \cite{carlstein_matched} and many others to deduce various resampling schemes for estimating the variance of a stationary process. 

The blocking method has been quite popular in the literature as a proof technique for obtaining optimal Gaussian approximations. See \cite{KarmakarWu2020}, \cite{zhouzhousinica} and \cite{LiuLin2009} for relevant references. Naturally, since the statements of our Theorems \ref{corol:new}-\ref{thm:GA2} \color{black}do not involve any blocks\color{black}, one may question if we can reach the optimal rate by expressing the variance directly in terms of some blocking mechanism. In the next section, we will provide a result that answers the above question in affirmative. The blocking mechanism we use is somewhat related to the Non-overlapping Block Bootstrap (NBB) method proposed in Chapter 2 of \cite{Lahiri2003ResamplingMF}. We describe the scheme in the following. 
\noindent
Usually the block length $m$ is taken so as $m\to \infty$ with $m/n \to 0$. Define \color{black} for $1\leq a, k, j \leq \lceil n/m \rceil$,
\color{black}\begin{equation} \label{eq:T_k}
  B_a := \sum_{i=(a-1)m + 1}^{am \wedge n} X_i; \ \ T_k= \sum_{a=1}^k B_a^{2} + 2\sum_{a=1}^{k-1} B_aB_{a+1}; \ R_j:= \mathbb{I}\{j/m \notin \N\}\sum_{i=\lfloor j/m \rfloor m+1}^j X_i .
\end{equation}\color{black}
Note that $S_j = \sum_{a=1}^k B_a + R_j$, where $k = \lfloor j/m \rfloor$.
We shall estimate $\IE(S_j^2)$ by the following `Block-based Running Variance' (BRV) estimator $\mathcal{T}_j$ where
\begin{equation}\label{Tj}
  \mathcal{T}_j:=T_{\lfloor j/m \rfloor} + R_j^2 + 2B_{\lfloor j/m \rfloor} R_j \text{ for all $1 \leq j \leq n$}
\end{equation}
simultaneously. Since $\mathcal{T}_j$'s may be negative, so instead of Brownian motion we use two-sided Brownian motion. A two-sided Brownian motion is defined as $\mathbb{W}(t)= 
  \mathbb{B}_1(t){\bf 1}_{t \geq 0} + 
  \mathbb{B}_2(-t){\bf 1}_{t < 0},$
where $\mathbb{B}_1$ and $\mathbb{B}_2$ are two independent standard Brownian motions starting at $0$. 

Next, we provide some theoretical properties of the BRV estimator $\mathcal{T}_j$. In particular, we bound the uniform deviation probability of $\mathcal{T}_j$. Such a deviation inequality for non-stationary processes is novel to the best of our knowledge. Thus we state it as a standalone result. 
\color{black}
\subsection{A maximal quadratic large deviation bound}
Quadratic large deviation bounds have a long history that started with the seminal work by \citet{hansenwright} and \citet{wright}. See \cite{rudelson} for an extensive overview. These are popularly referred as Hanson-Wright type inequalities in the literature. Subsequent work by \cite{BERCU199775}, \cite{KAKIZAWA2007992} and others established \textit{moderate deviation principles} for quadratic forms of stationary Gaussian processes. Moving beyond sub-Gaussianity, \citet{xiaowu} and \citet{zhang&wu2021} generalized the Hanson-Wright inequality for stationary process with finite polynomial moments and locally stationary processes, respectively. In this section we aim to (i) develop a maximal inequality i.e., derive tail probability bounds for the maximal partial sum, and (ii) relax the stationarity assumption by providing a result for the general non-stationary processes. Our proof is similar to the Theorem 6.1 of \cite{zhang&wu2021}; however, it differs in a crucial step. Since we aim to provide a maximal inequality, we use Borovkov's version of Nagaev inequality (\cite{borovkov}), instead of the usual bound of \cite{nagaev}. This, in particular, changes the treatment of a few important terms in our proof compared to that in \cite{zhang&wu2021}. Moreover, we also tackle the case when $2<p\leq 4$, something that is usually absent from other Hanson-Wright type inequalities in the literature. 
\begin{thm}\label{thm:maxpartialquad}
Let $p>2$. Assume Condition \ref{cond:fdm3} holds for $\Theta_{i,p}$. Let $Q_n=\sum_{1 \leq s \leq t \leq n}a_{s,t}X_sX_t$, with
$a_{s,t}=0$ if $|s-t|>\mathcal{D}_n$ for some $\mathcal{D}_n\leq n$, and $\sup |a_{s,t}|\leq 1$. Denote 
\begin{equation}\label{Vj}
R_k = \sum_{j=1}^k (V_j-\IE(V_j)), \mbox{ where } 
V_k=\sum_{t=(k-1)\mathcal{D}_n +1}^{(k\mathcal{D}_n) \wedge n} \sum_{1 \leq s \leq t}a_{s,t} X_s X_t, \text{ for } 1\leq k \leq \lceil n/\mathcal{D}_n \rceil.
\end{equation}
Then there exists constants $C_p$, depending only on $p$, such that for all $x > 0$,
\begin{equation}\label{eq3.4}
  \IP\left(\max_{1 \leq k \leq \lceil n/\mathcal{D}_n \rceil} | R_k | \geq x \right) \leq \begin{cases}
  & C_px^{-p/2} n\mathcal{D}_n^{p/4}\mu_{p,A}^p, \ 2<p \leq 4,\\
    & C_px^{-p/2}n \mathcal{D}_n^{p/2-1}\mu_{p,A}^p + C_p \exp \left(- \frac{C_p x^2}{n \mathcal{D}_n \mu_{4,A}^4} \right), \ p > 4.
  \end{cases}
\end{equation}
\end{thm}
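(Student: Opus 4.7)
The plan is to mimic the martingale-based proof of Theorem 6.1 in Zhang--Wu (2021), replacing the (pointwise) Nagaev inequality with Borovkov's maximal version at the end to upgrade to the supremum in $k$. Let $\mathcal{P}_j[\cdot]:=\IE[\cdot\mid\mathcal{F}_j]-\IE[\cdot\mid\mathcal{F}_{j-1}]$. Since each $X_t$ is $\mathcal{F}_t$-measurable and, by the support constraint on $a_{s,t}$, each $V_k$ depends only on $\varepsilon_i$ with $i\leq k\mathcal{D}_n$, we may write the telescoping decomposition
\begin{equation*}
R_K \;=\; \sum_{k=1}^{K}(V_k-\IE V_k) \;=\; \sum_{j\leq K\mathcal{D}_n} D_{j,K},\qquad D_{j,K}:=\mathcal{P}_j\!\Bigl(\sum_{k=1}^{K} V_k\Bigr),
\end{equation*}
so that for each fixed $K$, $\{D_{j,K}\}_j$ is a martingale difference sequence adapted to $(\mathcal{F}_j)_j$. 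Because $D_{j,K}$ is cumulative in $K$ (adding a block merely enlarges the inner sum), the sequence $(R_K)_K$ can be viewed as the output of a single underlying martingale summed over a consistent index set, which is exactly the structure needed to feed into Borovkov's maximal Nagaev bound.

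The second step controls $\|D_{j,K}\|_{p/2}$ via the functional dependence measure. For $j\leq s\leq t$, a coupling argument combined with H\"older's inequality gives
\begin{equation*}
\bigl\|\mathcal{P}_j(X_s X_t)\bigr\|_{p/2}\;\leq\;\|X_s\|_p\,\delta_p(t-j)+\|X_t\|_p\,\delta_p(s-j),
\end{equation*}
while $\mathcal{P}_j(X_sX_t)=0$ for $j>t$. Using $|a_{s,t}|\leq 1$ and the fact that at most $\mathcal{D}_n$ values of $s$ contribute for each $t$, I would sum these bounds over $s,t$ inside $V_k$ and then over blocks $k\leq K$ with $k\mathcal{D}_n\geq j$. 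Condition \ref{cond:fdm3} via $\mu_{p,A}$ ensures the tail sums $\sum_{l\geq m}\delta_p(l)$ decay polynomially, which after summation yields an estimate of the form $\sum_{j,K}\|D_{j,K}\|_{p/2}^{p/2}\lesssim n\mathcal{D}_n^{p/4}\mu_{p,A}^p$ in the regime $2<p\leq 4$, and corresponding $L^2$/$L^4$ quadratic-variation estimates $\sum_j\IE D_{j,K}^2\lesssim n\mathcal{D}_n\,\mu_{4,A}^4$ and $\max_j\|D_{j,K}\|_p$ bounds in the regime $p>4$.

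The final step splits on the range of $p$. For $2<p\leq 4$, I would apply the Burkholder--Rosenthal inequality at exponent $p/2\in(1,2]$, for which the von Bahr--Esseen form $\|R_K\|_{p/2}^{p/2}\leq C_p\sum_j\|D_{j,K}\|_{p/2}^{p/2}$ holds; combined with Doob's maximal inequality across $K$ and Markov's inequality, this directly yields the advertised polynomial bound. For $p>4$, I would truncate each $D_{j,K}$ at a level proportional to $x/\log(n/\mathcal{D}_n)$: the truncated increments form a bounded martingale whose maximum is controlled by Freedman/Bernstein to produce the Gaussian-type tail $\exp(-C_p x^2/(n\mathcal{D}_n\mu_{4,A}^4))$, while the excess mass is a sum of heavy-tailed increments handled by Borovkov's maximal Nagaev bound to yield the polynomial term $x^{-p/2}n\mathcal{D}_n^{p/2-1}\mu_{p,A}^p$.

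The principal technical obstacle, as the authors flag, is precisely this upgrade from a pointwise-in-$K$ moment/tail bound to a supremum-in-$K$ tail bound: Nagaev's original inequality is not a maximal inequality and naive union bounds over $K\leq\lceil n/\mathcal{D}_n\rceil$ would cost a factor of $n/\mathcal{D}_n$ that destroys the claimed rates. Borovkov's refinement provides a truly maximal version for independent summands, and the compatibility of the $D_{j,K}$ across $K$ (inner sum is monotone in $K$) is what lets us invoke it after the martingale reduction. Making this reduction clean---handling the $K$-dependence of the $D_{j,K}$ and controlling the excess heavy-tailed increments---is where the ``crucial step'' of the proof departs from Zhang--Wu (2021), and is the portion that will require the most care, especially in the $2<p\leq 4$ regime where moment inequalities are weakest.
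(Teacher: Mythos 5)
Your proposal correctly identifies the central obstacle (upgrading a pointwise-in-$K$ bound to a maximal one without paying a union-bound factor of $n/\mathcal{D}_n$), but the mechanism you offer to overcome it does not work. Writing $R_K=\sum_{j\le K\mathcal{D}_n}D_{j,K}$ with $D_{j,K}=\mathcal{P}_j\bigl(\sum_{k\le K}V_k\bigr)$ produces a \emph{triangular array}, not a single martingale: since $X_t=g_t(\mathcal{F}_t)$ depends on all past innovations, $\mathcal{P}_j(X_sX_t)\neq 0$ for $t$ arbitrarily larger than $j$, so $D_{j,K+1}-D_{j,K}=\sum_{t=K\mathcal{D}_n+1}^{(K+1)\mathcal{D}_n}\sum_{s\le t}a_{s,t}\mathcal{P}_j(X_sX_t)$ is generically nonzero. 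Consequently the partial sums of no fixed martingale-difference sequence recover all the $R_K$ simultaneously, $(R_K)_K$ is not a martingale in $K$ (indeed $\IE[V_{K+1}\mid\mathcal{F}_{K\mathcal{D}_n}]\neq\IE[V_{K+1}]$), and the invocation of ``Doob's maximal inequality across $K$'' in the $2<p\le 4$ case, as well as of Freedman's inequality for the truncated increments in the $p>4$ case, has no martingale to act on. This is precisely the gap the theorem is designed to close, and your argument leaves it open.

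The paper's route is different in exactly this step. It conditions each block variable $V_k$ on a geometrically growing window of innovation blocks, setting $V_{k,\tau}=\IE[V_k\mid\boldsymbol{\eta}_{k-\tau},\ldots,\boldsymbol{\eta}_k]$ with $\tau_l=2^l$, and decomposes $R_k$ into a far-tail remainder $\sum_j(V_j-V_{j,U})$, dyadic layers $M_{k,l}=\sum_j(V_{j,\tau_l}-V_{j,\tau_{l-1}})$, and a local term $\sum_j(V_{j,2}-\IE V_{j,2})$. The point of the conditioning is that, at scale $l$, increments grouped into blocks of $\tau_l$ consecutive indices are \emph{independent} once separated by two blocks; an even/odd split then reduces $\max_k|M_{k,l}|$ to maxima of sums of genuinely independent variables, to which Borovkov's maximal Fuk--Nagaev inequality (your Lemma of choice) legitimately applies, with within-block maxima handled by Doob applied to the backward martingale $V_{k,h}-V_{k,h-1}$ in $h$. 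A summable weight sequence $\lambda_l$ distributes the deviation level $x$ across scales, and the moment inputs are supplied by projection bounds of the type you wrote down (the analogue of your $\|\mathcal{P}_j(X_sX_t)\|_{p/2}$ estimate appears as a bound on $\|V_{k,h}-V_{k,h-1}\|_{p/2}$). If you want to salvage your approach, you would need to replace the single-martingale claim with some such finite-dependence reduction; as written, the maximal step is unproven.
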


\noindent The proof is given in Appendix Section \ref{proofmaxquad}. We emphasize that to avoid notational cumbersomeness, in \eqref{eq3.4} we have used same notation $C_p$ to denote multiple constants, each depending solely on $p$.
 \begin{remark}
   In view of \eqref{trunc}, $\delta_p^{\oplus}(j) \leq \delta_p(j)$ is satisfied by the functional dependence measure of the truncated process. Therefore, Theorem \ref{thm:maxpartialquad} also holds for $X_s$ replaced by $X_s^{\oplus} - \IE(X_s^{\oplus})$.
 \end{remark}
 \begin{remark}
   The bound in Theorem \ref{thm:maxpartialquad} should be contrasted with the bound obtained in Theorem 6 of \cite{zhang&wu2021}. In fact, our proof works for $A>1/2-1/q$ and matches their non-uniform bound for the corresponding case. A similar argument can be followed to yield a bound for a process satisfying $\mu_{p,A}< \infty$ for some general $A$. In view of our maximal inequality holding true for general non-stationary process, Theorem \ref{thm:maxpartialquad} is a more general result than those found in the literature. 
 \end{remark}

\subsection{Gaussian approximation rate with estimated variance} \label{subsec:est}
 Theorem \ref{thm:maxpartialquad} is useful in arriving at the estimation error of $\mathcal{T}_i$ as an estimate of $\IE(S_i^2)$. To begin with,\ignore{in Proposition \ref{prop:Bj2-EBj2}, we will find the rate of convergence of the plug-in estimate of the variance of the two-sided Brownian motion. \ignore{Subsequently, we will use the increment property of the two-sided Brownian motion, to arrive at the optimal rate of convergence for $\max_{i \leq n}\left| \mathbb{W}(\mathcal{T}_i)-\mathbb{W}(\IE(S_i^2)) \right |$}.} 
 note that $\mathcal{T}_i/2$ can be written in the form \eqref{Vj} with $a_{s,t}=1/2$ when $s=t$, and in general $|a_{s,t}|=0$ when $ |s-t|\geq 2m$ and $\sup |a_{s,t}|\leq 1$. Thus, taking $\mathcal{D}_n=2m$, Theorem \ref{thm:maxpartialquad} implies that, 
\begin{equation} \label{approx1}
  \max_{1 \leq k \leq \lfloor n/m \rfloor}\bigg|\sum_{j=1}^k (B_j^2 + 2B_j B_{j+1} - \IE[B_j^2 + 2B_j B_{j+1}])\bigg|=O_{\IP}(n^{\max\{2/p, 1/2\}} m^{1/2}). 
\end{equation}\color{black}
Moreover, by Lemma \ref{pmoment}, $\max_{1 \leq j \leq \lfloor n/m \rfloor}\IE[\max_{1 \leq k\leq m} |X_{mj+1} + \ldots +X_{mj+k}|^p]=O(m^{p/2})$. Hence,
\begin{equation}\label{approx2}
  \max_{1 \leq i \leq n} \bigg|\mathcal{T}_i - \sum_{j=1}^{\lfloor i/m \rfloor} (B_j^2 + 2B_j B_{j+1})\bigg|=O_{\IP}(n^{\max\{2/p, 1/2\}} m^{1/2}).
\end{equation} 
by Markov's inequality. \color{black}
Note that \eqref{approx2} takes care of the stochastic error of $\mathcal{T}_i$ as an estimate of $\IE(S_i^2)$ for $1 \leq i \leq n$. For the bias part, 
we need to control the order of the cross-product terms $\IE(B_iB_j
)$ for $i\neq j$. The following lemma, whose proof we give in Section \ref{prooflemmacp}, is thus necessitated.
\begin{lemma} \label{lemmacp}
  Let Condition \ref{cond:fdm3} hold with $A>1$. Then for $B_j$ as defined in \eqref{eq:T_k}, it holds that
  \begin{align}
    \max_{1\leq k \leq \lfloor n/m \rfloor} |\IE(B_k B_{k+1})| = O(1), \ \max_{1 \leq k \leq \lceil n/m \rceil} \sum_{i: |i-k|\geq 2}|\IE( B_iB_k)| &= O(m^{1-A}). \label{eq:blockproduct}
  \end{align}
\end{lemma}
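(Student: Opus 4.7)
The plan is to use the martingale projection decomposition. Let $\mathcal{P}_l(\cdot) := \IE(\cdot\mid\mathcal{F}_l)-\IE(\cdot\mid\mathcal{F}_{l-1})$, so that $X_t = \sum_{l\leq t}\mathcal{P}_l(X_t)$; a standard coupling argument based on \eqref{eq:fdm} gives $\|\mathcal{P}_l(X_t)\|_2 \leq \delta_2(t-l)$ for $t\geq l$, while $\mathcal{P}_l(X_s)=0$ for $s<l$. Orthogonality of $\mathcal{P}_l$ across distinct $l$ and Cauchy--Schwarz yield
\begin{equation}\label{eq:cplan1}
|\IE(B_iB_k)| \leq \sum_l \|\mathcal{P}_l(B_i)\|_2\,\|\mathcal{P}_l(B_k)\|_2,\qquad \|\mathcal{P}_l(B_j)\|_2 \leq \!\!\sum_{s\in B_j,\,s\geq l}\!\!\delta_2(s-l).
\end{equation}
Since $p\geq 2$, Condition \ref{cond:fdm3} gives $\Theta_{i,2} \leq \Theta_{i,p}\leq \mu_{p,A}(i+1)^{-A}$, and this polynomial decay will drive every subsequent estimate.

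Assume WLOG $i<k$, write $r=k-i\geq 1$, and split the $l$-sum into regime (i) $l\leq (i-1)m$ and regime (ii) $(i-1)m<l\leq im$; the tail $l>im$ contributes nothing since $\mathcal{P}_l(B_i)=0$ there. In regime (i), with $d=(i-1)m-l\geq 0$, the second inequality of \eqref{eq:cplan1} yields $\|\mathcal{P}_l(B_i)\|_2\leq \Theta_{d+1,2}$ and $\|\mathcal{P}_l(B_k)\|_2\leq \Theta_{rm+d+1,2}$, so the regime (i) contribution is
\[
C\sum_{d\geq 0}(d+2)^{-A}(rm+d+2)^{-A}\;\lesssim\; (rm)^{-A}\sum_{d\geq 0}(d+2)^{-A}\;=\;O((rm)^{-A}),
\]
where finiteness of the $d$-sum uses $A>1$. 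In regime (ii), with $e=im-l\in\{0,\ldots,m-1\}$, only the crude bound $\|\mathcal{P}_l(B_i)\|_2\leq\Theta_{0,2}$ is available, while $\|\mathcal{P}_l(B_k)\|_2\leq\Theta_{(r-1)m+e+1,2}$; summing over $e$ gives $\Theta_{0,2}\sum_{e=0}^{m-1}\Theta_{(r-1)m+e+1,2}$.

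For $r=1$ this equals $\Theta_{0,2}\sum_{e=0}^{m-1}(e+2)^{-A}=O(1)$, which combined with the $O(m^{-A})$ from regime (i) yields $|\IE(B_kB_{k+1})|=O(1)$, establishing the first claim. For $r\geq 2$, $(r-1)m\geq m$ dominates $e$, so each summand is $O(((r-1)m)^{-A})$ and the $e$-sum is bounded by $m^{1-A}(r-1)^{-A}$. Summing over $r\geq 2$ (and by symmetry over $i>k$),
\[
\sum_{r\geq 2}\left[C(rm)^{-A}+Cm^{1-A}(r-1)^{-A}\right]\;=\;O(m^{-A})+O(m^{1-A})\;=\;O(m^{1-A}),
\]
since $A>1$ makes both series in $r$ summable; this proves the second claim, and all bounds are uniform in $k$ because the projection estimates depend only on the block length $m$ (the last, possibly truncated block only helps).

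The principal obstacle is the asymmetric extraction of $\Theta$-decay in regime (ii): when $l$ lies inside $B_i$, $\|\mathcal{P}_l(B_i)\|_2$ cannot inherit any decay beyond the crude $\Theta_{0,2}$ bound, so the entire separation factor must come from the $B_k$ side through the shifted index $(r-1)m+e+1$. This asymmetry, together with the $m$-many terms accumulated over $e$, is precisely what forces the rate $m^{1-A}$ (rather than the sharper $m^{-A}$ obtained in regime (i)). The hypothesis $A>1$ enters twice: once to ensure convergence of $\sum_d(d+2)^{-A}$ that factors cleanly in regime (i), and again to ensure summability of $\sum_{r\geq 2}r^{-A}$ in the outer sum over blocks.
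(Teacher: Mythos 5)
Your proof is correct and follows essentially the same route as the paper: both rest on the martingale projection decomposition $|\IE(B_iB_k)|\leq\sum_l\|\mathcal{P}_l(B_i)\|_2\|\mathcal{P}_l(B_k)\|_2$ together with $\|\mathcal{P}_l(X_t)\|_2\leq\delta_2(t-l)\leq\delta_p(t-l)$ and the polynomial decay of $\Theta_{\cdot,p}$, with $A>1$ used exactly where you use it. The only difference is bookkeeping — you organize the double sum by the location of the projection index $l$ relative to the block boundaries, whereas the paper organizes it by the lag $t-s$ between time points and reads off the bound $\Theta_{0,p}\sum_{j}\Theta_{j,p}$ directly — and both yield the same rates.
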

\noindent Observe that \eqref{eq:blockproduct} readily yields
\begin{equation}\label{approx3}
  \max_{1 \leq i \leq n}\bigg|\IE(S_i^2) - \sum_{j=1}^{\lfloor i/m \rfloor} \IE(B_j^2 + 2B_j B_{j+1})\bigg|=O(nm^{-A}).
\end{equation}
Now, \eqref{approx1}, \eqref{approx2} and \eqref{approx3} can be summarized into the following proposition.
\begin{proposition}\label{prop:Bj2-EBj2}
Assume $p >2 $ and let Condition \ref{cond:fdm3} hold for $\Theta_{i,p}$ with $A>1$. Recall $B_j$ from \eqref{eq:T_k}, for a general $m \in \N$. Then the following holds:
\begin{equation} \label{1stprop3.3}
  \max_{1 \leq i \leq n}|\mathcal{T}_i - \IE(S_i^2)|=O_{\IP}(n^{\max\{2/p, 1/2 \}} m^{1/2}+ nm^{-A}).
\end{equation}
In particular, with $m \asymp n^{\zeta_1}$, where $\zeta_1 = \min\{1, 2-4/p\}/(1+2A)$, \eqref{1stprop3.3} implies
\begin{equation}\label{eq:BMbootstrap}
\max_{1\leq i \leq n}| \mathbb{W}(\mathcal{T}_{i })-\mathbb{B}_1(\IE(S_i^2)) |= O_{\IP^{*}}(n^{(1-A\zeta_1)/2}\sqrt{\log n}),
\end{equation}
where $\IP^*$ refers to the conditional distribution after observing $X_1,\ldots, X_n$, and $\IB_1(\cdot)$ is the same Brownian motion defining the positive half-line of $\mathbb{W}(\cdot)$. 
\end{proposition}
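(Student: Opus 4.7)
The first claim \eqref{1stprop3.3} follows at once by combining the three uniform bounds \eqref{approx1}, \eqref{approx2}, and \eqref{approx3} derived in the preceding paragraphs. Writing $Q_j := B_j^2 + 2 B_j B_{j+1}$ and decomposing
\[
\mathcal{T}_i - \IE(S_i^2) = \Bigl(\mathcal{T}_i - \sum_{j=1}^{\lfloor i/m\rfloor} Q_j\Bigr) + \sum_{j=1}^{\lfloor i/m\rfloor}\bigl(Q_j - \IE(Q_j)\bigr) + \Bigl(\sum_{j=1}^{\lfloor i/m\rfloor}\IE(Q_j) - \IE(S_i^2)\Bigr),
\]
taking the maximum over $1 \le i \le n$ and applying the triangle inequality together with \eqref{approx1}--\eqref{approx3} yields $\max_i |\mathcal{T}_i - \IE(S_i^2)| = O_{\IP}(n^{\max\{2/p, 1/2\}} m^{1/2} + n m^{-A})$.

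For the optimal $m$, balancing $n^{\max\{2/p,1/2\}} m^{1/2}$ against $n m^{-A}$ gives $m^{A+1/2} \asymp n^{\min\{1-2/p,\,1/2\}}$, i.e., $m \asymp n^{\zeta_1}$ with $\zeta_1 = \min\{1, 2-4/p\}/(1+2A)$, and both error terms are then of order $n^{1-A\zeta_1}$. Hence $\delta_n := \max_i|\mathcal{T}_i - \IE(S_i^2)| = O_{\IP}(n^{1-A\zeta_1})$.

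To pass to \eqref{eq:BMbootstrap}, condition on the data $X_1,\ldots,X_n$ so that $\mathcal{T}_i$ and $\IE(S_i^2)$ become deterministic while $\mathbb{W}$ remains an independent standard two-sided Brownian motion. Since $\IE(S_i^2) = O(n)$ and $A\zeta_1 < 1$, a standard maximal bound for Brownian increments on intervals of length $\delta_n$ inside $[0, Cn]$ (via the reflection principle, contributing an extra factor $\sqrt{\log(n/\delta_n)} = O(\sqrt{\log n})$) gives, for indices with $\mathcal{T}_i \ge 0$,
\[ \max_{i:\,\mathcal{T}_i \ge 0}\bigl|\mathbb{B}_1(\mathcal{T}_i) - \mathbb{B}_1(\IE(S_i^2))\bigr| = O_{\IP^*}\bigl(\sqrt{\delta_n \log n}\bigr) = O_{\IP^*}\bigl(n^{(1-A\zeta_1)/2}\sqrt{\log n}\bigr). \]
For indices with $\mathcal{T}_i < 0$, the estimate $|\mathcal{T}_i - \IE(S_i^2)| \le \delta_n$ together with $\IE(S_i^2) \ge 0$ forces both $-\mathcal{T}_i$ and $\IE(S_i^2)$ into $[0,\delta_n]$, so both $|\mathbb{B}_2(-\mathcal{T}_i)|$ and $|\mathbb{B}_1(\IE(S_i^2))|$ are bounded by $\sup_{0 \le s \le \delta_n}|\mathbb{B}_k(s)| = O_{\IP^*}(\sqrt{\delta_n})$, which is absorbed into the same rate.

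The only mildly delicate step is this last one: the two-sided Brownian motion splices two independent copies at the origin, so the comparison cannot be reduced to a single modulus-of-continuity statement on a connected interval. Fortunately, the negative-$\mathcal{T}_i$ regime only arises when $\IE(S_i^2)$ is itself $O(\delta_n)$, confining the problem to a $\delta_n$-neighborhood of the origin where elementary reflection bounds suffice; the remaining steps are bookkeeping (the three-term decomposition) and exponent optimization.
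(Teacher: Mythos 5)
Your proof is correct and follows essentially the same route as the paper: the first bound is exactly the triangle-inequality combination of \eqref{approx1}--\eqref{approx3}, and the second comes from balancing the bias $nm^{-A}$ against the stochastic error $n^{\max\{2/p,1/2\}}m^{1/2}$ at $m \asymp n^{\zeta_1}$ and then invoking the increment property of Brownian motion, which is precisely the paper's (one-line) justification. Your explicit treatment of the indices with $\mathcal{T}_i<0$ — where $\mathbb{W}$ switches to the independent copy $\mathbb{B}_2$ and a plain modulus-of-continuity bound on a connected interval does not apply — is more careful than the paper's remark and is handled correctly, since $|\mathcal{T}_i-\IE(S_i^2)|\le\delta_n$ forces both arguments into a $\delta_n$-neighborhood of the origin where separate reflection bounds give the same rate.
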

\color{black}Our choice of $m$ balances the bias ($nm^{-A}$) and the stochastic error ($n^{\max\{2/p, 1/2 \}} m^{1/2}$) together, and yields the rate in \eqref{eq:BMbootstrap} by increment property of Brownian motions. \color{black} However, the approximation rate in \eqref{eq:BMbootstrap} is worse than what we obtain in Section \ref{sec:main_thm}. But this also means that one can only assume moments slightly higher than $4$ and still achieve this rate. More importantly, a natural question is if we can relax our decay condition in Theorem \ref{thm:GA} when we are allowed to assume $p$ finite moments but want to achieve this comparatively large approximation rate. In other words, at the cost of sub-optimal rate, which anyway is the best for the empirical version, can we allow decay rate $A$ to be smaller? In what follows, we answer this question in affirmative. 

\begin{thm}\label{thm:suboptimal}
Let $p > 2$. Assume that the decay Condition~\ref{cond:fdm3} holds with $A>1$. Further grant the truncated uniform integrability Condition \ref{cond:ui}. Then there exists a Brownian motion $\mathbb{B}(\cdot)$ such that 
\begin{equation}
  \label{result:GA_suboptimal}
  \max_{1\leq j \leq n} \left|S_j - \mathbb{B}(\IE(S_j^2))\right| = \operatorname{o}_{\IP} (n^{(1-A\zeta_1)/2}\sqrt{\log n}).
\end{equation}
\end{thm}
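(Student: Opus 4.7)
The strategy inserts the block-based variance estimator $\mathcal{T}_j$ between $S_j$ and $\IE(S_j^2)$, using Proposition \ref{prop:Bj2-EBj2} as a ``variance regularization'' step. By the triangle inequality,
\[
\max_{1\leq j \leq n}\bigl|S_j - \mathbb{B}(\IE(S_j^2))\bigr| \;\leq\; \max_{j}\bigl|S_j - \mathbb{W}(\mathcal{T}_j)\bigr| \;+\; \max_{j}\bigl|\mathbb{W}(\mathcal{T}_j) - \mathbb{B}_1(\IE(S_j^2))\bigr|,
\]
where $\mathbb{W}$ is a two-sided Brownian motion with positive half $\mathbb{B}_1 =: \mathbb{B}$. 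Proposition \ref{prop:Bj2-EBj2} already delivers the second summand at rate $O_{\IP}(n^{(1-A\zeta_1)/2}\sqrt{\log n})$ as soon as $m = \lfloor n^{\zeta_1} \rfloor$, so the task reduces to coupling $(X_i)$ and $\mathbb{W}$ on a common enriched space so that the first summand is $o_{\IP}(n^{(1-A\zeta_1)/2})$.

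For this coupling I would adapt the multi-step construction outlined in Section \ref{ssc:outline}, but with the inner block size $m = \lfloor n^{\zeta_1} \rfloor$ forced on us by the estimator rather than chosen to be optimal for rate $n^{1/p}$. The stages are: (i) truncate each $X_i$ at level $n^{1/p}$, with the defect bounded in $L^1$ by $o(n^{1/p})$ via Condition \ref{cond:ui}, where $1/p \le (1-A\zeta_1)/2$ is a short algebraic check from the explicit form of $\zeta_1$; (ii) $m$-dependent approximation via the standard projection/Burkholder route, giving error $O_{\IP}(\sqrt{n}\,\Theta_{m,p}) = O_{\IP}(n^{(1-A\zeta_1)/2})$; (iii) two-layer blocking (inner blocks of size $m$ grouped in triples) with conditioning on the $\varepsilon$'s shared at outer-block boundaries, producing conditional independence of the interior inner blocks; (iv) Sakhanenko's Theorem \ref{thm:ind} applied conditionally and then unconditionally to lift the boundary information. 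The output Gaussian process has cumulative variance $\sum_k \IE(B_k^2) + 2\sum_k \IE(B_k B_{k+1})$, and by Lemma \ref{lemmacp} the omitted non-consecutive cross-terms contribute only $O(nm^{-A}) = O(n^{1-A\zeta_1})$, whose square root is within budget. Identifying this Gaussian variance with $\mathcal{T}_j$ (absorbing the gap $\mathcal{T}_j - \IE(\mathcal{T}_j)$ via the same Proposition \ref{prop:Bj2-EBj2} machinery) realises the first term as $\mathbb{W}(\mathcal{T}_j)$.

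The main obstacle is step (iv). Unlike Theorems \ref{thm:GA}-\ref{thm:GA2}, where $m$ is chosen to balance the Sakhanenko and $m$-dependence errors at their common value $n^{1/p}$, here $m=n^{\zeta_1}$ is dictated externally by the variance-estimation step, so one must verify that the resulting Sakhanenko error, of order $o(\sqrt{m}\,(n/m)^{1/p}) = o(n^{1/p + \zeta_1(1/2-1/p)})$, still fits the target $n^{(1-A\zeta_1)/2}$. This reduces to the algebraic inequality $(1-\zeta_1)/p \le (1-\zeta_1(1+A))/2$, which is verified from the explicit $\zeta_1 = \min\{1, 2-4/p\}/(1+2A)$ together with $A>1$ by a case split according to whether $p \ge 4$ or $2 < p < 4$. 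The extra $\sqrt{\log n}$ in the final rate is inherited entirely from the Brownian increment bound in Proposition \ref{prop:Bj2-EBj2}, and plays no role in the construction of the first summand.
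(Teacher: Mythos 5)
Your rate arithmetic is right ($\nu:=(1-A\zeta_1)/2$ is exactly the exponent the paper obtains, and your checks that $1/p\le\nu$ and that the Sakhanenko error fits the budget do verify), and the skeleton (truncation, $m$-dependence, two-layer blocking, conditional Sakhanenko, variance regularization) is the correct machinery. But your route differs from the paper's in a way that creates two genuine problems. The paper does \emph{not} touch the estimator $\mathcal{T}_j$ at all: it keeps the block size $m=\lfloor n^{L}\rfloor$ with the \emph{same} $L$ from \eqref{L}, relaxes the target rate from $n^{1/p}$ to $n^{\nu}$, verifies the modified algebraic relations \eqref{eq1n:suboptimal}--\eqref{eq4n:suboptimal} under $1<A\le A_0$ (the case $A>A_0$ being trivial from Theorem \ref{thm:GA}), re-proves Lemma \ref{Rosenthal} in the form of Lemma \ref{Rosenthal:suboptimal}, and then connects the resulting $\mathbb{B}(\sigma_i^2)$ to $\mathbb{B}(\IE(S_i^2))$ through the deterministic variance comparisons. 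By instead forcing $m=\lfloor n^{\zeta_1}\rfloor$ you change the block size that enters every one of the relations \eqref{eq1n}--\eqref{eq4n}; you verify only one exponent inequality (the Sakhanenko budget), but relation \eqref{eq4n} then pins down a specific $\alpha$, and you must still check that this $\alpha$ is compatible with the Rosenthal bound for the term $III$ in \eqref{liudecomp} and with the analogues of \eqref{eq1n}--\eqref{eq3n}. This is exactly the bookkeeping the paper's Lemma \ref{Rosenthal:suboptimal} exists to do, and it is not automatic.

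The second, more conceptual problem is the intermediate object $\mathbb{W}(\mathcal{T}_j)$. Proposition \ref{prop:Bj2-EBj2} is a statement under $\IP^{*}$, i.e.\ for a two-sided Brownian motion \emph{independent of the data}; for such a $\mathbb{W}$, conditionally on $X_1,\dots,X_n$ the quantity $S_j$ is a constant while $\mathbb{W}(\mathcal{T}_j)$ has conditional variance $|\mathcal{T}_j|\asymp j$, so $\max_j|S_j-\mathbb{W}(\mathcal{T}_j)|$ is of order $\sqrt{n}$, not $o_{\IP}(n^{\nu})$. To make your first summand small you would need $\mathbb{W}$ built \emph{from} the data, at which point the citation of Proposition \ref{prop:Bj2-EBj2} for the second summand no longer applies as stated (though the underlying deviation-plus-modulus-of-continuity argument does transfer). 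Moreover, even with a data-built $\mathbb{W}$, passing from the deterministic time $\sum_{k}\IE(B_k^2+2B_kB_{k+1})$ to the random time $\mathcal{T}_j$ already costs $O_{\IP}(n^{\nu}\sqrt{\log n})$ via the Brownian modulus, so your claim that the first summand is $o_{\IP}(n^{\nu})$ without a $\sqrt{\log n}$ is not achievable with the random $\mathcal{T}_j$ inside. The fix is simple and brings you back to the paper's proof: delete $\mathcal{T}_j$ from the argument entirely, and compare the constructed Gaussian variance directly with $\IE(S_j^2)$ using Lemma \ref{lemmacp} (equation \eqref{approx3}), which gives a deterministic gap $O(nm^{-A})$ and hence, by the increment property of Brownian motion, the $O_{\IP}(\sqrt{nm^{-A}\log n})=O_{\IP}(n^{(1-A\zeta_1)/2}\sqrt{\log n})$ term that is the true source of the $\sqrt{\log n}$ in the statement.
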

\begin{remark}
  Note that in \eqref{result:GA_suboptimal} we no longer need the lower bound \eqref{eq:Alowerbound2}. %if we settle for the worse rate of Theorem \ref{th:BMbootstrap}.
\end{remark}
\subsection{Gaussian approximation without cross product blocks}\label{sec:nocp}
Having explored the asymptotic properties of BRV estimator $\mathcal{T}_j$ as an estimate of $\IE(S_j^2)$ for $1 \leq j \leq n$, let us discuss a natural variant of $\mathcal{T}_j$. Interestingly, in $\mathcal{T}_j$ we have included the cross-product terms $B_i B_{i+1}$, as opposed to another possible estimate $\mathcal{T}_i^{-}$ which can be defined without them:
\begin{equation}\label{T_noblock}
  \mathcal{T}_i^{-}= \sum_{j=1}^{\lfloor i/m \rfloor} B_j^{2} + R_i^2.
\end{equation}
An application of Theorem \ref{thm:maxpartialquad} and \eqref{eq:blockproduct} similar to that in Proposition \ref{prop:Bj2-EBj2} show $\mathcal{T}_i^{-}$ satisfies
\begin{equation}\label{result:GA_nocp}
  \max_{1 \leq i \leq n} |\mathcal{T}_i^{-} - \IE(S_i^2)|= O_{\IP}(n^{\max\{2/p, 1/2 \}} m^{1/2}+ nm^{-1})
\end{equation}
under Condition \ref{cond:fdm3}. The above bound is worse than (\ref{1stprop3.3}) and it is minimized at $m\asymp n^{\zeta_2} $, $\zeta_2=\min \{1, 2-4/p\}/3$. Since $A> 1$, $\zeta_2 < \zeta_1$, and therefore
\begin{equation}\label{bm:result_nocp}
  \max_{1 \leq i \leq n} |\mathbb{W}(\mathcal{T}_i^{-}) - \mathbb{B}(\IE(S_i^2)) |= O_{\IP^{*}}(n^{(1-\zeta_2)/2}\sqrt{\log n}).
\end{equation} Thus the conditional version (\ref{eq:BMbootstrap}) using $\mathcal{T}_i^{-}$ is also worse.

\ignore{ In the following section, we discuss the error rate of $\mathcal{T}_i^{-}$ as an estimator of $\IE(S_i^2)$, and explore why it is a worse choice compared to $\mathcal{T}_i$. Recall $T_i^{-}$ from \eqref{T_noblock} and $T_i^{\diamond}$ from \eqref{Ti_Mies}. It should be noted that if we go back to Section \ref{sec:nocp}, $\IE[\mathcal{T}_i]$ outperforms $\IE[\mathcal{T}_i^{-}]$ and $\IE[\mathcal{T}_i^{\diamond}]$ in terms of the Gaussian approximation rate even with our new choice $m=n^{\min\{\frac{1}{1+2A}, \frac{2p-4}{p+2Ap}\}}$. Indeed, it is easy to see that Theorem \ref{thm:improvedGA} and \eqref{eq:blockproduct} implies that, there exists a probability space $(\Phi_c, \mathcal{A}_c, \mathbb{P}_c)$ on which we can define random variables $X_i^c$ such that the $S_j^c=\sum_{i=1}^j X_i^c=_{\mathbb{D}}S_j$ and a two-sided Brownian motion $\mathbb{W}(\cdot)$ such that,
\begin{align}
  \max_{1 \leq i \leq n} | S_i^c - \mathbb{W}(\IE[\mathcal{T}_i])|&=o_{\IP}(n^{\max\{{1/p},\min\{\frac{1+A}{2(1+2A)}, \frac{1}{4}+\frac{1-2/p}{2(1+2A)}\}\} } \log n), \label{eq:GA_block}\\
  \max_{1 \leq i \leq n} | S_i^c - \mathbb{W}(\IE[\mathcal{T}_i^{-}])|&=o_{\IP}(n^{\max\{1/p, \min\{\frac{A}{1+2A}, \frac{1}{2}-\frac{1-2/p}{1+2A}\}\}} \log n), \label{eq:GA_noblock}\\
  \max_{1 \leq i \leq n} | S_i^c - \mathbb{W}(\IE[\mathcal{T}_i^{\diamond}])|&=o_{\IP}(n^{1/2} ) .\label{eq:GA_mies1}
\end{align}
Thus, to summarize, equation \eqref{eq:BMbootstrap}, \eqref{eq:GA_block}, \eqref{eq:GA_noblock}, \eqref{eq:GA_mies1} and \eqref{eq:improvedGA} together implies a roughly $n^{1/4}$ for the Gaussian approximation of $\max_i |S_i - \mathbb{W}(\mathcal{T}_{i})|$ and $n^{1/2}$ rate (ignoring the logarithmic terms) for $\max_i |S_i - \mathbb{W}(\mathcal{T}_{i}^{-})|$ and $\max_i |S_i - \mathbb{W}(\mathcal{T}_{i}^{\diamond})|$ respectively, when conditions of Theorem \ref{thm:improvedGA} is satisfied. 
and therefore
\begin{equation}\label{bm:result_nocp}
  \max_{1 \leq i \leq n} |\mathbb{W}(\mathcal{T}_i^{-}) - \mathbb{B}(\IE(S_i^2)) |= O_{\IP^{*}}(n^{(1-\zeta_2)/2}\sqrt{\log n}),
\end{equation}
So the bound (\ref{bm:result_nocp}) is worse.} % than \eqref{eq:BMbootstrap}. 
\ignore{
Let us now investigate when can the rate of Theorem \ref{thm:GA}, i.e $1/p$ be worse than the rate in \eqref{result:GA_nocp}(ignoring the logarithmic factors). Elementary but tedious calculations show that for $p>2$, $A > A_0 \vee 1$,
\[ \max\{ 1/2 - L/2, 1/p\}=\begin{cases}
  1/p, &  \ p \in (2, \frac{7 + \sqrt{33}}{4}\approx 3.186) \ \text{ and} \\ & \hspace*{2cm} 1<A < A_2(p):=\frac{1}{2}\sqrt{\frac{p^3 + 2p^2 - 15p+16}{p(p-2)^2}} - \frac{p-3}{2(p-2)},\\
   1/2 - L'/2, &  \ \text{otherwise}.
\end{cases} \]
\begin{figure}
  \centering
  \includegraphics[scale=0.5]{New Figures/comparison.pdf}
  \caption{Plot of the two Gaussian approximation rates for $p=3$: with cross product as in Proposition \ref{prop:GAblock} and without cross product as in Proposition \ref{prop2.1}. The two rates intersect at $A_2(3)$.}
  \label{fig:comparison}
\end{figure}
This can be interpreted as follows: 

For the case where $p$ is small, i.e $\in (2, \frac{7 + \sqrt{33}}{4})$ and $A$ is not too big, the cross-product terms do not convey much additional information. The inclusion of only the square terms $\IE(B_j^2)$ in the variance of Gaussian approximation is enough to produce the optimal rate in these cases, explaining most of the information about the moments of the partial sums. However even for such small $p$, if we assume fast enough decay, i.e $A > \frac{1}{2}\sqrt{\frac{p^3 + 2p^2 - 15p+16}{p(p-2)^2}} - \frac{p-3}{2(p-2)}$, the inclusion of the cross-product term decreases the rate of Gaussian approximation to $1/p$. This can also be seen in Figure \ref{fig:comparison}.

On the other hand, for all $A$, as $p \geq \frac{7 + \sqrt{33}}{4}$, cross-product terms are no longer negligible. In this case, the rate of Theorem \ref{thm:improvedGA} (i.e the rate including the cross-product terms $\IE(B_{j}B_{j+1})$) decreases with $p$. As $p$ becomes even moderately large, $L$ behaves almost like a constant as a function of $p$ for fixed $A$. Hence $1/2 - L/2 >> 1/p$ for all reasonably large $p$ and all $A>A_0 \vee 1$. Therefore clearly the rate \eqref{result:GA_nocp} without the cross-product term is much worse for all $p$ except some small $p \in (2,4)$. 
}

Following the idea of the moving or overlapping block bootstrap method (cf. \cite{kunsch} and \cite{Liu1992MovingBJ}, \citet{zhouzhoujasa} and \citet{mies1}), consider the following estimate of $\IE(S_i^2)$ by
  \begin{equation}\label{Ti_Mies}
    \mathcal{T}^{\diamond}_i=\sum_{t=m}^i \frac{1}{m}\left(\sum_{s=t-m+1}^t X_s\right)^2.
  \end{equation}
  \noindent
A treatment similar to Proposition \ref{prop:Bj2-EBj2} shows that $\mathcal{T}_i^{\diamond}$ satisfies (\ref{result:GA_nocp}) as well. 
%\begin{equation}\label{BM_Mies}
%     \max_{1 \leq i \leq n} |\mathcal{T}_i^{\diamond} - \IE(S_i^2)|= O_{\IP}(n^{\max\{2/p, 1/2 \}} m^{1/2}+ nm^{-1}).
%  \end{equation}
%In other words, the error rate of $\mathcal{T}_i^{\diamond}$ as an estimator of $\IE(S_i^2)$ is exactly same as that of $\mathcal{T}_i^{-}$.
Thus, $\mathcal{T}_i$ has the best rate for estimating the variance of the Brownian motion among the three estimators discussed here. It should be noted that \cite{mies1} analyzes a different variance for the approximating Gaussian process (defined as a local long-range variance $\sigma^2_{{loc}_i}$), and $\mathcal{T}^{\diamond}_i$ has been proposed in that context. However, we point out that for fast enough decay, their rate of Gaussian approximation $\max_{1 \leq i \leq n}|S_i^{c} - \IB(\sigma^2_{{loc}_i})|=o_{\IP}(n^{p/(3p-2)}\sqrt{\log n})$ is suboptimal in $n$.

\section{Applications of Gaussian approximations}
% \color{black} Some introduction here.
\label{sec:application}
In this section, we are interested in obtaining Gaussian approximations of functionals of the form $$W(t):= \sum_{i=1}^n e_i w_i(t),$$ where $w_i(\cdot):[0,1]\to \R$ are weight functions and $(e_i)_{1 \leq i \leq n}$ are real-valued, mean-zero, possibly non-stationary processes. Such quantities are ubiquitous in various statistics of change point estimation, wavelet transform, and forming a simultaneous confidence band, among others. One can employ (\ref{eq:improvedGA}) of Theorem \ref{thm:GA} to deal with such quantities. A similar treatment is included in \cite{zhibia}. Let 
\begin{equation}\label{1stomega}
  W^{\diamond}(t)=\sum_{i=1}^n w_i(t) \left(\mathbb{B}(\IE(S_i^2))- \mathbb{B}(\IE(S_{i-1}^2)) \right)
\end{equation}
be the Gaussian process that we want to use to approximate $W(t)$, where $S_i = \sum_{j=1}^i e_j$. Let 
\begin{equation}\label{eq:omega}
  \Omega_n=\sup_{t \in (0,1)} \{|w_1(t)| + \sum_{i=2}^n |w_i(t)-w_{i-1}(t)| \}
\end{equation}
be the maximum variation of the weights $w_i(t)$. Then, 
\begin{align}\label{generalbound}
  \sup_{t \in (0,1)}|W(t)- W^{\diamond}(t)|\leq \Omega_n \max_{1 \leq i \leq n}|S_i- \mathbb{B}(\IE(S_i^2))|= \Omega_n o_{\IP}(n^{1/p}\sqrt{\log n}).
\end{align}
In the following, we detail three applications - testing for change-point, simultaneous confidence band building, and wavelet transform - using the above analysis. Each of these analysis requires providing a rate of $\Omega_n$ depending on certain conditions. 
\subsection{Change point detection}
\label{subsec:cpt}
Assume $X_i = \mu_i + Z_i$, $i = 1, \ldots,n$, where $(Z_i)$ is a mean zero non-stationary process.
% \color{black} 
We want to test for the existence of change point in means, that is we want to test for $H_0: \mu_i=\mu_0$ for all $i$ versus the alternative hypothesis 
\begin{equation}
  \label{eq:trend}
H_1: \mu_i = \mu_0+\delta \mathbb{I}\{i >\tau\} \text{ holds for some $1 < \tau<n$ and $\delta \neq 0$.}
\end{equation}
\noindent
We propose a CUSUM-based testing procedure with test statistic 
\begin{equation}
  \label{eq:test_stat}
U_n := \max_{t \in (0, 1) } |\sum_{i \le n t} (X_i - \bar{X})| / \sqrt{n},
\end{equation}
where we reject our null hypothesis if $U_n$ is larger than some suitable cut-off value. Under the null hypothesis, we can write $U_n = \max_{t \in (0, 1)} |U_{n, t}|$, where $U_{n,t} := \sum_{i=1}^n w_i(t) Z_i$ and the weights $w_i(t)=((1-1/n)\mathbb{I}\{i \leq n t\}- (1/n) \mathbb{I}\{i > n t\})/\sqrt{n} $. Let
\begin{equation*}
 V_n = \max_{t \in (0, 1)} V_{n,t}, \mbox{ where }
 V_{n,t} := \sum_{i=1}^n w_i(t) \left(\mathbb{B}(\IE(S_i^2))- \mathbb{B}(\IE(S_{i-1}^2)) \right).
\end{equation*}
By (\ref{generalbound}), we have $|U_n- V_n | = o_{\IP}(1)$ since $\Omega_n= (2-1/n) /\sqrt{n}$ and $\Omega_n n^{1/p}\sqrt{\log n} \to 0$. \qed

\ignore{
The following result establishes a asymptotic theory for this testing procedure under the null hypothesis, thereby showing its validity. 
\begin{thm}
\label{thm:size_test}
Let $\{S_i\}_{i=1}^n$ be formed based on $Z_i=X_i - \mu_i$.
  Further assume the conditions of Theorem \ref{thm:improvedGA} hold for the noise sequence $(Z_t)_{t \geq 1}$. \color{black}Then, under $H_0$, we have
  \begin{equation}
  \label{eq:asym_U_n}
    \bigg |U_n- \max_{1 \leq i \leq n} n^{-1/2} | \IB(\IE(S_i^2)) - \frac{i}{n} \IB(\IE(S_n^2))| \bigg | =o_{\IP}(1),
  \end{equation}
	as $n \rightarrow \infty$.
\end{thm}
\begin{proof}
  Observe that under $H_0$, $U_n$ can be written as $\max_{1 \leq t \leq n} |\sum_{i=1}^n w(t, i) Z_i|/\sqrt{n}$, 
  where $w(t,i)=(1-1/n)\mathbb{I}\{i \leq t\}-1/n \mathbb{I}\{i>t\}$. Let 
  \begin{align}\label{eq:BM}
    U_{n,t} := \sum_{i=1}^n w(t, i) Z_i/\sqrt{n} \nonumber, \ \ V_{n,t} := \sum_{i=1}^n w(t, i) \left(\mathbb{B}(\IE(S_i^2))- \mathbb{B}(\IE(S_{i-1}^2)) \right)/\sqrt{n},
  \end{align}
  and $V_n = \max_{1 \leq t \leq n} V_{n,t}$. Note that, $\Omega_n= 2-1/n = O(1)$. Hence, 
  \begin{align}
    |U_n - V_n| \leq \max_{1 \leq t \leq n} |U_{n,t} -V_{n,t}|=o_{\IP}(n^{1/p - 1/2}\log n)=o_{\IP}(1),
  \end{align}
  which completes the proof. 
\end{proof}

 This result shows that one can approximate the quantiles of the CUSUM statistic $U_n$ by the quantiles of suitably centered Brownian motion. This result enables us to define an approximately valid level-$\alpha$ test $\psi_{n}$, which we identify as an oracle test, as follows:
  \begin{equation} \label{test_cp}
    \psi_{n}:= \mathbb{I}\{U_n > c_{\alpha})\},
  \end{equation}
  where
     $c_{\alpha}:= \inf_{r}\bigg\{\IP\left(\max_{i\leq n} n^{-1/2}\big| \mathbb{B}(\IE(S_i^2)) - \frac{i}{n} \mathbb{B}(\IE(S_n^2))\big| >r \right)\leq \alpha\bigg\}.$
\color{black} From a practitioner's point of view, we can employ arguments from Section \ref{ssc:estvar} to estimate $c_{\alpha}$ by its bootstrap estimate $c_{\alpha}^T:=\inf_{r}\bigg\{\IP^{*}\left(\max_{i\leq n} n^{-1/2}\big| \mathbb{W}(\mathcal{T}_i) - \frac{i}{n} \mathbb{W}(\mathcal{T}_n)\big| >r \right)\leq \alpha\bigg\},$ where $\IP^{*}$ denotes the conditional probability given $X_1, \ldots, X_n$, and $\mathcal{T}_i$ is formed as in \eqref{eq:T_k} but based on $X_i - \hat{\mu}_i$, where $\hat{\mu}_i=\frac{1}{\tau}\sum_{j=1}^{\tau} X_j \mathbb{I}\{i \leq \tau\} + \frac{1}{n-\tau}\sum_{j=\tau+1}^{n} X_j \mathbb{I}\{i > \tau\}$, with $\tau = {\rm argmax}_t |\sum_{i = 1}^{t} (X_t - \bar{X})| / \sqrt{n} $.
 For future references we denote this bootstrap-based test by 
 \begin{equation} \label{test_cp_new}
    \psi_{n1}:= \mathbb{I}\{U_n > c_{\alpha}^T)\},
  \end{equation}
\color{black}
}

\subsection{Simultaneous confidence band}
\label{subsec:scb}
In this section, we discuss construction of simultaneous confidence band for a time-varying signal-plus-noise model with possibly irregularly spaced observed data and possibly non-stationary noise. Let $0=t_0 < t_1< t_2<\ldots<t_{n-1}<t_n<t_{n+1}=1$ be an $n$-length grid on $[0,1]$. Consider
\begin{equation}
  \label{eq:scb_model}
  X_i = \mu(t_i) + Z_i, \quad \quad i = 1,\ldots,n,
\end{equation}
where $\mu(\cdot) \in \mathcal{C}^3[0,1]$. The case $t_i=i/n$ has been thoroughly analyzed in the literature for stationary and i.i.d.set-up, such as \cite{10.2307/2291269}, \cite{10.1214/aos/1030563978} and \cite{zhibia}. Here we let $t_i=F^{-1}(i/n)$, where $F(t)=\int_{0}^t f(u) \text{d}u$ for some density $f \in C^3[0,1]$. We will estimate the trend function from observed data $(X_i)$ using the local linear estimate, and denote the result by $\hat\mu_{h_n} (\cdot)$, where $h_n$ is the bandwidth parameter. Define 
\begin{equation}\label{eq:Sj}
  S_j(t)=\sum_{i=1}^n(t-t_i)^j K( (t-t_i)/h_n).
\end{equation}
Theorem \ref{thm:scb} below provides a Gaussian approximation for the local linear estimate
\begin{align}\label{eq:hat_mu}
\hat\mu_{h_n} (t) := \sum_{i=1}^n w_{h_n}(t, i) X_i, \text{ where }w_{h_n}(t, i)=K\left(\frac{t-t_i}{h_n}\right) \frac{S_2(t)-(t-t_i) S_1(t)}{S_2(t) S_0(t)-S_1^2(t)}.\color{black}
\end{align}
Assume that $K$ is a smooth symmetric kernel with bounded support $[-\omega, \omega]$, satisfying:
\begin{equation} \label{condn:regularity}
  \int_{\mathbb{R}} \Psi_K(u ; \delta) \mathrm{d} u=O(\delta) \text { as } \delta \rightarrow 0, \mbox{ where } \Psi_K(u ; \delta)=\sup \left\{\left|K(y)-K(u)\right|: |y-u| \le \delta \right\}.
\end{equation}
\begin{thm}
\label{thm:scb}
Assume $\mu,f \in \mathcal{C}^3[0,1]$ and, for some constants $C_1, C_2 > 0$, $C_1 \leq f(t) \leq C_2$ for all $t\in[0,1]$. Then under the assumptions of Theorem \ref{thm:GA} for $Z_i$, there exists Brownian motion $\IB(\cdot)$ such that with $Q_{h_n}(t) = \sum_{i=1}^n w_{h_n}(t, i) \mathbf{Y}_i $, where $\mathbf{Y}_i = \mathbb{B}(\IE(S_i^2)) - \mathbb{B}(\IE(S_{i-1}^2))$, the following is true: 
\begin{equation}
  \label{eq:result_scb}
  \sup_{t \in [\omega h_n,1-\omega h_n]} \left|\hat\mu_{h_n} (t) - \mu(t) - h_n^2 \beta \mu{''}(t) - Q_{h_n} (t)\right| = o_{\IP} (h_n^{-1} n^{1/p-1} \sqrt{\log n}),
\end{equation} for any $h_n \to 0$ satisfying ${h_n^4} = O({n^{1/p-1}})$ and ${nh_n} \to \infty$ with $\beta=\int u^2 K(u) \text{d}u /2$.
\end{thm}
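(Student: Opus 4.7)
My plan is to decompose $\hat{\mu}_{h_n}(t) - \mu(t)$ into a deterministic bias part and a stochastic part, namely $B(t) := \sum_{i=1}^n w_{h_n}(t,i)\mu(t_i) - \mu(t)$ and $V(t) := \sum_{i=1}^n w_{h_n}(t,i) Z_i$, and control each separately. The bias piece is handled by standard local linear arguments and should yield $\beta h_n^2 \mu''(t)$ plus a negligible remainder, while the stochastic piece is a linear functional of the noise process $(Z_i)$, to which I would apply the Gaussian approximation \eqref{eq:improvedGA} of Theorem \ref{thm:GA} through the general Abel-summation bound \eqref{generalbound}.

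For the bias, I would first verify the design-adapting identities $\sum_i w_{h_n}(t,i) = 1$ and $\sum_i w_{h_n}(t,i)(t_i - t) = 0$, which follow directly from the explicit formula in \eqref{eq:hat_mu} and the definitions of $S_j(t)$ in \eqref{eq:Sj}. A third-order Taylor expansion of $\mu$ around $t$ together with these identities reduces $B(t)$ to $\tfrac{1}{2}\mu''(t)\sum_i w_{h_n}(t,i)(t_i-t)^2 + O(h_n^3)$. To evaluate the leading sum uniformly in $t \in [\omega h_n, 1-\omega h_n]$, I would convert the weighted sums into Riemann integrals with respect to the design density $f$ via the substitution $t_i = F^{-1}(i/n)$ and $v = t + h_n u$. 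Using the symmetry of $K$, this gives $S_0(t) \sim n h_n f(t)$, $S_2(t) \sim 2\beta n h_n^3 f(t)$, $S_1(t) = O(n h_n^3)$, and finally $\sum_i w_{h_n}(t,i)(t_i-t)^2 = 2\beta h_n^2 (1+o(1))$ uniformly. Combined with $\mu \in \mathcal{C}^3[0,1]$, this gives $B(t) = \beta h_n^2 \mu''(t) + O(h_n^3)$ uniformly in $t$, and the $O(h_n^3)$ term is absorbed by $o(h_n^{-1}n^{1/p-1}\sqrt{\log n})$ since $h_n^4 = O(n^{1/p-1})$.

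For the stochastic part, I would apply the bound \eqref{generalbound} with $e_i = Z_i$ and $w_i(t) = w_{h_n}(t,i)$, yielding $\sup_t |V(t) - Q_{h_n}(t)| \leq \Omega_n \cdot o_{\IP}(n^{1/p}\sqrt{\log n})$. For $t$ in the interior, the boundary weight $w_{h_n}(t,1)$ vanishes because $(t-t_1)/h_n > \omega$ lies outside the support of $K$, so it suffices to bound the total variation $\sum_{i \geq 2} |w_{h_n}(t,i) - w_{h_n}(t,i-1)|$. Because the weight has uniform magnitude of order $1/(nh_n)$ and is smooth in its design argument on the spatial scale $h_n$, I would show that the jump between consecutive design points satisfies $|w_{h_n}(t,i) - w_{h_n}(t,i-1)| = O((t_i - t_{i-1})/(nh_n^2)) = O(1/(n^2 h_n^2))$. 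Since only $O(nh_n)$ indices contribute (those with $|t - t_i| \leq \omega h_n$), this gives $\Omega_n = O(1/(nh_n))$, and hence $\sup_t |V(t) - Q_{h_n}(t)| = o_{\IP}(h_n^{-1} n^{1/p-1}\sqrt{\log n})$, matching the target.

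The principal obstacle will be the uniform control of the weight increments on the non-uniform design $t_i = F^{-1}(i/n)$. This will require (i) a uniform lower bound on the denominator $S_2(t)S_0(t) - S_1^2(t) \asymp n^2 h_n^4 f(t)^2 \kappa_2$, which is made possible by $C_1 \leq f \leq C_2$ on $[0,1]$; (ii) careful accounting of the kernel smoothness via \eqref{condn:regularity} near the endpoints of its support, where $K$ is small but its modulus of continuity still matters; and (iii) verifying that the Riemann-sum approximations of the numerator and denominator in $w_{h_n}(t,i)$ are uniform in $t$, which is ensured by $f \in \mathcal{C}^3[0,1]$ and the bounded-support, smoothness assumptions on $K$. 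Once these ingredients are assembled, adding the bias expansion to the stochastic approximation yields the theorem.
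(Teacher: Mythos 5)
Your proposal is correct and follows essentially the same route as the paper: the same bias/stochastic decomposition, the same local linear bias expansion yielding $\beta h_n^2\mu''(t)+O(h_n^3+n^{-1}h_n^{-1})$ (which the paper imports from Theorem 3.1 of Fan and Gijbels), and the same application of \eqref{generalbound} with the key bound $\Omega_n=O(1/(nh_n))$, which the paper establishes in Lemma \ref{lemma3zhibiao} using the kernel variation condition \eqref{condn:regularity} and $C_1\le f\le C_2$ exactly as you outline. The only cosmetic difference is that you propose to rederive the design-adaptation identities and the uniform asymptotics of $S_j(t)$ directly rather than citing them, which is fine.
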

\begin{proof}
We apply Theorem \ref{thm:GA} to $(Z_i)_{i=1}^n$. Note that $Q_{h_n} (t)$ is obtained by fitting the same local linear regression with bandwidth $h_n$ to $(\mathbf{Y}_i)_{i=1}^n$. By the argument in Theorem 3.1 in \cite{fan1996local}, $\IE[\hat\mu_{h_n} (t)]- \mu(t) = h_n^2 \mu{''}(t) \beta+ O( h_n^3 + n^{-1}h_n^{-1})$. Then (\ref{eq:result_scb}) follows by applying \eqref{generalbound} to $\hat\mu_{h_n} (t) - \IE[\hat\mu_{h_n} (t)] - Q_{h_n} (t)$ and noting that $\Omega_n=O(1/{(nh_n)})$ using Lemma \ref{lemma3zhibiao} and $C_1 \leq f(\cdot) \leq C_2$.
\end{proof}
{\ignore{  
  We apply our Gaussian approximation to $(Z_i)_{i=1}^n$ and obtain $\mathbf{Y}_i := \mathbb{B}(\IE(S_i^2)) - \mathbb{B}(\IE(S_{i-1}^2))$. To $(\mathbf{Y}_i)_{i=1}^n$, we can fit the same model with bandwidth $h_n$ to obtain $Q_{h_n} (t) := \sum_{i=1}^n w_{h_n}(t, i) \mathbf{Y}_i.$
 Using our Theorem \ref{thm:GA} along with \eqref{generalbound}, one can obtain the following result as the justification for our bootstrap procedure. 
\begin{thm}
\label{thm:scb}
Assume $\mu \in \mathcal{C}^3[0,1]$ and, for some constants $C_1, C_2 > 0$, $C_1 \leq f(t) \leq C_2$ for all $t\in[0,1]$. Then under assumptions of Theorem \ref{thm:GA2}, there exists Brownian motion $\IB(\cdot)$ such that with $Q_{h_n}(t)$ as above, it holds that
\begin{equation}
  \label{eq:result_scb}
  \sup_{t \in [\omega h_n,1-\omega h_n]} \left|\hat\mu_{h_n} (t) - \mu(t) - h_n^2 \mu{''}(t) \beta - Q_{h_n} (t)\right| = \operatorname{o}_{\IP} (h_n^{-1} n^{1/p-1} \log n)
\end{equation}
for any $h_n \to 0$ satisfying ${h_n^4} = O({n^{1/p-1}})$ and ${nh_n} \to \infty$. Here $\beta=\frac{1}{2}\int u^2 K(u)\ du$.
\end{thm}
\begin{proof}
Following the argument in Theorem 3.1 in \cite{fan1996local} and the estimates of $S_j(t)$ from Lemma \ref{lemma3zhibiao}, the bias part $\IE[\hat\mu_{h_n} (t)]- \mu(t) = h_n^2 \mu{''}(t) \beta+ \operatorname{O}_{\IP}\left( h_n^3 + {h_n}/{n} \right)$. Then (\ref{eq:result_scb}) follows by applying \eqref{generalbound} to $\hat\mu_{h_n} (t) - \IE[\hat\mu_{h_n} (t)] - Q_{h_n} (t)$ and noting that $\Omega_n=O(1/{(nh_n)})$ using Lemma \ref{lemma3zhibiao} and $C_1 \leq f(t) \leq C_2$ for all $t$.
\end{proof}
\ignore{
\begin{proof}
  Note that
\begin{equation}
  \label{eq:err_nu}
  \left|\hat\mu_{h_n} (t) - \mu(t) - h_n^2 \mu^{''}(t) \beta - {Q}_{h_n} (t)\right| \leq \underbrace{\left|\hat\mu_{h_n} (t) - \IE[\hat\mu_{h_n} (t)] - Q_{h_n} (t) \right |}_{\text{error}} + \underbrace{\left|\IE[\hat\mu_{h_n} (t)]- \mu(t) - h_n^2 \mu^{''}(t) \beta \right |}_{\text{bias}}
\end{equation}

For the bias part of \eqref{eq:err_nu}, using Lemma \ref{lemma3zhibiao} and $\frac{h_n^4}{n^{1/p}\Delta_n} \to 0$,
\begin{equation}
  \label{eq:mu-nu}
  \left|\IE[\hat\mu_{h_n} (t)]- \mu(t) - h_n^2 \mu^{''}(t) \beta \right | = \operatorname{O} \left( h_n^3 + \frac{\Delta_n}{h_n} \right) = o\left(\frac{n^{1/p}\Delta_n}{h_n}\right).
\end{equation}
For the error part, invoking \eqref{generalbound} and noting that $\Omega_n=O(\frac{\Delta_n}{h_n})$, completes the proof. 
\end{proof}
}}
\subsubsection{Bias correction}
 Using \eqref{eq:result_scb} to construct simultaneous confidence band requires estimation of $\mu''(t)$. Following \cite{hardle}, we use the jackknife-based bias corrected estimator 
  \begin{equation}\label{debias}
    \Tilde{\mu}_{h_n}(t)=2\hat{\mu}_{h_n}(t) - \hat{\mu}_{h_n \sqrt{2}}(t).
  \end{equation}
  Using \eqref{debias} is asymptotically equivalent to using the kernel
  $ K^{*}(x)=2 K(x) - {K(x/\sqrt{2})}/{\sqrt{2 }}$;
  see \cite{zhouwu10}, \cite{zhibia} and \cite{karmakar2022simultaneous} among others.
  Based on \eqref{debias} one can observe $\IE[ \Tilde{\mu}_{h_n}(t)] - \mu(t)= O(h_n^3 + n^{-1}h_n^{-1}\color{black})$. Thus one can get rid of the $h_n^2\mu''(t)$ term from the left-hand side of the \eqref{debias} to obtain
  \begin{equation}\label{debias_rate}
    \sup_{t \in [\omega h_n,1-\omega h_n]}|\Tilde{\mu}_{h_n}(t_i)- \mu(t_i)- \Tilde{Q}_{h_n}(t_i) |=o_{\IP}(h_n^{-1} n^{1/p-1} \sqrt{\log n}).
  \end{equation}

\subsubsection{Choice of bandwidth $h_n$}
Since our Gaussian approximation Theorem \ref{thm:GA} holds with $n^{1/4}$ rate for $p\geq4$, $A>A_0$, for this subsection, assume $p=4$. Ignoring the log factors, we obtain a rate of $O_{\IP}(n^{-3/4}/h_n)$ from \eqref{eq:result_scb}, which readily allows a large range of $h_n$:
  \begin{equation}\label{condition_p4}
    n^{-3/4} \leq h_n \leq n^{-3/16}.
  \end{equation}
  In particular, \eqref{condition_p4} allows for $h_n \asymp n^{-1/5}$, which is the mean-square error optimal bandwidth. 
As equation \eqref{debias_rate} suggests, $\Tilde{Q}_{h_n} $ is a good simultaneous approximation for $\Tilde{\mu}_{h_n}-\mu $ in distribution. Therefore, for our bootstrap algorithm, $\Tilde{Q}_{h_n} $ is generated based on $(\mathbf{Y}_i)$, which is simulated from our Gaussian approximation where we estimate $\IE(S_i^2)$ by $\mathcal{T}_i$ 's formed by $Z_i$ as in \eqref{eq:T_k}. Using this, for $0<\alpha<1$, we can calculate $q_{1-\alpha}$, the empirical $(1-\alpha)$-th quantile of $\max_{1\leq i \leq n} | \Tilde{Q}_{h_n}(i / n) |$. Thus, given significance level $\alpha$, the simultaneous confidence level for $\mu(\cdot)$ can be constructed as 
\begin{equation}
  \label{eq:cons_scb}
    [\Tilde{\mu}_{h_n} (t) - q_{1-\alpha}, \Tilde{\mu}_{h_n} (t) +q_{1-\alpha}], \quad t \in [0,1].
\end{equation}
\subsection{Wavelet coefficient process}\label{sec:wavelet}
Wavelet transform is a way of representing a time series locally both in time and frequency windows. Mathematically speaking, wavelength coefficients are simply the coefficients when the signal $(X_i)_{1 \leq i \leq n}$ is decomposed in terms of some orthonormal basis of $L^2(\R)$. The simplest discrete wavelet transform used is called the Haar Transform \cite{Haar1910ZurTD}. Assume the signal length is $n=2^k$. Then the $j$-th level Haar Wavelet coefficients with $j \le k$ are
\begin{equation} \label{haarcoeff}
  W_{j,t}=\sum_{l=1}^{2^j} h_{j,l} X_{2^j t -l+1} \ , \ t=1,\ldots, 2^{k-j}, \ \text{where} \ h_{j,l}= \begin{cases}
    -2^{-j/2} & \text{ if } 1 \leq l \leq 2^{j-1},\\
    2^{-j/2} & \text{ if } 2^{j-1} < l \leq 2^{j}.
    \end{cases}
\end{equation}
\citet{donoho} used wavelet methods to perform non-parametric signal estimation via soft thresholding; however their threshold value crucially depends on the assumptions of the noise process being i.i.d. Gaussian. \citet{johnstone} and \citet{vonsachs} extended the results for correlated Gaussian and locally stationary noise processes respectively. Recently, \cite{McGonigle} considered locally stationary wavelet processes as the noise processes for estimation of signal. Stationarity assumption also features crucially in the wavelet variance estimation mechanism of \citet{PERCIVAL2012623}. Here we allow the signal $(X_i)_{1 \leq i \leq n}$ to be possibly non-stationary, and focus on applying our Theorem \ref{thm:GA} to provide a Gaussian approximation result for the wavelet coefficient process $W_{j,t}$. 
Note that $W_{j,t}$ can be written as $\sum_{i=1}^n w_i(j,t)X_i$, where $w_i(j,t)=h_{j, 2^j t - i +1}$. Let 
\[ W^{\diamond}_{j,t}=\sum_{i=1}^n w_i(j,t)(\mathbb{B}(\IE(S_i^2)) - \mathbb{B}(\IE(S_{i-1}^2))). \]
With $\Omega_n$ as defined as in \eqref{eq:omega}, it can be easily seen that $\Omega_n =O(2^{-j/2})$. Thus, using \eqref{generalbound}, we get,
\begin{equation}
\label{eq:unifwave}
\max_{ j_* \le j \le k} \max_{1 \leq t \leq n/2^j} |W_{j,t} - W^{\diamond}_{j,t}|= o_{\IP}(2^{-j_*/2}n^{1/p} \sqrt{\log n}). 
\end{equation}
To ensure a uniform Gaussian approximation, we require the highest resolution level $j_*$ to satisfy:
\begin{equation}\label{waveletsuff}
j_* - \frac{2}{\log 2}\left(\frac{1}{p} \log n + \frac{1}{2}\log \log n \right) \rightarrow \infty. 
\end{equation}
In particular, it holds if $j_* \ge c \log n$ for some constant $c > 2/(p \log 2)$. Similar analysis can be performed for the more general Daubechies wavelet filters (\citet{debauches}), with better smoothness properties. The uniform Gaussian approximation (\ref{eq:unifwave}) allows an asymptotic distributional theory for statistics based on wavelet transforms of non-stationary processes.

\section{Simulation}\label{sec:simu}
% \color{black} 
This section presents a simulation study for some of our results in Sections \ref{sec:main_thm}, \ref{ssc:estvar} and \ref{sec:application} while some more are postponed to the Appendix Section \ref{appendix:simu}. Our aims are as follows. In Section \ref{subsec:theosimu}, we start off by investigating the accuracy of the two kinds of theoretical Gaussian approximations in Sections \ref{subsec:thm:new_GA} and \ref{subsec:thm:GA}. \color{black} We postpone inspecting the accuracy of our bootstrap Gaussian approximations for finite sample to appendix Section \ref{subsec:simu1},  In particular, in Section \ref{sec:nocp}, having argued that excluding the cross-product terms results in a worse rate and a less accurate approximation compared to \eqref{eq:BMbootstrap}, we compare their finite sample accuracy for some simple cases. Moving on to showing simulation-based evidences for our applications, in Section \ref{sim:scb}, we explore the empirical coverage of our simultaneous confidence band procedure discussed in Section \ref{subsec:scb} under different settings. We again defer analysing the performance of the CUSUM-based testing procedure for existence of change-point, as discussed in Section \ref{subsec:cpt} to Appendix Section \ref{subsec:simu2}.
\color{black}
\vspace{-0.1 in}
\subsection{Empirical accuracy of theoretical Gaussian approximations}\label{subsec:theosimu}\color{black}
Consider two models: 
\begin{enumerate}
 \item\label{ARtheosim} Model 5.1: $X_t = \theta X_{t-1} + \varepsilon_t, \ \theta \in \{0.9, -0.9\}.$ \\
 \vspace{-0.1 in}
\item\label{ARtheononsim}Model 5.2: $X_t = \theta_t X_{t-1} + \varepsilon_t$, $\theta_t = \theta$ if $t \le n/2$, $\theta_t = - \theta$ if $t > n/2$, $\theta \in \{0.9, -0.9\}$.
\end{enumerate}
\vspace{-0.1 in}
We will start off by letting $\varepsilon_t \overset{\text{i.i.d.}}{\sim} t_4/\sqrt{2}$ for both the Models. Observe that, with $N(0,1)$ innovations, $(X_t)_{t=1}^n$ is already a Gaussian process for both Models \ref{ARtheosim} and \ref{ARtheononsim}, and therefore the approximation error is trivially zero. This motivates the use of some other mean-zero error for this model. 
\noindent We will initially consider a small sample of size $n=100$. For each of the set-up, we will compare the quantiles of the following three random variables:
\begin{equation*}
    U_X:= \max_{1 \leq i \leq n} S_i, \ U_1 = \max_{1 \leq i \leq n} \IB(\IE(S_i^2)), \ U_2 = \max_{1 \leq i \leq n} \sum_{j=1}^i Y_i,
\end{equation*}
where $(Y_t)_{t=1}^n$ is a centered Gaussian process with same covariance structure as $(X_t)_{t=1}^n$. The true quantiles are estimated by sample quantiles based on $10^3$ repetitions. Figures \ref{Fig:ARtheosim} and \ref{Fig:ARtheononsim} depicts the QQ-plots of $U_1$ and $U_2$ against $U_X$. Clearly, when compared with $U_1$ which involves Brownian motion, our Gaussian approximation of Section \ref{subsec:thm:new_GA} maintaining covariance structure, performs much better for such a small sample size $n=100$. 
\begin{figure}[!htbp]
\centering
\includegraphics[height=6cm, width=13cm]{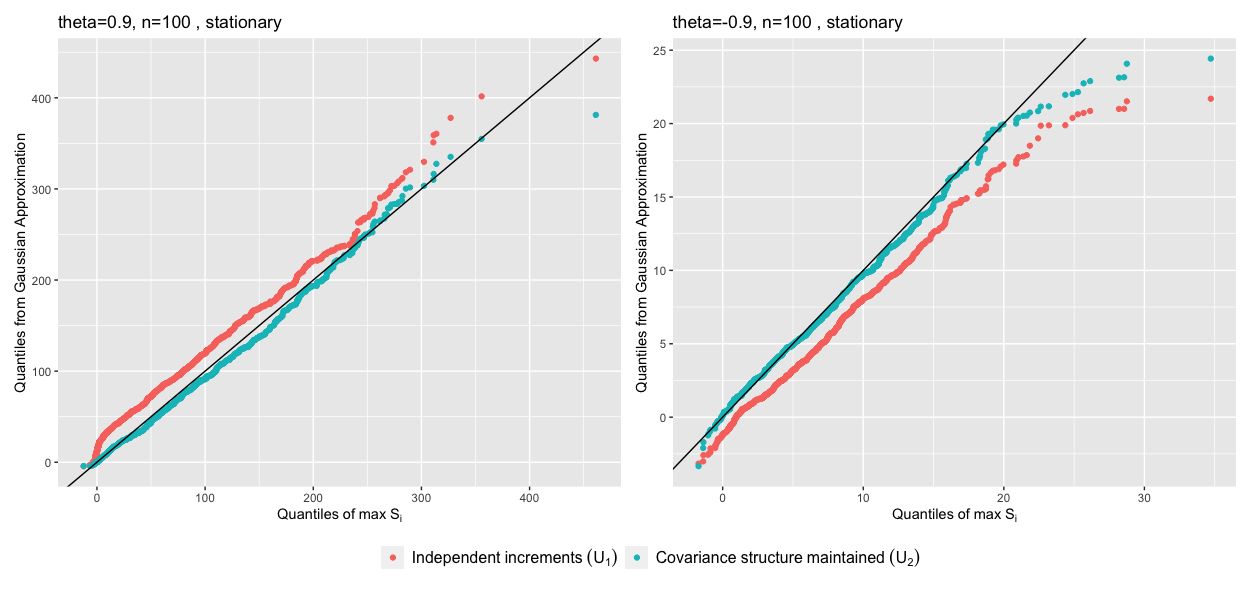} %File name
\caption{Comparison of theoretical quantiles with the two kinds of Gaussian approximation $X_1, \ldots, X_n \sim$ Model \ref{ARtheosim} with $t_4$ innovations: with independent increments, and with the approximation maintaining covariance structure.}
\label{Fig:ARtheosim}  %Figure number
\end{figure}
\begin{figure}[!htbp]
\centering
\includegraphics[height=6cm, width=13cm]{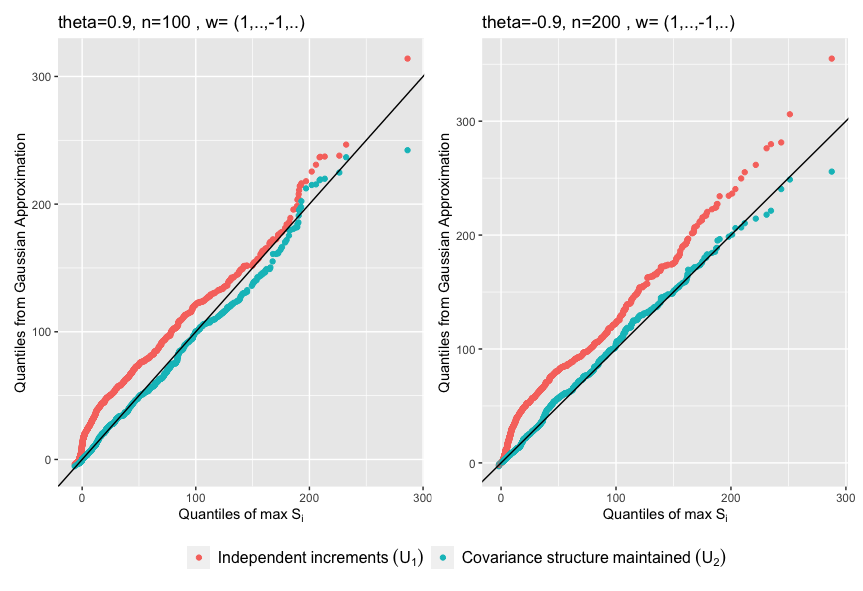} %File name
\caption{Comparison of theoretical quantiles with the two kinds of Gaussian approximation $X_1, \ldots, X_n \sim$ Model \ref{ARtheononsim} with $t_4$ innovations: with independent increments, and with the approximation maintaining covariance structure.}
\label{Fig:ARtheononsim}  %Figure number
\end{figure}\color{black}
However, as we increase $n$, both the approximations being theoretically valid with optimal rate of convergence, their performances become comparable. To show this empirically, we consider two more complicated non-stationary models.
\vspace{0.05 in}
\saveenum
\begin{enumerate}\resetenum
\item \label{ARnonsim} Let $w_{1} = \underbrace{0.75, \ldots}_{n/4}, \underbrace{-0.75, \ldots}_{n/4}, \underbrace{0.75, \ldots}_{n/4}, \underbrace{-0.75,\ldots}_{n/4} $, $w_{2} = (\sin(8\pi t/n))_{t=1}^n$, and
\begin{equation*}
  X_t=\theta_t X_{t-1} + \varepsilon_t, \ \theta_t=\theta w_{it},  \  X_0=0 \ , i \in \{1,2 \}, \ \theta \in \{-0.8, 0.8\}.
\end{equation*}

 \item \label{ARnonsimsq} $X_t= \sin(Y_t), \ \text{where $Y_t \sim$ Model \ref{ARnonsim}}$. 
\end{enumerate}
To further show the efficacy of our approximation, we consider a skewed error for Model \ref{ARnonsim} with i.i.d. $\chi^2_1-1$ errors. We consider i.i.d. $N(0,1)$ innovations for Model \ref{ARnonsimsq}. Note that due to the $\sin$ transformation, Model \ref{ARnonsimsq} is no longer Gaussian. The corresponding QQ-plots are shown in Figures \ref{Fig:chisq_model1_theo} and \ref{Fig:ar_nonsim_sine}. It can be seen that both Gaussian approximations show excellent accuracy for a somewhat increased sample size $n=200$. In fact, in some of the set-ups, the more natural Gaussian approximation retains an advantage over the Gaussian approximation involving the Brownian motion. 
\begin{figure}[!htbp]
\centering
\includegraphics[height=6cm, width=13cm]{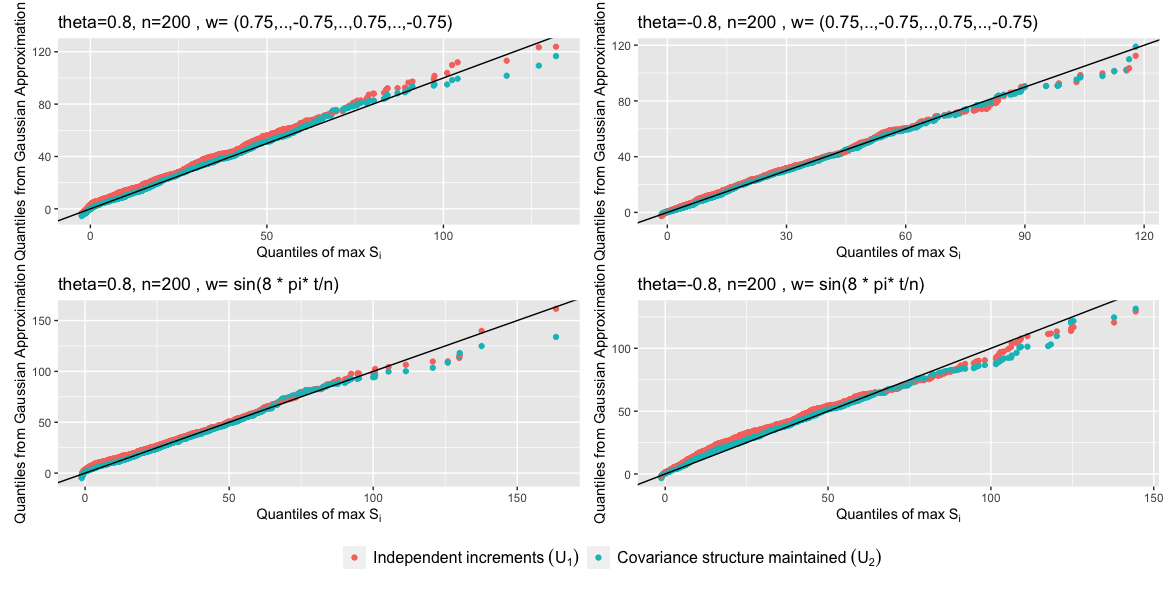} %File name
\caption{Comparison of theoretical quantiles with the two kinds of Gaussian approximation $X_1, \ldots, X_n \sim$ Model \ref{ARnonsim} with $\chi^2_1-1$ innovations: with independent increments, and with the approximation maintaining covariance structure.}
\label{Fig:chisq_model1_theo}  %Figure number
\end{figure}
\begin{figure}[!htbp]
\centering
\includegraphics[height=6cm, width=13cm]{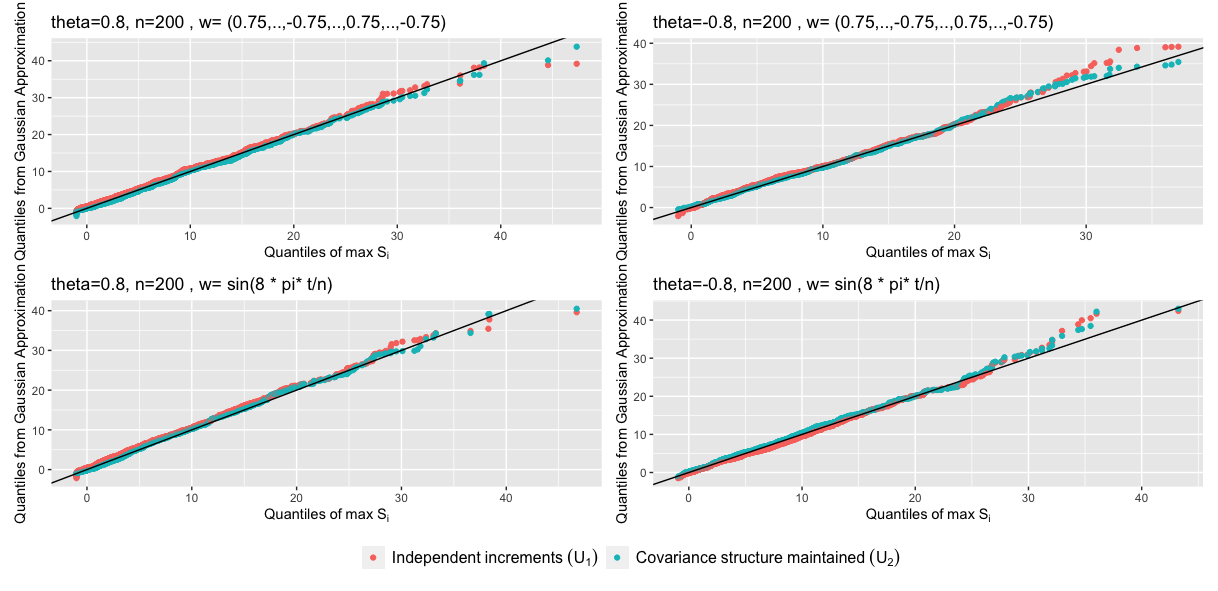} %File name
\caption{Comparison of theoretical quantiles with the two kinds of Gaussian approximation $X_1, \ldots, X_n \sim$ Model \ref{ARnonsimsq} with $N(0,1)$ innovations: with independent increments, and with the approximation maintaining covariance structure.}
\label{Fig:ar_nonsim_sine}  %Figure number
\end{figure}
\vspace{-0.13 in}
\subsection{Simulation for simultaneous confidence bands} \label{sim:scb}
In this subsection, we will explore the empirical coverage probabilities for our $95\%$ SCBs constructed as in \eqref{eq:cons_scb}. 
We will use the Jackknife-based bias corrected version of the local linear estimate, as in \eqref{debias}. 
\color{black} We generate data from the model \eqref{eq:scb_model} with $\mu(t)=0.5\cos(2\pi t-0.7)+0.3\exp(-t),$ with $t_i=i/n$ for $i=1, \ldots,n$.
We consider the two models \eqref{ARnonsim} and \eqref{ARnonsimsq} with innovations $\varepsilon_t \sim t_6\sqrt{2/3}$ for our error generating process $Z_t$, and consider the two weighing schemes for each model with $\theta \in \{-0.8, -0.4, 0.4, 0.8\}$ in \eqref{ARnonsim}. We will estimate the mean curve using the Epanechnikov kernel $K(x)=\frac{3}{4}(1-x^2)\mathbb{I}\{|x|\leq 1\}$. For each of these model, we consider data of sizes $n=600$ and $800$, and bandwidths $h_n=0.11, 0.13$ and $0.15$. For each such setting, we perform $1000$ replications each with $500$ bootstrap samples of size $n$ each. Following our theoretical result in Theorem \ref{thm:scb} as well as the discussion at Section 3.2.5 of \cite{fan1996local}, the variance of local linear estimator is comparatively high on the boundary points, which affects coverage. Thus, we report as empirical coverage the percentage of times the estimated SCB contains the true $\mu(t)$ curve in the interval $[0.05, 0.95]$. Generally speaking, the coverage probabilities in Tables \ref{tab:scb_t_new} and \ref{tab:scb_sin_t_new} are reasonably close to the nominal level $0.95$. Moreover, the bandwidths do not seem to have too large an effect on the coverage probability. 

 \begin{table}[H]
  \centering
  \resizebox{\columnwidth}{0.1\textwidth}{%
\begin{tabular}{|l|c|c|c|c|c||c|c|c|c||} 
\hline & & \multicolumn{4}{c||}{Weights : $w=(0.75, \ldots, -0.75, \ldots ,0.75, \ldots, -0.75, \ldots)$} & \multicolumn{4}{c||}{Weights: $w=\sin(8\pi t/n)$} \\ \hline 
$n$ & $h_n$ & \hspace*{0.05cm} $\theta=-0.8$ \hspace*{0.05cm} & \hspace*{0.05cm} $\theta=-0.4$ \hspace*{0.05cm} & \hspace*{0.05cm} $\theta=0.4$ \hspace*{0.05cm} & $\theta=0.8$ & $\theta=-0.8$& $\theta=-0.4$ & $\theta=0.4$ & $\theta=0.8$ \\
\hline \hline $600$ & $0.11$ & $0.922$ & $0.949$ & $0.929$ & $0.913$ & $0.930$ & $0.951$ & $0.959$ & $0.916$\\
\hline & $0.13$ & $0.946$ & $0.952$ & $0.951$ & $0.938$ & $0.951$ & $0.956$ & $0.963$ & $0.950$ \\
\hline & $0.15$ & $0.950$& $0.963$ & $0.951$ & $0.950$ & $0.956$ & $0.964$ & $0.964$ & $0.959$ \\
\hline \hline $800$ & $0.11$& $0.948$ & $0.963$ & $0.954$ & $0.932$& $0.952$ & $0.962$ & $0.951$ & $0.952$ \\
\hline & $0.13$ & $0.954$ & $0.963$ & $0.960$ & $0.956$ & $0.958$ & $0.966$ & $0.958$ & $0.962$ \\
\hline & $0.15$ & $0.955$ & $0.965$ & $0.965$ & $0.953$ & $0.959$ & $0.966$ & $0.971$ & $0.970$\\
\hline
\end{tabular}
}
\caption{Empirical coverage probabilities of SCB of $X_t$ from Model \eqref{eq:scb_model} where $Z_t \sim$ Model \ref{ARnonsim} with normalized $t_6$ error.}
  \label{tab:scb_t_new}
\end{table}

\begin{table}[H]
  \centering
  \resizebox{\columnwidth}{0.12\textwidth}{%
\begin{tabular}{|l|l|c|c|c|c||c|c|c|c||} 
\hline & & \multicolumn{4}{c||}{Weights : $w=(0.75, \ldots, -0.75, \ldots ,0.75, \ldots, -0.75, \ldots)$} & \multicolumn{4}{c||}{Weights: $w=\sin(8\pi t/n)$} \\ \hline 
$n$ & $h_n$ & \hspace*{0.05cm} $\theta=-0.8$ \hspace*{0.05cm} & \hspace*{0.05cm} $\theta=-0.4$ \hspace*{0.05cm} & \hspace*{0.05cm} $\theta=0.4$ \hspace*{0.05cm} & $\theta=0.8$ & $\theta=-0.8$& $\theta=-0.4$ & $\theta=0.4$ & $\theta=0.8$ \\
\hline \hline $600$ & $0.11$& $0.940$ & $0.951$ & $0.943$ & $0.946$ & $0.941$ & $0.954$ & $0.958$ & $0.938$ \\
\hline & $0.13$ & $0.957$ & $0.951$ & $0.947$ & $0.951$ & $0.953$ & $0.951$ & $0.962$ & $0.950$ \\
\hline & $0.15$ & $0.950$ & $0.962$ & $0.954$ & $0.942$ & $0.959$ & $0.959$ & $0.958$ & $0.957$ \\
\hline \hline $800$ & $0.11$& $0.943$ & $0.967$ & $0.956$ & $0.941$ & $0.953$ & $0.959$ & $0.971$ & $0.938$ \\
\hline & $0.13$ & $0.953$ & $0.961$ & $0.967$ & $0.953$ & $0.956$ & $0.958$ & $0.961$ & $0.952$ \\
\hline & $0.15$ & $0.946$ & $0.965$ & $0.968$ & $0.949$ & $0.966$ & $0.958$ & $0.959$ & $0.963$ \\
\hline
\end{tabular}
}
\caption{Empirical coverage probabilities of SCB of $X_t$ from Model \eqref{eq:scb_model} where $Z_t \sim$ Model \ref{ARnonsimsq} with $t_6$ error.}
  \label{tab:scb_sin_t_new}
\end{table}
\ignore{
 \begin{table}[H]
  \centering
  \resizebox{\columnwidth}{0.165\textwidth}{%
\begin{tabular}{|l|c|c|c|c|c||c|c|c|c||} 
\hline & & \multicolumn{4}{c||}{Weights : $w=(0.75, \ldots, -0.75, \ldots ,0.75, \ldots, -0.75, \ldots)$} & \multicolumn{4}{c||}{Weights: $w=\sin(8\pi t/n)$} \\ \hline 
$n$ & $h_n$ & \hspace*{0.05cm} $\theta=-0.8$ \hspace*{0.05cm} & \hspace*{0.05cm} $\theta=-0.4$ \hspace*{0.05cm} & \hspace*{0.05cm} $\theta=0.4$ \hspace*{0.05cm} & $\theta=0.8$ & $\theta=-0.8$& $\theta=-0.4$ & $\theta=0.4$ & $\theta=0.8$ \\
\hline \hline $600$ &$0.10$ & $0.924$ & $0.944$ & $0.920$ & $0.904$ & $0.926$ & $0.946$ & $0.954$ &$0.908$\\
\hline & $0.11$& $0.948$ & $0.964$ & $0.944$ & $0.906$ & $0.940$ & $0.938$ & $0.954$ & $0.938$ \\
\hline & $0.12$& $0.952$& $0.968$ & $0.944$ & $0.916$ & $0.960$ & $0.970$ & $0.962$ & $0.948$ \\
\hline & $0.13$ & $0.956$ & $0.966$ & $0.950$ & $0.924$ & $0.962$ & $0.966$ & $0.960$ & $0.950$ \\
\hline & $0.14$ & $0.954$ & $0.972$ & $0.936$ & $0.926$ & $0.966$ & $0.968$ & $0.966$ & $0.952$ \\
\hline & $0.15$ & $0.956$& $0.968$ & $0.940$ & $0.934$ & $0.958$ & $0.966$ & $0.972$ & $0.962$ \\
\hline \hline $800$ &$0.10$ & $0.922$ & $0.960$ & $0.944$ & $0.914$ & $0.948$ & $0.958$ & $0.946$ & $0.934$ \\
\hline & $0.11$& $0.944$ & $0.966$ & $0.948$ & $0.932$& $0.962$   & $0.964$ & $0.956$ & $0.946$ \\
\hline & $0.12$& $0.944$ & $0.964$ & $0.952$ & $0.934$ & $0.970$ & $0.972$ & $0.966$ & $0.954$ \\
\hline & $0.13$ & $0.948$ & $0.972$ & $0.962$ & $0.938$ & $0.962$ & $0.966$ & $0.958$ & $0.960$ \\
\hline & $0.14$ & $0.950$ & $0.966$ & $0.962$ & $0.940$ & $0.964$ & $0.966$ & $0.968$ & $0.968$ \\
\hline & $0.15$ & $0.956$ & $0.968$ & $0.954$ & $0.936$ & $0.956$ & $0.968$ & $0.966$ & $0.970$\\
\hline
\end{tabular}
}
\caption{Empirical coverage probabilities of SCB of $X_t$ from Model \eqref{eq:scb_model} where $Z_t \sim$ Model \ref{ARnonsim} with $N(0,1)$ error.}
  \label{tab:scb_t}
\end{table}
\begin{table}[H]
  \centering
  \resizebox{\columnwidth}{0.165\textwidth}{%
\begin{tabular}{|l|l|c|c|c|c||c|c|c|c||} 
\hline & & \multicolumn{4}{c||}{Weights : $w=(0.75, \ldots, -0.75, \ldots ,0.75, \ldots, -0.75, \ldots)$} & \multicolumn{4}{c||}{Weights: $w=\sin(8\pi t/n)$} \\ \hline 
$n$ & $h_n$ & \hspace*{0.05cm} $\theta=-0.8$ \hspace*{0.05cm} & \hspace*{0.05cm} $\theta=-0.4$ \hspace*{0.05cm} & \hspace*{0.05cm} $\theta=0.4$ \hspace*{0.05cm} & $\theta=0.8$ & $\theta=-0.8$& $\theta=-0.4$ & $\theta=0.4$ & $\theta=0.8$ \\
\hline \hline $600$ &$0.10$ & $0.930$ & $0.942$ & $0.938$ & $0.906$ & $0.958$ & $0.964$ & $0.924$ & $0.912$\\
\hline & $0.11$& $0.934$ & $0.956$ & $0.938$ & $0.930$ & $0.970$ & $0.968$ & $0.940$ & $0.938$ \\
\hline & $0.12$& $0.950$ & $0.968$ & $0.942$ & $0.938$ & $0.970$ & $0.966$ & $0.950$ & $0.948$ \\
\hline & $0.13$ & $0.956$ & $0.964$ & $0.948$ & $0.940$ & $0.968$ & $0.968$ & $0.952$ & $0.956$ \\
\hline & $0.14$ & $0.944$ & $0.968$ & $0.956$ & $0.950$ & $0.966$ & $0.974$ & $0.962$ & $0.962$\\
\hline & $0.15$ & $0.950$ & $0.966$ & $0.950$ & $0.948$ & $0.968$ & $0.970$ & $0.964$ & $0.962$ \\
\hline \hline $800$ &$0.10$ & $0.950$ & $0.972$ & $0.964$ & $0.942$ & $0.954$ & $0.966$ & $0.962$ & $0.944$ \\
\hline & $0.11$& $0.952$ & $0.960$ & $0.968$ & $0.946$ & $0.966$ & $0.968$ & $0.974$ & $0.946$ \\
\hline & $0.12$& $0.960$ & $0.966$ & $0.960$ & $0.962$ & $0.970$ & $0.972$ & $0.974$ & $0.954$  \\
\hline & $0.13$ & $0.958$ & $0.968$ & $0.970$ & $0.964$ & $0.970$ & $0.972$ & $0.976$ & $0.956$ \\
\hline & $0.14$ & $0.958$ & $0.960$ & $0.964$ & $0.960$ & $0.968$ & $0.966$ & $0.970$ & $0.954$ \\
\hline & $0.15$ & $0.958$ & $0.962$ & $0.972$ & $0.958$ &$0.960$ & $0.962$ & $0.970$ & $0.958$ \\
\hline
\end{tabular}
}
\caption{Empirical coverage probabilities of SCB of $X_t$ from Model \eqref{eq:scb_model} where $Z_t \sim$ Model \ref{ARnonsimsq} with $N(0,1)$ error.}
  \label{tab:scb_sin_t}
\end{table}}

\section{Real data application: analysis of Lake Chichancanab sediment density data}\label{sec:datanalysis}
The Maya civilization, arguably one of the most important pre-Columbian mesoamerican civilizations, underwent a collapse during the last classical period of their history, circa 900-1100 AD \cite{10.2307/26307982, demarest2004ancient, 10.2307/26309152, renfrew_1988}. A severe drought has been hinted at as a primary reason behind this collapse \cite{FAUST2001153,10.2307/26308045, doi:10.1073/pnas.1210106109}, despite the Mayans primarily inhabiting a seasonally dry tropical forest (\cite{golden2004continuities}). Drought has also been explored as a possible cause of a comparatively less-studied preclassical Maya collapse in 150-200 AD (\cite{gill2000great}). \cite{Hodell1995, doi:10.1126/science.1057759, HODELL20051413} analyzed the sediment core density dataset from the Lake Chichancanab in the Yucatan peninsula to analyze the onset pattern of droughts during the Maya civilization. An age-depth model of radiocarbon dating is used to estimate the calendar age of depth of each sediment. The total number of data points is $n=564$, and the corresponding years range from 858 BC to 1994 AD. 

We first test the existence of a change-point for this dataset as described in subsection \ref{subsec:cpt}. For this we choose $m=20$. The $p$-value of our test $\psi_{n1}$ comes out to be $0.09$, and thus we fail to reject non-existence of a change-point. \cite{gill2000great} posited that between 800 and 1000 AD, the Yucatan peninsula was hit by a massive drought, triggering the Mayan collapse. However in light of our findings, such a hypothesis seems unlikely. Next we move on to building a simultaneous confidence band as in \eqref{eq:cons_scb}, which we will subsequently use to test the existence of certain trend. For the local linear estimates (Figures \ref{fig:sfig8}), we select $h=0.1$. The residual plots \ref{fig:sfig7} of $X_i - \hat{\mu}_L(t_i)$ where $\hat{\mu}_L$ is the locally linear estimate, suggest that the error process is indeed non-stationary. \cite{HODELL20051413} concluded that the Yucatan peninsula experienced two drought cycles of period $208$ and $50$ years. This hypothesis has been very influential in shaping academic discussion not only around classical Mayan collapse (\cite{w3020479}, \cite{doi:10.1073/pnas.1210106109}) but also in dialogues involving climate change (\cite{https://doi.org/10.1111/hic3.12140}). In order to test this hypothesis, we fit the following trend function to our data:
\begin{equation} \label{trend_maya}
  \mu(t)= \alpha_0 t + \boldsymbol{\alpha_1}^Tf_S(2\pi t \theta_1) + \boldsymbol{\alpha_2}^Tf_S(2\pi t \theta_2),
\end{equation}
where $\theta_1=208/N$ and $\theta_2=50/N$ with $N$=range of the years in observation, and $f_S(x)=(\sin(x), \cos(x))^T$. Figure \ref{fig:sfig8} shows that based on our $95\%$ SCB, we cannot accept the trend of \eqref{trend_maya}. \cite{carleton2017archaeological} argued that \cite{doi:10.1126/science.1057759,HODELL20051413} used interpolation to turn the irregularly spaced data-points into a regularly spaced one before applying their methods, and the obtained periodicity might have been the superficial result of such method. 
\begin{figure}[!htbp]
\begin{subfigure}{.5\textwidth}
 \centering
 \includegraphics[width=\linewidth, height=0.65\linewidth]{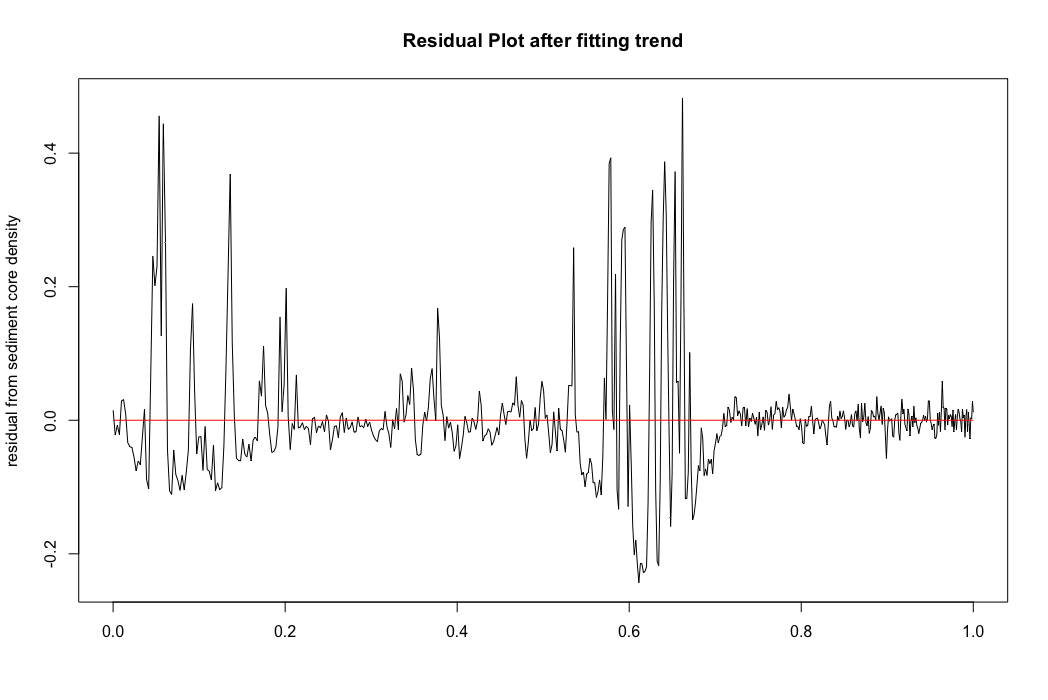}
 \caption{}
 \label{fig:sfig7}
\end{subfigure}%
\begin{subfigure}{.5\textwidth}
 \centering
 \includegraphics[width=\linewidth,height=0.65\linewidth]{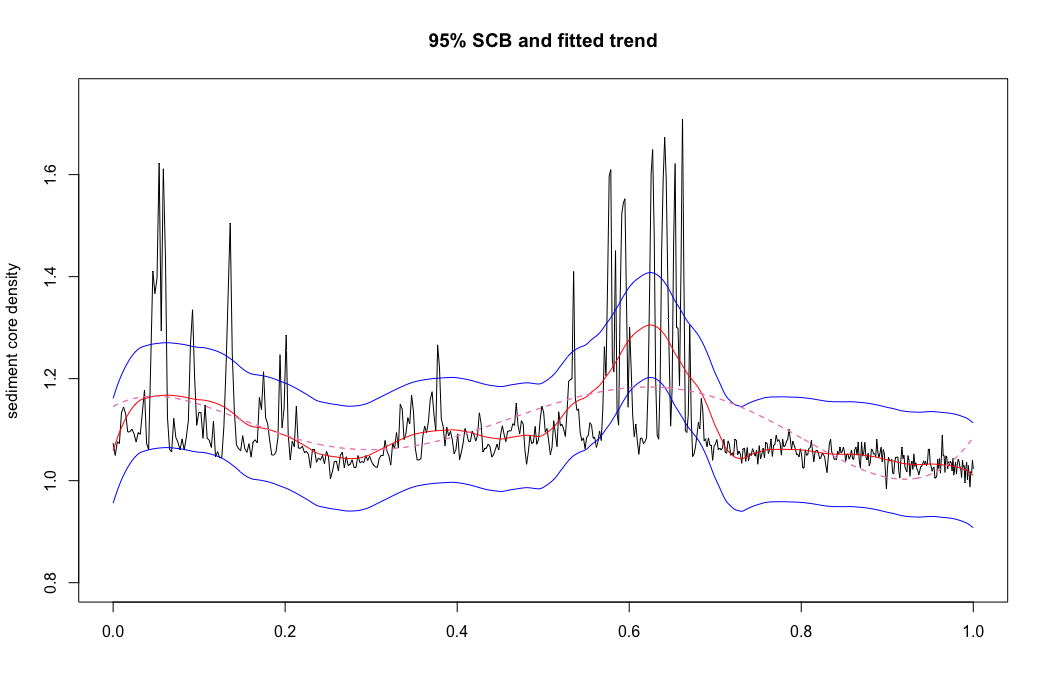}
 \caption{}
 \label{fig:sfig8}
\end{subfigure}
\caption{(a) Plot of the residual $X_i- \hat{\mu}_i$. (b) $95\%$ SCB in blue and the fitted local linear estimate in red. The fitted line \eqref{trend_maya} is in dashed green. }
\label{fig:scb_maya}
\end{figure}

\vspace{-0.1 in}
\section{Discussion}\label{sec:discussion}This paper develops an optimal Gaussian approximation for non-stationary univariate time series, that besides being optimal, also provides a clear instructive way as to how one can construct such approximations for practical applications. Our results match the best possible rates from other literature on non-stationary time series \cite{ MR0375412,MR0402883,kmt,KarmakarWu2020} etc. with relaxed assumptions. 

Our first result is an approximation result that preserves the population second order properties in the approximating Gaussian analogue. Our second, and probably more practically usable result states that the approximating Gaussian process can be embedded in a Brownian motion with evolving variances. A major difficulty in constructing approximating Gaussian processes was the non-availability of the notion of a long-run covariance, and our paper settles this question while maintaining the sharp rate. This work lays out an asymptotic framework which can be used in many areas of non-stationary time series, such as complex non-linear and non-stationary econometric models with smooth or abrupt changes. Moreover, one can further explore beyond just temporal dependence and wish to obtain similar results for complex spatial, spatio-temporal or tensor processes where non-stationarity is quite intrinsic.

{\bf Acknowledgments.} The authors would like to thank the Associate Editor and the reviewers for their constructive feedbacks that helped improving the paper significantly. The second author thanks NSF DMS grant 2124222 for supporting their research. The third author's research is partially supported by NSF DMS grants 1916351, 2027723 and 2311249.

\begin{center}
    SUPPLEMENTARY MATERIAL
\end{center}

\noindent \textbf{Supplement to ``Gaussian approximation for non-stationary time series with optimal rate and explicit construction ''} (; .pdf) contains all proofs in Sections \ref{sec:section2proofs}, \ref{sec2proofsnew}, \ref{sec:section3proofs} and \ref{appendix:scb}, and some additional simulation results in Section \ref{appendix:simu}.

\ignore{
For this paper, we also obtain the rate of Gaussian approximation when the variance is estimated. One could see that we could not better the $n^{1/4}$ rate although with the assumption of higher moments, one could reach the optimal $n^{1/p}$ bound for the theoretical ones. One could possibly show an information-theoretic block in this phenomena that shows this gap whenever such sequence of variance is estimated. Since this was slightly out of scope of our paper, we wish to pursue this in a different paper in near future. 
}

\bibliographystyle{imsart-nameyear}
\bibliography{references}
\newpage

\setcounter{page}{1}
\begin{center}
\begin{large}
   ONLINE SUPPLEMENTARY MATERIAL
\end{large}
\end{center}
 \textbf{Supplement to ``Gaussian approximation for non-stationary time series with optimal rate and explicit construction ''} (; .pdf) contains all proofs in Sections \ref{sec:section2proofs}, \ref{sec2proofsnew}, \ref{sec:section3proofs} and \ref{appendix:scb}, and some additional simulation results in Section \ref{appendix:simu}.
\section{Appendix A: Proofs of Theorems \ref{corol:new} and \ref{thm:GA}} 
\label{sec:section2proofs}
Since both Theorems \ref{corol:new} and \ref{thm:GA} require similar sets of assumptions, we will prove them together. Further, Theorem \ref{thm:GA} does not require the non-singularity Condition \ref{cond:regularity_new} for $(X_t)_{t \geq 1}$. Therefore, we begin by proving this result. 
\subsection{Proof of Theorem \ref{thm:GA}} \label{sec:proofGA}
%\begin{comment}
Recall $A_0$ from \eqref{eq:Alowerbound}. Define, in the light of the form of $\Theta_{i,p}$ in Condition \ref{eq:Thetaip} with $A>A_0$, 
\begin{align}
  L&=\frac{2-f_1+f_2 + \sqrt{f_3+ f_2^2}}{2pf_4}, \label{L}\\
  \alpha &= \frac{2+f_1+f_2 + \sqrt{f_3+f_2^2}}{2+2p+2A}, \nonumber
\end{align}
with 
\begin{align*}
  f_1 (p, A)&= p(3+A), \\
  f_2 (p, A) &= p^2 (1+A), \\
  f_3(p,A) &= 4- 4p(A-1) -p^2 (7A^2+6A+3) + 2p^3(A^2 -1),\\
  f_4(p, A) &= p(A+1)^2-2.
\end{align*}
Specifically, with $A> A_0$, our choice of $L$ and $\alpha$ satisfies the following relations, which will be used in our proofs:
\begin{align}
  &\frac{1}{2}-\frac{1}{p} - \frac{LA}{2} < 0, \label{eq1n}\\
  & L\left(\frac{\alpha}{2}-1\right)+ 1 - \frac{\alpha}{p} <0, \label{eq2n} \\
  &p<\alpha < 2(1+p+pA)/3, \label{eq3n}\\
  & 1/p - 1/\alpha + L - L(A+1)p/\alpha =0. \label{eq4n}
\end{align}
\begin{comment}
Define, in the light of the form of $\Theta_{i,p}$ in \ref{eq:Thetaip},
\begin{align}
  L&=\frac{2A + 2/p -1}{p(A+1)-2} \label{L}\\
  \alpha &= 2/3(1+p+pA) \label{alpha}
\end{align}
Specifically, with $A> A_0$, our choice of $L$ and $\alpha$ satisfies the following which will be used in our proofs:
\begin{align}
  &\frac{1}{2}-\frac{1}{p} - \frac{LA}{2} < 0 \label{eq1n}\\
  & L\left(\frac{\alpha}{2}-1\right)+ 1 - \frac{\alpha}{p} =0 \label{eq2n} \\
  & 1/p - 1/\alpha + L - L(A+1)p/\alpha =0 \label{eq3n}
\end{align}
 Lemma \ref{Rosenthal} and its proof is based on Lemma 7.3 of \cite{KarmakarWu2020}, nevertheless we present it here for the sake of completeness.
\end{comment}

These relations feature crucially in our proof, enabling us to read off certain terms as $o(1)$. In particular, they are important in proving the following three results.
We will employ the following lemma, which uses the uniform integrability condition to control the $p$-th moment of the truncated process. 
\begin{lemma}\label{lem:truncation}
  Assume Conditions \ref{cond:fdm3} and \ref{cond:ui} for the sequence $(X_i)$. Then, 
  \[\sup_{i}\IE(|T_{n^{1/p}}(X_i)|^{\alpha})=o(n^{\alpha/p-1}). \]
\end{lemma}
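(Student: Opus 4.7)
The plan is to decompose $\IE(|T_{n^{1/p}}(X_i)|^{\alpha})$ into the bulk contribution from $\{|X_i|\le n^{1/p}\}$ and the tail contribution from $\{|X_i|>n^{1/p}\}$, so that
\[
\IE(|T_{n^{1/p}}(X_i)|^{\alpha})=\IE(|X_i|^{\alpha}\mathbb{I}\{|X_i|\le n^{1/p}\})+n^{\alpha/p}\,\IP(|X_i|>n^{1/p}).
\]
Each piece will be treated separately, the tail by a Markov-type estimate and the bulk by a two-scale decomposition that exploits the gap between $p$ and $\alpha$.

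For the tail piece I would rewrite $n^{\alpha/p}\IP(|X_i|>n^{1/p})=n^{\alpha/p-1}\cdot n\IP(|X_i|^{p}>n)$ and bound
\[
n\,\IP(|X_i|^{p}>n)\le \IE\!\left(|X_i|^{p}\mathbb{I}\{|X_i|^{p}>n\}\right),
\]
which by Condition \ref{cond:ui} (taking $a=1$) tends to $0$ uniformly in $i$. Hence this contribution is $o(n^{\alpha/p-1})$ uniformly in $i$, at no further cost.

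For the bulk piece the naive estimate $\IE(|X_i|^{\alpha}\mathbb{I}\{|X_i|\le n^{1/p}\})\le n^{(\alpha-p)/p}\IE(|X_i|^{p})$ only gives $O(n^{\alpha/p-1})$, so an extra cutoff is required. For arbitrary $a\in(0,1)$ I would split further at $a^{1/p}n^{1/p}$:
\[
\IE(|X_i|^{\alpha}\mathbb{I}\{|X_i|\le a^{1/p}n^{1/p}\})+\IE(|X_i|^{\alpha}\mathbb{I}\{a^{1/p}n^{1/p}<|X_i|\le n^{1/p}\}).
\]
Using $\alpha>p$ (a consequence of \eqref{eq3n}), the first term is bounded by $a^{(\alpha-p)/p}n^{\alpha/p-1}\Theta_{0,p}^{p}$, which can be made arbitrarily small relative to $n^{\alpha/p-1}$ by choosing $a$ small. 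The second term is bounded by $n^{(\alpha-p)/p}\IE(|X_i|^{p}\mathbb{I}\{|X_i|^{p}>an\})$, and Condition \ref{cond:ui} forces this to be $o(n^{\alpha/p-1})$ uniformly in $i$ as $n\to\infty$.

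Putting the three pieces together yields $\sup_i \IE(|T_{n^{1/p}}(X_i)|^{\alpha})\le (C\,a^{(\alpha-p)/p}+o_n(1))\,n^{\alpha/p-1}$ with $C=\Theta_{0,p}^{p}$; taking $n\to\infty$ first and $a\to 0$ afterwards gives the desired $o(n^{\alpha/p-1})$. The only subtle point of the argument is that Condition \ref{cond:ui} alone, without the intermediate cutoff $a^{1/p}n^{1/p}$, cannot turn the $O$-bound into an $o$-bound; the decisive mechanism is the strict inequality $\alpha>p$ from \eqref{eq3n}, which supplies the small factor $a^{(\alpha-p)/p}$ needed to absorb the bounded contribution of $\IE(|X_i|^{p})$.
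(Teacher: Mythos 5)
Your proof is correct and follows essentially the same route as the paper's: both split at the threshold $a^{1/p}n^{1/p}$, bound the small-value part by $a^{(\alpha-p)/p}n^{\alpha/p-1}\sup_i\IE(|X_i|^p)$ using $\alpha>p$, control the large-value part via Condition \ref{cond:ui}, and then send $n\to\infty$ before $a\to 0$. Your extra separation of the exact truncation event $\{|X_i|>n^{1/p}\}$ is a cosmetic refinement of the paper's two-way split and changes nothing substantive.
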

\begin{proof}
  Note that, for a fixed $a>0$, an application of Condition \ref{cond:ui} entails
  \begin{align}
    \limsup _{n \to \infty} n^{1- \alpha/p} \sup_{i}\IE(|T_{n^{1/p}}(X_i)|^{\alpha}) &= \limsup _{n \to \infty} n^{1- \alpha/p} \sup_{i}\IE(|T_{n^{1/p}}(X_i)|^{\alpha}( \mathbb{I}\{|X_i|^p\leq an \} \nonumber\\ & \hspace*{3cm}+ \mathbb{I}\{|X_i|^p> an \} )) \nonumber\\
    & \leq a^{\alpha / p -1}\sup_{i} \IE(|X_i|^p) + \limsup _{n \to \infty} \sup_{i} n \IP(|X_i|^p > an) \nonumber \\
    & \leq a^{\alpha / p -1} \sup_{i} \IE(|X_i|^p) \label{limsuparg}.
  \end{align}
  Since $\sup_{i} \IE(|X_i|^p)=O(1)$ by Condition \ref{cond:fdm3}, and $a$ can be chosen arbitrarily small, \eqref{limsuparg} completes the proof. 
\end{proof}

\subsubsection{Key lemma}
In this section, first we provide a bound on the $p$-th moment of maximal partial sums. 
{\color{black}
\begin{lemma} \label{pmoment}
Consider Condition \ref{cond:fdm3} for $X_t$ from \eqref{eq:model_Z}. Let $p \ge 2$. Then for any $m \ge 1$, it holds that
  \begin{equation}\label{eq:pmoment}
 \sup_a \| \max_{1 \leq k\leq m} |X_{a+1} + \ldots +X_{a + k}| \|_p \le \frac{p}{\sqrt{p-1}} m^{1/2} \Theta_{0, p}.
  \end{equation} 
\end{lemma}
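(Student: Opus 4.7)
The plan is to use a martingale decomposition of $S_{a+k}-S_a$ indexed by dependence lag, combine Doob's maximal inequality with a sharp Burkholder-type bound, and then sum a simple telescoping estimate over the lag. Since the bound is uniform in $a$, it suffices to fix $a$ and work with $S_{a+k}-S_a = \sum_{i=a+1}^{a+k} X_i$. Using the projection operators $\mathcal{P}_j \cdot := \IE(\cdot \mid \mathcal{F}_j) - \IE(\cdot \mid \mathcal{F}_{j-1})$, and the fact that $\Theta_{0,p}<\infty$ forces $\IE(X_i\mid \mathcal{F}_{-\infty})=\IE(X_i)=0$, I would represent each $X_i = \sum_{l=0}^{\infty} \mathcal{P}_{i-l} X_i$ in $\mathcal{L}_p$. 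Swapping sums yields the decomposition
\begin{equation*}
S_{a+k}-S_a = \sum_{l=0}^{\infty} N_{a,k}^{(l)}, \qquad N_{a,k}^{(l)} := \sum_{i=a+1}^{a+k} \mathcal{P}_{i-l} X_i,
\end{equation*}
which reorganizes the partial sum according to the lag $l$ at which the innovation is picked up.

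The key observation is that, for each fixed lag $l$, the process $(N_{a,k}^{(l)})_{k=0}^m$ is a martingale with respect to the shifted filtration $\mathcal{G}_k := \mathcal{F}_{a+k-l}$: its $k$-th difference $\mathcal{P}_{a+k-l} X_{a+k}$ is $\mathcal{G}_k$-measurable with $\IE(\cdot\mid \mathcal{G}_{k-1})=0$ by construction of $\mathcal{P}$. I would then apply Doob's maximal inequality to get $\|\max_{k\leq m}|N_{a,k}^{(l)}|\|_p \leq \tfrac{p}{p-1}\|N_{a,m}^{(l)}\|_p$, followed by Rio's sharp Burkholder inequality for $p\geq 2$, which yields $\|N_{a,m}^{(l)}\|_p \leq \sqrt{p-1}\,\bigl(\sum_{i=a+1}^{a+m}\|\mathcal{P}_{i-l}X_i\|_p^2\bigr)^{1/2}$. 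Using the canonical bound $\|\mathcal{P}_{i-l}X_i\|_p \leq \|X_i - X_{i,\{i-l\}}\|_p \leq \delta_p(l)$ that is immediate from the coupling definition in \eqref{eq:fdm}, the $l$-th martingale contributes at most $\tfrac{p}{\sqrt{p-1}}\sqrt{m}\,\delta_p(l)$.

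Finally I would collect the lag contributions by the triangle inequality:
\begin{equation*}
\Bigl\|\max_{1\leq k\leq m}|S_{a+k}-S_a|\Bigr\|_p \leq \sum_{l=0}^{\infty}\Bigl\|\max_{1\leq k\leq m}|N_{a,k}^{(l)}|\Bigr\|_p \leq \frac{p}{\sqrt{p-1}}\,m^{1/2}\sum_{l=0}^{\infty}\delta_p(l) = \frac{p}{\sqrt{p-1}}\,m^{1/2}\,\Theta_{0,p}.
\end{equation*}
Taking the supremum over $a$ gives \eqref{eq:pmoment}. The one step that needs genuine care is justifying the $L^p$-convergent interchange of the sums over $i$ and $l$, together with confirming that the martingale constant $\tfrac{p}{\sqrt{p-1}}$ (rather than the looser Marcinkiewicz--Zygmund constant) actually drops out: this forces the use of Rio's sharp $\sqrt{p-1}$ factor combined with the $p/(p-1)$ Doob constant, and is the only delicate ingredient; the remaining steps are routine manipulations.
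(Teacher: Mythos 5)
Your proposal is correct and follows essentially the same route as the paper's proof: the lag-indexed martingale decomposition $N_{a,k}^{(l)}$ is exactly the paper's $R_{k,s}$, and the combination of Doob's maximal inequality (constant $p/(p-1)$) with Rio's sharp Burkholder bound (constant $\sqrt{p-1}$) and the projection bound $\|\mathcal{P}_{i-l}X_i\|_p \le \delta_p(l)$ is identical, yielding the same constant $p/\sqrt{p-1}$.
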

\begin{proof}
 Let us denote the projection operator $P_{k}(X)=\IE(X|\mathcal{F}_k)-\IE(X|\mathcal{F}_{k-1})$, $R_k= \sum_{i=1}^k X_{a+i}$ and $R_{k,s}=\sum_{i=1}^{k} P_{a+i-s}X_{a+i}, s \ge 0$. Note that $R_k=\sum_{s=0}^{\infty} R_{k,s}$. For fixed $s\geq0$, $(P_{a+i-s}X_{a+i})_{1\leq i \leq m}$ form martingale differences, and therefore, Burkholder's inequality (\cite{Rio2009MomentIF}, Theorem 2.1) entails that
 \begin{align*}
   \|R_{m,s}\|_p^2 &= \|\sum_{i=1}^m P_{a+i-s} X_{a+i} \|_p^2 
   \leq (p-1) \sum_{i=1}^m \|P_{a+i-s} X_{a+i}\|_p^2
   \leq (p-1) m \delta_{p}(s)^2.
 \end{align*}
 where the last assertion follows from Theorem 1 of \cite{Wu2005} and the uniform definition of our functional dependence measure. Finally, Doob's maximal inequality implies that
 \begin{align}\label{doobfinal}
   \|\max_{1 \leq k \leq m} |R_k| \|_p \leq \sum_{s=0}^{\infty} \|\max_{1 \leq k \leq m} |R_{k,s}| \|_p \leq \sum_{s=0}^{\infty} \frac{p}{p-1} \|R_{m,s}\|_p \leq \frac{p}{\sqrt{p-1}} m^{1/2} \Theta_{0,p},
 \end{align}
 which completes the proof of \eqref{eq:pmoment}.
\end{proof}
}
Next, we present a lemma which is one of the main ingredients of our proof. In this result, we raise the partial sums of the truncated, $m$-dependent processes to a power $\alpha>p$, and our specific choice of $\alpha$ allows us to provide a sharp upper bound. We will use this lemma throughout our proof to infer certain quantities are $o(1)$. 
%\end{comment}
\begin{lemma}\label{Rosenthal}
  Assume Conditions \ref{cond:fdm3} and \ref{cond:ui}, along with \eqref{eq1n}, \eqref{eq2n}, \eqref{eq3n} and \eqref{eq4n} for $A$, $L$ and $\alpha$. Let $m=\lfloor n^L \rfloor$ and let \[\Tilde{R}_{s,t}=\Tilde{X}_{s+1} + \ldots +\Tilde{X}_{s+t}, \] 
  where $\Tilde{X}_i$ is as defined in \eqref{m-dep}. Then 
  \begin{align}
   \sup_s \IE \left[\max_{1 \leq t \leq m} |\tilde{R}_{s,t}|^{\alpha} \right] = o(mn^{\alpha/p-1}).
  \end{align}
  \end{lemma}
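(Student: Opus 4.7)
The plan is to combine the maximal $L^p$ moment bound from Lemma \ref{pmoment} with the truncation-based $L^\alpha$ control from Lemma \ref{lem:truncation}, then close the gap between $p$ and $\alpha$ via a Rosenthal-type moment inequality tailored to the $m$-dependent, bounded, mean-zero sequence $\tilde{X}_i$. First I would observe that since $T_{n^{1/p}}$ is $1$-Lipschitz, the coupling bound $|\tilde{X}_i-\tilde{X}_{i,\{i-k\}}|\leq |X_i-X_{i,\{i-k\}}|$ gives $\delta_p^{\tilde{X}}(k)\leq \delta_p(k)$, so Condition \ref{cond:fdm3} transfers to the truncated process. Applying Lemma \ref{pmoment} to $\tilde{X}_i-\IE\tilde{X}_i$ then yields $\|\max_{1\leq t\leq m}|\tilde{R}_{s,t}|\|_p \leq C\,m^{1/2}\Theta_{0,p}$ uniformly in $s$, and in particular $\mathrm{Var}(\tilde{R}_{s,m})=O(m)$; simultaneously Lemma \ref{lem:truncation} furnishes $\sup_i\|\tilde{X}_i\|_\alpha^{\alpha}=o(n^{\alpha/p-1})$.

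The next step is to establish a maximal Rosenthal-type inequality of the form
\[
\IE\!\left[\max_{1\leq t\leq m}|\tilde{R}_{s,t}|^\alpha\right] \leq C_\alpha\!\left(\mathrm{Var}(\tilde{R}_{s,m})^{\alpha/2}+m\,\sup_i\|\tilde{X}_i\|_\alpha^{\alpha}\right),
\]
with $C_\alpha$ depending only on $\alpha$. I would do this by passing to the aggregate martingale $\tilde{R}_{s,t}-\IE\tilde{R}_{s,t}=\sum_j P_j\tilde{R}_{s,t}$ with respect to the filtration $(\mathcal{F}_j)$, combining a maximal Burkholder--Rosenthal inequality with an Etemadi-type step that pulls the maximum over $t$ inside, and exploiting $\Theta_{0,p}<\infty$ to keep $\|P_j \tilde{R}_{s,t}\|_2$ bounded uniformly in $m$ while the truncation $|\tilde{X}_i|\leq n^{1/p}$ enters only through the jump part. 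Plugging the $L^p$--variance and truncation estimates of the previous paragraph into the display produces $\IE[\max_{1\leq t\leq m}|\tilde{R}_{s,t}|^\alpha]\leq C\,m^{\alpha/2}+o(m\,n^{\alpha/p-1})$. Finally, the algebraic relation \eqref{eq2n}, rewritten as $L(\alpha/2-1)<\alpha/p-1$, yields $m^{\alpha/2}=n^{L\alpha/2}=o(n^{L+\alpha/p-1})=o(m\,n^{\alpha/p-1})$, so both terms are $o(m\,n^{\alpha/p-1})$ uniformly in $s$ and the lemma follows.

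The hard part will be the martingale step: the naive projection decomposition $\tilde{X}_i=\sum_{s'=0}^m P_{i-s'}\tilde{X}_i$ followed by Burkholder--Rosenthal applied to each of the $m+1$ individual martingales and Minkowski summation injects a spurious factor of $m^\alpha$ into the jump contribution, producing a bound of order $m^{\alpha+1}\sup_i\|\tilde{X}_i\|_\alpha^\alpha$, which is far too large. Bypassing this requires treating $(P_j\tilde{R}_{s,t})_j$ as a single martingale so that the $L^2$ bound $\|P_j\tilde{R}_{s,t}\|_2\lesssim \Theta_{0,p}$, obtained by Cauchy--Schwarz across the at most $m+1$ coupled coordinates and summing $\delta_p$, drives the quadratic-variation part, while the truncation-plus-Lemma \ref{lem:truncation} estimate feeds only into the jump term without collecting an extra power of $m$. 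The remaining conditions \eqref{eq1n}, \eqref{eq3n}, \eqref{eq4n} then serve, alongside the principal \eqref{eq2n}, to secure the summability of $\delta_p(k)^{p/\alpha}$ and the correct $L^2$--$L^\infty$ interpolation needed for this step to close.
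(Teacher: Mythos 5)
Your overall architecture is right in spirit --- truncation gives the $L^\alpha$ control via Lemma \ref{lem:truncation}, the $L^2$/variance scale gives $O(m^{\alpha/2})$, and \eqref{eq2n} kills the $m^{\alpha/2}$ term --- and your terms $\mathrm{Var}(\tilde R_{s,m})^{\alpha/2}$ and $m\sup_i\|\tilde X_i\|_\alpha^\alpha$ do correspond to the first and last terms of the paper's bound. But there is a genuine gap: the Rosenthal-type inequality you propose,
\begin{equation*}
\IE\Bigl[\max_{1\leq t\leq m}|\tilde{R}_{s,t}|^\alpha\Bigr] \leq C_\alpha\Bigl(\mathrm{Var}(\tilde{R}_{s,m})^{\alpha/2}+m\,\sup_i\|\tilde{X}_i\|_\alpha^{\alpha}\Bigr),
\end{equation*}
is the form valid for \emph{independent} summands and is false for a general mean-zero $m$-dependent sequence summed over a window of length $m$. (Take $\tilde X_i\equiv\xi$ on the window with $\IE\xi=0$, $\IE\xi^2=1$ and $\IE|\xi|^\alpha=m^\alpha$: the left side is $m^{2\alpha}\cdot O(1)$ while the right side is $O(m^{\alpha+1})$.) A correct Rosenthal bound for dependent sequences --- the one from \cite{LiuHanWu} that the paper invokes --- necessarily carries dependence-adjusted correction terms, namely $m^{1/2}\sum_{j>m}\tilde\delta_\alpha(j)$ and $m^{1/\alpha}\sum_{j\leq m}j^{1/2-1/\alpha}\tilde\delta_\alpha(j)$, and controlling these is where most of the work in the paper's proof lives. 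Your martingale plan does not recover them: in Burkholder--Rosenthal the conditional quadratic variation $\sum_j\IE[d_j^2\mid\mathcal F_{j-1}]$ must be bounded in $L^{\alpha/2}$, not in expectation, and the bound $\|P_j\tilde R_{s,t}\|_2\lesssim\Theta_{0,p}$ only controls its mean; passing to $\|\cdot\|_{\alpha/2}$ reintroduces $\|d_j\|_\alpha^2$, i.e.\ the $\alpha$-norm dependence measures you were trying to avoid.

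The missing ingredient is the interpolation inequality \eqref{lemma7.2},
\begin{equation*}
\tilde\delta_\alpha(j)^\alpha\;\leq\;2^\alpha\,n^{\alpha/p-1}\,\delta_p(j)^p,
\end{equation*}
which converts the $\alpha$-norm dependence measure of the truncated process into the $p$-norm measure of the original process at the price of the factor $n^{\alpha/p-1}$ (this is the moral generalization of Lemma \ref{lem:truncation} from marginals to increments of the coupling). With it, the tail correction becomes $m^{1/2}n^{1/p-1/\alpha}\sum_{j>m}\delta_p(j)^{p/\alpha}$ and the weighted sum becomes $m^{1/\alpha}n^{1/p-1/\alpha}\sum_{j\leq m}j^{1/2-1/\alpha}\delta_p(j)^{p/\alpha}$; a dyadic decomposition of these sums together with precisely the relations \eqref{eq3n} and \eqref{eq4n} (which you invoke only vaguely) shows they are $O(m^{1/2})$ and $o((mn^{\alpha/p-1})^{1/\alpha})$ respectively. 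You gesture at ``summability of $\delta_p(k)^{p/\alpha}$'' but never state the inequality that makes the truncation level $n^{1/p}$ interact with the exponent gap between $p$ and $\alpha$; without it the argument does not close.
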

  \begin{remark}
     Lemma \ref{Rosenthal} and its proof should be contrasted with Lemma 7.3 of \cite{KarmakarWu2020}, where one requires a sequence $t_n$ converging slowly to zero in both the definition of $m$ and the truncated process $X_i^{\oplus}$. In contrast, our Condition \ref{cond:ui} with the help of Lemma \ref{lem:truncation} enables us to circumvent the need of such sequences.  
  \end{remark}
  \begin{proof}
In the following, $\lesssim$ includes constants depending on $p$ and $A$, emanating from $\mu_{p,A}=O(1)$ from Condition \ref{cond:fdm3}. Let ${\delta}^{\oplus}_p(\cdot)$ and $\Tilde{\delta}_p(\cdot)$ denote the functional dependence measure defined for the truncated and the $m$-dependent processes, respectively. 
    Since the functional dependence measure \eqref{eq:fdm} is defined in a uniform manner, we can ignore the $\sup_s$ term and apply the Rosenthal-type bound in \cite{LiuHanWu} to obtain
    \begin{align}
      \| \max_{1 \leq t \leq m} |\Tilde{R}_{s,t}|\|_{\alpha} & \lesssim m^{1/2} \left[\sum_{j=1}^m \Tilde{\delta}_2(j) + \sum_{m+1}^{\infty} \Tilde{\delta}_{\alpha}(j) + \sup_{i} \| T_{n^{1/p}}(X_i)\| \right] \nonumber \\
      & \hspace*{1cm} + m^{1/\alpha} \left[\sum_{j=1}^m j^{1/2 - 1/\alpha} \Tilde{\delta}_{\alpha}(j) + \sup_{i} \|T_{n^{1/p}}(X_i)\|_{\alpha} \right] \nonumber\\
      & \lesssim I +II +III+IV, \label{liudecomp}
    \end{align}
    where 
    \begin{align*}
      I &= m^{1/2}\bigg(\sum_{j=1}^m \Tilde{\delta}_2(j) + \sup_{i} \|X_i\|\bigg),\\
      II &= m^{1/2} \sum_{j=m+1}^{\infty} \Tilde{\delta}_{\alpha}(j), \\
      III &= m^{1/\alpha} \sum_{j=1}^{m} j^{1/2 - 1/\alpha} \Tilde{\delta}_{\alpha}(j),\\
      IV &= m^{1/\alpha} \sup_{i} \|T_{n^{1/p}}(X_i)\|_{\alpha}.
    \end{align*}
    For $I$, we note that $\Tilde{\delta}_2(j) \leq \delta^{\oplus}_2(j) \leq \delta_2(j)$, and $\sup_{i} \|X_i\| \leq \Theta_{0,2}$. Thus $I=O(m^{1/2})$,
    which yields, using \eqref{eq2n}, 
    \begin{equation}\label{I}
    \frac{n^{1-\alpha/p}}{m}I^{\alpha}=o(1).
    \end{equation}
    
    \noindent For both $II$ and $III$, we start by observing that
    \allowdisplaybreaks
    \begin{align}
      (\Tilde{\delta}_{\alpha}(j))^\alpha \leq \left(\delta^{\oplus}_{\alpha}(j) \right)^{\alpha}
      &= \sup_{i } \IE\left(|T_{n^{1/p}}(X_i) - T_{n^{1/p}}(X_{i, \{i-j\}})|^{\alpha} \right)\nonumber\\
      &\leq n^{\alpha/p} \sup_{i } \IE\left(\left|\min\left(2, \frac{|X_i -X_{i, i-j}|}{n^{1/p} } \right)\right|^{\alpha} \right) \nonumber\\
      & \leq 2^{\alpha} n^{\alpha/p -1} \delta_p(j)^{p} \label{lemma7.2},
    \end{align}
    since $\min(2^{\alpha}, |x|^{\alpha})\leq 2^{\alpha}|x|^p$. Hence, for $II$ we use \eqref{lemma7.2} to get,
    \begin{align*}
       II \leq 2 m^{1/2}n^{1/p -1/\alpha}\sum_{j=m+1}^{\infty}\delta_p(j)^{p/\alpha} 
       & \leq 2m^{1/2}n^{1/p -1/\alpha}\sum_{l=\lfloor\log_2 m\rfloor}^{\infty} \sum_{j=2^l}^{2^{l+1}-1} \delta_p(j)^{p/\alpha} \\
       & \lesssim m^{1/2}n^{1/p -1/\alpha} \sum_{l=\lfloor\log_2 m\rfloor}^{\infty} 2^{l(1-p/\alpha)} \Theta_{2^l, p}^{p/\alpha} \\
       & \lesssim m^{1/2}n^{1/p -1/\alpha} m^{1-p/\alpha - pA / \alpha} = O(m^{1/2})
    \end{align*}
    using \eqref{eq4n}. Thus \eqref{eq2n} leads to 
    \begin{equation} \label{II}
    \frac{n^{1-\alpha/p}}{m}II^{\alpha}=o(1).
    \end{equation}
    \allowdisplaybreaks
    In light of \eqref{lemma7.2}, for $III$, we proceed as following 
    \begin{align}
      \sum_{j=1}^{m} j^{1/2 - 1/ \alpha} \delta_p(j)^{p/ \alpha} &\leq \sum_{l=1}^{\lceil\log_2(m)\rceil} \sum_{j=2^l}^{2^{l+1}-1} j^{1/2 - 1/ \alpha}\delta_p(j)^{p/\alpha} \nonumber \\
      & \lesssim \sum_{l=1}^{\lceil\log_2(m)\rceil} 2^{l(3/2 -1/\alpha-p/\alpha)} \Theta_{2^l, p}^{p/\alpha} = O(1) \ \ \text{(Using \eqref{eq3n})}. \label{eq2.3kmt}
    \end{align}
    Fix $J$. Using $\Tilde{\delta}_{\alpha}(j)^{\alpha} \leq \delta^{\oplus}_{\alpha}(j)^{\alpha} \leq C\sup_{i }\IE(|T_{n^{1/p}}(X_i)|^{\alpha})$ in conjunction with Lemma \ref{lem:truncation} yields
 \begin{equation} \label{1stsum}
   n^{1-\alpha/p}( j^{1/2 -1/\alpha} \Tilde{\delta}_{\alpha}(j))^{\alpha} =o(1),
 \end{equation}
for each $1\leq j \leq J$. 
 Thus, using \eqref{lemma7.2} along with \eqref{eq2.3kmt} and \eqref{1stsum}, 
\begin{align}
  \limsup_{n \to \infty}\frac{n^{1-\alpha/p}}{m} III^{\alpha} &= \limsup_{n \to \infty}n^{1 - \alpha/p} \left(\sum_{j=1}^{\infty} j^{1/2 - 1/\alpha} \Tilde{\delta}_{\alpha}(j) \right)^{\alpha}\nonumber \\& \leq\limsup_{n \to \infty} c_{\alpha}n^{1-\alpha/p}\left(\sum_{j=1}^J j^{1/2 -1/\alpha} \Tilde{\delta}_{\alpha}(j)\right)^{\alpha} + c_{\alpha} \left( \sum_{j=J+1}^{\infty} j^{1/2 - 1/\alpha} \delta_p(j)^{p/ \alpha} \right)^{\alpha}\nonumber \\ &= c_{\alpha}\left( \sum_{j=J+1}^{\infty} j^{1/2 - 1/\alpha} \delta_p(j)^{p/ \alpha} \right)^{\alpha} \nonumber,
\end{align}
which, in view of \eqref{eq2.3kmt}, implies 
\begin{equation}\label{III}
   \limsup_{n \to \infty}\frac{n^{1/\alpha-1/p}}{m} III^{\alpha} \leq c_{\alpha} \limsup_{J \to \infty} \left( \sum_{j=J+1}^{\infty} j^{1/2 - 1/\alpha} \delta_p(j)^{p/ \alpha} \right)^{\alpha} =0 .
\end{equation}
Finally, for $IV$, using Lemma \ref{lem:truncation}, one obtains
  \begin{align}
    \frac{n^{1-\alpha/p}}{m} IV^{\alpha} = n^{1-\alpha/p} \sup_{i} \|T_{n^{1/p}}(X_i)\|^{\alpha}_{\alpha}= o(1). \label{IV}
  \end{align}
The proof is now completed combining  \eqref{I}, \eqref{II}, \eqref{III} and \eqref{IV}.
  \end{proof}
\color{black}

Now we are ready to prove our first Gaussian approximation result.

\begin{proof}[Proof of Theorem \ref{thm:GA}]
The proof can be divided broadly in seven steps, which we discuss in the following sections. In the following, we will use $C$ to denote a generic positive constant. The value of this constant can change from line to line.

\subsubsection{Truncation}
Recall $S_i^{\oplus}$ from \eqref{trunc}. In this section we derive a result showing the effectiveness of the truncated partial sum process in optimally approximating the original partial sum process $(S_i)_{i \geq 1}$. 
\begin{proposition} \label{prop1}
Under the conditions of Theorem \ref{thm:GA}, $\max_{1\leq i \leq n}| S_i - S_i^{\oplus}| = o_{\IP}(n^{1/p})$. 
\end{proposition}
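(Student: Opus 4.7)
The plan is to reduce the maximal difference $\max_{1\leq i\leq n} |S_i - S_i^{\oplus}|$ to a deterministic bias term plus a sum of absolute values of fluctuations, and to control both using only the truncated uniform integrability Condition \ref{cond:ui}. No temporal dependence argument will be needed at this step, which makes the proof essentially moment-theoretic.

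Setting $D_j := X_j - X_j^{\oplus}$ with $X_j^{\oplus} = T_{n^{1/p}}(X_j)$ and using $\IE X_j = 0$ so that $\IE X_j^{\oplus} = -\IE D_j$, I would first rewrite
\[
S_i - S_i^{\oplus} = \sum_{j=1}^i (D_j - \IE D_j), \qquad \max_{1 \leq i \leq n} |S_i - S_i^{\oplus}| \leq \sum_{j=1}^n |D_j| + \sum_{j=1}^n |\IE D_j|.
\]
Since $D_j = 0$ on $\{|X_j| \leq n^{1/p}\}$ and $|D_j| \leq |X_j|$ otherwise, the pointwise inequality $|D_j| \leq |X_j|\,\mathbb{I}\{|X_j|^p > n\}$ combined with the factor $|X_j|^{p-1}/n^{(p-1)/p} \geq 1$ on $\{|X_j|^p > n\}$ yields
\[
\IE |D_j| \leq n^{1/p - 1}\,\IE\bigl[|X_j|^p\,\mathbb{I}\{|X_j|^p > n\}\bigr].
\]
Condition \ref{cond:ui} applied with $a = 1$ then gives $\sup_j \IE |D_j| = o(n^{1/p-1})$, uniformly in $j$.

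Summing over $j \leq n$ immediately yields $\sum_{j=1}^n |\IE D_j| \leq \sum_{j=1}^n \IE|D_j| = o(n^{1/p})$, handling the deterministic part. For the random part, Markov's inequality gives $\IP\bigl(\sum_j |D_j| > \varepsilon n^{1/p}\bigr) \leq \varepsilon^{-1} n^{-1/p} \sum_j \IE |D_j| = o(1)$, so $\sum_j |D_j| = o_{\IP}(n^{1/p})$, which combines with the bias bound to close the proof. The only delicate bookkeeping is the conversion between tail moments and tail probabilities at the exact truncation level $n^{1/p}$; this is calibrated so that Condition \ref{cond:ui} provides the required $o(1)$ factor. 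Temporal dependence, although the central difficulty in subsequent steps of Theorem \ref{thm:GA}, plays no role here because passing to absolute values dominates the cancellation one would otherwise need.
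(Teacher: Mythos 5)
Your proposal is correct and follows essentially the same route as the paper: the same decomposition of $S_i - S_i^{\oplus}$ into a recentred truncation error, the same pointwise bound $|X_j - X_j^{\oplus}| \le n^{1/p-1}|X_j|^p\,\mathbb{I}\{|X_j|^p>n\}$, and the same appeal to Condition \ref{cond:ui} to make the bias $o(n^{1/p})$. The only immaterial difference is in the stochastic part, where you apply Markov's inequality to $\sum_{j}|D_j|$, while the paper instead observes that $\sum_{j\le i}(X_j-X_j^{\oplus})$ vanishes identically on the event $\max_{j\le n}|X_j|\le n^{1/p}$, whose probability tends to one by the same condition.
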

\begin{proof}
 Note that by the truncated uniform integrability Condition \ref{cond:ui},
\begin{equation}\label{eq:1st}
\max_{1\leq j \leq n}\IP(|X_j| > n^{1/p}) \leq \frac{1}{n}\max_{1\leq j \leq n} \IE\left(|X_j|^p \mathbb{I}_{|X_j| \geq n^{1/p}} \right) = o(n^{-1}). \color{black}
\end{equation}
Thus,
\begin{equation*}
  \IP(\max_{1\leq i \leq n} |S_i - \sum_{j=1}^i X_i^{\oplus}|> 0)
  \leq \IP(\max_{1\leq j \leq n} |X_j| > n^{1/p})
  \leq n \max_{1\leq j \leq n}\IP(|X_j| > n^{1/p}) \to 0.
\end{equation*}
 Hence, 
 \begin{equation}\label{eq2}
   \max_{1\leq i \leq n} |S_i - \sum_{j=1}^i X_i^{\oplus}| = o_{\IP}(1).
 \end{equation}
 \color{black}
Next, note that by \eqref{eq:1st},
 \[ X_i - X_i^{\oplus}= (X_i -n^{1/p})\mathbb{I}\{X_i > n^{1/p} \}+ (X_i+n^{1/p})\mathbb{I}\{X_i < -n^{1/p} \}. \]
  This immediately implies
  \[ |X_i - X_i^{\oplus}| \leq |X_i| \mathbb{I}\{|X_i|>n^{1/p} \}\leq n^{1/p -1} |X_i|^p\mathbb{I}\{|X_i|>n^{1/p}\}, \]
  which, upon invoking Condition \ref{cond:ui}, yields
\begin{equation}\label{eq3.11}
  \max_{1\leq i \leq n} |\IE(X_i - X_i^{\oplus})| =o(n^{{1/p}-1}).
\end{equation}\color{black}
Therefore,
\begin{equation}\label{eq3}
  \max_{1\leq i \leq n} | \IE(S_i - \sum_{j=1}^i X_j^{\oplus})|\leq \sum_{j=1}^n | \IE(X_j - X_j^{\oplus})| \leq n \max_{1\leq j \leq n} | \IE(X_j - X_j^{\oplus})| = o(n^{1/p}),
\end{equation}
which by (\ref{eq2}) and (\ref{eq3}) completes the proof. 
\end{proof}
\subsubsection{$m$-dependence}\label{mdep}
$m$-dependence approximation is a useful tool which allows us to handle the truncated process in terms of the innovations $\varepsilon_i$. This technique has been studied extensively in the literature; see for example \cite{LiuLin2009} and \cite{kmt}. For a suitably chosen $m$, one looks at the conditional mean $\IE(X_i|\varepsilon_i, \ldots, \varepsilon_{i-m})$. More formally, let $m=\lfloor n^L \rfloor$. Define the $m$-dependent partial sum process
\begin{align}
\tilde{S}_i=\sum_{j=1}^i \tilde{X}_j, \text{ where }\tilde{X_j}=\IE(X_j^{\oplus} | \varepsilon_{j}, \ldots, \varepsilon_{j-m})- \IE(X_j^{\oplus}) \label{m-dep}.
\end{align}
We will need the following proposition.
\begin{proposition} \label{prop:prop2}
Under the conditions of Theorem \ref{thm:GA}, $\max_{1 \leq i \leq n} |S_i^{\oplus} - \tilde{S_i}| = o_{\IP}(n^{1/p})$. \label{prop2}
\end{proposition}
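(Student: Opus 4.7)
The plan is to bound $\max_{1\le i\le n}|S_i^\oplus - \tilde S_i|$ in probability by $o(n^{1/p})$. Writing $D_j := X_j^\oplus - \IE(X_j^\oplus\mid \varepsilon_j,\ldots,\varepsilon_{j-m})$, the means of $X_j^\oplus$ cancel and $S_i^\oplus - \tilde S_i = \sum_{j=1}^i D_j$, so the task reduces to a maximal bound on this centered partial-sum process.

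My first step would be to expand each $D_j$ by telescoping over the level of conditioning,
\begin{equation*}
D_j = \sum_{s>m} \Delta_j^{(s)},\qquad \Delta_j^{(s)} := \IE(X_j^\oplus\mid \varepsilon_j,\ldots,\varepsilon_{j-s}) - \IE(X_j^\oplus\mid \varepsilon_j,\ldots,\varepsilon_{j-s+1}),
\end{equation*}
which is valid by $L^p$-martingale convergence as $s\to\infty$. A coupling with an independent copy $\varepsilon'_{j-s}$ together with Jensen's inequality gives the sharp pointwise estimate $\|\Delta_j^{(s)}\|_p \le \delta_p^\oplus(s) \le \delta_p(s)$, since the two nested conditional expectations differ only through the influence of $\varepsilon_{j-s}$. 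In particular $\|D_j\|_p \lesssim \Theta_{m+1,p}\lesssim m^{-A}$.

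Next I would interchange the two summations, $\sum_{j=1}^i D_j = \sum_{s>m}\sum_{j=1}^i \Delta_j^{(s)}$, and bound each inner maximal partial sum separately. For fixed $s$ the sequence $\{\Delta_j^{(s)}\}_j$ is centered and has its terms measurable with respect to overlapping blocks of $s+1$ consecutive innovations. Splitting $\{1,\ldots,n\}$ into $s+1$ interlaced sub-sequences (indices spaced $s+1$ apart) turns the inner sum into a superposition of $s+1$ genuine martingale-difference series in the natural filtration $(\mathcal{F}_k)$, and Burkholder's inequality combined with Doob's maximal inequality---used exactly as in the proof of Lemma \ref{pmoment}---produce a per-slab $L^p$-bound of the form $\sqrt{(s+1)\,n}\,\delta_p(s)$. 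Summing over $s>m$ and using the polynomial decay from Condition \ref{cond:fdm3} (via an Abel summation-by-parts estimate on $\sum_{s>m}\sqrt{s}\,\delta_p(s)$) yields the master inequality
\begin{equation*}
\bigl\|\max_{1\le i\le n}\bigl|\textstyle\sum_{j=1}^i D_j\bigr|\bigr\|_p \;\lesssim\; \sqrt{n}\,m^{1/2-A}.
\end{equation*}

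To finish, with $m=\lfloor n^L\rfloor$ Markov's inequality gives $\max_{1\le i\le n}|S_i^\oplus-\tilde S_i|=O_\IP(\sqrt{n}\,m^{1/2-A})$, and the algebraic relations \eqref{eq1n}--\eqref{eq4n}---calibrated so that the same $L$ simultaneously controls truncation, $m$-dependence and blocking in the overall proof---force this to be $o(n^{1/p})$. The hard part will be the interlaced-martingale construction for the overlapping $\Delta_j^{(s)}$: the naive triangle inequality only yields $n\,\delta_p(s)$ per slab, whereas extracting the sharp $\sqrt{s\,n}\,\delta_p(s)$ bound requires exploiting the conditional-mean-zero property of $\Delta_j^{(s)}$ inside a martingale framework, and it is precisely this sharper maximal estimate together with the Abel summation that dictates where the exponent constraint on $L$ and $A$ enters.
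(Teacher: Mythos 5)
Your proposal is correct, but it takes a genuinely different route from the paper, whose entire proof is a citation: Lemma A1 of \cite{LiuLin2009} gives $\|\max_{1\le i\le n}|S_i^\oplus-\tilde S_i|\|_p\le Cn^{1/2}\Theta_{m+1,p}$ directly, and \eqref{eq1n} finishes. Your self-contained argument differs in the choice of projection: you telescope over the \emph{finite-window} conditional expectations $\IE(X_j^\oplus\mid\varepsilon_{j-s},\ldots,\varepsilon_j)$, whose increments $\Delta_j^{(s)}$ live on overlapping windows of $s+1$ innovations and are not adapted to any single filtration, forcing the interlacing into $s+1$ independent subsequences. That interlacing is valid (disjoint windows of i.i.d.\ inputs give genuine independence, and your coupling-plus-Jensen bound $\|\Delta_j^{(s)}\|_p\le\delta_p(s)$ is the standard Theorem 1 of \cite{Wu2005} argument), but it costs you a factor $\sqrt{s}$: you end with $\sqrt{n}\sum_{s>m}\sqrt{s}\,\delta_p(s)\lesssim\sqrt{n}\,m^{1/2-A}$, whereas the one-sided forward projections $P_{j-s}X_j^\oplus$ used in Liu--Lin (and in the paper's own Lemma \ref{pmoment}) form an exact martingale difference sequence in $j$ for each fixed $s$, so Burkholder gives $\sqrt{n}\,\delta_p(s)$ per level and $\sqrt{n}\,\Theta_{m+1,p}\lesssim\sqrt{n}\,m^{-A}$ overall. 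Your lossier bound still closes: since \eqref{eq:Alowerbound} forces $A>1$, one has $L(A-\tfrac12)\ge LA/2$, so \eqref{eq1n} yields $\tfrac12-\tfrac1p-L(A-\tfrac12)<0$ and hence $\sqrt{n}\,m^{1/2-A}=o(n^{1/p})$ — you should make this one-line verification explicit, since it is where the margin is consumed. One wording caution: what you call the ``per-slab'' bound $\sqrt{(s+1)n}\,\delta_p(s)$ is really the aggregate over all $s+1$ slabs (each slab contributes $\sqrt{n/(s+1)}\,\delta_p(s)$); read the other way it would give $(s+1)^{3/2}\sqrt{n}\,\delta_p(s)$, which does \emph{not} suffice for $A$ close to $1$. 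Your stated master inequality corresponds to the correct reading, so the proof stands; what the paper's route buys is the sharp $m^{-A}$ rate and a two-line proof, while yours buys independence from the external lemma at the price of half a power of $m$.
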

\begin{proof}
Using Lemma A1 of \cite{LiuLin2009} (\color{black} although the lemma holds for stationary random variables, however the proof can be verified to be readily applicable to the non-stationary case\color{black}) and \eqref{eq1n}, we have
\begin{equation}
  \| \max_{1 \leq i \leq n} | S_i^{\oplus}- \tilde{S_i} | \|_p \leq C n^{1/2} \Theta_{1+m, p} = o(n^{1/p}),
\end{equation}
which completes the proof. 
\end{proof}
\subsubsection{Blocking} \label{blocksec}
We will form blocks of sums of $m$-dependent process obtained above. Such blocking will make it easier to control the dependency structure of our process, resulting in optimal error bounds. For the same choice of $m$ as in Section \ref{mdep} , and for $1 \leq i \leq n$, denote
\begin{equation}\label{notation}
  l_i= \bigg\lceil\frac{\lceil i/m \rceil}{3}\bigg \rceil.
\end{equation} 
For $k=1, \ldots, \lceil n/m \rceil$, consider blocks of $m$-dependent processes
\begin{equation*}
  \tilde{B}_k=\sum_{j=(k-1)m+1}^{(km) \wedge n} \tilde{X_j}.
\end{equation*}
 Similarly for our original process we will define blocks 
\begin{equation}\label{blockdefn}
  {B}_k=\sum_{j=(k-1)m+1}^{(km) \wedge n} {X_j} .
\end{equation}
For the blocking approximation we approximate the partial sum process $\Tilde{S}_i$ by \[ S_{i}^{\diamond}=\sum_{l=1}^{l_i}\sum_{k=3l-2}^{3l \wedge \lceil n/m \rceil} \tilde{B}_k.\] The following proposition justifies the blocking approximation.
\begin{proposition} \label{prop3}
Under conditions of Theorem \ref{thm:GA}, $\max_{1\leq i \leq n}| \tilde{S}_i - S_i^{\diamond}| = o_{\IP}(n^{1/p})$. 
\end{proposition}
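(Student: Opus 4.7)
The plan is to identify $\tilde{S}_i - S_i^{\diamond}$ as a short tail sum confined to a single ``outer block'' of length at most $3m$, and then apply the sharp high-moment estimate of Lemma \ref{Rosenthal} together with a union bound over outer blocks to obtain the optimal $n^{1/p}$ rate.

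First I would observe that with the indexing in \eqref{notation}, whenever $3(l-1)m < i \le (3lm) \wedge n$ one has $l_i = l$, so that
\[
S_i^{\diamond} \;=\; \sum_{k=1}^{(3l)\wedge \lceil n/m \rceil} \tilde{B}_k \;=\; \sum_{j=1}^{(3lm)\wedge n} \tilde{X}_j, \qquad \text{hence}\qquad S_i^{\diamond} - \tilde{S}_i \;=\; \sum_{j=i+1}^{(3lm)\wedge n} \tilde{X}_j.
\]
This is a sum of at most $3m-1$ consecutive $\tilde{X}_j$'s anchored at the right endpoint of outer block $l$. Setting $a_l = ((3lm)\wedge n) - 3m$ and writing the tail sum as a difference of two forward partial sums starting at index $a_l + 1$, I would bound
\[
\max_{3(l-1)m < i \le (3lm)\wedge n}\bigl|S_i^{\diamond} - \tilde{S}_i\bigr| \;\le\; 2\,\max_{1 \le t \le 3m}\Bigl|\sum_{j=a_l+1}^{a_l+t}\tilde{X}_j\Bigr|.
\]

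Next I would apply Lemma \ref{Rosenthal} (with $m$ replaced by $3m$; the Rosenthal-type input used in its proof is insensitive to the exact block length up to universal constants) to conclude that, uniformly in $l$,
\[
\mathbb{E}\Bigl[\max_{1 \le t \le 3m}\Bigl|\sum_{j=a_l+1}^{a_l+t}\tilde{X}_j\Bigr|^{\alpha}\Bigr] \;=\; o\bigl(m\, n^{\alpha/p - 1}\bigr).
\]
A union bound over the $\lceil n/(3m)\rceil$ outer blocks combined with Markov's inequality then yields, for every fixed $\varepsilon > 0$,
\[
\mathbb{P}\Bigl(\max_{1 \le i \le n}\bigl|\tilde{S}_i - S_i^{\diamond}\bigr| > \varepsilon n^{1/p}\Bigr) \;\le\; \frac{2^{\alpha}}{\varepsilon^{\alpha} n^{\alpha/p}}\cdot\Bigl\lceil\frac{n}{3m}\Bigr\rceil\cdot o\bigl(m\, n^{\alpha/p-1}\bigr) \;=\; o(1),
\]
which is exactly the claim $\max_i |\tilde{S}_i - S_i^{\diamond}| = o_{\mathbb{P}}(n^{1/p})$.

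The main delicacy, and indeed the motivation for the two-layer blocking in \eqref{notation} instead of a single block, is that the per-block moment bound must be a factor of $m$ sharper than the naive $p$-th moment bound from Lemma \ref{pmoment} (which alone would only give $O_{\mathbb{P}}(n^{1/p} m^{1/2 - 1/p})$, and $m \to \infty$ ruins the rate). This sharpening is exactly what Lemma \ref{Rosenthal} delivers through the $\alpha$-th moment with $\alpha > p$ calibrated by \eqref{eq3n}--\eqref{eq4n}; its applicability to the $m$-dependent truncated blocks is justified because the functional dependence measure of $\tilde{X}_j$ is dominated by $\delta_p(\cdot)$ of the original process, so Condition \ref{cond:fdm3} transfers verbatim. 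The only remaining bookkeeping is the boundary case where $(3lm)\wedge n < 3lm$, which changes neither the bound on the number of terms nor the Rosenthal-type estimate.
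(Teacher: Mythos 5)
Your proposal is correct and follows essentially the same route as the paper: identify $\tilde{S}_i - S_i^{\diamond}$ as a partial sum confined to a single outer block of length $O(m)$, invoke the $\alpha$-th moment bound $o(mn^{\alpha/p-1})$ of Lemma \ref{Rosenthal}, and conclude by a union bound over the $\lceil n/(3m)\rceil$ outer blocks together with Markov's inequality. The only differences are cosmetic (your explicit factor of $2$ from writing the tail as a difference of anchored partial sums, and your remark on extending the lemma from windows of length $m$ to $3m$), both of which the paper absorbs into constants.
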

\begin{proof}
\color{black}For $1\leq k \leq n$ and $l > k$, \color{black} let $\tilde{S}_{k,l}=\sum_{j=k+1}^l \tilde{X}_j$ with $S_{k,k}=0$. Note that, for $\delta>0$, 
\begin{align*}
\IP(\max_{1\leq i \leq n} |\tilde{S}_i - S_i^{\diamond}| > n^{1/p} \delta) &\leq l_n \max_{1\leq l \leq l_n} \IP\left(\max_{3lm \leq j \leq 3(l+1)m} |\tilde{S}_{3lm,j}| > n^{1/p}\delta\right)\\
& \leq C\frac{n}{3m} \max_{1\leq l \leq l_n} \frac{\IE(\max_{3lm \leq j \leq 3(l+1)m}|\tilde{S}_{3lm, j}|^{\alpha} )}{\delta^\alpha n^{\alpha/p}} \\
& =o(1). \ \text{(By Lemma \ref{Rosenthal})} 
\end{align*}
Therefore, $\max_{1\leq i \leq n}| \tilde{S}_i - S_i^{\diamond}| = o_{\IP}(n^{1/p})$. 
\end{proof}

%%%Let
%%%\begin{equation}
  %%% S_i(a) = \sum_{j=1}^{l} Y_j(\boldsymbol{a}_{3j}, \boldsymbol{a}_{3j+3})
%%%\end{equation}
\subsubsection{Conditional Gaussian approximation}
The blocking step in Section \ref{blocksec} yields us $m$-dependent blocks. The dependence between these blocks is induced by the shared innovations $\varepsilon_i$'s along the border. In this subsection, we condition on these shared $\varepsilon_i$'s and apply Theorem 1 of \cite{Sakhanenko2006} to obtain a conditional Gaussian approximation. 

In order to properly explain the conditioning argument, we require some notation. Let $\boldsymbol{\eta}=(\ldots,\boldsymbol{\eta}_{-3}, \boldsymbol{\eta}_0, \boldsymbol{\eta}_3, \ldots)$, where $\boldsymbol{\eta}_{k}=(\varepsilon_{(k-1)m+1}, \ldots, \varepsilon_{km})$. We will use an argument conditioning via $\boldsymbol{\eta}$. To facilitate such arguments, denote by $\boldsymbol{a}$ an arbitrary deterministic sequence $(\ldots, \boldsymbol{a}_{-3}, \boldsymbol{a}_0, \boldsymbol{a}_3, \ldots)$ with $\boldsymbol{a}_{k}=(a_{(k-1)m+1}, \ldots, a_{km})$.

Let $\Tilde{g}_i$ be measurable functions such that we can write $\Tilde{X}_i=\Tilde{g}_i(\varepsilon_{i-m}, \ldots, \varepsilon_i).$
Recall $l_n$ from \eqref{notation}. For $1\leq k \leq l_n$, define the random functions, 
\begin{align*}
  \Tilde{B}_{3k-2}(\boldsymbol{a}_{3k-3}) &= \sum_{i=(3k-3)m+1}^{(3k-2)m} \Tilde{g}_i(a_{i-m}, \ldots, a_{(3k-3)m}, \varepsilon_{(3k-3)m+1}, \ldots, \varepsilon_i),\\
  \Tilde{B}_{3k-1} &= \sum_{i=(3k-2)m+1}^{(3k-1)m} \Tilde{g}_i(\varepsilon_{i-m}, \ldots, \varepsilon_{(3k-2)m}, \ldots, \varepsilon_i),\\
  \Tilde{B}_{3k}(\boldsymbol{a}_{3k})&=\sum_{i=(3k-1)m+1}^{3km} \Tilde{g}_i(\varepsilon_{i-m}, \ldots, \varepsilon_{(3k-1)m}, a_{(3k-1)m+1}, \ldots, a_i) .
\end{align*}

\noindent \ignore{Let $\IE^{*}$ denote the conditional expectation given $\boldsymbol{a}$, that is $\IE^{*}(\cdot)=\IE(\cdot | \boldsymbol{\eta}=\boldsymbol{a})$.} For $1 \leq l \leq l_n$, let
\begin{align}
  M_{3l}(\boldsymbol{a}_{3l})&=\IE^{}(\tilde{B}_{3l}(\boldsymbol{a}_{3l})), \nonumber\\
  M_{3l-2}(\boldsymbol{a}_{3l-3}) &= \IE^{}(\tilde{B}_{3l-2}(\boldsymbol{a}_{3l-3})), \ \text{and} \nonumber \\
  Y_l^{\boldsymbol{a}}:= Y_l(\boldsymbol{a}_{3l-3}, \boldsymbol{a}_{3l})&= \tilde{B}_{3l-2}(\boldsymbol{a}_{3l-3}) - M_{3l-2}(\boldsymbol{a}_{3l-3}) + \tilde{B}_{3l-1}+ \tilde{B}_{3l}(\boldsymbol{a}_{3l})-M_{3l}(\boldsymbol{a}_{3l}). 
\end{align}

%%% tackle last block here

%%%Let
%%%\begin{equation}
  %%% S_i(a) = \sum_{j=1}^{l} Y_j(\boldsymbol{a}_{3j}, \boldsymbol{a}_{3j+3})
%%%\end{equation}
In the rest of this sub-section, unless otherwise specified, we will treat $\boldsymbol{a}$ as fixed. 
Note that in $Y_l^{\boldsymbol{a}}$'s, we have combined three blocks together to combine an "outer" layer of blocks. Further, due to our conditioning (by $\boldsymbol{\eta}=\boldsymbol{a}$), $Y_l^{\boldsymbol{a}}$'s are independent. The corresponding mean and variance functionals, for $1\leq k \leq l_n$, are respectively,
\begin{align}\label{meanvarY}
  M_k(\boldsymbol{a}) &= \sum_{l=1}^{k} [M_{3l-2}(\boldsymbol{a}_{3l-3}) + M_{3l}(\boldsymbol{a}_{3l})],\\
  Q_k(\boldsymbol{a}) &= \sum_{l=1}^{k} V_l(\boldsymbol{a}_{3l-3}, \boldsymbol{a}_{3l}),
\end{align}
where $V_l(\boldsymbol{a}_{3l-3}, \boldsymbol{a}_{3l})$ is the variance of $Y_l^{\boldsymbol{a}}$. Note that $\IE(Y_l^{\boldsymbol{a}})=0$. Let $C_{3l-1}(\boldsymbol{\eta}_{3l-2})=\IE[\tilde{B}_{3l-1}| \boldsymbol{\eta}_{3l-2}]$. We will decompose $V_l$ as follows:
\begin{align*}
  &V_l(\boldsymbol{a}_{3l-3}, \boldsymbol{a}_{3l})\\ &= \IE^{}(Y_l(\boldsymbol{a}_{3l-3}, \boldsymbol{a}_{3l})^2)\\
  &= \IE^{}\left[\left(\IE^{}[Y_l(\boldsymbol{a}_{3l-3}, \boldsymbol{a}_{3l}) | \boldsymbol{\eta}_{3l-2}, \boldsymbol{\eta}_{3l-1}] - \IE^{}[Y_l(\boldsymbol{a}_{3l-3}, \boldsymbol{a}_{3l}) | \boldsymbol{\eta}_{3l-2}] \right)^2\right] + \IE^{}\left[ \left(\IE^{}[Y_l(\boldsymbol{a}_{3l-3}, \boldsymbol{a}_{3l}) | \boldsymbol{\eta}_{3l-2}]\right)^2\right]\\
  &= \IE^{}\left[\left(\tilde{B}_{3l-1}- C_{3l-1}(\boldsymbol{\eta}_{3l-2}) + \tilde{B}_{3l}(\boldsymbol{a}_{3l})-M_{3l}(\boldsymbol{a}_{3l})\right)^2\right] \\
  & \hspace*{2cm}+ \IE^{}\left[\left(\tilde{B}_{3l-2}(\boldsymbol{a}_{3l-3}) - M_{3l-2}(\boldsymbol{a}_{3l-3})+ C_{3l-1}(\boldsymbol{\eta}_{3l-2}) \right)^2 \right]\\
  &:= \tilde{V}_{2l}(\boldsymbol{a}_{3l}) + \tilde{V}_{2l-1}(\boldsymbol{a}_{3l-3}).
\end{align*}
 Let 
\begin{equation}\label{eq:vl0}
  V^{0}_l(\boldsymbol{a}_{3l})= \tilde{V}_{2l}(\boldsymbol{a}_{3l}) + \tilde{V}_{2l+1}(\boldsymbol{a}_{3l}).
\end{equation}
Then, for all $t\in \mathbb{N}$, 
\begin{equation} \label{eq:3.21}
  \sum_{l=1}^t V_l(\boldsymbol{a}_{3l-3}, \boldsymbol{a}_{3l}) = \tilde{V}_{1}(\boldsymbol{a}_0) + \sum_{l=1}^{t-1} V^{0}_l(\boldsymbol{a}_{3l})+ \tilde{V}_{2t}(\boldsymbol{a}_{3t}).
\end{equation}
Let, for $\alpha$ satisfying \eqref{eq1n}-\eqref{eq4n}, 
\allowdisplaybreaks
\begin{align*}
  L_{\alpha}(\boldsymbol{a},x) &= \sum_{l=1}^{l_n} \IE \min \{|Y_l(\boldsymbol{a}_{3l-3}, \boldsymbol{a}_{3l})/x|^{\alpha}, |Y_l(\boldsymbol{a}_{3l-3}, \boldsymbol{a}_{3l})/x|^2\}\\
  &\leq \sum_{l=1}^{l_n} \IE \left[|Y_l(\boldsymbol{a}_{3l-3}, \boldsymbol{a}_{3l})/x|^{\alpha} \right].
\end{align*}
Then by Theorem 1 of \cite{Sakhanenko2006}, there exists a probability space $(\Omega_{\boldsymbol{a}}, \mathcal{A}_{\boldsymbol{a}}, \mathbf{P}_{\boldsymbol{a}})$ where we can define random variables $(\mathcal{R}_l(\boldsymbol{a}))_{1 \leq l \leq l_n} =_{\mathbb{D}} (Y_l(\boldsymbol{a}_{3l-3}, \boldsymbol{a}_{3l}))_{1 \leq l \leq l_n}$, and Brownian motion $\mathbf{B}_{\boldsymbol{a}}$, such that 
\begin{equation} \label{condnGA}
  \IP^{} \left(\max_{1\leq i \leq n} | \Gamma_i(\boldsymbol{a})- \mathbf{B}_a(Q_{l_i}(\boldsymbol{a}))| \geq c \alpha x \right) \leq L_{\alpha}(\boldsymbol{a},x), \mbox{ where } \Gamma_i(\boldsymbol{a})=\sum_{j=1}^{ l_i } \mathcal{R}_j(\boldsymbol{a}).
\end{equation}
Here $c>0$ is an absolute constant. Now we will incorporate the randomness coming from $\boldsymbol{\eta}$ in our conditional Gaussian approximation \eqref{condnGA}.
Using $x=n^{1/p}$ and Lemma \ref{Rosenthal},
\begin{align*}
\IE(L_{\alpha}(\boldsymbol{\eta},x)) \leq \frac{n^{1-\alpha/p}}{m} C \max_{1\leq l\leq n-3m}\IE[|\Tilde{S}_{l, 3m+l}|^{\alpha}] = o(1).
\end{align*}
Thus, we have, 
\begin{equation}\label{eq:condn1}
  \max_{1\leq i \leq n} | \Gamma_i(\boldsymbol{\eta})- \mathbf{B}_{\boldsymbol{\eta}}(Q_{l_i}(\boldsymbol{\eta}))|= o_{\IP}(n^{1/p}).
\end{equation}\color{black}
Similar to \cite{kmt}, the probability space for the above in-probability convergence is 
\[
\left(\Omega_*, \mathcal{A}_*, \mathrm{P}_*\right)=(\Omega, \mathcal{A}, \mathbb{P}) \times \prod_{\tau \in \Omega}\left(\Omega_{\boldsymbol{\eta}(\tau)}, \mathcal{A}_{\boldsymbol{\eta}(\tau)}, \mathbf{P}_{\boldsymbol{\eta}(\tau)}\right),
\]
where $(\Omega, \mathcal{A}, \mathbb{P})$ is the probability space on which the random variables $\left(\varepsilon_i\right)_{i \in \mathbb{Z}}$ are defined and, for a set $A \subset \Omega_*$ with $A \in \mathcal{A}_*$, the probability measure $\mathrm{P}_*$ is defined as
$$
\mathrm{P}_*(A)=\int_{\Omega} \mathbf{P}_{\boldsymbol{\eta}(\omega)}\left(A_\omega\right) \mathbb{P}(d \omega),
$$
where $A_\omega$ is the $\omega$-section of $A$. Here we recall that, for each $\boldsymbol{a},\left(\Omega_{\boldsymbol{a}}, \mathcal{A}_{\boldsymbol{a}}, \mathbf{P}_{\boldsymbol{a}}\right)$ is the probability space carrying $\mathbf{B}_{\boldsymbol{a}}$ and $\mathcal{R}_l(\boldsymbol{a})$ given $\boldsymbol{\eta}=\boldsymbol{a}$. On the probability space $\left(\Omega_*, \mathcal{A}_*, \mathrm{P}_*\right)$, the random variable $\mathcal{R}_{l}(\boldsymbol{\eta}) $ is defined as $\mathcal{R}_{l}(\boldsymbol{\eta})(\omega, \theta(\cdot))=$ $\mathcal{R}_{l}(\boldsymbol{\eta}(\omega))(\theta(\omega))$, where $(\omega, \theta(\cdot)) \in \Omega_*, \theta(\cdot)$ is an element in $\prod_{\tau \in \Omega} \Omega_{\boldsymbol{\eta}(\tau)}$ and $\theta(\tau) \in \Omega_{\boldsymbol{\eta}(\tau)}, \tau \in \Omega$. The other random processes $M_{l_i}({\boldsymbol{\eta}})$ and $\mathbb{B}_{\boldsymbol{\eta}}\left(Q_{l_i}(\boldsymbol{\eta})\right)$ can be similarly defined. 
\color{black}
\subsubsection{Unconditional Gaussian approximation}
In this subsection, we lift the condition on the shared innovations and work with $\Gamma_i(\boldsymbol{\eta})$ and $Q_{l_i}(\boldsymbol{\eta})$. 
Again, given $\boldsymbol{a}$, using \eqref{eq:3.21}, for i.i.d. standard normal random variables $(Z_{k}^{\boldsymbol{a}})_{k=1}^{l_n}$, let \[\omega_k(\boldsymbol{a}):=\sum_{l=1}^{k-1} \sqrt{V_l^{0}(\boldsymbol{a}_{3l})} Z_{l}^{\boldsymbol{a}},\text{ and }R_k(\boldsymbol{a})=\sqrt{\tilde{V}_1(\boldsymbol{a}_0)} Z_0^{\boldsymbol{a}}+\sqrt{\tilde{V}_{2k}(\boldsymbol{a}_{3k})} Z_k^a,\]
such that we can write
\[\mathbf{B}_a(Q_k(\boldsymbol{a}))= \omega_k(\boldsymbol{a})+R_k(\boldsymbol{a}).\]
For $1 \leq k \leq l_n$, denote $Z_k^{\eta}:=Z_k$ to be the i.i.d. standard normal random variables independent of $\varepsilon$'s, and define
\begin{equation}
  \Phi_i = \sum_{k=1}^{l_i-1} \sqrt{V_k^{0}(\boldsymbol{\eta}_{3k})} Z_k.
\end{equation}
Note that $(\Phi_i | \{\boldsymbol{\eta}=\boldsymbol{a}\})_{i\geq 1} =_{\mathbb{D}} (\omega_{l_i}(\boldsymbol{a}))_{i \geq 1}$. Hence it holds
$(\Phi_i)_{i \geq 1}=_{\mathbb{D}} (\omega_{l_i}(\boldsymbol{\eta}))_{i \geq 1 }$.
By Jensen's inequality and Lemma \ref{Rosenthal}, we have
\begin{equation} \label{eq:small1}
  \max_{1 \leq k \leq l_n} \|\tilde{V}_{2k}(\boldsymbol{\eta}_{3k})\|^{\alpha/2} = o(mn^{\alpha/p-1}),
\end{equation}
which implies, for $C>0$,  
\begin{align*}
  \IP(\max_{1\leq k \leq l_n} |\tilde{V}_{2k}(\boldsymbol{\eta}_{3k})| \geq Cn^{2/p}) \leq \sum_{k=1}^{l_n} \IP(|\tilde{V}_{2k}(\boldsymbol{\eta}_{3k})| \geq Cn^{2/p})&\leq C^{-\alpha/2}\frac{n}{3m} n^{-\alpha/p}\|\tilde{V}_{2k}(\boldsymbol{\eta}_{3k})\|^{\alpha/2} \\ &=o(1).
\end{align*}
Therefore, 
\begin{equation}\label{eq:small2}
  \max_{1 \leq k \leq l_n} |\tilde{V}_{2k}(\boldsymbol{\eta}_{3k})|=o_{\IP}(n^{2/p}).
\end{equation}
Similarly, $|\tilde{V}_1(\boldsymbol{\eta}_0)|=o_{\IP}(n^{2/p})$. Thus, 
\begin{equation} \label{eqn:uncondn1}
  \max_{1\leq i \leq n} |\mathbf{B}_{\boldsymbol{\eta}}(Q_{l_i}(\boldsymbol{\eta})) - \omega_{l_i}(\boldsymbol{\eta}) | = o_{\IP}(n^{1/p}). 
\end{equation}
From (\ref{eq:condn1}) and (\ref{eqn:uncondn1}), we have
\begin{equation} \label{3.30}
  \max_{1 \leq i \leq n} | \Gamma_i(\boldsymbol{\eta})- \omega_{l_i}(\boldsymbol{\eta})| = o_{\IP}(n^{1/p}).
\end{equation}
Recall \eqref{meanvarY}. We have the following distributional equality
\begin{equation} \label{3.31}
  (\Gamma_i(\boldsymbol{\eta}) + M_{l_i}(\boldsymbol{\eta}))_{1 \leq i \leq n} =_{\mathbb{D}} (S_i^{\diamond})_{1 \leq i \leq n}.
\end{equation}
In view of \eqref{3.30} and \eqref{3.31}, we need to prove Gaussian approximation for $\Phi_i +M_{l_i}(\boldsymbol{\eta})$. For $1\leq i \leq n$ let
\begin{equation} \label{A_l}
  S_{i}^{\natural}=\sum_{l=1}^{l_i} \tilde{A}_l, \text{ where }\tilde{A}_l = \sqrt{V_l^{0}(\boldsymbol{\eta}_{3l})} Z_l + M_{3l}(\boldsymbol{\eta}_{3l}) + M_{3l+1}(\boldsymbol{\eta}_{3l}), 1 \leq l \leq l_n.
\end{equation}
 Note that by the same argument as in (\ref{eq:small1}) and (\ref{eq:small2}), we have
\begin{equation} \label{3.33}
  \max_{1\leq i \leq n} | \Phi_i + M_{l_i}(\boldsymbol{\eta}) - S_i^{\natural} |= \max_{1 \leq i \leq n} |\sqrt{V_{l_i}^0(\boldsymbol{\eta}_{3l})}Z_{l_i}-M_{3l_i+1}(\boldsymbol{\eta}_{3l}) + M_{1}(\boldsymbol{\eta}_{0}) | = o_{\IP}(n^{1/p}).
\end{equation}
Hence, by Theorem 1 of \cite{Sakhanenko2006} (ignoring the technicalities of enriched space), on the same probability space on which $\Tilde{A}_l$'s are defined, we have a standard Brownian motion $\mathbb{B}(\cdot)$ such that
\begin{equation} \label{3.34}
  \max_{1\leq i \leq n} |S_{i}^{\natural} - \mathbb{B}(\sigma_i^2)| = o_{\IP}(n^{1/p}), \text{ where $\sigma_i^2=\sum_{l=1}^{l_i} \|\tilde{A}_l\|^2$.}
\end{equation}
\subsubsection{Approximation of the variance}
In this final step of our proof, we aim to provide an approximation to $\sigma_i^2$ in terms of the variance of the truncated random process $X_i^{\oplus}$. To that end, we start from the expression of $V_l^0(\boldsymbol{a}_{3l})$ in equation (\ref{eq:vl0}):
\begin{align}
  V_l^0(\boldsymbol{a}_{3l})&=\| \tilde{B}_{3l-1} \|^2 + \| \tilde{B}_{3l}(\boldsymbol{a}_{3l}) \|^2 - M_{3l}^2(\boldsymbol{a}_{3l}) + \|\Tilde{B}_{3l+1}(\boldsymbol{a}_{3l}) \|^2 \nonumber\\ & \quad\quad\quad - M_{3l+1}^2(\boldsymbol{a}_{3l}) - \|C_{3l-1}(\boldsymbol{\eta}_{3l-2})\|^2 + \|C_{3l+2}(\boldsymbol{\eta}_{3l+1})\|^2 \nonumber\\ & \quad\quad\quad + 2\IE(\Tilde{B}_{3l-1}\Tilde{B}_{3l}(\boldsymbol{a}_{3l}))+ 2\IE(\Tilde{B}_{3l+1}(\boldsymbol{a}_{3l})C_{3l+2}(\boldsymbol{\eta}_{3l+1})). \label{v_l0}
\end{align}
Using \eqref{v_l0} in view of the definition of $A_l$ in \eqref{A_l} yields, 
\begin{align}\label{eq:tildeA}
  \Tilde{v}_l:= \| \tilde{A}_l\|^2= \| \tilde{B}_{3l-1} \|^2 &+ \| \tilde{B}_{3l} \|^2 + \| \tilde{B}_{3l+1} \|^2 \nonumber\\ & \quad\quad\quad + 2\IE(\tilde{B}_{3l-1} \tilde{B}_{3l}) + 2\IE(\tilde{B}_{3l} \tilde{B}_{3l+1}) + 2 \IE(\tilde{B}_{3l+1} \tilde{B}_{3l+2}) \nonumber\\ & \quad\quad\quad  - \|C_{3l-1}(\boldsymbol{\eta}_{3l-2})\|^2 + \|C_{3l+2}(\boldsymbol{\eta}_{3l+1})\|^2.
\end{align}
Hence, 
\begin{align*}
  \sigma_i^2 &= \sum_{l=1}^{l_i} (\|\Tilde{B}_{3l-1}\|^2 + \|\Tilde{B}_{3l} \|^2 + \|\Tilde{B}_{3l+1}^2\| + 2 \IE[\Tilde{B}_{3l-1}\Tilde{B}_{3l} + \Tilde{B}_{3l}\Tilde{B}_{3l+1}+\Tilde{B}_{3l+1}\Tilde{B}_{3l+2}] )
  \\ & \hspace*{2cm}- \|C_{2}(\boldsymbol{\eta}_1)\|^2 + \|C_{3l_i+2}(\boldsymbol{\eta}_{3l_i+1})\|^2.
\end{align*}
We define the functional dependence measure for the process $\Tilde{X}_i$ as
\[\Tilde{\delta}_{p}(k)=\sup_{i}\|\Tilde{X}_i-\Tilde{X}_{i, {i-k}}\|_p, \]
where $p > 2$ is same as in the statement of Theorem \ref{thm:GA}. We can easily have the following simple relation:
 \begin{equation}\label{ineqfdm}
 \quad\Tilde{\delta}_{p}(k) \leq\delta_{p}(k).
 \end{equation}
Lemma \ref{lemmacp}, Lemma \ref{pmoment} and \eqref{ineqfdm} along with observing that $\max_{1 \leq k \leq \lceil n/m \rceil} \IE(B_k^2)= O(m)$ yields
\[ \max_{1 \leq i \leq n} |\|\Tilde{S}_i \|^2 - \sigma_i^2 | =O(m) = o(n^{(\alpha/p-1)/(\alpha/2-1)}),
\]
in view of \eqref{eq2n}. This implies 
\begin{equation} \label{eq:tildeapprox}
  \max_{1\leq i \leq n} |\mathbb{B}(\sigma_i^2) - \mathbb{B}(\|\Tilde{S}_i\|^2)|=o_{\IP}(n^{(\alpha/p-1)/(\alpha-2)}\sqrt{\log n})=o_{\IP}(n^{1/p}).
\end{equation}
Now, using $\|S_n^{\oplus} - \Tilde{S}_n \| \leq \sqrt{n}\Theta_{m,2} \leq \sqrt{n}\Theta_{m,p}$ and \eqref{eq1n}, we obtain, 
\[ |\|S_i^{\oplus}\|^2 - \|\Tilde{S}_i \|^2 | \leq \|S_i^{\oplus} - \Tilde{S}_i\|\|S_i^{\oplus} + \Tilde{S}_i\| \leq n\Theta_{m,p}\Theta_{0,p}=O(nm^{-A})=o(n^{2/p}/\log n). \]
Therefore,
\begin{equation} \label{eq:finalapprox}
\max_{1 \leq i \leq n}| \mathbb{B}(\|\Tilde{S}_i\|^2) - \mathbb{B}(\|{S}^{\oplus}_i\|^2) | = o_{\IP}(n^{1/p}),
\end{equation}
\color{black}
which completes the proof of \eqref{eq:GA} in view of Propositions \ref{prop1}, \ref{prop:prop2}, \ref{prop3}, and equations \eqref{eq:condn1}, \eqref{3.30}, \eqref{3.31}, \eqref{3.33}, \eqref{3.34}, \eqref{eq:tildeapprox} and \eqref{eq:finalapprox}.
\subsubsection{Connecting $\|S_i^{\oplus}\|^2$ to $\|S_i\|^2$}
The crux of the proof in this section relies on the following fundamental lemma connecting the variance of the truncated process to that in terms of the original process.
\begin{lemma}\label{lem:trunc-originalvariance}
  Let $T_{n^{1/p}}(X_i)$ and $S_i^{\oplus}$ be defined as in \eqref{trunc}. Also assume Conditions \ref{cond:fdm3} and \ref{cond:ui} for the process $(X_t)_{t \geq 1}$. Then, 
  \begin{equation}\label{trunc-originalvariance}
    \max_{1 \leq i \leq n}|\IE(S_i^2)-\IE( (S_i^{\oplus})^2)|=o(n^{2/p}).
  \end{equation}
  \end{lemma}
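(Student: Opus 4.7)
The plan is to start from the identity
\[
\IE(S_i^2) - \IE((S_i^\oplus)^2) = \operatorname{Cov}(S_i - S_i^\oplus,\ S_i + S_i^\oplus) = \sum_{j,k=1}^i \bigl[\operatorname{Cov}(\Delta_j, X_k) + \operatorname{Cov}(X_j^\oplus, \Delta_k)\bigr],
\]
where $\Delta_j := X_j - X_j^\oplus$. The obvious Cauchy--Schwarz bound $\|S_i - S_i^\oplus\|_2\,\|S_i + S_i^\oplus\|_2$ only yields $o(n^{1/2+1/p})$, which is too weak for $p>2$. I would therefore estimate each covariance individually and show $|\operatorname{Cov}(\Delta_j, X_k)| + |\operatorname{Cov}(X_j^\oplus, \Delta_k)| \leq o(n^{2/p-1})/(|j-k|+1)^A$, so that summing over $(j,k) \in [i]^2$ using $A>1$ produces the desired $o(n^{2/p})$.

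The key refined estimate is $\|\Delta_j\|_{p/(p-1)} = o(n^{2/p-1})$ and, more generally, $\delta^{\Delta}_{p/(p-1)}(k) \leq o(n^{2/p-1})\,\delta_p(k)$. Both follow from H\"{o}lder's inequality with exponents $(p-1, (p-1)/(p-2))$ applied on the rare event $A_j := \{|X_j| > n^{1/p}\}$: since $\Delta_j$ (and its functional dependence increment) vanishes off $A_j$, and Condition \ref{cond:ui} ensures $\IP(A_j) = o(n^{-1})$, one gets
\[
\IE|\Delta_j|^{p/(p-1)} \leq (\IE|\Delta_j|^p)^{1/(p-1)}\,\IP(A_j)^{(p-2)/(p-1)} \leq o(1)\cdot o\!\left(n^{-(p-2)/(p-1)}\right),
\]
which delivers $\|\Delta_j\|_{p/(p-1)} \leq o(n^{2/p-1})$. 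The same trick applied to $h(X_j) - h(X_{j,\{j-k\}})$, where $h(x) = x - T_{n^{1/p}}(x)$ is $1$-Lipschitz and supported on $\{|x|>n^{1/p}\}$, yields the decaying bound on $\delta^{\Delta}_{p/(p-1)}$ using $\|X_j - X_{j,\{j-k\}}\|_p \leq \delta_p(k)$.

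Given these, I would bound $|\operatorname{Cov}(\Delta_j, X_k)|$ for $k\neq j$ via a coupling. For $k<j$, let $\Delta_j^{\circ(k)}$ be obtained from $\Delta_j$ by replacing every $\varepsilon_s$ with $s\leq k$ by an i.i.d.\ copy; for $k>j$ define $X_k^{\circ(j)}$ analogously. Then $X_k$ and $\Delta_j^{\circ(k)}$ are independent, so $\IE[X_k\,\Delta_j^{\circ(k)}] = 0$, and by H\"{o}lder with conjugate exponents $(p, p/(p-1))$ together with a telescoping coupling,
\[
|\operatorname{Cov}(\Delta_j, X_k)| = |\IE[X_k(\Delta_j - \Delta_j^{\circ(k)})]| \leq \|X_k\|_p \sum_{s\leq k} \delta^{\Delta}_{p/(p-1)}(j-s) \leq O(1)\cdot o(n^{2/p-1})\,\Theta_{|j-k|,p},
\]
which is $o(n^{2/p-1})/(|j-k|+1)^A$ under Condition \ref{cond:fdm3}. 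The diagonal $j=k$ is easier: $|\IE[X_j\Delta_j]| \leq \IE[X_j^2\mathbb{I}_{A_j}] \leq n^{2/p-1}\IE[|X_j|^p\mathbb{I}_{A_j}] = o(n^{2/p-1})$, and the identical reasoning handles $\operatorname{Cov}(X_j^\oplus,\Delta_k)$ using $\|X_j^\oplus\|_p \leq \|X_j\|_p = O(1)$.

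The main obstacle is the non-obvious choice of H\"{o}lder exponents: the natural Cauchy--Schwarz pair $(2,2)$ only gives $\|\Delta_j\|_2 = o(n^{1/p-1/2})$, losing a factor of $n^{1/2-1/p}$ relative to $\|\Delta_j\|_{p/(p-1)}$, and this is exactly the gap that needs to be closed to reach $o(n^{2/p})$ rather than $o(n^{1/2+1/p})$. Once the refined single-term bound is in place, the concluding summation $\sum_{j,k\leq i} o(n^{2/p-1})/(|j-k|+1)^A \leq i\cdot o(n^{2/p-1})\cdot O(1) = o(n^{2/p})$ follows immediately from $A>1$.
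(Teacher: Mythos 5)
Your proposal is correct and takes essentially the same route as the paper's own proof: the paper likewise splits $\IE(X_sX_t)-\IE(X_s^{\oplus}X_t^{\oplus})$ into the two cross terms involving the truncation remainder, couples via $X_{s,\{t\}}$ (replacing all past innovations) so that the covariance becomes an expectation against a coupling difference, applies H\"older with conjugate exponents $(p, p/(p-1))$, and obtains the crucial refined bounds $\|X_t-X_t^{\oplus}\|_{p/(p-1)}=o(n^{2/p-1})$ and $\|r_{n^{1/p}}(X_s)-r_{n^{1/p}}(X_{s,\{t\}})\|_{p/(p-1)}\leq o(n^{2/p-1})\Theta_{s-t,p}$ from Condition \ref{cond:ui} via a second H\"older on the rare truncation event, before summing with $A>1$. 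Your covariance formulation and the dependence measure $\delta^{\Delta}_{p/(p-1)}$ are only cosmetic repackagings of the paper's cumulative coupling bound and its separate treatment of the centering term $(\sum_t\IE X_t^{\oplus})^2$.
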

  \begin{proof}
    In view of our truncated uniform integrability Condition \ref{cond:ui}, one obtains,
    \begin{equation}
      \max_{1 \leq i \leq n}(\sum_{t=1}^i \IE[X_t^{\oplus}])^2 =n^2 (n^{1/p -1})^2 o(1) = o(n^{2/p}).
    \end{equation}
    Thus, it is enough to show that $\max_{1 \leq i \leq n}|\IE(S_i^2 - (\sum_{t=1}^i X_t^{\oplus})^2)|=o(n^{2/p})$. Writing $\IE(S_i^2 - (\sum_{t=1}^i X_t^{\oplus})^2)=\sum_{s=1}^i\sum_{t=1}^i \IE[X_sX_t -X_s^{\oplus}X_t^{\oplus}]$, observe that 
 \begin{equation} \label{maindecomp}
 |\IE(X_sX_t)- \IE(X_s^{\oplus}X_t^{\oplus})| \leq |\IE[(X_t-X_t^{\oplus})X_s]|+ |\IE[X_t^{\oplus}(X_s - X_s^{\oplus})]|:= I + II. 
 \end{equation}
Since \eqref{maindecomp} is symmetric in $s$ and $t$, we can assume without loss of generality that $s\geq t$. Recall the causal representation \eqref{eq:model_Z} for $X_s$. Denote by $X_{s, \{t\}}=g_s(\varepsilon_s, \varepsilon_{s-1}, \ldots, \varepsilon_{t+1}, \varepsilon'_{t}, \varepsilon'_{t-1}, \ldots)$, where $\varepsilon'_l, \varepsilon_i$ are i.i.d. for all $l, i \in \Z$. Such coupling has also been used in \cite{CUNY2018233} to obtain weaker conditions for \cite{kmt}'s result. Since $X_{s, \{t\}}$ is independent of $X_t$, hence for the term $I$ in \eqref{maindecomp}, H\"older's inequality along with Condition \ref{cond:ui} yields,
 \begin{align}
   \IE[(X_t-X_t^{\oplus})X_s] = \IE[(X_t-X_t^{\oplus})(X_s -X_{s, \{t\}})] &\leq \| X_t - X_t^{\oplus} \|_{\frac{p}{p-1}} \|X_s -X_{s, \{t\}} \|_p \nonumber\\&\leq C n^{2/p-1}\Theta_{s-t,p}o(1),\label{term1_new}
 \end{align}
 where the last inequality follows from an application of Theorem 1(iii) of \cite{Wu2005}. Note that here and also in the subsequent equations, the $o(1)$ term is independent of $s$, $t$ and $i$, since our Condition \ref{cond:ui} is defined uniformly for all $t \geq 1$.

 Now we will tackle the term $II$ in \eqref{maindecomp}. For simplicity, let us denote the remainder term $X_s-X_s^{\oplus}$ by $r_{n^{1/p}}(X_s)$. Again via H\"older inequality we obtain,
 \begin{align}
  | \IE[X_t^{\oplus}(X_s - X_s^{\oplus})]| &= | \IE[X_t^{\oplus}(r_{n^{1/p}}(X_s) - r_{n^{1/p}}(X_{s, \{t\}}))] | \nonumber \\ &\leq \|X_t^{\oplus} \|_p \|r_{n^{1/p}}(X_s) - r_{n^{1/p}}(X_{s, \{t\}})\|_{\frac{p}{p-1}}. \label{term2_new}
 \end{align}
 Now, in the light of \text{H\"older's inequality} and \text{Condition \ref{cond:ui}}, we have,
 \begin{align}
   \IE[|r_{n^{1/p}}(X_s) - r_{n^{1/p}}(X_{s, \{t\}})|^{\frac{p}{p-1}}] & \leq \IE\bigg[|X_s -X_{s, \{t\}}|^{\frac{p}{p-1}}\bigg(\mathbb{I}\{|X_s|\geq n^{1/p}\}+ \nonumber \\ & \hspace*{5cm}\mathbb{I}\{|X_{s, \{t\}}|\geq n^{1/p}\} \bigg)\bigg] \nonumber\\ \hspace*{2cm}
   & \leq \left(\IE[\|X_s -X_{s, \{t\}}\|^p]\right)^{\frac{1}{p-1}} \ \|\mathbb{I}\{|X_s|\geq n^{1/p}\} \nonumber\\ & \hspace*{5cm}+\mathbb{I}\{|X_{s, \{t\}}|\geq n^{1/p}\} \|_{\frac{p-1}{p-2}} \nonumber\\
   & \leq C \Theta_{s-t,p}^{\frac{p}{p-1}}n^{\frac{2-p}{p-1}} o(1). \label{term3_new}
 \end{align}
 Therefore, using $\sup_{t\geq 1} \|X_t^{\oplus}\|_p \leq \Theta_{0,p}$, \eqref{maindecomp}, \eqref{term1_new} and \eqref{term2_new} in conjunction with \eqref{term3_new} yields, for a fixed $1\leq i \leq n$,
 \begin{align*}
  \sum_{s=1}^i \sum_{t=1}^i |\IE(X_sX_t)- \IE(X_s^{\oplus}X_t^{\oplus})|\leq C o(1)\Theta_{0,p}\sum_{s=1}^i\sum_{t=1}^i \Theta_{|s-t|, p} n^{2/p-1}.
 \end{align*}
 This, in view of $\Theta_{i,p}=O(i^{-A})$ for $A>1$, immediately implies that,
 \[ \max_{1 \leq i \leq n}|\IE(S_i^2)-\IE((S_i^{\oplus})^2)| \leq C \Theta_{0,p}O(1) n^{2/p}o(1)=o(n^{2/p}),\]
which completes the proof of the lemma.\end{proof}

\noindent The proof of \eqref{eq:improvedGA} is immediate using Theorem \ref{thm:GA}, Lemma \ref{lem:trunc-originalvariance} and increment property of Brownian motion. 
 \end{proof}

\color{black}
\subsection{Proof of Theorem \ref{corol:new}}
Before we state the proof of Theorem \ref{corol:new}, we state and prove a couple of lemmas which are heavily used in the subsequent proofs. 

Lemma \ref{lemmaY} concerns approximating a Gaussian process $(Y_t)_{t \geq 1}$ by its orthogonal projections on sum of consecutive blocks of size $m$. On the other hand, given a Brownian motion $\IB(\cdot)$ and a process $(X_t)_{t\geq 1}$, Lemma \ref{lemmaYconstruct} constructs a Gaussian process $(Y_t)_{t\geq 1}$, such that the $(Y_t)_{t \geq 1}$ has the same covariance structure as $(X_t)_{t \geq 1}$, and partial sums of $(Y_t)_{t \geq 1}$ are approximated by the Brownian motion $\IB(\cdot)$ computed at variances of certain projections of $(X_t)$. 

\begin{lemma} \label{lemmaY}
   Let $(X_t)_{t=1}^n$ satisfy Conditions \ref{cond:fdm3}, \ref{cond:ui} and \ref{cond:regularity_new}; consider $m=m_n\in \N$ satisfying $m/n \to 0$ and $m \to \infty$ as $n \to \infty$. Further let $(Y_t)$ be a mean-zero Gaussian process with $\operatorname{Cov}(Y_s, Y_t)=\operatorname{Cov}(X_s, X_t)$. Denote $S_i^Y = \sum_{j=1}^i Y_j$ with $S_0^Y:=0$. Let $\Xi_k= S_{(km) \wedge n}^Y -S_{(k-1)m}^Y$ be the block sums,  $1 \leq k \leq \lceil n/m \rceil$, and let the $k$-th order projection be defined as
   \[ \xi_k= \IE[S_n^Y | \Xi_k, \ldots, \Xi_1]- \IE[S_n^Y|\Xi_{k-1}, \ldots, \Xi_1], k \geq 2, \text{ and $\xi_1:=\IE[S_n^Y |\Xi_1]$.}\] Then it holds that 
   \begin{align}\label{lemmaY1} 
     \max_{1 \leq i \leq n}|S_i^Y - \sum_{k=1}^{\lceil i/m \rceil} \xi_k |=O_{\IP}(\sqrt{m\log n}).
   \end{align}
Further, recall $B_k$ from \eqref{blockdefn}, defined using $m$ considered in this lemma. Let $\mathcal{q}_k=\IE[\xi_k^2]$, $1 \leq k \leq \lceil n/m \rceil$. Then, it holds uniformly in $k$ that,
\begin{align}\label{matrixvar}
  \mathcal{q}_k=u_{k,k}^{}+ 2 u_{k+1, k}^{} + \boldsymbol{u}_{k}^{^T} U_{k, k}^{-1}\boldsymbol{u}_{k}^{} - \boldsymbol{u}_{k-1}^{^T} U_{k-1,k-1}^{-1} \boldsymbol{u}_{k-1}^{} + O(m^{1-A}), \mbox{ where } U_{k,k}= \operatorname{Var}\begin{pmatrix}
   B_1 \\ B_2 \\ \vdots\\ B_k
 \end{pmatrix}
\end{align}
and $u_{k,l} = \IE[B_k B_l],$ $\boldsymbol{u}_k =(u_{k+1,1}, \ldots, u_{k+1, k})^T$ with $\boldsymbol{u}_0=\boldsymbol{u}_{\lceil n/m \rceil}=\boldsymbol{0}$. 
 \end{lemma}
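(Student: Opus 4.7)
The plan hinges on the Gaussian martingale identity $M_K := \sum_{k=1}^K \xi_k = \IE[S_n^Y\mid \boldsymbol{\Xi}_K]$, where $\boldsymbol{\Xi}_K:=(\Xi_1,\dots,\Xi_K)^T$. For \eqref{lemmaY1}, since $S_{Km\wedge n}^Y$ is $\sigma(\boldsymbol{\Xi}_K)$-measurable, I would split, for $i$ in block $K=\lceil i/m\rceil$,
\[
S_i^Y - M_K \;=\; \bigl(S_i^Y - S_{Km\wedge n}^Y\bigr) \;-\; \IE\bigl[S_n^Y - S_{Km\wedge n}^Y \,\big|\, \boldsymbol{\Xi}_K\bigr].
\]
The first (within-block) term has $L^2$ norm $O(\sqrt{m})$ by Lemma \ref{pmoment} applied to $Y$ (whose covariance agrees with $X$); a Gaussian maximal inequality together with a union bound over the $\lceil n/m\rceil$ blocks and the $m$ positions per block delivers $O_{\IP}(\sqrt{m\log n})$. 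For the conditional-expectation term, I would decompose $S_n^Y - S_{Km\wedge n}^Y=\Xi_{K+1}+R_K$ with $R_K:=\sum_{l\ge K+2}\Xi_l$; the projection of $\Xi_{K+1}$ onto $\boldsymbol{\Xi}_K$ has variance at most $\operatorname{Var}(\Xi_{K+1})=O(m)$, while Lemma \ref{lemmacp} forces $|\operatorname{Cov}(R_K,\Xi_j)|=O(m^{1-A})$ for every $j\le K$, rendering the projection of $R_K$ of strictly lower order once $U_{K,K}^{-1}$ is shown to have bounded operator norm. A final Gaussian maximum inequality over $K$ yields the stated rate.

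For \eqref{matrixvar}, I would work with the orthogonalized innovations $\tilde\Xi_k:=\Xi_k-\IE[\Xi_k\mid\boldsymbol{\Xi}_{k-1}]$. Because $(\tilde\Xi_k)$ are mutually orthogonal and generate the same Gaussian $\sigma$-algebra as $(\Xi_k)$,
\[
\xi_k = \frac{\operatorname{Cov}(S_n^Y,\tilde\Xi_k)}{\operatorname{Var}(\tilde\Xi_k)}\tilde\Xi_k, \qquad \mathcal{q}_k=\frac{\operatorname{Cov}(S_n^Y,\tilde\Xi_k)^2}{\operatorname{Var}(\tilde\Xi_k)}.
\]
The denominator is the standard one-step prediction variance $\operatorname{Var}(\tilde\Xi_k)=u_{k,k}-\boldsymbol{u}_{k-1}^T U_{k-1,k-1}^{-1}\boldsymbol{u}_{k-1}$. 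For the numerator, orthogonality eliminates the $l\le k-1$ terms, the $l=k$ term contributes $\operatorname{Var}(\tilde\Xi_k)$, and Lemma \ref{lemmacp} reduces $\sum_{l>k}\operatorname{Cov}(\Xi_l,\tilde\Xi_k)$ to $u_{k+1,k}+O(m^{1-A})$ (the direct contribution comes only from $\Xi_{k+1}$, and the projection-correction $\operatorname{Cov}(\Xi_l,\IE[\Xi_k\mid\boldsymbol{\Xi}_{k-1}])$ summed over $l>k$ is also of order $m^{1-A}$). Plugging in gives $\mathcal{q}_k=\operatorname{Var}(\tilde\Xi_k)+2u_{k+1,k}+u_{k+1,k}^2/\operatorname{Var}(\tilde\Xi_k)+O(m^{1-A})$, and the claimed identity then drops out from the Schur-complement expansion
\[
\boldsymbol{u}_k^T U_{k,k}^{-1}\boldsymbol{u}_k = \boldsymbol{u}_k^{(k-1)T} U_{k-1,k-1}^{-1}\boldsymbol{u}_k^{(k-1)} + \frac{\bigl(u_{k+1,k}-\boldsymbol{u}_k^{(k-1)T}U_{k-1,k-1}^{-1}\boldsymbol{u}_{k-1}\bigr)^2}{\operatorname{Var}(\tilde\Xi_k)},
\]
where $\boldsymbol{u}_k^{(k-1)}:=(u_{k+1,1},\dots,u_{k+1,k-1})^T$ has entries of order $m^{1-A}$ (blocks of separation $\ge 2$), so its contributions are absorbed into the $O(m^{1-A})$ remainder.

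The main obstacle throughout will be providing uniform-in-$k$ control of the norms of $U_{k,k}^{-1}$ and of $\boldsymbol{u}_{k-1}^T U_{k-1,k-1}^{-1}$, since Condition \ref{cond:regularity_new} directly supplies only the diagonal growth $u_{k,k}\to\infty$. I would attack this by writing $U_{k,k}$ as an $O(m^{1-A})$ perturbation (columnwise in $\ell^1$) of a tridiagonal matrix via Lemma \ref{lemmacp}; a Gershgorin/Neumann-series argument combined with Condition \ref{cond:regularity_new} should then produce the uniform bounds required to turn the heuristic estimates above into rigorous $O(m^{1-A})$ remainders and to close both parts of the lemma.
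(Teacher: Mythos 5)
Your proposal is correct and, for the first assertion, follows the paper's argument essentially verbatim: both exploit $\sum_{k\le K}\xi_k=\IE[S_n^Y\mid\boldsymbol{\Xi}_K]$, split off the within-block remainder (variance $O(m)$ since the $Y$- and $X$-covariances agree), control the conditional expectation of the future blocks through the explicit Gaussian projection formula, and close with Gaussian tails plus a union bound; the paper's \eqref{decompu}--\eqref{finalunblocking} is exactly your decomposition. For \eqref{matrixvar} you take a slightly different algebraic route: you pass to the one-step innovations $\tilde\Xi_k$, write $\mathcal{q}_k=\operatorname{Cov}(S_n^Y,\tilde\Xi_k)^2/\operatorname{Var}(\tilde\Xi_k)$, and recover the stated form via the Schur-complement expansion of $\boldsymbol{u}_k^TU_{k,k}^{-1}\boldsymbol{u}_k$, whereas the paper writes $\xi_k$ directly as the linear functional $\bigl((\boldsymbol{u}_k^T+\boldsymbol{r}_k^T)U_{k,k}^{-1}+(-(\boldsymbol{u}_{k-1}^T+\boldsymbol{r}_{k-1}^T)U_{k-1,k-1}^{-1}\mid 1)\bigr)\Gamma_k$ and expands its variance using the block structure of $U_{k,k}$ (see \eqref{xikcalc}--\eqref{finalU}). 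The two computations are equivalent Gaussian projection calculus, and both rest on the same two inputs you identify: Lemma \ref{lemmacp} for the off-(tri)diagonal block covariances and the Gershgorin bound $\lambda_{\min}(U_{k,k})\ge\min_l\operatorname{Var}(B_l)-c\to\infty$ from Condition \ref{cond:regularity_new}. One caution: your assertion that $\sum_{l>k}\operatorname{Cov}(\Xi_l,\IE[\Xi_k\mid\boldsymbol{\Xi}_{k-1}])=O(m^{1-A})$ is the analogue of the paper's treatment of the $\boldsymbol{r}_k$-vectors in \eqref{rateu} and \eqref{U_calc}; it should be justified by bounding these aggregated covariance vectors entrywise via the polynomial decay of $\Theta_{j,p}$ (as in \eqref{blockprod}) and pairing an $\ell^\infty$ bound with an $\ell^1$ bound before inserting $U_{k-1,k-1}^{-1}$, rather than by a crude operator-norm estimate, since the latter picks up a factor growing with the number of blocks. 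The paper is itself terse on exactly this point, so this is a presentational refinement rather than a gap in your approach.
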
 
\begin{proof}
We will require repeated use of results from conditional mean of normal distributions. For $1 \leq k \leq \lceil n/m \rceil$,  we can write
\begin{equation}\label{condnexp}
\operatorname{Var}\begin{pmatrix}
   B_1 \\ B_2 \\ \vdots\\ B_k \\ B_{k+1} + \ldots + B_{\lceil n/m \rceil}
 \end{pmatrix}= \begin{pmatrix}
   U_{k,k} & \boldsymbol{u}_k + \boldsymbol{r}_k\\
   \boldsymbol{u}_k^T + \boldsymbol{r}_k^T & \| B_{k+1} + \ldots + B_{\lceil n/m \rceil}\|^2
 \end{pmatrix},
 \end{equation}
 where $\boldsymbol{r}_k=\sum_{i=k+2}^{\lceil n/ m \rceil} (u_{i,1}, \ldots , u_{i, k})^T$, $k<\lceil n/m\rceil-1$ with $\boldsymbol{r}_{\lceil n/m\rceil-1}=\boldsymbol{r}_{\lceil n/m\rceil}=\boldsymbol{0}$. Observe that, by Lemma \ref{lemmacp}, it holds uniformly in $k$ that
 \begin{align}
   |\sum_{i:|i-k|\geq 2}u_{i,k}|&=O(m^{1-A}), \
   |u_{k+1,k}|= O(1),  \text{ and } 
   |\boldsymbol{r}_k^T \boldsymbol{u}_k|= O(m^{1-A}) \label{rateu}.
 \end{align}\color{black}
 Define the additional remainder part of the partial sum: $\boldsymbol{R}_{(km) \wedge n}=0$ for $1 \leq k \leq \lceil n/m \rceil$, and $\boldsymbol{R}_i=\sum_{j=i+1}^{(m\lceil i/m \rceil) \wedge n} Y_j$ for $1 \leq i < n$ such that $i/m \notin \N$. Note that, by the telescopic nature of the definition of $\xi_k$, we have
    \[ \sum_{k=1}^{\lceil i/m \rceil} \xi_k=\IE[S_n^Y|\Xi_{\lceil i/m \rceil}, \ldots, \Xi_1] = \sum_{k=1}^{\lceil i/m \rceil} \Xi_k +  \IE\bigg[\sum_{k=\lceil i/m \rceil+1}^{\lceil n/m \rceil} \Xi_k| \Xi_{\lceil i/m \rceil}, \ldots, \Xi_1\bigg].\]
Moreover, define the $k$-dimensional column vector $\Gamma_k = (\Xi_1, \ldots, \Xi_k)^T$, then
    \[ \IE\bigg[\sum_{k=\lceil i/m \rceil+1}^{\lceil n/m \rceil} \Xi_k| \Xi_{\lceil i/m \rceil}, \ldots, \Xi_1\bigg]=(\boldsymbol{u}_{\lceil i/m \rceil}^T + \boldsymbol{r}_{\lceil i/m \rceil}^T) U_{\lceil i/m \rceil, \lceil i/m \rceil}^{-1} \Gamma_{\lceil i/m \rceil}
    \]
    follows from noting that the vector $(\Xi_{1}, \ldots, \Xi_{\lceil i/m \rceil}, \sum_{k=\lceil i/m \rceil+1}^{\lceil n/m \rceil} \Xi_k)^T$ follows a  $\lceil i/m \rceil+1$ dimensional centered multivariate normal distribution with covariance matrix given by \eqref{condnexp} (Theorem 3.2.4. of \cite{mardiamva}). 
Note that we can write $S_i^Y=\sum_{k=1}^{\lceil i/m \rceil} \Xi_k - \boldsymbol{R}_i$ for all $1\leq i\leq n$. Therefore, using \[ \operatorname{Var}\big(\boldsymbol{u}_k^T U_{k,k}^{-1}  \Gamma_{k} \big) 
    = \boldsymbol{u}_k^T U_{k,k}^{-1}\boldsymbol{u}_k,\]
    we have
\begin{align} \label{decompu}
  \|\sum_{k=1}^{\lceil i/m \rceil} \xi_k - S_i^Y \|^2 \leq 2\IE[\boldsymbol{R}_i^2] + 2(\boldsymbol{u}_{\lceil i/m \rceil}^T + \boldsymbol{r}_{\lceil i/m \rceil}^T) U_{\lceil i/m \rceil, \lceil i/m \rceil}^{-1}(\boldsymbol{u}_{\lceil i/m \rceil} + \boldsymbol{r}_{\lceil i/m \rceil}).
\end{align}
 Observe that, there exists constant $c>0$ such that uniformly in $k$,
\begin{equation}\label{eigencalc}
\lambda_{\min}(U_{k,k}) \geq \min_{1 \leq l \leq k}[\operatorname{Var}(B_l)- \sum_{j \neq l, 1\leq j \leq k} |\IE(B_l B_j)|] \geq \min_{1 \leq l \leq k}\operatorname{Var}(B_l) - c \to \infty, 
\end{equation}
where the first inequality is by Gershgorin circle theorem and the second inequality follows by invoking Lemma \ref{lemmacp} and noting that $\sum_{j,l, |j-l|=1}^{k}|\IE[B_l B_j]|=O(1)$, and $\sum_{j,l: |j-l|\geq 2}^{k}|\IE[B_l B_j]|=O(m^{1-A})$. The limit assertion is due to Condition \ref{cond:regularity_new}. \color{black} Finally, via \eqref{rateu}, we note that
\[\max_{1 \leq k \leq \lceil n/m \rceil}(\boldsymbol{u}_k + \boldsymbol{r}_k)^T(\boldsymbol{u}_k + \boldsymbol{r}_k) = O(1).\] Thus, \eqref{eigencalc} implies 
\begin{align}\label{U_calc}
&(\boldsymbol{u}_{\lceil i/m \rceil}+\boldsymbol{r}_{\lceil i/m \rceil})^T U_{\lceil i/m \rceil, \lceil i/m \rceil}^{-1}(\boldsymbol{u}_{\lceil i/m \rceil}+\boldsymbol{r}_{\lceil i/m \rceil})\nonumber\\\leq &\lambda_{\max}(U_{\lceil i/m \rceil, \lceil i/m \rceil}^{-1})(\boldsymbol{u}_{\lceil i/m \rceil} + \boldsymbol{r}_{\lceil i/m \rceil})^T(\boldsymbol{u}_{\lceil i/m \rceil} + \boldsymbol{r}_{\lceil i/m \rceil}) = O(1). 
\end{align}
This directly yields, in view of \eqref{decompu}, \eqref{U_calc} and $ \max_{1\leq i\leq n}\IE[\boldsymbol{R}_i^{2}] \leq C m \Theta_{0,2}^2$,
\begin{align}\label{finalunblocking}
 \max_{1\leq i \leq n}\| \sum_{k=1}^{\lceil i/m \rceil} \xi_k - S_i^Y\|^2 =O(m),
\end{align}
which implies \eqref{lemmaY1} via the elementary inequality $\IP(W>t)\leq 2e^{-t^2/2\sigma^2}$ for $W \sim N(0, \sigma^2)$. For proving \eqref{matrixvar}, \eqref{condnexp} implies for $1 \leq k \leq \lceil n/m \rceil$,
 \begin{align} 
  \xi_k&=\Xi_k + (\boldsymbol{u}_k^T + \boldsymbol{r}_k^T) U_{k,k}^{-1} \Gamma_{k}
   - (\boldsymbol{u}_{k-1}^T + \boldsymbol{r}_{k-1}^T) U_{k-1,k-1}^{-1}   \Gamma_{k-1}
  \nonumber \\
  &=  \big( (\boldsymbol{u}_k^T+\boldsymbol{r}_k^T) U_{k,k}^{-1} + (- (\boldsymbol{u}_{k-1}^T + \boldsymbol{r}_{k-1}^T) U_{k-1,k-1}^{-1} \ | \ 1) \big)  \Gamma_{k}, \label{xikcalc}
 \end{align}
where $(- (\boldsymbol{u}_{k-1}^T + \boldsymbol{r}_{k-1}^T) U_{k-1,k-1}^{-1} \ | \ 1)$ is a $k$-dimensional row vector with last entry $1$. Observe that \allowdisplaybreaks
\begin{align*}
  \operatorname{Cov}\big(\Xi_k, \ \boldsymbol{u}_{k-1}^T U_{k-1, k-1}^{-1} \Gamma_{k-1}
    \big) = \boldsymbol{u}_{k-1}^T U_{k-1, k-1}^{-1} \boldsymbol{u}_{k-1}, &\text{ and }
  U_{k,k}=\begin{pmatrix}
      U_{k-1,k-1} & \boldsymbol{u}_{k-1}\\
      \boldsymbol{u}_{k-1}^T & u_{k,k}
  \end{pmatrix}.
\end{align*}
 Therefore, in light of \eqref{rateu} and \eqref{eigencalc}, \eqref{xikcalc} directly yields
 \begin{align} \label{finalU}
   \mathcal{q}_k= u_{k,k} + \boldsymbol{u}_{k}^T U_{k,k}^{-1} \boldsymbol{u}_{k} - \boldsymbol{u}_{k-1}^T U_{k-1,k-1}^{-1}\boldsymbol{u}_{k-1} + 2 (-\boldsymbol{u}_{k-1}^T U_{k-1,k-1}^{-1} \ | \ 1 ) \boldsymbol{u}_{k} + O(m^{1-A}) .
 \end{align}
Write $\boldsymbol{u}_{k}= (\boldsymbol{s}_{k}^T \ | \ u_{k+1, k})^T$, where $\boldsymbol{s}_{k}=(u_{k+1,1}, \ldots, u_{k+1, k-1})^T$. Then similar to \eqref{rateu}, $|\boldsymbol{u}_{k-1}^T\boldsymbol{s}_k|=O(m^{1-A})$ holds uniformly in $k$. So, in light of \eqref{eigencalc}, \eqref{finalU} can be re-written as \eqref{matrixvar}, which completes the proof.
\color{black}
\end{proof}

Before moving onto the construction of $Y^c$-processes, we need to introduce a few notation. For $w_1, \ldots, w_n \overset{\text{i.i.d.}}{\sim} N(0,1)$, define
\begin{equation}\label{defnY}
  Y_t^{w}:= Y_t(w_1, \ldots, w_t)= \|X_t\| \bigg(\sum_{i=1}^t x_i^{(t)} w_i \bigg),
\end{equation}
where $x_1^{(1)}=1$, and for $i\leq t$, $x_i^{(t)}$ is obtained by solving the equation system $\sum_{k=1}^i x_k^{(i)} x_k^{(t)}=\operatorname{Corr}(X_i, X_t)$. Observe that by construction, $\operatorname{Cov}(Y_s^w, Y_t^w)=\operatorname{Cov}(X_s, X_t)$ for all $1\leq s,t \leq n$. \color{black} For $m = m_n \to \infty$ with $m/n \to 0$, let us define $$\xi_k^w:=\IE\bigg[\sum_{i=1}^n Y_i^w \bigg| \bigg(\sum_{i=(j-1)m+1}^{(jm) \wedge n}Y_i^w\bigg)_{j=1}^k \bigg] - \IE\bigg[\sum_{i=1}^n Y_i^w \bigg| \bigg(\sum_{i=(j-1)m+1}^{(jm) \wedge n}Y_i^w\bigg)_{j=1}^{k-1} \bigg],$$ for $2\leq k \leq \lceil n/m \rceil$ with $\xi_1^w:=\IE[\sum_{i=1}^n Y_i^w | \sum_{i=1}^{m}Y_i^w]$. Further define real-valued $\alpha_i^{(k)}, 1\leq k \leq \lceil n/m \rceil, 1 \leq i \leq km,$ such that it holds
\begin{equation} \label{alphaeps}
   \sum_{j=1}^k \xi_j^w= \sum_{i=1}^{(km) \wedge n} \alpha_i^{(k)} w_i, \text{ for $1 \leq k \leq \lceil n/m \rceil$.}
\end{equation} 
Note that the sequence $(\alpha_i^{(k)})$ satisfying \eqref{alphaeps} exists, since, by property of conditional expectation of multivariate normal distributions, $\xi_k^{w}$ can be written as linear combinations of $Y_i^w$'s which, in turn, are linear combinations of $w_i$'s themselves. Recall the definition of $\mathcal{q}_k$ from Lemma \ref{lemmaY}. Observe that due to property of projection, $\xi_k$'s are uncorrelated and thus independent as well since they are jointly normally distributed. Moreover, $\IE[\xi_k]=0$. Therefore, by definition of $\mathcal{q}_k$ and \eqref{alphaeps}, it follows that 
\begin{align*}
    \sum_{j=1}^k \mathcal{q}_j=\operatorname{Var}(\sum_{j=1}^k \xi_j)&= \operatorname{Var}(\IE\bigg[\sum_{i=1}^n Y_i^w \bigg| \bigg(\sum_{i=(j-1)m+1}^{(jm) \wedge n} Y_i^w\bigg)_{j=1}^k \bigg])\\ &=\operatorname{Var}(\sum_{i=1}^{(km) \wedge n}\alpha_i^{(k)} w_i)=\sum_{i=1}^{(km) \wedge n}(\alpha_i^{(k)})^2.
\end{align*}
We emphasize that at the present level of construction, $(Y_t^c)_{t=1}^n$ are not defined. However, what is known is their distribution, and therefore $(\mathcal{q}_k)$ and the sequence $\alpha_i^{(k)}$ are also known. Now $Y_t^c$ will be defined through this quantities in the following lemma. 
 \begin{lemma}\label{lemmaYconstruct}
 Let $\IB(\cdot)$ be a Brownian motion, and $(X_i)_{i=1}^n$ be a stochastic process satisfying Conditions \ref{cond:fdm3}, \ref{cond:ui} and \ref{cond:regularity_new}. Let $\mathcal{q}_k$ be defined as in Lemma \ref{lemmaY} with some $m \in \N$ such that $m\to \infty$, $m/n \to 0$ as $n \to \infty$. Consider the following algorithm of constructing a new Gaussian process $Y_t^c$:
 \begin{itemize}
   \item For $1 \leq k \leq \lceil n/m \rceil$, write $\IB(\sum_{j=1}^k \mathcal{q}_j):=\sum_{j=1}^{km} \alpha_i^{(k)} \eta_i$, where $\eta_{1}, \ldots, \eta_n$ are i.i.d. $N(0,1)$. 
 \item For $1\leq i \leq n$, define $Y_i^c:= Y_i(\eta_1, \ldots, \eta_i)$ as in \eqref{defnY}.
\end{itemize}
Then it holds that 
\begin{itemize}
  \item For $1 \leq i \leq n$, $Y_i^c \sim N(0, \operatorname{Var}(X_t))$, and for $1 \leq i\neq j \leq n$, $\operatorname{Cov}(Y_i^c, Y_j^c)=\operatorname{Cov}(X_i, X_j)$. 
  \item $\max_{1 \leq i \leq n} |\IB(\sum_{j=1}^{\lceil i/m 
  \rceil} \mathcal{q}_j) - \sum_{j=1}^i Y_j^c|= O_{\IP}(\sqrt{m\log n}). $
\end{itemize}

 \end{lemma}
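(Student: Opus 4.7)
The plan is to prove the lemma in three steps: verify the covariance identity for $(Y_t^c)$, justify the coupling construction of the $\eta_i$ from the given Brownian motion, and then derive the approximation bound as an immediate consequence of Lemma~\ref{lemmaY} applied to $Y^c$.

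For the covariance part, by \eqref{defnY} with $w$ replaced by $\eta$ and since the $\eta_i$ are i.i.d.\ $N(0,1)$, for $s\le t$ I would directly compute
\[ \operatorname{Cov}(Y_s^c,Y_t^c)=\|X_s\|\|X_t\|\sum_{k=1}^{s} x_k^{(s)}x_k^{(t)}=\|X_s\|\|X_t\|\operatorname{Corr}(X_s,X_t)=\operatorname{Cov}(X_s,X_t), \]
using the defining relation $\sum_{k=1}^i x_k^{(i)}x_k^{(t)}=\operatorname{Corr}(X_i,X_t)$. Taking $s=t$ and using $\sum_{k=1}^t (x_k^{(t)})^2=1$ gives $\operatorname{Var}(Y_t^c)=\operatorname{Var}(X_t)$ as well.

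For the construction of the $\eta_i$, I would argue that both sides of the defining relation $\IB(\sum_{j=1}^k\mathcal{q}_j)=\sum_{i=1}^{km}\alpha_i^{(k)}\eta_i$, regarded as centered Gaussian vectors indexed by $k\in\{1,\dots,\lceil n/m\rceil\}$, have identical covariance $\sum_{j=1}^{k\wedge l}\mathcal{q}_j$. On the Brownian side this is immediate; on the right-hand side it follows from orthogonality of the projections $\xi_j^w$, since $\operatorname{Cov}\bigl(\sum_{i=1}^{km}\alpha_i^{(k)}w_i,\sum_{i=1}^{lm}\alpha_i^{(l)}w_i\bigr)=\operatorname{Var}\bigl(\sum_{j=1}^{k\wedge l}\xi_j^w\bigr)=\sum_{j=1}^{k\wedge l}\mathcal{q}_j$. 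By a standard Gaussian coupling on a suitably enriched probability space, one can then realise $\eta_1,\ldots,\eta_n$ as i.i.d.\ $N(0,1)$ satisfying the prescribed identity simultaneously for all $k$ (with the extra degrees of freedom beyond $\lceil n/m\rceil$ filled by independent Gaussian padding).

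Finally, for the approximation bound I would apply Lemma~\ref{lemmaY} with $Y_t=Y_t^c$; its hypotheses on $(X_t)$ are assumed and transfer to $Y^c$ through the matched covariance. Let $\xi_k^c$ denote the corresponding projections. Because block sums of $Y^c$ are the same linear functions of $\eta$ as block sums of $Y^w$ are of $w$, and because Gaussian conditional expectation is a deterministic linear map depending only on the covariance, $\xi_k^c$ is the same linear combination of the $\eta_i$ as $\xi_k^w$ is of the $w_i$. Hence by \eqref{alphaeps} applied with $w\leftrightarrow\eta$ and by the construction of the $\eta_i$,
\[ \sum_{j=1}^k\xi_j^c=\sum_{i=1}^{km}\alpha_i^{(k)}\eta_i=\IB\Big(\sum_{j=1}^k\mathcal{q}_j\Big). \]
Invoking \eqref{lemmaY1} for $Y^c$ therefore yields
\[ \max_{1\le i\le n}\Big|\IB\Big(\sum_{j=1}^{\lceil i/m\rceil}\mathcal{q}_j\Big)-\sum_{j=1}^i Y_j^c\Big|=\max_{1\le i\le n}\Big|\sum_{k=1}^{\lceil i/m\rceil}\xi_k^c-\sum_{j=1}^i Y_j^c\Big|=O_\IP(\sqrt{m\log n}), \]
as required. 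The main technical subtlety is the identification $\sum_{j=1}^k\xi_j^c=\IB(\sum_{j=1}^k\mathcal{q}_j)$: this rests on the coupling in Step~2 together with the observation that projecting onto block sums is a linear operation invariant under the $w\leftrightarrow\eta$ relabeling, so the $\xi_k^c$'s coincide with the Gaussian linear combinations exhibited in the construction.
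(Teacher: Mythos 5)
Your proposal is correct and follows the same route as the paper's (very terse) proof: direct verification of the covariance from \eqref{defnY}, identification of $\sum_{j\le k}\xi_j^c$ with $\IB(\sum_{j\le k}\mathcal{q}_j)$ via \eqref{alphaeps}, and an application of Lemma \ref{lemmaY} to $Y^c$. The extra details you supply — the matching covariances $\sum_{j\le k\wedge l}\mathcal{q}_j$ that make the coupling of $(\eta_i)$ with $\IB$ legitimate, and the observation that the projection coefficients depend only on the (shared) covariance so that $\xi_k^c$ is the same linear combination of $\eta$ as $\xi_k^w$ is of $w$ — are exactly what the paper leaves implicit.
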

 \begin{proof}
 The first assertion can be verified directly from our construction. For the second assertion, let $\Xi_l^c$ and $\xi_l^c$ be obtained from $Y_t^c$ as in Lemma \ref{lemmaY}. Then as in \eqref{alphaeps}, $\mathbb{B}(\sum_{l=1}^{\lceil i/m \rceil} \mathcal{q}_l)$ can also be represented as $\sum_{l=1}^{\lceil i/m \rceil} \xi_l^c$. Then the result follows from Lemma \ref{lemmaY}.
 \end{proof}

 \begin{proof}[Proof of Theorem \ref{corol:new}]
   Consider the Brownian motion $\IB(\cdot)$ from the conclusion of Theorem \ref{thm:GA}. \color{black} We will use the same $m$ as in the proof of Theorem \ref{thm:GA}. Let $\mathcal{q}_k$ be defined as in Lemma \ref{lemmaY} with the original process $(X_t)_{t=1}^n$. Denote $\boldsymbol{D}_i:=\sum_{j=m\lfloor i/m \rfloor+1}^{i} X_j$ if $i/m \notin \N$, and $\boldsymbol{D}_i=0$ otherwise. Observe that,
   \[ \IE(S_i^2) = \sum_{k=1}^{\lfloor i/m \rfloor} u_{k,k} + 2 \sum_{k=1}^{{\lfloor i/m \rfloor}-1} u_{k, k+1} + \IE[\boldsymbol{D}_i^2] + 2 \IE[B_{\lfloor i/m \rfloor}\boldsymbol{D}_i] + \Psi_i, \]
   where $\Psi_i:= 2\sum_{k,l \leq \lfloor i/m \rfloor: |k-l|\geq 2 } \IE[B_kB_l] + 2\sum_{k=1}^{\lfloor i/m \rfloor-1}\IE[B_k \boldsymbol{D}_i]$ is the sum of all the higher order cross-products.
   Observe that by Lemma \ref{lemmacp}, 
   \[ |\Psi_i| \leq 2\lceil \frac{n}{m} \rceil \max_{1 \leq k \leq \lceil n/m \rceil} \left| \sum_{l: |k-l| \geq 2} \IE[B_lB_k]\right|=O(nm^{-A}) \text{ uniformly in $i$}. \]
   On the other hand, from \eqref{matrixvar} in Lemma \ref{lemmaY}, it follows that,
   \[\sum_{j=1}^{\lceil i/m \rceil}\mathcal{q}_j = \sum_{k=1}^{\lceil i/m \rceil} u_{k,k} + 2 \sum_{k=1}^{\lceil i/m \rceil} u_{k,k+1}+ \boldsymbol{u}_{\lceil i/m \rceil}^T U_{\lceil i/m \rceil, \lceil i/m \rceil}^{-1}\boldsymbol{u}_{\lceil i/m \rceil}+ O(nm^{-A}) \text{ uniformly in $i$}.\]
  Therefore, an argument similar to \eqref{U_calc} along with an application of Lemma \ref{lemmacp} and Lemma \ref{pmoment} yields that
 \begin{align}\label{8.65} 
 &\max_{1 \leq i \leq n} |\sum_{j=1}^{\lceil i/m \rceil}\mathcal{q}_{j} - \IE(S_i^2) |\nonumber\\ \leq &\max_{1\leq i \leq n}(u_{\lceil i/m \rceil, \lceil i/m \rceil} + \IE[D_i^2] + 2 |u_{\lceil i/m \rceil, \lceil i/m \rceil+1}| + 2|u_{\lceil i/m \rceil-1, \lceil i/m \rceil+1}|+ 2 |\IE[B_{\lfloor i/m \rfloor}\boldsymbol{D}_i] |  \nonumber \\ & \hspace*{2cm}+ |\boldsymbol{u}_{\lceil i/m \rceil}^T U_{\lceil i/m \rceil, \lceil i/m \rceil}^{-1}\boldsymbol{u}_{\lceil i/m \rceil}| ) + O(nm^{-A})\nonumber \\ =&O(m+ nm^{-A})=o(n^{(\alpha/p-1)/(\alpha/2-1)}+ n^{2/p}/\log n )=o(n^{2/p}/\log n),\end{align}
where the second equality is due to \eqref{eq2n} and \eqref{eq1n} respectively. This in turn implies, by the increment of the Brownian motion, that
\begin{align*}
  \max_{1 \leq i \leq n} |\IB(\IE(S_i^2)) - \IB(\sum_{j=1}^{\lceil i/m \rceil} \mathcal{q}_j) |=o_{\IP}(n^{1/p}).
\end{align*}
\color{black}
Lemma \ref{lemmaYconstruct} and Theorem \ref{thm:GA}, along with another application of \eqref{eq2n} complete the proof of \eqref{corol_new}.

For \eqref{corol_new_trunc}, define $\mathcal{q}_k^{\oplus}$ based on $X_t^{\oplus}$. Invoking \eqref{ineqfdm}, Lemma \ref{lemmaY} and \ref{lemmaYconstruct} hold when the original process is $X_t^{\oplus}$. Therefore, following exactly the above argument, along with Theorem \ref{thm:GA} completes the proof. 
 \end{proof}
\color{black}
\section{Appendix B: Proofs of Theorems \ref{thm:multGA} and \ref{thm:GA2}}\label{sec2proofsnew}
Much like Theorems \ref{thm:GA} and \ref{corol:new}, we will prove Theorem \ref{thm:GA2} first, and the proof of Theorem \ref{thm:multGA} will follow from it. 
\subsection{Proof of Theorem \ref{thm:GA2}}
Recall $A_0'$ from \eqref{eq:Alowerbound2}. Define, in the light of the form of $\Theta_{i,p}$ in \eqref{eq:Thetaip} with $A'>A_{0}'$,
\begin{align}
  L'&=\frac{f_1-f_2 + A'\sqrt{(p-2)(f_3 -3p)}}{A'f_4} \label{L'},\\
  \alpha' &= \frac{(2p + p^2)A' + p^2+ 3p+2+\sqrt{f_5}}{2+2p+4A'}, \nonumber
\end{align}
with $f_1(p,A)=Ap^2(A+1)$, $f_2(p,A)=A(2pA+3p-2)$, $f_3(p,A)=p^3(1+A)^2+6f_1+4pA-2$, $f_4(p,A)=2p(2pA^2 + 3pA + p-2)$ and $f_5(p,A)=p^2(p^2+4p-12)A^2 + 2p(p^3 + p^2 -4p-4)A + (p^2-p-2)^2$. Our choice of $L'$ and $\alpha'$ satisfies the following relations which we use in our proof:
\begin{align}
  &\frac{1}{2}-\frac{1}{p} - L'A' < 0, \label{eq1n_ver2}\\
  & L'\left(\frac{\alpha'}{2}-1\right)+ 1 - \frac{\alpha'}{p} <0, \label{eq2n_ver2} \\
  &p<\alpha' < 2(1+p+pA')/3, \label{eq3n_ver2}\\
  & 1/p - 1/\alpha' + L' - L'(A'+1)p/\alpha' =0. \label{eq4n_ver2}
\end{align}
Note that with this new $L'$ and $\alpha'$ along with choosing a new $m=\lfloor n^{L'} \rfloor$, proof of Lemma \ref{lem:truncation} and Lemma \ref{Rosenthal} goes through. 
We will also use the following lemma in a crucial step of our proof. 
\begin{lemma}\label{lemma6.3}
Under the assumption of Theorem \ref{thm:GA2}, 
\begin{equation} \label{kmtassumption}
  \min_{l\geq 1} \{\Theta_{l,p} + ln^{2/p-1}\}=o({n^{1/p-1/2}} ). 
\end{equation}
\end{lemma}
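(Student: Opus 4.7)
The plan is to prove this as a direct consequence of the polynomial decay of $\Theta_{l,p}$ combined with balancing. Under Condition \ref{cond:fdm3} with decay exponent $A > A_0' \geq 1$, we have $\Theta_{l,p} \leq \mu_{p,A} (l+1)^{-A} \lesssim l^{-A}$. The quantity to minimize is then bounded, up to constants, by $l^{-A} + l\, n^{2/p-1}$, the first term decreasing and the second increasing in $l$. I would choose $l^\star$ to equate the two, which solves $l^{A+1} \asymp n^{1-2/p}$, giving $l^\star \asymp n^{(p-2)/(p(A+1))}$. At this $l^\star$ both summands have the same order, namely $n^{-A(p-2)/(p(A+1))}$.

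The next step is to compare this with the target rate $n^{1/p-1/2} = n^{-(p-2)/(2p)}$. Writing the two exponents as
\[
-\frac{p-2}{p}\cdot\frac{A}{A+1} \quad \text{and} \quad -\frac{p-2}{p}\cdot\frac{1}{2},
\]
strict smallness of the first relative to the second is equivalent to $A/(A+1) > 1/2$, i.e., $A > 1$. Since Condition \ref{cond:fdm3} in the hypotheses of Theorem \ref{thm:GA2} enforces $A > A_0' \geq 1$, this strict inequality holds, so $n^{-A(p-2)/(p(A+1))} / n^{-(p-2)/(2p)} \to 0$ as $n \to \infty$.

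To finish, I would take $l_n := \lceil n^{(p-2)/(p(A+1))} \rceil$ (a positive integer for all $n$, and still of the same order as $l^\star$), yielding
\[
\min_{l\ge 1}\{\Theta_{l,p} + l\, n^{2/p-1}\} \le \Theta_{l_n,p} + l_n\, n^{2/p-1} = O\bigl(n^{-A(p-2)/(p(A+1))}\bigr) = o\bigl(n^{1/p-1/2}\bigr),
\]
which is the desired conclusion. There is no genuine obstacle beyond the balancing; the only sensitive point is that the lemma is sharp at $A=1$ (where the bound degenerates to $O$ instead of $o$), so the argument relies essentially on the strict inequality $A > 1$ being built into the assumption $A > A_0'$ of Theorem \ref{thm:GA2}.
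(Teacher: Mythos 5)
Your proposal is correct and is essentially the paper's own argument: the paper's choice $l_0=n^{(1/2-1/p)/B}$ with $B=(A'+1)/2$ is exactly your balancing point $l^\star \asymp n^{(1-2/p)/(A+1)}$, and both proofs reduce the $o(\cdot)$ claim to the strict inequality $A>1$ guaranteed by $A>A_0'$. No gaps.
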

\begin{proof}
Let $A'> 1$. Define $B:=(A'+1)/2$ and choose $l_0=n^{(1/2-1/p)/B}$. In light of $1 <B < A'$,
\begin{align*}
 \min_{l\geq 0} \{\Theta_{l,p} + ln^{2/p-1}\} \leq \Theta_{l_0,p} + l_0n^{2/p-1}&=(n^{1/2-1/p})^{-A'/B} +(n^{1/2-1/p})^{1/B-2}=o({n^{1/p-1/2}}). 
\end{align*}
\end{proof}
\begin{proof}[Proof of Theorem \ref{thm:GA2}]
For the proof of Theorem \ref{thm:GA2}, we proceed exactly as in Theorem \ref{thm:GA}, with $L$ and $\alpha$ therein replaced by $L'$ and $\alpha'$, and equations \eqref{eq1n}-\eqref{eq4n} replaced by equations \eqref{eq1n_ver2}-\eqref{eq4n_ver2}. The arguments up until equation \eqref{3.34} go through verbatim with $m=\lfloor n^{L'} \rfloor$. Thus the only part which requires our attention is the \textit{approximation of variance} step. 
\subsubsection{Variance regularization}
Based on equation \eqref{eq:tildeA}, define, for $1 \leq l \leq l_n$, 
\begin{align}
  v_l &:= \|B_{3l-1}\|^2 + \|B_{3l}\|^2 + \|B_{3l+1}\|^2 + 2 \IE[B_{3l-1}B_{3l}] + 2 \IE[B_{3l}B_{3l+1}] + 2 \IE[B_{3l+1}B_{3l+2}] \nonumber \\ & - \|C_{3l-1}(\boldsymbol{\eta}_{3l-2})\|^2 + \|C_{3l+2}(\boldsymbol{\eta}_{3l+1})\|^2 + 2\IE[B_1B_{3l} + \ldots +B_{3l-2}B_{3l}] \nonumber \\
  & + 2\IE[B_1B_{3l+1} + \ldots + B_{3l-1}B_{3l+1}] + 2 \IE[B_1 B_{3l+2} + \ldots + B_{3l}B_{3l+2}]. \label{v_l}
\end{align}

Let $B^{\oplus}_k=\sum_{j=(k-1)m+1}^{(km) \wedge n} (X_j^{\oplus} - \IE(X_j^{\oplus}))$, $1\leq k \leq \lceil n/m \rceil$. Note that uniformly over $k$, \[\|\Tilde{B}_k - {B}^{\oplus}_k \| \leq \sqrt{m} \Theta_{m,2} \leq \sqrt{m} \Theta_{m,p}.\] Let the projection operator $P_j$ be defined as $P_j
(\cdot)=\IE[\cdot | \mathcal{F}_j] - \IE[\cdot | \mathcal{F}_{j-1}]$. Using the Cauchy-Schwarz inequality, and ${B}^{\oplus}_k=\sum_{j=-\infty}^k P_j{B}^{\oplus}_k$, for $k \geq 1$,
\begin{equation} \label{sq_1}
  | \IE(\Tilde{B}_k^2)- \IE({B}^{\oplus^2}_k) | \leq \| \Tilde{B}_k - {B}^{\oplus}_k\| \| \Tilde{B}_k + {B}^{\oplus}_k\| \leq 
  4m \Theta_{m,p} \Theta_{0,p}.
\end{equation}
Similarly,
\begin{equation}\label{cr1}
  |\IE(\Tilde{B}_k\Tilde{B}_{k+1})- \IE({B}^{\oplus}_k{B}^{\oplus}_{k+1}) | \leq \|{B}^{\oplus}_k\|\| \Tilde{B}_{k+1} - {B}^{\oplus}_{k+1}\| + \|\Tilde{B}_{k+1}\| \| \Tilde{B}_{k}- {B}^{\oplus}_k\| \leq 4m \Theta_{m,p} \Theta_{0,p},
\end{equation}
for $1 \leq k \leq \lceil n/m \rceil$. Further, note that uniformly for all $k, l\geq 1$ using uniform integrability condition (\ref{cond:ui}), we obtain
\allowdisplaybreaks
\begin{align*}
  |\IE(X_{k}X_{l} - X^{\oplus}_{k}X^{\oplus}_{l})| 
  =& |\IE(X_{k}X_{l} \mathbb{I}_{\max\{|X_{k}|,|X_{l}|\}\leq n^{1/p}}) - \IE(X^{\oplus}_{k}X^{\oplus}_{l}) \\ & \hspace*{4cm} + \IE(X_{k}X_{l} \mathbb{I}_{\max\{|X_{k}|,|X_{l}|\}> n^{1/p}})|\\
  =& |-\IE(X^{\oplus}_{k}X^{\oplus}_{l} \mathbb{I}_{\max\{|X_{k}|,|X_{l}|\}> n^{1/p}}) + \IE(X_{k}X_{l} \mathbb{I}_{\max\{|X_{k}|,|X_{l}|\}> n^{1/p}})|\\
  \leq & |\IE(X^{\oplus}_{k}X^{\oplus}_{l} \mathbb{I}_{\max\{|X_{k}|,|X_{l}|\}> n^{1/p}}) | + | \IE(X_{k}X_{l} \mathbb{I}_{\max\{|X_{ k}|,|X_{l}|\}> n^{1/p}})|\\
  \leq & \IE\left((|X_{k}|^2 + |X_{l}|^2)\mathbb{I}_{\max\{|X_{ k}|,|X_{l}|\}> n^{1/p}} \right)
  = o(n^{2/p-1}).
\end{align*}
In view of \eqref{ineqfdm}, \eqref{eq:product} also holds for $X_k^{\oplus}X_l^{\oplus}$. Noting that Condition \ref{cond:ui} implies $\sup_{i} |\IE(X_i^{\oplus})|=o(n^{1/p-1})$, we have, for a fixed $0\leq j \leq \lceil n/m \rceil-1$ and $l \geq 0$, 
\begin{align*}
  |\IE(B_{j+1}^2 - (B_{j+1}^\oplus)^2)| &= \bigg|\sum_{k=1}^m \IE(X_{jm+k}^2-(X_{jm +k}^\oplus)^2) \\ & \hspace*{1cm}+ \sum_{s \neq t}^m \IE(X_{jm+s}X_{jm+t}- X_{jm+s}^{\oplus}X_{jm+t}^{\oplus}) - \left(\IE[\sum_{k=1}^{m} X_{jm+k}^{\oplus}]
\right)^2\bigg|\\& \leq o(mn^{2/p -1}) + O(lmn^{2/p -1} + m \sum_{s=l+1}^{\infty} \sum_{i=0}^{\infty} \delta_p(i) \delta_p(i + s)),
\end{align*}
where the last line follows from using the fact that there are $\leq m$ terms of the form $\IE(X_kX_{k+s}-X_k^{\oplus} X_{k+s}^{\oplus})$ for a fixed $s \leq m$ and applying \eqref{eq:product} in the proof of Lemma \ref{lemmacp}. Note that \[\sum_{j=l}^{\infty} \sum_{i=0}^{\infty} \delta_p(i) \delta_p(i + j) \leq \Theta_{0,p}\Theta_{l,p}.\] Hence, 
\begin{align}
  |\IE(B_j^2) - \IE( (B_j^\oplus)^2)| &= O(mn^{2/p-1} + m\min_{l \geq 0} \{ln^{2/p-1} + \Theta_{l+1,p} \})\nonumber \\ &= O(m\min_{l \geq 1} \{ln^{2/p-1} + \Theta_{l,p} \}).\label{sq2}
\end{align}
Similarly, 
\begin{equation}\label{cr2}
  |\IE(B_j B_{j+1}) - \IE(B_j^{\oplus} B_{j+1}^{\oplus}) |=O(m\min_{l \geq 1} \{ln^{2/p-1} + \Theta_{l,p} \}).
\end{equation}
Therefore, \eqref{sq_1}, \eqref{cr1}, \eqref{sq2} and \eqref{cr2} together with Lemma \ref{lemmacp} yields
\begin{equation} \label{v_lcomp}
  |\Tilde{v}_l - v_l| = O(m \Theta_{m,p} + m\min_{l \geq 1} \{ln^{2/p-1} + \Theta_{l,p} \} + m^{1-A'}) = O(m^{1-A'} + m\min_{l \geq 1} \{ln^{2/p-1} + \Theta_{l,p} \}).
\end{equation}
Applying Lemma \ref{lemma6.3} along with \eqref{v_lcomp} leads to the following assertion:
\begin{equation} \label{limit}
  \max_l \frac{|\Tilde{v}_l - |v_l||}{m} \leq \max_l |\Tilde{v}_l -v_l|/m \to 0 \text{ as } n \to \infty .
\end{equation}
Recall $l_n$ from \eqref{notation}; \eqref{eq:blockproduct} implies that 
\begin{align} \label{crossprodrem}
\max_{1 \leq l \leq l_n}&\bigg|\IE[B_{3l-1}B_{3l}] + \IE[B_{3l}B_{3l+1}] + \IE[B_{3l+1}B_{3l+2}] + \IE[B_1B_{3l} + \ldots +B_{3l-2}B_{3l}] \nonumber \\ & + \IE[B_1B_{3l+1} + \ldots + B_{3l-1}B_{3l+1}] + \IE[B_1 B_{3l+2} + \ldots + B_{3l}B_{3l+2}]\bigg|/m \to 0 \text{ as } n \to \infty.
\end{align}
Hence, for large enough $n$, it follows from the regularity Condition \ref{cond:regularity} along with \eqref{crossprodrem} that 
\begin{align}\label{large.n.var}
  \inf_{l} \frac{|v_l|}{3m} &\geq \inf_{l} \bigg( \| B_{3l-1}\|^2 +\|B_{3l}\|^2+\|B_{3l+1}\|^2+\|C_{3l+2}(\boldsymbol{\eta}_{3l+1})\|^2- \|C_{3l-1}(\boldsymbol{\eta}_{3l-1})\|^2 \nonumber \\& - 2 |\IE[B_{3l-1}B_{3l}] + \IE[B_{3l}B_{3l+1}] + \IE[B_{3l+1}B_{3l+2}] + \IE[B_1B_{3l} + \ldots +B_{3l-2}B_{3l}] \nonumber \\& + \IE[B_1B_{3l+1} + \ldots + B_{3l-1}B_{3l+1}]   + \IE[B_1 B_{3l+2} + \ldots + B_{3l}B_{3l+2}] |  \bigg)/(3m) >\frac{c}{3},
\end{align}
where we have used $\|C_{3l-1}(\boldsymbol{\eta}_{3l-1})\|^2 \leq \| B_{3l-1}\|^2$ by Jensen's inequality.
Observe that $\mathbb{B}(\sigma_n^2)$ can be represented as $\sum_{l=1}^{l_n} \sqrt{\Tilde{v}_l}Z_l^{*}$ for i.i.d. standard Gaussian random variables $Z_1^{*}, \ldots, Z_{l_n}^{*}$. We define the following Brownian motion:
\begin{align}\label{boldB_n}
  \mathbf{{B}}_n = \sum_{l=1}^{l_n}\sqrt{|v_l|}Z_l^{*}.
\end{align}
Let 
\begin{equation}
  \Psi_n^2 = \| \mathbb{B}(\sigma_n^2)) - \mathbf{B}_n \|^2= \sum_{l=1}^{l_n} (\sqrt{\Tilde{v}_l}- \sqrt{|v_l|})^2.
\end{equation}
Using \eqref{large.n.var} and \eqref{limit}, for large enough $n$ we have, 
\begin{equation}\label{large.n.var2}
  \inf_l \frac{ (\sqrt{\Tilde{v}_l}+ \sqrt{|v_l|})^2}{3m} \geq \inf_l\frac{\Tilde{v}_l+ |v_l|}{3m}\geq \inf_l \frac{2v_l - |\Tilde{v}_l - v_l|}{3m} > \frac{c}{3}.
\end{equation}
Therefore, from \eqref{v_lcomp} and \eqref{large.n.var2} one obtains for $1\leq l \leq l_n$, 
\begin{equation}\label{9.19new}
  \sup_{l}(\sqrt{\Tilde{v}_l}- \sqrt{|v_l|})^2=\frac{(\tilde{v}_l-|v_l|)^2/ 3m}{ (\sqrt{\Tilde{v}_l}+ \sqrt{|v_l|})^2/ 3m}= O\left(m^{1-2A'} + m\left(\min_{l \geq 1} \{ln^{2/p-1} + \Theta_{l,p} \}\right)^2\right).
\end{equation}
Thus, Lemma \ref{lemma6.3} together with \eqref{eq1n_ver2} implies
\begin{equation} \label{eq:psin}
  \Psi_n^2 =O\left(nm^{-2A'}+ n\left(\min_{l \geq 1} \{ln^{2/p-1} + \Theta_{l,p} \}\right)^2\right)=o(n^{2/p}).
\end{equation}
Note that ${\mathbb{B}}(\sigma_k^2) - \mathbf{B}_k$ is a Gaussian process with independent increments. Therefore, using Doob's maximal inequality we have
\begin{equation} \label{eq:Bn}
  \max_{1\leq i \leq n} |\mathbb{B}(\sigma_i^2) - \mathbf{B}_i | =o_{\IP}(n^{1/p}),
\end{equation}
in view of \eqref{eq:psin}.
Next, definition of $v_l$ in \eqref{v_l} along with \eqref{eq:blockproduct} implies 
\begin{equation}
  | \IE(S_i^2) - \sum_{l=1}^{l_i} |v_l| | \leq | \IE(S_i^2) - \sum_{l=1}^{l_i} v_l | =O(m) \text{ uniformly over $1\leq i \leq n$},
\end{equation}
which readily leads to 
\begin{equation} \label{finalstep}
  \max_{1 \leq i \leq n} |\mathbb{B}(\IE(S_i^2)) - \mathbf{B}_i | = o_{\IP}(n^{1/p}).
\end{equation}
This completes the proof of \eqref{result:GA_ver2} in view of Propositions \ref{prop1}, \ref{prop2}, \ref{prop3}, and equations \eqref{eq:condn1}, \eqref{3.30}, \eqref{3.31}, \eqref{3.33}, \eqref{3.34}, \eqref{eq:Bn} and \eqref{finalstep}. 
\color{black}
\end{proof}

\color{black}
\subsection{Proof of Theorem \ref{thm:multGA}}
\begin{proof}
Now, we will construct a Gaussian process $(Y_t^c)_{t=1}^n$ with the same covariance structure $(X_t)_{t=1}^n$ such that \eqref{eq:multGA_nocondn} holds. We will use the same $m$ as in the proof of Theorem \ref{thm:GA2}. Recall $\mathcal{q}_k$ as defined in Lemma \ref{lemmaY}, and $v_l$ from \eqref{v_l}. We will use the same notation as in the proof of Theorem \ref{corol:new}. First we define
\begin{align*}
    \mathbf{v}_l&= \|B_{3l-1}\|^2 + \|B_{3l}\|^2 + \|B_{3l+1}\|^2 + 2 \IE[B_{3l-1}B_{3l}] + 2 \IE[B_{3l}B_{3l+1}] + 2 \IE[B_{3l+1}B_{3l+2}] \nonumber \\ & - \|C_{3l-1}(\boldsymbol{\eta}_{3l-2})\|^2 + \|C_{3l+2}(\boldsymbol{\eta}_{3l+1})\|^2,\ 1 \leq l \leq l_n.
\end{align*}
 By the argument similar to \eqref{large.n.var} it follows that $\inf_l |\mathbf{v}_l|/m >c$ for all sufficiently large $n$. We define a new Brownian motion $\mathcal{H}_n:= \sum_{l=1}^{l_n} |\mathbf{v}_l|^{1/2} Z_l^{\star}$ with the same $Z_l^{*}$'s as in definition of $\mathbf{B}_n$ in \eqref{boldB_n}. It follows similar to \eqref{large.n.var2}-\eqref{eq:Bn} that 
\begin{equation}
  \| \mathbf{B}_n - \mathcal{H}_n \|^2= \sum_{l=1}^{l_n}(|v_l|^{1/2} - |\mathbf{v}_l|^{1/2})^2= O(nm^{-2A'})= o(n^{2/p}),
\end{equation}
which yields $\max_{1 \leq i \leq n} | \mathbf{B}_i - \mathcal{H}_i| =o_{\IP}(n^{1/p}).$ Next we define 
\[ \tau_k:=u_{k,k}^{}+ 2 u_{k+1, k}^{} + \boldsymbol{u}_{k}^{^T} U_{k, k}^{-1}\boldsymbol{u}_{k}^{} - \boldsymbol{u}_{k-1}^{^T} U_{k-1,k-1}^{-1} \boldsymbol{u}_{k-1}^{} \text{ for $1 \leq k \leq \lceil n/m \rceil$.} \]
 In light of Lemma \ref{lemmaY} and similar to the argument leading to \eqref{8.65}, it can be observed that 
 \begin{align} \label{newGn}
   \max_{1 \leq i \leq n}|\sum_{l=1}^{l_i} \mathbf{v}_{l} - \sum_{k=1}^{\lceil i/m \rceil }\tau_k|= O(m).
 \end{align}
 Observe that due to Condition \ref{cond:regularity}, $\min_{1 \leq l \leq l_n}\mathbf{v}_l>0$ and $\min_{1 \leq k\leq \lceil n/m \rceil} \tau_k>0$ for all sufficiently large $n$. This motivates us to define the following Gaussian process
 \[ \mathcal{G}_n:= \sum_{l=1}^{\lceil n/m \rceil} |\tau_l|^{1/2} Z_{l}^{*}. \]
In light of \eqref{newGn} and \eqref{eq2n_ver2}, we obtain 
\begin{equation} \label{newtauapprox}
  \max_{1 \leq i \leq n} | \mathcal{H}_i - \mathcal{G}_i| =O_{\IP}(\sqrt{m\log n})=o_{\IP}(n^{1/p}).
\end{equation}
Denote $$\mathcal{B}_n:=\sum_{k=1}^{\lceil n/m \rceil} \mathcal{q}_k^{1/2} Z_k^{*}.$$ 

\noindent We now show that $\mathcal{B}_n$ is a good enough approximation of $\mathcal{G}_n$. By definition of $\tau_l$ it holds, $\mathcal{q}_l=\tau_l+O(m^{1-A'})$ uniformly for $1 \leq l \leq \lceil n/m \rceil$; moreover, an argument similar to \eqref{large.n.var} yields that $\inf_l |\tau_l| /m >c>0$ for all sufficiently large $n$. Thus an application of the argument same as \eqref{large.n.var2} and \eqref{9.19new} implies that 
\begin{equation}
  \| \mathcal{B}_n - \mathcal{G}_n \|^2= \sum_{l=1}^{\lceil n/m \rceil}(\mathcal{q}_l^{1/2} - |\tau_l|^{1/2})^2= O(nm^{-2A'})= o(n^{2/p}),
\end{equation}
which in turn yields
\begin{equation}\label{newY}
  \max_{1 \leq k \leq \lceil n/m \rceil}| \mathcal{B}_k - \mathcal{G}_k|=o_{\IP}(n^{1/p}),
\end{equation}
using Doob's maximal inequality. 

We will construct our Gaussian approximation $(Y_t^c)_{t=1}^n$ from $\mathcal{B}_k$ exactly as in Lemma \ref{lemmaYconstruct}. In light of \eqref{newY}, \eqref{newtauapprox} and \eqref{eq:Bn}, Lemma \ref{lemmaYconstruct} completes the proof of \eqref{eq:multGA_nocondn}.
\end{proof}
\color{black}
\section{Appendix C: Proofs of Section \ref{ssc:estvar}} \label{sec:section3proofs}
In this section we provide the proofs of the two main results of the estimation step. Firstly, we prove the maximal quadratic deviation inequality in Theorem \ref{thm:maxpartialquad}. 
\subsection{Proof of Theorem \ref{thm:maxpartialquad}}\label{proofmaxquad}
In order to prove this theorem, we require the following lemmas. These results are well-known in the literature for $p>4$ case; however we weaken the condition on moments to allow $p>2$. For the sake of completeness we state and prove the results as we use it. In the following, we use $C_p$ to denote a generic positive constant depending solely on $p$, and $C$ to denote an universal constant. Their values are subject to change from line to line. \color{black}

Our first lemma is the (one-dimensional) general version of Lemma D.6 of Supplement to \cite{zhang&wu2021}. 
\begin{lemma}\label{lemma8.1}
   Assume that the process \eqref{eq:model_Z} has $\IE(X_t)=0$ and $\Theta_{0,p} < \infty$ for some $p >2$. Let $\mathcal{D}_n \leq n$ and $\boldsymbol{\eta}_k=(\varepsilon_{(k-1)\mathcal{D}_n+1}, \ldots, \varepsilon_{k\mathcal{D}_n})$. For $1 \leq k \leq \lceil n/\mathcal{D}_n \rceil$, define $V_k$ as in \eqref{Vj} and let $V_{k, h}=\IE(V_k | \boldsymbol{\eta}_{k-h}, \boldsymbol{\eta}_{k-h+1}, \ldots, \boldsymbol{\eta}_k)$. Then for $h \geq 2$, 
   \begin{equation} \label{eq8.1}
     \| V_{k,h} - V_{k,h-1} \|_{p/2} \leq 
     \begin{cases}
       C_p\mathcal{D}_n^{1/2 + 2/p} \Theta_{0,p} \sum_{d=(h-2)\mathcal{D}_n +1}^{(h+1)\mathcal{D}_n}\delta_p(d), & \ 2 < p \leq 4,\\
       C_p\mathcal{D}_n \Theta_{0,p} \sum_{d=(h-2)\mathcal{D}_n +1}^{(h+1)\mathcal{D}_n}\delta_p(d), & \ p>4.
     \end{cases}
   \end{equation}
\end{lemma}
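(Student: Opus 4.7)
The plan is to realize $V_{k,h}-V_{k,h-1}$ as a block-projection via an i.i.d.\ coupling, expand the resulting quadratic form bilinearly, and bound the $L_{p/2}$-norm by an iterated Burkholder--Rosenthal argument. Concretely, let $\boldsymbol{\eta}_{k-h}'$ be an i.i.d.\ copy of $\boldsymbol{\eta}_{k-h}$ independent of everything else, let $X_s^{\ast}$ denote $g_s$ evaluated on the sequence obtained from $\mathcal{F}_s$ by substituting $\boldsymbol{\eta}_{k-h}'$ for $\boldsymbol{\eta}_{k-h}$, and let $V_k^{\ast}$ be the corresponding replacement inside $V_k$. Since $V_k^{\ast}$ has the same conditional law as $V_k$ given $\sigma(\boldsymbol{\eta}_{k-h+1},\ldots,\boldsymbol{\eta}_k)$, the standard projection identity $V_{k,h}-V_{k,h-1}=\mathbb{E}[V_k-V_k^{\ast}\mid \boldsymbol{\eta}_{k-h},\ldots,\boldsymbol{\eta}_k]$ holds, and conditional Jensen immediately gives $\|V_{k,h}-V_{k,h-1}\|_{p/2}\le\|V_k-V_k^{\ast}\|_{p/2}$.

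I would then decompose $X_sX_t-X_s^{\ast}X_t^{\ast}=(X_s-X_s^{\ast})X_t+X_s^{\ast}(X_t-X_t^{\ast})$ and split $V_k-V_k^{\ast}=I+II$, with $I=\sum_t X_t Y_t$, $Y_t=\sum_s a_{s,t}(X_s-X_s^{\ast})$, and $II$ symmetric. The coupling gaps are quantified by telescoping through the $\mathcal{D}_n$ innovations of block $k-h$: writing $X_s-X_s^{\ast}=\sum_{l\in\mathrm{block}(k-h)}\Delta_l^{(s)}$, each $\Delta_l^{(s)}$ is a martingale difference in $l$ whose $L_p$-norm is dominated by $\delta_p(s-l)$. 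Since $t\in[(k-1)\mathcal{D}_n+1,k\mathcal{D}_n]$ and $a_{s,t}$ forces $s\in[t-\mathcal{D}_n,t]$, the lags $s-l$ and $t-l$ that can actually appear lie in $[(h-2)\mathcal{D}_n+1,(h+1)\mathcal{D}_n]$, which is exactly the summation range of $d$ in the stated bound.

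The endgame is an iterated moment inequality. Burkholder in the coupling index $l$ first yields $\|Y_t\|_p\le C_p\,\mathcal{D}_n^{1/2}\sum_d\delta_p(d)$ uniformly in $t$ (for each fixed $t$ the sequence $\{\sum_s a_{s,t}\Delta_l^{(s)}\}_l$ is a martingale difference sequence of length $\mathcal{D}_n$). H\"older then gives $\|X_tY_t\|_{p/2}\le \|X_t\|_p\|Y_t\|_p\le C_p\Theta_{0,p}\,\mathcal{D}_n^{1/2}\sum_d\delta_p(d)$. For the outer sum $\sum_t X_tY_t$, when $p>4$ the exponent $p/2>2$ and a second Burkholder step (after decomposing each $X_tY_t$ on the Wu projection basis in $(\varepsilon_i)$ to expose a martingale-difference structure in the input innovations) contributes an additional $\mathcal{D}_n^{1/2}$, producing the claimed $\mathcal{D}_n\Theta_{0,p}\sum_d\delta_p(d)$. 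When $2<p\le 4$, $p/2\le 2$ and Burkholder is unavailable; one substitutes a Rio-type $L_{p/2}$-Rosenthal inequality for weakly dependent sums, which costs an extra factor $\mathcal{D}_n^{2/p-1/2}$ and yields the bound $\mathcal{D}_n^{1/2+2/p}\Theta_{0,p}\sum_d\delta_p(d)$.

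The main obstacle will be the second moment-inequality step: naively applying Minkowski over the $\mathcal{D}_n^2$ pairs $(s,t)$ overcounts by a factor of $\mathcal{D}_n$, so cancellation over the outer $t$-index is essential. Making this cancellation explicit requires projecting $X_tY_t$ onto Wu's martingale-difference basis in $(\varepsilon_i)$ and then invoking Burkholder (resp.\ a Rosenthal-type $L_{p/2}$ bound) at the right scale. The regime $2<p\le 4$ is the delicate one, since $p/2\le 2$ disables the quadratic-variation form of Burkholder and forces the use of a Rio-type inequality; this is precisely what produces the asymmetric exponent in the two stated bounds.
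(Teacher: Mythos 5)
Your skeleton is right — coupling/projection onto the replaced block $\boldsymbol{\eta}_{k-h}$, the bilinear decomposition of $X_sX_t-X_s^{\ast}X_t^{\ast}$, H\"older plus a Burkholder step, the correct lag window $[(h-2)\mathcal{D}_n+1,(h+1)\mathcal{D}_n]$, and the correct exponents in both regimes. But the step you yourself flag as "the main obstacle" is a genuine gap, and the sketched fix does not close it. Because you apply Burkholder over the replaced-block index $l$ \emph{inside} $Y_t=\sum_s a_{s,t}(X_s-X_s^{\ast})$, you arrive at $\|X_tY_t\|_{p/2}\le C_p\Theta_{0,p}\mathcal{D}_n^{1/2}\sum_d\delta_p(d)$ uniformly in $t$, and Minkowski over the $\mathcal{D}_n$ values of $t$ then gives $\mathcal{D}_n^{3/2}$, which is off by $\mathcal{D}_n^{1/2}$. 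To recover that factor you propose a second projection of $\sum_tX_tY_t$ onto Wu's innovation basis followed by Burkholder over the projection index $r$. For that to work, each projection norm must \emph{simultaneously} carry summability in $r$ (via $\delta_p(\cdot-r)$) and the smallness factor $\sum_d\delta_p(d)$ coming from the block replacement; concretely you would need bounds on second-order differences such as $\|(X_s-X_s^{\ast})-(X_{s,\{r\}}-X_{s,\{r\}}^{\ast})\|_p$ by a \emph{product} of two dependence coefficients. Such product bounds do not follow from the first-order measure $\delta_p$ and are not assumed, so the cancellation over $t$ you need is not established. (A secondary issue: your $\Delta_l^{(s)}$, defined by replacing innovations one at a time, are not martingale differences with respect to the natural increasing filtration; one must use the backward projections $\mathcal{P}^{k\mathcal{D}_n}_{j}$ to get a genuine MDS.)

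The paper's proof avoids the obstacle entirely by reordering the two moment inequalities. It first telescopes $V_{k,h}-V_{k,h-1}=\sum_{l=1}^{\mathcal{D}_n}\mathcal{P}^{k\mathcal{D}_n}_{(k-h-1)\mathcal{D}_n+l}V_k$ into $\mathcal{D}_n$ \emph{single-innovation} backward projections, which are martingale differences in $l$ by construction. Each projection is bounded by Minkowski over $t$ plus H\"older plus the bound $\|\sum_s a_{s,t}X_s\|_p\le C_p\Theta_{0,p}\mathcal{D}_n^{1/2}$, yielding $C_p\Theta_{0,p}\mathcal{D}_n^{1/2}\sum_t\delta_p(t-j_l)$ — here the decay factor in $h$ appears automatically from the per-$t$ single-innovation coupling, with no second-order quantities needed. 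Only then is Burkholder applied over the block index $l$ (Rio's $\ell_2$-of-norms form for $p>4$, and the square-function form of Burkholder (1973) combined with $(\sum|a_i|)^{p/4}\le\sum|a_i|^{p/4}$ for $2<p\le4$ — so Burkholder is not "unavailable" in that regime, only its $\ell_2$ form is), contributing the final $\mathcal{D}_n^{1/2}$, respectively $\mathcal{D}_n^{2/p}$. In short: the square-root cancellation is harvested over the $\mathcal{D}_n$ innovations of the replaced block, not over the outer index $t$; moving your Burkholder-in-$l$ step from inside $Y_t$ to the outermost level is exactly what repairs the argument.
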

\begin{proof}
For $a<b$, let $\mathcal{F}_a^b=(\varepsilon_a, \varepsilon_{a+1}, \ldots, \varepsilon_b)$. Define the backward projection operator as $\mathcal{P}^b_a X = \IE(X|\mathcal{F}_a^b) - \IE(X|\mathcal{F}_{a+1}^b)$ with $\mathcal{P}_a^a(X)=\IE[X|\varepsilon_a]$. Observe that for $h \geq 2$,
  \begin{align}
    V_{k,h}- V_{k,h-1}&= \IE(V_k | \boldsymbol{\eta}_{k-h}, \ldots, \boldsymbol{\eta}_{k}) - \IE(V_k | \boldsymbol{\eta}_{k-h+1}, \ldots, \boldsymbol{\eta}_{k})= \sum_{l=1}^{\mathcal{D}_n} \mathcal{P}_{(k-h-1)\mathcal{D}_n + l}^{k \mathcal{D}_n} V_k\nonumber.
  \end{align}
  Using Jensen's inequality (see \cite{Wu2005}; Theorem 1.(i)), one obtains
  \begin{align}
    \|\mathcal{P}_{(k-h-1)\mathcal{D}_n + l}^{k \mathcal{D}_n} V_k\|_{p/2} \leq I +II, \label{Proj}
  \end{align}
  where 
  \begin{align}
    I &= \bigg\|\sum_{t=(k-1)\mathcal{D}_n +1}^{(k\mathcal{D}_n) \wedge n}(X_t -X_{t, \{(k-h-1)\mathcal{D}_n+l\}})\sum_{s=(t-\mathcal{D}_n) \vee 1}^t a_{s,t}X_s\bigg\|_{p/2} \label{I.quad},\\
    II &= \| \sum_{s=[(k-2)\mathcal{D}_n +1] \vee 1}^{(k\mathcal{D}_n) \wedge n} (X_s - X_{s, \{(k-h-1)\mathcal{D}_n+l\}}) \sum_{t=s \vee(k-1) \mathcal{D}_n+1}^{(s+\mathcal{D}_n) \wedge n} a_{s,t} X_{t, \{(k-h-1)\mathcal{D}_n+l\}} \|_{p/2} \label{II.quad}.
  \end{align}
  In order to tackle $I$, we start off by noting the following assertion. In view of Burkholder's inequality (\cite{Rio2009MomentIF}; Theorem 2.1), it follows that
  \begin{align}
    \|\sum_{s=1}^t c_s X_s \|_{p} \leq \sum_{r=0}^{\infty} \| \sum_{s=1}^t c_s \mathcal{P}_{s-r}^s X_s\|_p &\leq C_p \sum_{r=0}^{\infty}\sqrt{\sum_{s=1}^t \|c_s\mathcal{P}_{s-r}^s X_s\|_p^2} \\ &\leq C_p\sum_{r=0}^{\infty} \sqrt{\sum_{s=1}^t c_s^2} \delta_p(r)\nonumber\\&= C_p \Theta_{0,p}\sqrt{\sum_{s=1}^t c_s^2},\label{d.5iii}
  \end{align}
  which entails, invoking H\"older's inequality, that, 
  \begin{align}
    I & \leq \sum_{t=(k-1)\mathcal{D}_n +1}^{(k\mathcal{D}_n) \wedge n}\|X_t -X_{t, \{(k-h-1)\mathcal{D}_n+l\}} \|_p \bigg\|\sum_{s=(t-\mathcal{D}_n) \vee 1}^t a_{s,t}X_s \bigg\|_p \nonumber \\
    & \leq C_p \Theta_{0,p} \sqrt{\mathcal{D}_n} \sum_{t=(k-1)\mathcal{D}_n +1}^{(k\mathcal{D}_n) \wedge n} \delta_p(t-(k-h-1)\mathcal{D}_n-l) \label{Ibound}.
  \end{align}
  Similarly, 
  \begin{align}
    II & \leq C_p \Theta_{0,p} \sqrt{\mathcal{D}_n} \sum_{t=(k-1)\mathcal{D}_n +1}^{(k\mathcal{D}_n) \wedge n} \delta_p(t-(k-h-1)\mathcal{D}_n-l) \label{IIbound}.
  \end{align}
  Thus combining \eqref{Ibound} and \eqref{IIbound} with \eqref{Proj} yields,
  \begin{align}
    \|\mathcal{P}_{(k-h-1)\mathcal{D}_n + l}^{k \mathcal{D}_n} V_k\|_{p/2} \leq C_p \Theta_{0,p} \sqrt{\mathcal{D}_n} \sum_{t=(k-1)\mathcal{D}_n +1}^{(k\mathcal{D}_n) \wedge n} \delta_p(t-(k-h-1)\mathcal{D}_n-l). \label{Pjnorm}
  \end{align}
 Finally, for $p>4$, \citet{Rio2009MomentIF}'s version of Burkholder's inequality (Theorem 2.1 of \cite{Rio2009MomentIF}) along with \eqref{Pjnorm} implies
  \begin{align}
    \| V_{k,h}- V_{k,h-1} \|_{p/2}^2 \leq C_p \sum_{l=1}^{\mathcal{D}_n} \|\mathcal{P}_{(k-h-1)\mathcal{D}_n + l}^{k \mathcal{D}_n} V_k \|_{p/2}^2 &\leq C_p^3 \Theta_{0,p}^2 \mathcal{D}_n \sum_{l=1}^{\mathcal{D}_n} \left(\sum_{d=(h-1)\mathcal{D}_n -l+1}^{(h+1)\mathcal{D}_n -l} \delta_p(d) \right)^2 \nonumber \\
    & \leq C_p^3 \Theta_{0,p}^2 \mathcal{D}_n^2 \left(\sum_{d=(h-2)\mathcal{D}_n+1}^{(h+1)\mathcal{D}_n} \delta_p(d) \right)^2 \nonumber,
  \end{align}
  which completes the proof for $p>4$. For the case $2<p\leq 4$, one proceeds using Theorem 3.2 of \cite{burk73} as follows:
  \begin{align}
    \| V_{k,h}- V_{k,h-1} \|_{p/2}^{p/2} &\leq C_p \IE\left(\left(\sum_{l=1}^{\mathcal{D}_n}|\mathcal{P}_{(k-h-1)\mathcal{D}_n + l}^{k \mathcal{D}_n} V_k|^2\right)^{p/4}\right) \\ & \leq C_p \sum_{l=1}^{\mathcal{D}_n} \IE[|\mathcal{P}_{(k-h-1)\mathcal{D}_n + l}^{k \mathcal{D}_n} V_k|^{p/2}] \nonumber \\&\leq C_p \Theta_{0,p}^{p/2} \mathcal{D}_n^{p/4} \sum_{l=1}^{\mathcal{D}_n}\left( \sum_{d=(h-2)\mathcal{D}_n+1}^{(h+1)\mathcal{D}_n} \delta_p(d)\right)^{p/2} \nonumber \\
    & \leq C_p \Theta_{0,p}^{p/2} \mathcal{D}_n^{1+p/4} \left(\sum_{d=(h-2)\mathcal{D}_n+1}^{(h+1)\mathcal{D}_n} \delta_p(d) \right)^{p/2} \nonumber,
  \end{align}
  where we applied $(|a_1|+\ldots+|a_n|)^{p/4}\leq |a_1|^{p/4} + \ldots + |a_n|^{p/4}$ for $2<p \leq 4$. 
  This completes the proof. 
\end{proof}
Next we will use a version of (41), Proposition 8 of \cite{xiaowu}. 
\begin{lemma}\label{lemma8.2}
  Grant the process \eqref{eq:model_Z} with $\IE(X_t)=0$ and $\Theta_{0,p} < \infty$ for some $p >2$. Then,
  \begin{align}
    \| \sum_{s,t=1}^n a_{s,t}(X_s X_t - \IE(X_sX_t))\|_{p/2} \leq \begin{cases}
      C_p \mathcal{C} \Theta_{0,p}^2 n^{2/p}, & \ 2<p\leq 4 \\
      C_p \mathcal{C} \Theta_{0,p}^2 \sqrt{n}, & \ p \geq 4
    \end{cases}
  \end{align}
  where $\mathcal{C}=\max\{\max_{1 \leq t \leq n}(\sum_{s=1}^n a_{s,t}^2)^{1/2}, \max_{1 \leq s \leq n}(\sum_{t=1}^n a_{s,t}^2)^{1/2} \}$.
\end{lemma}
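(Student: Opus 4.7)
The plan is to perform a martingale-difference decomposition of the quadratic form along the filtration $(\mathcal{F}_k)$ and then apply Burkholder's inequality in the form appropriate to the range of $p$, following the same template as Proposition 8 of \cite{xiaowu} but without stationarity and extended to $2<p\leq 4$. Write $Q := \sum_{s,t=1}^n a_{s,t}(X_sX_t - \IE X_sX_t)$ and $X_s^{(k)} := \IE(X_s\mid\mathcal{F}_k)$. Since $\mathcal{P}_k$ annihilates constants, $Q = \sum_{k \leq n} D_k$ with $D_k := \mathcal{P}_k Q$, and the product-rule identity
\begin{equation*}
\mathcal{P}_k(X_s X_t) = X_s^{(k-1)}\mathcal{P}_k X_t + (\mathcal{P}_k X_s)X_t^{(k-1)} + (\mathcal{P}_k X_s)(\mathcal{P}_k X_t)
\end{equation*}
decomposes $D_k = D_k^{(1)} + D_k^{(2)} + D_k^{(3)}$ into two symmetric pieces and a diagonal piece.

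Next I would derive a uniform-in-$k$ estimate for each piece. For $D_k^{(1)} = \sum_{t \geq k}(\mathcal{P}_k X_t)\sum_s a_{s,t} X_s^{(k-1)}$, H\"older's inequality together with $L^p$-contractivity of conditional expectation and the inequality $\|\sum_s c_s X_s\|_p \leq C_p\Theta_{0,p}\sqrt{\sum_s c_s^2}$ (derived inside the proof of Lemma \ref{lemma8.1}) yield
\begin{equation*}
\|D_k^{(1)}\|_{p/2} \leq \sum_{t \geq k}\|\mathcal{P}_k X_t\|_p\,\bigl\|\textstyle\sum_s a_{s,t}X_s\bigr\|_p \leq C_p\,\mathcal{C}\,\Theta_{0,p}\sum_{t \geq k}\delta_p(t-k),
\end{equation*}
and the symmetric bound holds for $D_k^{(2)}$. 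For the diagonal term $D_k^{(3)}$, a direct H\"older estimate on both coordinates combined with the Cauchy--Schwarz step $\sum_s|a_{s,t}|\delta_p(s-k)\leq \mathcal{C}\sqrt{\sum_s\delta_p(s-k)^2}\leq \mathcal{C}\,\Theta_{0,p}$ gives $\|D_k^{(3)}\|_{p/2}\leq C_p\mathcal{C}\Theta_{0,p}\sum_{t\geq k}\delta_p(t-k)$ as well.

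Finally I would assemble the pieces via Burkholder. For $p \geq 4$ the $L^{p/2}$ Burkholder inequality (valid since $p/2 \geq 2$) gives
\begin{equation*}
\|Q\|_{p/2}^2 \leq C_p\sum_{k\leq n}\|D_k\|_{p/2}^2 \leq C_p\,\mathcal{C}^2\Theta_{0,p}^2\sum_{k\leq n}\Bigl(\sum_{t=1}^n\delta_p(t-k)\Bigr)^{2},
\end{equation*}
and a routine splitting of the $k$-sum into $k\in[1,n]$ (each summand $\leq \Theta_{0,p}^2$) and $k\leq 0$ (summands bounded by $\Theta_{1-k,p}^2$, whose sum converges under $A>1$) delivers the claimed $\|Q\|_{p/2}\leq C_p\mathcal{C}\Theta_{0,p}^2\sqrt{n}$. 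For $2 < p \leq 4$ I would instead invoke the subadditive Burkholder inequality (Theorem 3.2 of \cite{burk73}, used identically in Lemma \ref{lemma8.1}), obtaining $\|Q\|_{p/2}^{p/2}\leq C_p\sum_k\|D_k\|_{p/2}^{p/2}\leq C_p\,n\,(\mathcal{C}\Theta_{0,p}^2)^{p/2}$, whence $\|Q\|_{p/2}\leq C_p\mathcal{C}\Theta_{0,p}^2\,n^{2/p}$. The main obstacle is the diagonal piece $D_k^{(3)}$: a naive bound via the Frobenius norm of $(a_{s,t})$ or a single Cauchy--Schwarz would lose a factor of $\sqrt{n}$, and the correct order requires exploiting both the row-sum and column-sum structure encoded in $\mathcal{C}$ simultaneously. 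A secondary technical point is verifying that the $k$-sum is effectively of order $n$ by absorbing the decay tail into the summable sequence $(\Theta_{j,p})$ supplied by Condition \ref{cond:fdm3}.
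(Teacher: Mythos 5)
Your overall architecture (projection decomposition of $Q$ along $(\mathcal{F}_k)$, term-by-term H\"older/$\ell^2$ estimates involving $\mathcal{C}$, then Burkholder in the form appropriate to each range of $p$) is the same as the paper's, and the final assembly is essentially right. But the key decomposition step is not: the claimed product rule $\mathcal{P}_k(X_sX_t) = X_s^{(k-1)}\mathcal{P}_kX_t + (\mathcal{P}_kX_s)X_t^{(k-1)} + (\mathcal{P}_kX_s)(\mathcal{P}_kX_t)$ is false. Its right-hand side equals $X_s^{(k)}X_t^{(k)} - X_s^{(k-1)}X_t^{(k-1)}$, whereas $\IE(X_sX_t\mid\mathcal{F}_k) = X_s^{(k)}X_t^{(k)} + R_k$ with $R_k := \IE[(X_s-X_s^{(k)})(X_t-X_t^{(k)})\mid\mathcal{F}_k]$; since $X_s$ and $X_t$ share the future innovations $\varepsilon_{k+1},\varepsilon_{k+2},\ldots$, they are not conditionally independent given $\mathcal{F}_k$ and $R_k$ does not vanish. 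Your decomposition thus drops the martingale difference $R_k-R_{k-1}$, which is genuinely random and of the same order as the terms you keep (e.g.\ for $X_s=X_t=\varepsilon_k\varepsilon_{k+1}$ one finds $R_k-R_{k-1}=\sigma^2(\varepsilon_k^2-\sigma^2)$). The paper avoids any product rule: it bounds $\|\mathcal{P}_kQ\|_{p/2}\le\|\sum_{s,t}a_{s,t}(X_sX_t-X_{s,\{k\}}X_{t,\{k\}})\|_{p/2}$ via the coupling form of Theorem 1 of \cite{Wu2005}, and then uses the exact algebraic identity $X_sX_t-X_{s,\{k\}}X_{t,\{k\}}=(X_s-X_{s,\{k\}})X_t+X_{s,\{k\}}(X_t-X_{t,\{k\}})$, after which H\"older, the bound $\|\sum_t c_tX_t\|_p\le C_p\Theta_{0,p}(\sum_t c_t^2)^{1/2}$ and the definition of $\mathcal{C}$ give $\|\mathcal{P}_kQ\|_{p/2}\le C_p\mathcal{C}\Theta_{0,p}\sum_s\delta_p(s-k)$ exactly as in your estimates. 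Substituting that coupling step for the product rule repairs the proof; note also that with this route there is no separate ``diagonal piece'' to worry about.

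A secondary point: to control the tail $k\le 0$ of the $k$-sum you invoke $A>1$ from Condition \ref{cond:fdm3}, which is not among the hypotheses of the lemma (only $\Theta_{0,p}<\infty$ is assumed). This is avoidable: bound
\begin{equation*}
\sum_{k\le n}\Bigl(\sum_{t=1}^n\delta_p(t-k)\Bigr)^{2}\le\Bigl(\sup_{k}\sum_{t=1}^n\delta_p(t-k)\Bigr)\sum_{t=1}^n\sum_{k\le n}\delta_p(t-k)\le n\,\Theta_{0,p}^2,
\end{equation*}
which uses only $\Theta_{0,p}<\infty$ and yields the stated constants; the analogous $\ell^{p/2}$ sum for $2<p\le4$ is handled the same way.
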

\begin{proof}
  Let $Q:= \sum_{s,t=1}^n a_{s,t}X_s X_t$. Write $ Q- \IE(Q)=\sum_{r=-\infty}^n P_rQ$, where the projections $P_r$ are defined as in the proof of Lemma \ref{lemmacp}. Now, Jensen's inequality yields,
  \begin{align}
    \|P_rQ\|_{p/2} \leq \|\sum_{s,t=1}^n a_{s,t}\left(X_s X_t - X_{s, \{r\}} X_{t, \{r\}} \right) \|_{p/2} \leq I_r + II_r \nonumber,
  \end{align}
  where 
  \begin{align}
    I_r &= \| \sum_{s,t=1}^n a_{s,t}(X_s - X_{s, \{r\}})X_t\|_{p/2},\\
    II_r&=\|\sum_{s,t=1}^n a_{s,t} X_{s, \{r\}}(X_t - X_{t, \{r\}}) \|_{p/2}.
  \end{align}
  To tackle $I_r$, we employ H\"older's inequality and \eqref{d.5iii}, it follows that,
  \begin{align*}
    I_r \leq \sum_{s=1}^n\| X_s - X_{s, \{r\}}\|_p \|\sum_{t=1}^n a_{s,t}X_t \|_p \leq C_p \Theta_{0,p} \mathcal{C} \sum_{s=1}^n \delta_p(s-r). 
  \end{align*}
  The same bound applies to $II_r$. Now, for $p>4$, Burkholder's inequality (\cite{Rio2009MomentIF}) implies that
  \begin{align*}
    \|Q-\IE(Q)\|_{p/2}^2 &\leq C_p \sum_{r=-\infty}^n \|P_r Q\|^2_{p/2}\leq C_p \Theta_{0,p}^2 \mathcal{C}^2 \sum_{r=-\infty}^n \left(\sum_{s=1}^n \delta_p(s-r) \right)^2\leq C_p \Theta_{0,p}^4 n\mathcal{C}^2.
  \end{align*}
     As for $2< p \leq 4$, invoking \cite{burk88} along with elementary inequality $(|a_1|+\ldots+|a_n|)^{p/4}\leq |a_1|^{p/4} + \ldots + |a_n|^{p/4}$, yields,
     \allowdisplaybreaks
   \begin{align*}
      \|Q-\IE(Q)\|_{p/2}^{p/2} \leq \bigg\|\sqrt{\sum_{r=-\infty}^n |P_rQ|^2} \bigg\|_{p/2}^{p/2}  &\leq \IE\left(\sum_{r=-\infty}^n |P_rQ|^{p/2} \right)\\ &\leq C_p \Theta_{0,p}^{p/2} \mathcal{C}^{p/2} \sum_{r=-\infty}^n \left(\sum_{s=1}^n \delta_p(s-r) \right)^{p/2}\\
      & \leq C_p \mathcal{C}^{p/2}n \Theta_{0,p}^p.
   \end{align*}
   This completes the proof. 
\end{proof}
Finally, we will need a Fuk-Nagaev type inequality \cite{fuk-nagaev, borovkov}. In particular, we will use \cite{borovkov}'s argument that the left-hand side $\IP(|S_n|\geq x)$ in Theorems 1-4 in \cite{fuk-nagaev} can be replaced by $\IP\left(\max_{1 \leq i \leq n}|S_i| \geq x \right)$. This, in conjunction with Corollary 1.6 and 1.8  of \cite{nagaev}, can be summarized into the following result. 
\begin{lemma}\label{lemma8.3}
  Let $Z_1, \ldots, Z_n$ be independent zero-mean random variables with $\IE[|Z_i|^p] < \infty$ for $p>1$. Let $S_i=\sum_{j=1}^i Z_j$. Then, for any $x>0$,
  \begin{align}
    \IP\left(\max_{1 \leq i \leq n}|S_i| \geq x \right) \leq \begin{cases}
      C_p x^{-p} \sum_{i=1}^n \IE[|Z_i|^p], \ & 1<p \leq 2,\\
      C_p x^{-p} \sum_{i=1}^n \IE[|Z_i|^p] + \exp\left(-\frac{x^2}{C_p\sum_{i=1}^n \IE[Z_i^2]}\right), \ & p>2. \label{maxfuknagaev} 
    \end{cases}
  \end{align}
\end{lemma}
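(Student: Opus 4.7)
The plan is to establish both bounds in two stages: first derive the corresponding tail bounds for the non-maximal partial sum $|S_n|$ itself, which are the classical Fuk–Nagaev inequalities of \cite{fuk-nagaev, nagaev}, and then upgrade these to bounds on $\max_{1 \le i \le n}|S_i|$ using the device from \cite{borovkov}. Since the lemma statement itself notes that the result is essentially a repackaging of Corollaries~1.6 and~1.8 of Nagaev plus Borovkov's maximal-inequality trick, the work is in assembling these ingredients with explicit constants.

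For the non-maximal bounds, I would split according to the range of $p$. In the case $1 < p \le 2$, the Marcinkiewicz–Zygmund / von Bahr–Esseen inequality gives $\IE |S_n|^p \le 2 \sum_{i=1}^{n} \IE|Z_i|^p$ for independent centered $Z_i$, whence Markov's inequality yields $\IP(|S_n| \ge x) \le C_p x^{-p} \sum_{i=1}^n \IE|Z_i|^p$. In the case $p > 2$, I would invoke the classical Fuk–Nagaev truncation argument: fix $y > 0$ (eventually taking $y \asymp x$), write $Z_i = Z_i' + Z_i''$ with $Z_i' = Z_i \mathbf{1}\{|Z_i| \le y\} - \IE[Z_i \mathbf{1}\{|Z_i| \le y\}]$ and $Z_i'' = Z_i - Z_i'$, and control $\sum Z_i'$ by Bernstein's inequality (producing the sub-Gaussian exponential term with variance proxy $\sum \IE Z_i^2$) while controlling $\sum Z_i''$ through Markov with the $p$-th moment (producing the polynomial term $C_p x^{-p}\sum \IE|Z_i|^p$). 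Optimising $y$ gives precisely the two-term bound \eqref{maxfuknagaev} but with $|S_n|$ in place of $\max_i |S_i|$.

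To pass from $|S_n|$ to $\max_{1 \le i \le n}|S_i|$, I would invoke an Ottaviani-style maximal inequality: letting $\tau = \inf\{i: |S_i| \ge x\}$ and conditioning on $\tau$, independence of the remaining increments yields
\begin{equation*}
\IP\bigl(\max_{1 \le i \le n}|S_i| \ge 2x\bigr) \le \frac{\IP(|S_n| \ge x)}{1 - \max_{1 \le i \le n}\IP(|S_n - S_i| \ge x)},
\end{equation*}
which is the Borovkov trick underlying his extension of the Fuk–Nagaev bounds. Provided the denominator is bounded below (say by $1/2$, which can be arranged when $x$ is at least a constant times the non-maximal threshold), this absorbs only a factor of $2$ into the constants $C_p$. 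For small $x$ where the denominator condition fails, the bound is trivial since the right-hand side of \eqref{maxfuknagaev} already exceeds $1$.

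The main obstacle I foresee is handling the regime where the Ottaviani bound becomes vacuous, i.e.\ when $x$ is not large enough to guarantee $\max_i \IP(|S_n - S_i| \ge x) \le 1/2$. One then has to either (i) iterate the argument, peeling off blocks as in Borovkov's original paper, or (ii) absorb the degenerate regime into the universal constant by checking that in that regime the stated right-hand side is already $\ge 1$. Both routes lead to the same final constants, and careful bookkeeping of $C_p$ across the two $p$-ranges is the only remaining technicality.
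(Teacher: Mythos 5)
Your proposal is correct and takes essentially the same route as the paper, which does not prove the lemma from scratch but simply assembles Corollaries 1.6 and 1.8 of Nagaev with Borovkov's observation that $\IP(|S_n|\geq x)$ may be replaced by $\IP(\max_{1\leq i\leq n}|S_i|\geq x)$ in the Fuk--Nagaev bounds. Your reconstruction supplies exactly those ingredients (von Bahr--Esseen for $1<p\leq 2$, truncation plus Bernstein for $p>2$, and an Ottaviani-type maximal inequality for the upgrade), and your handling of the regime where the Ottaviani denominator is not bounded below --- noting that there the stated right-hand side already exceeds a constant, so the bound is absorbed into $C_p$ --- is the correct way to close the one genuine gap in that argument.
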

Now we have all the technical tools required for the proof of Theorem \ref{thm:maxpartialquad}.
\begin{proof}[Proof of Theorem \ref{thm:maxpartialquad}]
Observe that $Q_n=\sum_{j=1}^{\lceil n/\mathcal{D}_n \rceil} V_j$. Let $U_n= \lceil n/\mathcal{D}_n \rceil$. If $U_n=1$, the conclusion readily follows from Markov's inequality. Therefore let $U_n \geq 2$. 

Denote by $\boldsymbol{\eta}_k=(\varepsilon_{(k-1)\mathcal{D}_n+1}, \ldots, \varepsilon_{k\mathcal{D}_n})$.
Recall $V_k$ from \eqref{Vj}. Let $V_{k, \tau}=\IE[V_k | \boldsymbol{\eta}_k, \ldots, \boldsymbol{\eta}_{k - \tau}]$. Let $L_n=\lfloor \log U_n/\log 2 \rfloor$. We will omit the subscript $n$ from $U_n$ and $L_n$ for presentation purposes, their dependence on $n$ being implicit. Let $\tau_l=2^l$, $1 \leq l \leq L-1$, and $\tau_L=U$. Let
\begin{equation}
  M_{k,l}=\sum_{j=1}^k (V_{j, \tau_l}- V_{j, \tau_{l-1}}), \text{  for $1\leq k \leq U$, and $1 \leq l \leq L$}.
\end{equation}
Define $D_k=\sum_{j=1}^k V_j$ for $1 \leq k \leq U$, and let $D_{k,\tau}=\IE[D_k | \boldsymbol{\eta}_k, \ldots, \boldsymbol{\eta}_{k - \tau}]$. 
Note that
\begin{equation}\label{decomp}
D_k - \IE(D_k)=\sum_{j=1}^k(V_j - V_{j,U}) + \sum_{l=2}^L M_{k,l} + \sum_{j=1}^k (V_{j,2}- \IE(V_{j,2})) .
\end{equation}
Thus,
\begin{align} 
  \max_{1 \leq k \leq U} |D_k - \IE(D_k)| &\leq \max_{1 \leq k \leq U} |\sum_{j=1}^k(V_j - V_{j,U})| + \sum_{l=2}^L  \max_{1 \leq k \leq U}| M_{k,l} | \nonumber\\  & \hspace*{4cm}+ \max_{1 \leq k \leq U} |\sum_{j=1}^k (V_{j,2}- \IE(V_{j,2}))|.\label{eq:2.27}
\end{align}
For the first term in the above sum, note that
\begin{equation} \label{2.28}
  \left \| \max_{1 \leq k \leq U} |D_k - D_{k,U}| \right\|_{p/2} \leq \left\|D_U - D_{U,U} \right\|_{p/2} + \left\|\max_{1 \leq i \leq U-1} \left|\sum_{k=U-i}^{U} (V_k - V_{k, U}) \right| \right\|_{p/2}.
\end{equation}
Now, $V_{k}- V_{k,U}=\sum_{i=U+1}^{\infty}(V_{k,i}- V_{k, i-1})$. 
Since $V_{k,i}- V_{k, i-1}$ are martingale differences with respect to $\sigma(\boldsymbol{\eta}_{k-i}, \boldsymbol{\eta}_{k-i+1}, \ldots)$, hence, using Doob's Inequality we obtain, 
\begin{equation} \label{2.29}
  \left \|\max_{1 \leq i \leq U-1} \left|\sum_{k=U-i}^{U} (V_{k,j} - V_{k, j-1}) \right| \right\|_{p/2} \leq C_p \|D_{U,j} - D_{U,j-1} \|_{p/2} \ .
\end{equation}
Therefore, Lemma \ref{lemma8.1} along with Burkholder inequality (\cite{burk73} for $2<p<4$ along with $(|a_1|+\ldots+|a_n|)^{p/4}\leq |a_1|^{p/4} + \ldots + |a_n|^{p/4}$, and \cite{Rio2009MomentIF}'s version for $p>4$) implies,
\begin{align}
  \| D_{U,j}- D_{U, j-1}\|_{p/2} &\leq \begin{cases}
    C_p U^{2/p}\mathcal{D}_n^{1/2 + 2/p}\Theta_{0,p} \sum_{d=(j-1)\mathcal{D}_n+1}^{(j+1)\mathcal{D}_n} \delta_p(d), & \ 2<p\leq 4, \\
    C_p \sqrt{U}\mathcal{D}_n\Theta_{0,p} \sum_{d=(j-1)\mathcal{D}_n+1}^{(j+1)\mathcal{D}_n} \delta_p(d), & \ p>4.
  \end{cases}\label{d_u}
\end{align}
\color{black}
Therefore, using \[\left\|\max_{1 \leq i \leq U-1} \left|\sum_{k=U-i}^{U} (V_k - V_{k, U}) \right| \right\|_{p/2} \leq \sum_{j=U+1}^{\infty} \left \|\max_{1 \leq i \leq U-1} \left|\sum_{k=U-i}^{U} (V_{k,j} - V_{k, j-1}) \right| \right\|_{p/2},\] we have,
\begin{equation} \label{2.30}
  \left\|\max_{1 \leq i \leq U-1} \left|\sum_{k=U-i}^{U} (V_k - V_{k, U}) \right| \right\|_{p/2} \leq \begin{cases}
    C_p U^{2/p}\mathcal{D}_n^{1/2 + 2/p}\mu_{p,A}^2 n^{-A}, & \ 2<p\leq 4, \\
    C_p \sqrt{U}\mathcal{D}_n\mu_{p,A}^2 n^{-A}, & \ p>4,
  \end{cases}
\end{equation}
where we have used Condition \ref{cond:fdm3}: $\Theta_{U\mathcal{D}_n+1, p} \leq C (U\mathcal{D}_n+1)^{-A}\mu_{p,A}\leq C n^{-A}\mu_{p,A}$.
Proceeding similarly, 
\[\||D_U - D_{U,U}|\|_{p/2} \leq \begin{cases}
    C_p U^{2/p}\mathcal{D}_n^{1/2 + 2/p}\mu_{p,A}^2 n^{-A}, & \ 2<p\leq 4, \\
    C_p \sqrt{U}\mathcal{D}_n\mu_{p,A}^2 n^{-A}, & \ p>4.
  \end{cases} .\]
Hence, by Markov's inequality,
\begin{equation} \label{2.31}
\allowdisplaybreaks
  \IP\left( \max_{1 \leq k \leq U} |D_k - D_{k,U}| \geq x \right) \leq \begin{cases}
  C_px^{-p/2} n^{1-Ap/2} \mathcal{D}_n^{p/4}\mu_{p,A}^{p}, & \ 2<p\leq 4\\
    C_px^{-p/2} n^{p/4-A p/2} \mathcal{D}_n^{p/4}\mu_{p,A}^p, & \ p>4.
  \end{cases} 
\end{equation}
\color{black}
For the second term in \eqref{eq:2.27}, define the following quantities:
\allowdisplaybreaks
\begin{align}
  Y_{h,l}&=\sum_{j=(h-1)\tau_l +1}^{(h\tau_l) \wedge U} (V_{j,\tau_l}- V_{j, \tau_{l-1}}), \ \ 1\leq h \leq \lceil U / \tau_l \rceil:=U_0, \label{2.32}\\
  R^e_{s,l}&=\sum_{h \text{ even}}^{s} Y_{h,l} \ , \ R^o_{k,l}=\sum_{h \text{ odd}}^{s}Y_{h,l}, \ 1 \leq s \leq U_0. \label{2.33}
\end{align}
Further let $\{\lambda_j\}_{1 \leq j \leq L}$ be a positive sequence that $\sum_{l=1}^L \lambda_l \leq 1$. We will specify the choice of $\lambda_j$ later. For some $s \in \N$, denote by $s_l:=s\tau_l \wedge U$. 
Therefore,
\begin{align}
  \IP(\max_{1 \leq k \leq U}|M_{k,l}| \geq 3\lambda_l x) \leq \IP(\max_{1 \leq s \leq U_0} |R^e_{s,l}| \geq \lambda_l x) &+ \IP(\max_{1 \leq s \leq U_0} |R^o_{s,l}| \geq \lambda_l x) + \nonumber\\ &\sum_{s=1}^{U_0} \IP(\max_{s_l+1 \leq j \leq (s+1)_l} |M_{j,l}- M_{s_l,l}| \geq \lambda_l x ). \label{evenodd}
\end{align}

\noindent For the first two terms in \eqref{evenodd}, note that $Y_{h_1,l}$ and $Y_{h_2,l}$ are independent for $|h_1-h_2|\geq 2$. Therefore, using Lemma \ref{lemma8.3}, we obtain
\begin{equation} \label{2.34}
  \IP(\max_{1 \leq s \leq U_0} |R^e_{s,l}| \geq \lambda_l x) \leq \begin{cases}
  C_p \frac{\sum_{h \text{ even}} \IE[|Y_{h,l}|^{p/2}]}{(\lambda_l x)^{p/2}}, & \ 2< p \leq 4, \\
    C_p \frac{\sum_{h \text{ even}} \IE[|Y_{h,l}|^{p/2}]}{(\lambda_l x)^{p/2}}+ 2 \exp\left(- C_p \frac{(\lambda_l x)^2}{\sum_{h \text{ even}} \IE[|Y_{h,l}|^2]} \right), & \ p > 4.
  \end{cases} 
\end{equation}
An argument similar to \eqref{2.29} and \eqref{2.30} yields,
\begin{align*}
  \|Y_{h,l}\|_{p/2} &\leq \begin{cases}
    C_p\tau_l^{2/p}\mathcal{D}_n^{1/2+2/p}(\tau_l \mathcal{D}_n)^{-A} \mu_{p,A}^2, \ & 2 < p \leq 4,\\
    C_p\sqrt{\tau_l}\mathcal{D}_n (\tau_l \mathcal{D}_n)^{-A} \mu_{p,A}^2, \ & p>4.
  \end{cases}
\end{align*}
Thus, 
\begin{align}
   &\IP(\max_{1 \leq s \leq U_0} |R^e_{s,l}| \geq \lambda_l x)\nonumber\\ &\leq
   \begin{cases}
   C_p (\lambda_l x)^{-p/2}\frac{U}{\tau_l} \tau_l^{1-A p/2} \mathcal{D}_n^{p/4+1-A p/2} \mu_{p,A}^p, \ & 2<p\leq 4, \\
     C_p (\lambda_l x)^{-p/2}\frac{U}{\tau_l} \tau_l^{p/4-A p/2} \mathcal{D}_n^{p/2-A p/2} \mu_{p,A}^p + 2 \exp \left(-C_p \frac{(\lambda_l x)^2}{\frac{U}{\tau_l}{\tau_l}\mathcal{D}_n^2 (\tau_l \mathcal{D}_n)^{-2A} \mu_{4,A}^4} \right), \ & p>4
   \end{cases} \nonumber\\
   & \leq \begin{cases}
   C_p(\lambda_l x)^{-p/2} \tau_l^{-A p/2} n \mathcal{D}_n^{p/4-A p/2} \mu_{p,A}^p, \ & 2<p\leq 4, \\
     C_p(\lambda_l x)^{-p/2} \tau_l^{p/4-A p/2-1} n \mathcal{D}_n^{p/2-A p/2 -1} \mu_{p,A}^p + 2 \exp \left(-C_p \frac{(\lambda_l x)^2 (\tau_l \mathcal{D}_n)^{2 A}}{n \mathcal{D}_n \mu_{4,A}^4} \right), \ & p>4. 
   \end{cases} \label{even}
\end{align}
A similar inequality holds for $\max_{1 \leq s \leq U_0} |R^o_{s,l}|$.
To tackle the third term $\sum_{s=1}^{U_0} \IP(\max_{s_l+1 \leq j \leq (s+1)_l} |M_{j,l}- M_{s_l, l}| \geq \lambda_l x )$ in \eqref{evenodd}, we employ an argument similar to \eqref{2.28} through \eqref{2.31}. Write 
\begin{align*}
\bigg\|\max_{s_l+1 \leq j \leq (s+1)_l} |M_{j,l}- M_{s_l,l}| \bigg\|_{p/2} &\leq \| M_{(s+1)_l,l}- M_{s_l,l}\|_{p/2} \\ & \hspace{2cm} +\bigg\|\max_{s_l + 2 \leq j \leq (s+1)_l} \bigg|\sum_{k=j}^{(s+1)_l} (V_{k, \tau_l}- V_{k, \tau_{l-1}}) \bigg| \bigg\|_{p/2}. 
\end{align*}
Using Doob's Inequality, 
\[\left\|\max_{s_l + 2 \leq j \leq (s+1)_l} \left|\sum_{k=j}^{(s+1)_l} (V_{k, \tau_l}- V_{k, \tau_{l-1}}) \right| \right\|_{p/2} \leq \frac{p/2}{p/2-1} \| M_{(s+1)_l, l}- M_{s_l,l}\|_{p/2} .\]
An argument similar to \eqref{d_u} yields, \allowdisplaybreaks
\begin{align*}
  \| M_{(s+1)_l, l}- M_{s_l,l}\|_{p/2}  &\leq \begin{cases}
   C_p \tau_l^{2/p}\mathcal{D}_n^{1/2+2/p}(\tau_l \mathcal{D}_n)^{-A} \mu_{p,A}^2, \ & 2 < p \leq 4,\\
    C_p\sqrt{\tau_l}\mathcal{D}_n (\tau_l \mathcal{D}_n)^{-A} \mu_{p,A}^2, \ & p>4.
  \end{cases}
\end{align*}
Therefore, applying Markov's inequality we have
\begin{align} 
\begin{split}
&\sum_{s=1}^{U_0} \IP\left(\max_{s_l+1 \leq j \leq (s+1)_l} |M_{j,l}- M_{s_l,l}|  \geq \lambda_l x \right)\\  &\leq \begin{cases}
   C_p(\lambda_l x)^{-p/2} \tau_l^{-A p/2} n \mathcal{D}_n^{p/4-A p/2} \mu_{p,A}^p, \ & 2<p \leq 4,\\
     C_p(\lambda_l x)^{-p/2} \tau_l^{p/4-A p/2-1} n \mathcal{D}_n^{p/2-A p/2 -1} \mu_{p,A}^p, \ & p>4. 
   \end{cases} \label{eq:Mlast}
\end{split}
\end{align}
Thus, combining \eqref{even} and \eqref{eq:Mlast} in \eqref{evenodd}, we get, 
\begin{align} \label{2ndterm}
   \IP(\max_{1 \leq k \leq U}|M_{k,l}| \geq 3\lambda_l x) &\leq \begin{cases}
   &C_p(\lambda_l x)^{-p/2} \tau_l^{-A p/2} n \mathcal{D}_n^{p/4-A p/2} \mu_{p,A}^p, \hspace*{1.5cm} 2<p\leq 4, \\
    & C_p(\lambda_l x)^{-p/2} \tau_l^{p/4-A p/2-1} n \mathcal{D}_n^{p/2-A p/2 -1} \mu_{p,A}^p \\ & \hspace*{1.5cm} + 2 \exp \left(-C_p \frac{(\lambda_l x)^2 (\tau_l \mathcal{D}_n)^{2 A}}{n \mathcal{D}_n \mu_{4,A}^4} \right), \hspace*{1.35cm} p>4. 
   \end{cases}
\end{align}
Using \eqref{2ndterm}, we have for the second term in \eqref{eq:2.27},
\begin{align}
  \IP\left(\sum_{l=2}^L \max_{1 \leq k \leq U} |M_{k,l} |\geq 3x\right) & \leq \sum_{l=2}^L \IP(\max_{1 \leq k \leq U}|M_{k,l}| \geq 3\lambda_l x) \nonumber\\
  & \leq \begin{cases}
  C_px^{-p/2} n\mathcal{D}_n^{p/4-Ap/2} \mu_{p,A}^p \cdot I_1, \ & 2<p\leq 4, \\
    C_px^{-p/2} n \mathcal{D}_n^{p/2 - A p/2 -1} \mu_{p,A}^p \cdot I_2 + 4 \cdot II, \ & p>4,
  \end{cases} \label{eqnsbeforechoice}
\end{align}
where 
\begin{align*}
&I_1=\sum_{l=2}^L \lambda_l^{-p/2} \tau_l^{-Ap/2}, \\ &I_2=\sum_{l=2}^L \lambda_l^{-p/2} \tau_l^{p/4-A p /2 -1}, \\ &II=\sum_{l=2}^L \exp\left( -C_p \frac{(\lambda_l x)^2 (\tau_l \mathcal{D}_n)^{2 A}}{n \mathcal{D}_n \mu_{4,A}^4}\right).
\end{align*}
Let $\lambda_l=(1/l^2)/(\pi^2/3)$ for $1\leq l \leq L/2$, and $\lambda_l=(1/(L+1-l)^2)/(\pi^2/3)$ for $L/2 < l \leq L$. Clearly $\sum_{l=1}^L \lambda_l \leq 1$. With our choice of $\lambda_l$ and $\tau_l$, elementary calculation using $A>1/2 -1/p$ and $\min _{l \geq 1} \lambda_l^2 \tau_l^{2A}>0$ shows that there exists a constant $C$ such that
\begin{equation} \label{lambdachoice}
  I_1 \leq C ; I_2 \leq C ; \ II \leq C \exp \left(-C_p\frac{x^2}{\mu_{4,A}^4 n \mathcal{D}_n^{1-2A}}\right) .
\end{equation}
Putting \eqref{lambdachoice} in \eqref{eqnsbeforechoice}, one obtains
\begin{equation} \label{2.36}
\IP\left(\sum_{l=2}^L  \max_{1 \leq k \leq U} |M_{k,l} |\geq 3x\right) \leq \begin{cases}
 C_px^{-p/2} \mu_{p,A}^p n \mathcal{D}_n^{p/4}, \ \ 2<p\leq 4, \\
   C_px^{-p/2} \mu_{p,A}^p n \mathcal{D}_n^{p/2-1} + C \exp\left(-C_p \frac{x^2}{n\mathcal{D}_n^{1-2A}\mu_{4,A}^4} \right), \  p>4.
\end{cases} 
\end{equation}
Now finally we tackle the third term in \eqref{eq:2.27}. Note that as $\boldsymbol{\eta}_k$'s are independent, hence $V_{k,2}$ and $V_{k',2}$ are independent if $|k-k'|>2$. We again employ Lemma \ref{lemma8.3} and techniques similar to \eqref{2.32}, \eqref{2.33} and \eqref{2.34} to obtain,
\begin{align}
  &\IP \left( \max_{1 \leq k \leq U}| \sum_{j=1}^k (V_{j,2}- \IE(V_{j,2}))| \geq x \right)\nonumber\\
  & \leq \begin{cases}
  C_px^{-p/2} \sum_{j=1}^U \IE(|V_{j,2}- \IE(V_{j,2})|^{p/2}), \ \ 2< p \leq 4, \\
    C_px^{-p/2} \sum_{j=1}^U \IE(|V_{j,2}- \IE(V_{j,2})|^{p/2}) + \nonumber\\ \hspace*{2cm} \ \ 2 \exp \left(-C_p\frac{x^2}{\sum_{j \text{ even}} \IE(|V_{j,2}- \IE(V_{j,2})|^2)}\right) + 2 \exp \left(-C_p \frac{x^2}{\sum_{j \text{ odd}} \IE(|V_{j,2}- \IE(V_{j,2})|^2)}\right),  \ p>4. 
  \end{cases} \label{3rdterm}
\end{align}
By conditional Jensen's inequality and Lemma \ref{lemma8.2} (noting that $\mathcal{C}=O(\sqrt{\mathcal{D}_n}))$, we get
\[
\IE(|V_{j,2}- \IE(V_{j,2})|^{p/2}) \leq \IE(|V_j - \IE(V_j)|^{p/2}) \leq \begin{cases}
C_p\left(\mathcal{D}_n^{1/2 + 2/p}\right)^{p/2} \mu_{p,A}^p, & 2 <p \leq 4,\\
C_p\mathcal{D}_n^{p/2}  \mu_{p,A}^p, & p>4, 
\end{cases}\]
which yields
\begin{equation} \label{2.37}
  \IP \left( |\max_{1 \leq k \leq U} \sum_{j=1}^k (V_{j,2}- \IE(V_{j,2}))| \geq x \right) \leq \begin{cases}
   C_px^{-p/2} n \mathcal{D}_n^{p/4} \mu_{p,A}^p,  \ \ 2<p\leq 4,\\
    C_px^{-p/2} n \mathcal{D}_n^{p/2-1} \mu_{p,A}^p + 4 \exp \left(-C_p \frac{x^2}{n\mathcal{D}_n \mu_{4,A}^4} \right),  \ p>4.
  \end{cases}
\end{equation}
Combining \eqref{2.31}, \eqref{2.36} and \eqref{2.37}, we have the result. 
\end{proof}
\subsection{Proof of Lemma \ref{lemmacp}} \label{prooflemmacp}
\begin{proof}
Define the projection operator $P_i$ as $P_iX=\IE[X | \mathcal{F}_i]- \IE[X | \mathcal{F}_{i-1}]$ where $\mathcal{F}_i=\sigma(\ldots, \varepsilon_{i-1}, \varepsilon_i)$.
  Note that for $l >k$, 
\begin{align}\label{eq:product}
  |\IE(X_{k}X_{l}) | = |\sum_{i \in \mathbb{Z}} \sum_{j \in \mathbb{Z}} \IE\left((P_iX_{k})(P_jX_{l}) \right)| &\leq \sum_{i \in \mathbb{Z}} \| P_i(X_{k}) \| \|P_i(X_{l})\| \nonumber \\
  & \leq \sum_{i=\infty}^k \delta_{p}(k-i)\delta_{p}(l-i)= \sum_{i=0}^{\infty} \delta_p(i) \delta_p(i + l-k).
\end{align}
Using (\ref{eq:product}) repeatedly, 
\begin{align*}
  &\max_{1\leq k \leq \lfloor\frac{n}{m}\rfloor} |\IE(B_k B_{k+1})| \nonumber\\ &\leq \sum_{j=0}^{2m} (m- |m-j|)\sum_{i=0}^{\infty} \delta_p(i) \delta_p(i+j)\nonumber\\
  &\leq \sum_{i=0}^{\infty} \delta_p(i) \left(\Theta_{i+1,p} + \Theta_{i+2,p} + \ldots + \Theta_{i+2m-1,p} \right)\nonumber\\
  &\leq \sum_{i=0}^{\infty} \delta_p(i) \sum_{j=1}^{2m-1} \Theta_{j,p}=\Theta_{0,p}\sum_{j=1}^{2m-1} \Theta_{j,p}= \mu_{p,A}O\left(\sum_{j=1}^{2m-1} (j+1)^{-A} \right) = O(1), 
\end{align*}
 since $A>1$ in Condition \ref{cond:fdm3}. Moreover, for fixed $i,j$, via an exact same argument as above, one obtains, 
\begin{align}\label{blockprod}
   |\IE(B_i B_j)| \leq \Theta_{0,p} \sum_{k=|i-j-1|m +1}^{|i-j+1|m-1}\Theta_{k,p}. 
\end{align}
Then \eqref{blockprod} in conjunction with \eqref{eq:Thetaip} directly implies that
\begin{align}
  \max_{1 \leq k \leq \lceil n/m \rceil} \sum_{i: |i-k|\geq 2}|\IE( B_iB_k)| & \leq\Theta_{0,p} \max_{1 \leq k \leq \lceil n/m \rceil}\sum_{i=1}^{k-2} \sum_{j=im+1}^{(i+2)m -1}\Theta_{j,p} \nonumber  \\ &\leq 2 \Theta_{0,p} \sum_{j=m+1}^{\infty} \Theta_{j,p}= O(m^{1-A}). 
\end{align}
This completes the proof of \eqref{eq:blockproduct}.
\end{proof}
\ignore{
\subsection{Proof of Proposition \ref{prop:Bj2-EBj2}}
\label{sec:proofBj2-EBj2}
\begin{proof}
Note that we have $a_{s,t}=\begin{cases}1/2 \ & s=t\\ 1 \ & 1\leq |s-t|< 2m \\ 0 \ & \text{otherwise} \end{cases}$. 
Thus, taking $\mathcal{D}_n=2m$, Theorem \ref{thm:maxpartialquad} implies that, 
\begin{equation} \label{approx1}
  \max_{1 \leq k \leq \lfloor n/m \rfloor}\bigg|\sum_{j=1}^k (B_j^2 + 2B_j B_{j+1} - \IE[B_j^2 + 2B_j B_{j+1}])\bigg|=o_{\IP}(n^{\max\{2/p, 1/2\}} m^{1/2} l(n)) 
\end{equation}
\text{ for a slowly decreasing function $l(n)$.} Now, using $\max_{1 \leq j \leq \lfloor n/m \rfloor}\IE[\max_{1 \leq k\leq m} |X_{j+1} + \ldots X_{j+k}|^p]=O(m)$ along with Markov's inequality, one obtains, 
\begin{equation}\label{approx2}
  \max_{1 \leq i \leq n} |\mathcal{T}_i - \sum_{j=1}^{\lfloor i/m \rfloor} (B_j^2 + 2B_j B_{j+1})|=o_{\IP}(n^{\max\{2/p, 1/2\}} m^{1/2}).
\end{equation} 
Next we observe that \eqref{eq:blockproduct} yields
\begin{equation}\label{approx3}
  \max_{1 \leq i \leq n}|\IE(S_i^2) - \sum_{j=1}^{\lfloor i/m \rfloor} \IE(B_j^2 + 2B_j B_{j+1})|=O(nm^{-A}).
\end{equation}
\eqref{approx1}, \eqref{approx2} and \eqref{approx3} completes the proof. 
\end{proof}
}
\subsection{Proof of Theorem \ref{thm:suboptimal}} \label{sec:proofsuboptimal}
 We will define $L$ and $\alpha$ as in the proof of Theorem \ref{thm:GA}. Let $\nu=\min\{(1+A)/(2+4A), (1 + 4A/p)/(2+4A)\}$. The theorem follows trivially from Theorem \ref{thm:GA} if $A>A_0$. Thus let $A \leq A_0$. 
Specifically, with $1< A \leq A_0$, our choice of $L$ and $\alpha$ satisfies the following, which will be used in our proofs:
\begin{align}
  &\frac{1}{2} - \nu - \frac{LA}{2} < 0, \label{eq1n:suboptimal}\\
  & L\left(\frac{\alpha}{2}-1\right)+ 1 - \alpha \nu <0, \label{eq2n:suboptimal} \\
  &\alpha \geq \max\{p, 2(1+p+pA)/3\}, \label{eq3n:suboptimal}\\
  & 1/p - 1/\alpha + L - L(A+1)p/\alpha =0. \label{eq4n:suboptimal}
\end{align}
We will need a slightly different version of Lemma \ref{Rosenthal}. To avoid confusion, we state and prove it separately. In the following $C$ will denote a constant whose value will depend on $p$ and $A$, and whose value might change from line to line.
\begin{lemma}\label{Rosenthal:suboptimal}
  Assume Conditions \ref{cond:fdm3} and \ref{cond:ui}, along with \eqref{eq1n:suboptimal}, \eqref{eq2n:suboptimal}, \eqref{eq3n:suboptimal} and \eqref{eq4n:suboptimal} for $A$, $L$ and $\alpha$. Let $m=\lfloor n^{L} \rfloor$ and let \[\Tilde{R}_{s,t}=\Tilde{X}_s + \ldots+ \Tilde{X}_t, \] 
  where $\Tilde{X}_i$ is as defined in \eqref{m-dep}. Then 
  \begin{align}
   \max_s \IE \left[\max_{1 \leq t \leq m} |\tilde{R}_{s,t}|^{\alpha} \right] = o(mn^{\alpha \nu-1}).
  \end{align}
  \end{lemma}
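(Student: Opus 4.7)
The plan is to mirror the proof of Lemma \ref{Rosenthal} essentially verbatim, making only the minimal modifications needed to accommodate the replacement of $1/p$ by $\nu$ in the target rate. The four-term decomposition of \cite{LiuHanWu} still applies: since $\Tilde{X}_i$ is built from the same truncated process $X_i^{\oplus}$ and since the functional dependence measures $\Tilde{\delta}_p$ and $\delta_p^{\oplus}$ continue to satisfy $\Tilde{\delta}_p(j) \leq \delta_p^{\oplus}(j) \leq \delta_p(j)$, I would first write
\[
\|\max_{1 \leq t \leq m} |\Tilde{R}_{s,t}|\|_{\alpha} \lesssim I + II + III + IV
\]
with $I, II, III, IV$ defined exactly as in the proof of Lemma \ref{Rosenthal}. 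The goal is now to show $n^{1-\alpha\nu}/m \cdot (\cdot)^{\alpha} = o(1)$ for each of the four terms, rather than $n^{1-\alpha/p}/m \cdot (\cdot)^{\alpha} = o(1)$.

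Next, I would handle $I$ and $II$ using conditions \eqref{eq2n:suboptimal} and \eqref{eq4n:suboptimal}. The bound $I = O(m^{1/2})$ is unchanged, and \eqref{eq2n:suboptimal} immediately gives $n^{1-\alpha\nu}/m \cdot I^{\alpha} = n^{L(\alpha/2 - 1) + 1 - \alpha\nu} = o(1)$. For $II$, the key inequality \eqref{lemma7.2} of Lemma \ref{Rosenthal}, namely $\Tilde{\delta}_{\alpha}(j)^{\alpha} \leq 2^{\alpha} n^{\alpha/p - 1} \delta_p(j)^p$, still holds because the truncation level is $n^{1/p}$ regardless of the target rate. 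Condition \eqref{eq4n:suboptimal} (identical to \eqref{eq4n}) then yields $II = O(m^{1/2})$ exactly as before, and another appeal to \eqref{eq2n:suboptimal} closes that term.

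For $III$, the argument of Lemma \ref{Rosenthal} carries over verbatim: condition \eqref{eq3n:suboptimal} (that is, $\alpha \geq 2(1+p+pA)/3$) ensures $\sum_{j \geq 1} j^{1/2 - 1/\alpha} \delta_p(j)^{p/\alpha} = O(1)$ via dyadic summation, and a limsup argument invoking Lemma \ref{lem:truncation} on any finite initial segment gives $n^{1-\alpha\nu}/m \cdot III^{\alpha} \to 0$. (Here, the fact that $\nu \geq 1/p$ in our regime only makes the required bound weaker than in the original lemma.)

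The main --- and essentially only --- new ingredient is the treatment of $IV = m^{1/\alpha} \sup_i \|T_{n^{1/p}}(X_i)\|_{\alpha}$, and this is where the appearance of $\nu$ is felt. Lemma \ref{lem:truncation} gives $\sup_i \|T_{n^{1/p}}(X_i)\|_{\alpha}^{\alpha} = o(n^{\alpha/p - 1})$, so that
\[
\frac{n^{1-\alpha\nu}}{m} \cdot IV^{\alpha} = n^{1-\alpha\nu} \cdot o(n^{\alpha/p - 1}) = o\bigl(n^{\alpha(1/p - \nu)}\bigr).
\]
This is $o(1)$ precisely because $\nu \geq 1/p$ throughout the suboptimal regime $1 < A \leq A_0$, which follows from the very definition of $\nu = \min\{(1+A)/(2+4A), (1 + 4A/p)/(2+4A)\}$ and a short algebraic verification against $1/p$. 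The subtlety to watch for is ensuring the inequality $\nu \geq 1/p$ is strict enough (or at least compatible with the $o(\cdot)$ decay coming from Lemma \ref{lem:truncation}); since the $o$ is strict while the exponent condition is $\geq$, $o(1)$ follows regardless. The main conceptual obstacle, which is really a bookkeeping one, is to check that the four structural constraints \eqref{eq1n:suboptimal}--\eqref{eq4n:suboptimal} are jointly satisfiable with the choice of $L$ and $\alpha$ inherited from Theorem \ref{thm:GA} --- in particular that \eqref{eq4n:suboptimal}, the balance equation tying $L$, $\alpha$, $p$ and $A$, is exactly the unchanged \eqref{eq4n}, so the decomposition strategy remains internally consistent even though $A$ now lies below $A_0$.
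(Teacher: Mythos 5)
Your plan is right for terms $I$, $II$, and $IV$ (and your handling of $IV$ via $\nu \geq 1/p$ is exactly the point), but your treatment of $III$ contains a genuine error: you have the direction of the inequality in \eqref{eq3n:suboptimal} backwards. The dyadic bound
$\sum_{l} 2^{l(3/2 - 1/\alpha - p/\alpha)}\Theta_{2^l,p}^{p/\alpha}$ is $O(1)$ precisely when $3/2 - (1+p+pA)/\alpha < 0$, i.e.\ when $\alpha < 2(1+p+pA)/3$ — that is condition \eqref{eq3n}, which holds in the \emph{optimal} regime $A > A_0$. In the suboptimal regime the relevant condition is \eqref{eq3n:suboptimal}, namely $\alpha \geq 2(1+p+pA)/3$, so the exponent is nonnegative and the series $\sum_{j\geq 1} j^{1/2-1/\alpha}\delta_p(j)^{p/\alpha}$ \emph{diverges}; the verbatim carry-over you propose (including the limsup/tail argument, which also needs convergence of the full series) fails. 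This is not incidental: the loss of \eqref{eq3n} is exactly why the paper states a separate lemma here rather than citing Lemma \ref{Rosenthal}.

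The repair is to bound the \emph{partial} sum up to $m$ rather than the infinite series. Using \eqref{lemma7.2} for every $j$ (not only the tail), one gets
\begin{equation*}
III \;\leq\; C\, m^{1/\alpha}\, n^{1/p-1/\alpha} \sum_{l=1}^{\lceil \log_2 m\rceil} 2^{l(3/2 - 1/\alpha - p/\alpha)}\, O\!\bigl(2^{-lAp/\alpha}\bigr)
\;\leq\; C\, m^{3/2 - p/\alpha - Ap/\alpha}\, n^{1/p - 1/\alpha}
\end{equation*}
when $\alpha > 2(1+p+pA)/3$ (with an extra $\log_2 m$ factor in the boundary case $\alpha = 2(1+p+pA)/3$). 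The balance equation \eqref{eq4n:suboptimal} then converts $n^{1/p-1/\alpha}$ into $m^{-1+(A+1)p/\alpha}$, so that $III = O(m^{1/2})$, and \eqref{eq2n:suboptimal} gives $n^{1-\alpha\nu} m^{-1} III^{\alpha} = o(1)$; in the boundary case the leftover $\log_2 n$ factor is absorbed using $\nu > 1/p$. With this substitution for $III$, the rest of your outline matches the paper's argument.
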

 \begin{proof}
   The proof of this lemma is almost same as that of Lemma \ref{Rosenthal}. The only point of differences are the use of \eqref{eq2n:suboptimal} instead of \eqref{eq2n}, as well as a different treatment of the term $III$ in \eqref{liudecomp}. The latter difference is necessitated as we no longer have $\alpha< 2(1+p+pA)/3$ as we had in \eqref{eq3n}.

   In fact for term $III$ we will proceed as follows. For the case $\alpha>2(1+p+pA)/3$, using \eqref{eq3n:suboptimal}, and using same argument as \eqref{eq2.3kmt}, one obtains
   \begin{align}
     III = m^{1/\alpha} \sum_{j=1}^m j^{1/2 -1/\alpha}\Tilde{\delta}_{\alpha}(j) \nonumber
     &\leq C m^{1/ \alpha} n^{1/p -1/\alpha} \sum_{l=1}^{\lceil \log_2 m \rceil} \sum_{j=2^l}^{2^{l+1}-1} j^{1/2 - 1/\alpha} \delta_p(j)^{p/\alpha} \nonumber \\
     & \leq C m^{1/ \alpha} n^{1/p -1/\alpha} \sum_{l=1}^{\lceil \log_2 m \rceil} 2^{l(3/2 - 1/\alpha - p/\alpha)} O(2^{-lAp/\alpha}) \nonumber \\
     & \leq C m^{3/2 - p/\alpha - Ap/\alpha} n^{1/p -1/\alpha}=m^{1/2}, \label{8.41}
   \end{align}
   where the last equality follows from \eqref{eq4n:suboptimal}. Therefore, in view of \eqref{eq2n:suboptimal}, we obtain
   \begin{equation}
     \frac{n^{1-\alpha \nu}}{m} III^{\alpha} = n^{1 -\alpha \nu} m^{-1} O(m^{\alpha/2}) =o(1).
   \end{equation}
 In case $\alpha =2(1+p+pA)/3$, same treatment as \eqref{8.41} yields,
 \begin{align}
   III \leq C m^{1/\alpha} n^{1/p - 1/\alpha} \log_2 m \leq C L m^{1/\alpha} n^{1/p - 1/\alpha} \log_2 n.
 \end{align}
   One immediately obtains,
   \begin{align}
      \frac{n^{1-\alpha \nu}}{m} III^{\alpha} = L (\log_2 n) n^{\alpha/p - \alpha \nu} O(1) = o(1),
   \end{align}
   where the last assertion is due to $\nu > 1/p$. This completes the proof of this lemma.
 \end{proof} 
 \begin{proof}[Proof of Theorem \ref{thm:suboptimal}]
 The proof follows mostly along the lines of the proof of Theorem \ref{thm:GA}, with $S_i^{\oplus}$ and $\Tilde{S}_i$ defined as in that proof. We list below the points of differences from that proof.
\begin{itemize}
  \item As above, use \eqref{eq1n:suboptimal} and Lemma \ref{Rosenthal:suboptimal} instead of whenever \eqref{eq1n} and Lemma \ref{Rosenthal} is used in the proof of Theorem \ref{thm:GA}.
  \item Proposition \ref{prop2} now holds with a rate of $n^{\nu}$. 
  \item Proposition \ref{prop3} also holds with a rate of $n^{\nu}$. For the proof, we will investigate $\IP(\max_{1 \leq i \leq n} |\Tilde{S}_i - S_i^{\oplus}| > n^{1/4} \delta )$.
  \item Instead of \eqref{eq:condn1}, we will reach a rate of $n^{\nu}$ using $x=n^{\nu}$ and Lemma \ref{Rosenthal:suboptimal} in the previous step. 
  \item Investigate $ \IP(\max_{1\leq k \leq l_n} |\tilde{V}_{2k}(\boldsymbol{\eta}_{3k})| \geq cn^{2\nu})$ to obtain a rate of $n^{\nu}$ instead of \eqref{eq:small2}. 
\end{itemize}
\end{proof}
\ignore{
\section{Some important propositions}
The following series of propositions, leading up to Proposition \ref{prop5} enables us to use Theorem 4 of \cite{gotzezaitsev} on our conditionally independent processes $Y_l^{\boldsymbol{a}}$ as well as their unconditional counterparts $S_i^{\natural}$. 

\begin{proposition}\label{prop1zaitsev}
Recall $B_j$ from \eqref{blockdefn} and $c$ from Condition \ref{cond:regularity}. Then it holds that
\begin{equation}
  \Omega(c m) = \rho_{\star}(\text{Var}(\tilde{B_j})) \leq \rho^{\star}(\text{Var}(\tilde{B_j})) = O(m \Theta_{0,2}^2). 
\end{equation}
\end{proposition}
\begin{proof}
  Without loss of generality assume $j=1$. Observe that $\lim_{m \to \infty} \frac{\text{Var}(B_1)}{m} = \Sigma$. Therefore it holds that
  \begin{equation}\label{twosidedeigen}
    \Omega(\lambda m )=\rho_{\star}(\text{Var}(B_1)) \leq \rho^{\star}(\text{Var}(B_1)) \leq \| B_1 \|^2 = O(m \Theta_{0,2}^2),
  \end{equation}
  where the first and second equality follows from \eqref{loweigen} and Burkholder's inequality respectively. Moreover, $\|S_m^{\oplus} - B_1\|=o(m)$ and in view of \cite{liu_wu_2010}, $\|S_m^{\oplus} - \Tilde{B_1}\| =O(\sqrt{m}\Theta_{m,2}) = o(\sqrt{m})$. This along with \eqref{twosidedeigen} completes the proof.
\end{proof}
\begin{proposition}\label{prop2zaitsev}
  For a sequence $\boldsymbol{a}$, recall $Y_j^{\boldsymbol{a}}$ from \eqref{Yj}.Let $\boldsymbol{\eta}=(\ldots, \boldsymbol{\eta}_0, \boldsymbol{\eta}_3, \ldots)$, where $\boldsymbol{\eta}_{k}=(\varepsilon_{(k-1)m+1}, \ldots, \varepsilon_{km})$. Then 
  \begin{equation}\label{blockeigen}
     \Omega(m) = \rho_{\star}(\text{Var}(Y_j^{\boldsymbol{\eta}})) \leq \rho^{\star}(\text{Var}(Y_j^{\boldsymbol{\eta}})) = O(m).
  \end{equation}
\end{proposition}
\begin{proof}
  \eqref{blockeigen} follows directly from \eqref{twosidedeigen} in view of 
  \begin{align*}
    |\|Y_j^{\boldsymbol{\eta}}\|^2- \|\tilde{B}_{3j-2} + \tilde{B}_{3j-1} + \Tilde{B}_{3j}\|^2 |=\|M_{3j-2}^{\boldsymbol{\eta}}\|^2 + \|M_{3j}^{\boldsymbol{\eta}}\|^2= O(m\Theta_{0,2}^2). 
  \end{align*}
\end{proof}
\begin{proposition}\label{prop3zaitsev}
  Let $J=K^{2/\gamma}/\log^2 K$. Then, there exists a constant $c$ such that
  \begin{equation}
    \IP\left(\boldsymbol{a}: \max_{1 \leq t \leq K/J} \bigg|\text{Var}\bigg(\sum_{l=(t-1)J}^{tJ-1}Y_l^{\boldsymbol{a}}\bigg) - \IE_{\boldsymbol{a}}[\text{Var}\bigg(\sum_{l=(t-1)J}^{tJ-1}Y_l^{\boldsymbol{a}}\bigg)] \bigg |\geq cJm \right) \to 0 \text{ as } n \to \infty. 
  \end{equation}
\end{proposition}
\begin{proof}
  Without loss of generality assume that $V_l^{\boldsymbol{a}}$ are independent for different $l$, otherwise we can break the sums inside the probability statement into the even and odd sums. Further we assume $d=1$. The proof easily generalizes for the multivariate case. Therefore, in view of \eqref{decompvar}, it is enough to show that 
  \begin{equation}\label{propld}
    K \max_{1 \leq t \leq K/J} \max_{(t-1)J \leq l \leq tJ-1} \bigg[\IP\bigg( |\tilde{V}_{2l}({\boldsymbol{a}_{3l}}) - \IE(\tilde{V}_{2l}({\boldsymbol{a}_{3l}}))| \geq clm\bigg)+ \IP\bigg( |\tilde{V}_{2l-1}({\boldsymbol{a}_{3l-3}}) - \IE(\tilde{V}_{2l-1}({\boldsymbol{a}_{3l-3}}))| \geq cJm\bigg)\bigg] \to 0.
  \end{equation}
  Assume without loss of generality $l=1$. Observe that ,
  \begin{align}\label{decomp1}
    |\tilde{V}_1({\boldsymbol{a}_0})- \IE[\tilde{V}_1({\boldsymbol{a}_0})]| &\leq |\|\Tilde{B}_{1}(\boldsymbol{a}_{0})\|^2 - \IE[\|\Tilde{B}_{1}(\boldsymbol{a}_{0})\|^2]| + |\|M_{1}(\boldsymbol{a}_{0})\|^2 - \IE[\|M_{1}(\boldsymbol{a}_{0})\|^2]|\nonumber\\ & \hspace*{2cm}+ 2 |\IE[\Tilde{B}_1 C_{2}(\boldsymbol{\eta}_1)|a_{1-m}, \ldots, a_0] - \IE[\Tilde{B}_1 C_{2}(\boldsymbol{\eta}_1)] |. 
  \end{align}
  For the first term in \eqref{decomp1}, note that $\|\Tilde{B}_{1}(\boldsymbol{a}_{0}) \|^2=\IE[\Tilde{S}_m^2|a_0, \ldots, a_{1-m}]$. Therefore, Burkholder's inequality yields
  \begin{align}\label{1stvar}
    \IE\bigg[ |\|\Tilde{B}_{1}(\boldsymbol{a}_{0})\|^2 - \IE[\|\Tilde{B}_{1}(\boldsymbol{a}_{0})\|^2]|^{\gamma/2}\bigg] = \|\sum_{j=-m}^0 P_j \Tilde{S}_m^2 \|_{\gamma/2}^{\gamma/2} \leq C_{\gamma}\bigg(\sum_{j=-m}^0 \|P_j \Tilde{S}_m^2 \|_{\gamma/2}^2 \bigg)^{\gamma/4}.
  \end{align}
  Using $\|P_j X_i\|_{\gamma} \leq \delta_{i-j, \gamma}$, we obtain 
  \begin{equation}\label{pjsm}
  \|P_j \Tilde{S}_m^2\|_{\gamma/2}= O(m^{1/2})\sum_{r=1}^m\Tilde{\delta}_{r-j, \gamma} =O(m) n^{1/p-1/\gamma}\sum_{r=1}^m{\delta}_{r-j, p}^{p/\gamma}.
   \end{equation}
  In view of the fact that there exists $A^{\star}> A_0$ such that $3 - 2(A+1)p/\gamma=0$, observe that $\Theta_{i,p}=O(i^{-A})=O(i^{-A'}(\log i)^{-B})$ for some $A_0<A'<\min\{A, A^{\star}\}$ and $B> 2\gamma/p$. The entire proof of the main theorem goes through with $A'$ instead of $A$. Therefore, without loss of generality we assume $3 - 2(A+1)p/\gamma>0$. Putting \eqref{pjsm} back in \eqref{1stvar}, 
   \begin{align}
     \sum_{j=-m}^0 \|P_j \Tilde{S}_m^2 \|_{\gamma/2}^2 &=O(m)n^{2/p - 2/\gamma} \sum_{j=0}^m \bigg(\sum_{r=1}^m{\delta}_{r+j, \gamma}^{p/\gamma} \bigg)^2 \nonumber \\ 
     &= O(m) n^{2/p -2/\gamma} \sum_{j=0}^m \sum_{l=0}^{\log_2 m } 2^{2l(1-p/\gamma)} \Theta_{2^l+j, p}^{2p/\gamma} \nonumber \\
     &= O(m) n^{2/p - 2/\gamma} m^{3 - 2(A+1)p/\gamma} (\log n)^{-2p/\gamma}\nonumber,
   \end{align}
   which immediately yields,
   \begin{equation} \label{finalde}
     \IE\bigg[ |\|\Tilde{B}_{1}(\boldsymbol{a}_{0})\|^2 - \IE[\|\Tilde{B}_{1}(\boldsymbol{a}_{0})\|^2]|^{\gamma/2}\bigg] = O(1)m^{\gamma - (A+1)\frac{p}{2\gamma}}n^{\frac{\gamma}{2p}-1/2}(\log n)^{-2p/\gamma}=o((Jm)^{\gamma/2}),
   \end{equation} 
   where the last equality follows from \eqref{eq4n} and $\log J \asymp \log m \asymp \log n$. For the second and third terms of \eqref{decomp1}, note that using Cauchy-Schwarz and Jensen's inequality,
\begin{equation} \label{decomp2nd}
  |\|M_{1}(\boldsymbol{a}_{0})\|^2 - \IE[\|M_{1}(\boldsymbol{a}_{0})\|^2]| \leq \|\Tilde{B}_1(\boldsymbol{a}_0) \|^2 \text{ and } \|\Tilde{S}_m\|^2 = O(m),
\end{equation}
and 
\begin{equation} \label{decomp3rd}
  \IE\bigg|\IE[\Tilde{B}_1 C_2(\boldsymbol{\eta}_1) | {a}_0, \ldots, {a}_{1-m}] \bigg|^{\gamma/2} = O(m^{\gamma/2}) \| \IE[\Tilde{B}_1(\boldsymbol{a}_0)\|^2.
\end{equation} 
For $ |\tilde{V}_{2}({\boldsymbol{a}_{3}}) - \IE(\tilde{V}_{2}({\boldsymbol{a}_{3}}))|$, a treatment similar to \eqref{decomp2nd} and \eqref{decomp3rd} reveals that we only need to bound $\IP\bigg( | \|\Tilde{B}_3(\boldsymbol{a}_3)\|^2 - \IE[\|\Tilde{B}_3(\boldsymbol{a}_3)\|^2] | > C Jm\bigg)$. Proceeding as in \eqref{finalde}
\begin{align}
  \|\Tilde{S}_m^2- \IE[\Tilde{S}_m^2| a_1, \ldots, a_m] \|_{\gamma/2}^{\gamma/2}= o((Jm)^{\gamma/2}).
\end{align}
Finally, using Nagaev Inequality and in view of the fact $\IE[\Tilde{S}_m^2]=O(m)$, we have
\begin{equation}\label{nagaev}
  \IP\left(|\Tilde{S}_m^2 - \IE[\Tilde{S}_m^2]| > C Jm \right) \leq \IP\left(|\Tilde{S}_m| > C \sqrt{Jm} \right) \leq C\frac{m}{(Jm)^{\gamma/2}} \Theta_{0,p}^p+ \exp(-C_1J). 
\end{equation}
For the second term in \eqref{nagaev}, clearly $K\exp(-C_1J) \to 0$. For the first term, $K\frac{m}{(Jm)^{\gamma/2}} = \frac{\log^{\gamma} K }{m^{\gamma/2- 1}}\to 0$. This completes the proof.
\end{proof}
\begin{proposition}\label{prop4}
Suppose $L_{\gamma}^{\boldsymbol{a}}=\sum_{l=1}^K\IE[|Y_l^{\boldsymbol{a}}|^{\gamma}]$ with $Y_l^{\boldsymbol{a}}$ defined in \eqref{Yj}. Then for some constants $c$ and $C$ it holds that
\begin{equation}
  \IP( cKm^{\gamma/2} \leq L_{\gamma}^{\boldsymbol{a}} \leq CKm^{\gamma/2} ) \to 1.
\end{equation}  
\end{proposition}
\begin{proof}
  Observe that $\IE[L_{\gamma}^{\boldsymbol{a}}]=O(Km^{\gamma/2})$, and $\IE[|\Tilde{S}_m- M_1^{}\boldsymbol{a}|^{\gamma}| \boldsymbol{a}]\leq \IE[|\Tilde{S}_m|^{\gamma} | \boldsymbol{a}]$. Therefore, it is enough to show that 
  \begin{equation}
    \IP\left(|\sum_{l=1}^K Q_l- \IE[Q_l]|> CK \right) \to 0,
  \end{equation}
  where $Q_l=m^{-\gamma/2} \IE[|\Tilde{S}_{3ml}- \Tilde{S}_{3m(l-1)}|^{\gamma} | \boldsymbol{a}_{3(l-1)}, \boldsymbol{a}_{3l}]$. 
   Without loss of generality we can assume $Q_l$ to be independent, otherwise we can consider even and odd sums separately. A treatment similar to \eqref{decomp2nd} and \eqref{nagaev} yields,
  \begin{equation}\label{beftrunc}
    \IP\left(Q_j > J^{\gamma/2} \right)= o(K^{-1}).
  \end{equation}
  In light of \eqref{beftrunc}, we will be done if we show
  \begin{equation}\label{finaltrunc}
    \IP\left(\sum_{l=1}^K [T_{J^{\gamma/2}}(Q_l)- \IE[T_{J^{\gamma/2}}(Q_l)] \right) \to 0.
  \end{equation}
  \eqref{finaltrunc} is immediate using Markov's inequality upon realizing that $\IE[Q_l]=O(1)$, which implies that $\IE[T_{J^{\gamma/2}}(Q_l)^2]= O(J^{\gamma/2})$. 
\end{proof}
\begin{proposition}\label{prop5}
  Choose $\nu_l=lJ$ and $s\asymp K/J$ with $J=\lfloor K^{2/\gamma} / \log^2 K \rfloor$. Further let $\Gamma_k^{\boldsymbol{a}}=\text{Var}(\sum_{l=\nu_{k-1}+1}^{\nu_k} Y_l^{\boldsymbol{a}})$. Then for some constants $c_1, c_2$ we have, with probability going to 1,
  \begin{equation}\label{gotzezaitsevcondn1}
    c_1 w^2 \leq \rho_{\star}(\Gamma_k^{\boldsymbol{a}}) \leq \rho^{\star}(\Gamma_k^{\boldsymbol{a}}) \leq c_2 w^2,
  \end{equation}
  where $w=(L_{\gamma}^{\boldsymbol{a}})^{1/\gamma}/ \log s$. Further, if $\zeta_{k,\gamma}^{\boldsymbol{a}}=\sum_{l=\nu_{k-1}+1}^{\nu_k} \IE[|Y_l^{\boldsymbol{a}}|^{\gamma}]$, then for some $0<\varepsilon<1$ and constant $c_3$, it holds with probability going to 1,
  \begin{equation}\label{gotzezaitsevcondn2}
    c_3 d^{\gamma/2} s^{\varepsilon} (\log s)^{\gamma+3} \max_{1 \leq k \leq s}\zeta_{k,\gamma}^{\boldsymbol{a}} \leq L_{\gamma}^{\boldsymbol{a}}.
  \end{equation}
\end{proposition}
\begin{proof}
  \eqref{gotzezaitsevcondn1} is immediate from Propositions \ref{prop2}, \ref{prop3} and \ref{prop4} along with our choice $J$. Moreover, exactly as in Proposition \ref{prop4}, we can show $\zeta_k^{\boldsymbol{a}} \asymp Jm^{\gamma/2}$ with probability going to 1. Then \eqref{gotzezaitsevcondn2} follows as $n \to \infty$. 
\end{proof}

}
% \color{black}
\ignore{
\section{Appendix C: Additional lemma for Theorem \ref{thm:scb}}
The following result is a technical lemma required to bound the total variation of the weights for the local linear estimate. 
\begin{lemma}\label{lemma3zhibiao}
  Let $K$ be a smooth symmetric kernel on $[-\omega, \omega]$, satisfying the bounded variation condition \eqref{condn:regularity}. Let \[\mathcal{S}_n(t)=\begin{pmatrix} S_0(t) & S_1(t)\\ S_1(t) & S_2(t) 
  \end{pmatrix} \] with $S_{j}(t)$ be defined as in \eqref{eq:Sj}. Then with $h_n \to 0$ and $nh_n \to \infty$, it holds that 
  \begin{equation}
    \sup_{t \in [\omega h_n, 1 - \omega h_n]} \mathcal{S}_n(t) = nh \begin{pmatrix}1 & 0 \\ 0 & 2\beta h_n^2 \end{pmatrix} (1+ o(1)),
  \end{equation}
  where $\beta=\frac{1}{2} \int u^2 K(u) \text{d} u$.
\end{lemma}
\vspace{-0.25 in}
\begin{proof}
Observe that \[S_j(t)= \int_0^n \left(\frac{\lfloor 1+u \rfloor}{n}-t\right)^j K\left(\frac{\lfloor 1+u \rfloor - nt}{nh_n}\right)\ \text{d}u.\]. Let $m_j =\int v^j K(v) \text{d}v$. Note that $m_0=1, m_1=0$ and $m_2=2\beta$. Thus, using \eqref{condn:regularity} and $t \in [\omega h_n, 1- \omega h_n]$, 
  \begin{align*}
    S_{j}(t)&= \int_{0}^n (\frac{u}{n}-t)^j K(\frac{u - nt}{nh_n}) \ \text{d} u + O(h_n^j) \\
    &= m_j n h_n^{j+1} + O(h_n^j),
  \end{align*}
  which completes the proof in view of $(nh_n)^{-1}=o(1)$. 
\end{proof}
}
\ignore{
\section{Bootstrap}\label{proofsec4}

Note that, our main theorem \ref{thm:GA} uses $T_k$ from (\ref{eq:T_k}) which arises from the expected block variance. However, from a practical perspective, these values are seldom known and thus given a series in hand, the best one can hope for, is an empirical version of $T_k$ as a plugged-in estimator. First we state the following algorithm that does this implementation and put forth some remarks.

\begin{algorithm}[H]
	\renewcommand{\algorithmicrequire}{\textbf{Input:}}
	\renewcommand{\algorithmicensure}{\textbf{Output:}}
	\caption{Bootstrap for non-stationary process}
	\label{alg:Boot_ns}
	\begin{algorithmic}[2]
		\REQUIRE Observed data $\{X_i\}_{i =1}^n$; $b$, the size of Bootstrap; $m$, the block size. 
		\FORALL{$j = 1,2,\ldots,\lfloor n/m \rfloor$} 
		\STATE $B_j \gets \sum_{t = (j-1) m +1}^{jm} X_t$
		\ENDFOR
		\FORALL{$i = 1,2,\ldots,b$} 
		\STATE Update $\boldsymbol{\eta} \gets \lfloor i/m \rfloor$
		\STATE Update $R \gets \sum_{t = \boldsymbol{\eta} m +1}^{i} X_t$; $R \gets 0$ if $i = \boldsymbol{\eta} m$ 
		\STATE Update $(C_1, \ldots, C_{\boldsymbol{\eta}}, C_{\boldsymbol{\eta} + 1}) \gets (B_1, \ldots, B_{\boldsymbol{\eta}}, R)$
		\STATE Store $\hat T_i \gets \sum_{t = 1}^{\boldsymbol{\eta} + 1} C_{t}^2 + 2 \sum_{t =1}^{\boldsymbol{\eta}} C_t C_{t+1}$
		\ENDFOR
		\STATE Store $(\hat H_1, \ldots, \hat H_n) \gets \operatorname{sort} \left (\hat T_1, \ldots, \hat T_n \right)$
		\STATE Store $(r_1, \ldots, r_n) \gets \operatorname{rank} \left (\hat T_1, \ldots, \hat T_n \right)$
		
		\IF{$H_1 < 0$}
		\STATE Store $w \gets \max\{ t: \hat H_t < 0 \}$.
		\FORALL{$s = 1,2,\ldots,b$} 
		\STATE Generate $Z_1, Z_2, \ldots, Z_n$ i.i.d. standard normal. 
		\STATE Update $S_{s,w} \gets \sqrt{-\hat H_w} Z_w$, $S_{s,t} \gets S_{s,t + 1} + \sqrt{\hat H_{t+1} - \hat H_{t}} Z_t$, $t = 1,\ldots,w-1$.
		\STATE Update $S_{s,w+1} \gets \sqrt{\hat H_{w+1}} Z_{w+1}$, $S_{s,t} \gets S_{s,t - 1} + \sqrt{\hat H_t - \hat H_{t-1}} Z_t$, $t = w+2,\ldots,n$.
		\STATE Update $Y_{s,1} \gets S_{s,r_1}$, $Y_{s,t} \gets S_{s,r_t} - S_{s,r_{t-1}}$, $t = 2, \ldots,n$.
		\STATE Store $\mathbf{Y}_s \gets (Y_{s,1}, Y_{s,2}, \ldots, Y_{s,n})$
		\ENDFOR
	  \ELSE
		\FORALL{$s = 1,2,\ldots,b$} 
		\STATE Generate $Z_1, Z_2, \ldots, Z_n$ i.i.d. standard normal. 
		\STATE Update $S_{s,1} \gets \sqrt{\hat H_1} Z_1$, $S_{s,t} \gets S_{s,t - 1} + \sqrt{\hat H_t - \hat H_{t-1}} Z_t$, $t = 2,\ldots,n$.
		\STATE Update $Y_{s,1} \gets S_{s,r_1}$, $Y_{s,t} \gets S_{s,r_t} - S_{s,r_{t-1}}$, $t = 2, \ldots,n$.
		\STATE Store $\mathbf{Y}_s \gets (Y_{s,1}, Y_{s,2}, \ldots, Y_{s,n})$
		\ENDFOR
		\ENDIF

		\ENSURE Output the data frame $(\mathbf{Y}_1, \mathbf{Y}_2, \ldots, \mathbf{Y}_b)$ .
	\end{algorithmic} 
\end{algorithm}
}
%\vspace{-0.15 in}
\section{Appendix D: Additional lemma for Theorem \ref{thm:scb}}\label{appendix:scb}
Here we will prove a technical lemma required to bound the total variation of the weights for the local linear estimate. This lemma also helps control the bias of the estimate $\hat{\mu}_{h_n}(t)$. 
\begin{lemma}[Consistency]\label{lemma3zhibiao}
  Let $S_{j}(t)$ be defined as in \eqref{eq:Sj}. Then with $h_n \to 0$ and $nh_n \to \infty$, under the assumptions of Theorem \ref{thm:scb}, it holds that 
  \begin{equation}\label{11.1}
    \sup_{t \in [\omega h_n, 1 - \omega h_n]} \bigg|\frac{{S}_j(t)}{nh_n^{j+1}f(t)}\bigg|=m_j + o(1), \text{ for $j=0,1,2$,}
  \end{equation}
with $m_0=1$, $m_1=0$ and $m_2=2\beta=\int u^2 K(u) \text{d} u$. Moreover, for $\Omega_n$ in \eqref{eq:omega} with $w_{h_n}(t,i)$ as in \eqref{eq:hat_mu}, it holds that $\Omega_n=O(n^{-1}h_n^{-1})$. 
\end{lemma}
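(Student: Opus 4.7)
The plan is to establish \eqref{11.1} by approximating the discrete sum $S_j(t)$ with a Riemann integral governed by the design density $f$, then use \eqref{11.1} to read off the size of each weight $w_{h_n}(t,i)$ and bound its total variation in $i$.

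First I would write $S_j(t)=\sum_{i=1}^n g_j(t,F^{-1}(i/n))$ with $g_j(t,u):=(t-u)^j K((t-u)/h_n)$. Using the fact that $t_i=F^{-1}(i/n)$ provides a grid on $[0,1]$ with $v$-spacing exactly $1/n$ under the transformation $v=F(u)$, the natural target is
\begin{equation*}
\mathcal{I}_j(t):= n\int_0^1 g_j(t,F^{-1}(v))\,dv = n\int_0^1 g_j(t,u)f(u)\,du .
\end{equation*}
The change of variables $w=(t-u)/h_n$ and a first-order Taylor expansion of $f$ (legitimate since $f\in\mathcal C^3$) gives $\mathcal{I}_j(t)=nh_n^{j+1}f(t)m_j+O(nh_n^{j+2})$ uniformly in $t\in[\omega h_n,1-\omega h_n]$, where the boundary correction is absent because $K$ is supported on $[-\omega,\omega]$ and the argument lies in $(0,1)$. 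The leading constants $m_0=1$, $m_1=0$, $m_2=2\beta$ follow from the symmetry of $K$.

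Next I would bound the discrete-to-integral error $|S_j(t)-\mathcal{I}_j(t)|$ uniformly in $t$. On each sub-interval $[(i-1)/n,i/n]$ the integrand $h(v):=g_j(t,F^{-1}(v))$ oscillates by at most $\omega_h(i,1/n)$, which splits into a Lipschitz part coming from the $(t-u)^j$ factor and a kernel part controlled by $\Psi_K((t-t_i)/h_n;\,1/(nh_n\,\min f))$ via $f\ge C_1>0$. The polynomial contribution is $O(h_n^{j-1}/n)$ per active interval, summed over $O(nh_n)$ active indices, giving $O(h_n^j)$. For the kernel contribution, replacing the sum over $i$ by a Riemann sum and applying the modulus condition \eqref{condn:regularity} yields
\begin{equation*}
\sum_{i=1}^n h_n^j\,\Psi_K\!\left(\frac{t-t_i}{h_n};\frac{C}{nh_n}\right)\ \lesssim\ nh_n^{j}\!\int \Psi_K(w;C/(nh_n))\,dw\ =\ O(h_n^j).
\end{equation*}
Combining, $|S_j(t)-\mathcal{I}_j(t)|=O(h_n^j)$ uniformly, so dividing by $nh_n^{j+1}f(t)$ gives $O((nh_n)^{-1})=o(1)$ together with the $O(h_n)$ term from Taylor expansion, establishing \eqref{11.1}.

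Finally, for the bound on $\Omega_n$, I would plug \eqref{11.1} into the definition of $w_{h_n}(t,i)$ in \eqref{eq:hat_mu}: the denominator $S_0(t)S_2(t)-S_1(t)^2$ is of exact order $n^2h_n^4$ (the $S_1^2$ term being of the same order with coefficient dominated by the symmetric part), while the numerator $K((t-t_i)/h_n)\bigl(S_2(t)-(t-t_i)S_1(t)\bigr)$ is $O(nh_n^3)$ since $|t-t_i|\le\omega h_n$ on the support of $K$. Hence $|w_i(t)|=O(1/(nh_n))$ uniformly. For the total variation in $i$, the denominator is independent of $i$, so I would split the variation of the numerator into one piece from $K((t-t_i)/h_n)$ (total variation in $i$ bounded by the total variation of $K$ on $\mathbb{R}$, which is $O(1)$, multiplied by the factor of order $nh_n^3$) and another piece from the polynomial factor $(t-t_i)S_1$ (which contributes $\sum_i |t_i-t_{i-1}|\cdot|S_1|\cdot K=O(h_n)\cdot O(nh_n^2)=O(nh_n^3)$). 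Dividing by $n^2h_n^4$ gives $\Omega_n=O(1/(nh_n))$.

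The main obstacle I anticipate is ensuring all $o(1)$ and $O(\cdot)$ terms in step two hold \emph{uniformly} in $t\in[\omega h_n,1-\omega h_n]$; the key device is the integrated modulus condition \eqref{condn:regularity}, which converts pointwise oscillation estimates on $K$ into a uniform-in-$t$ Riemann-sum error, and the uniform positivity and smoothness of $f$, which prevent the error from blowing up near boundary-like points in the interior region.
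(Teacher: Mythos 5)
Your proposal follows essentially the same route as the paper's proof: approximate the discrete sum by the continuous integral $n\int g_j(t,u)f(u)\,du$, control the discretization error via the modulus condition \eqref{condn:regularity} together with $f\ge C_1$, Taylor-expand $f$ to extract $nh_n^{j+1}f(t)m_j$, and then bound $\Omega_n$ by reading off the orders of numerator and denominator of $w_{h_n}(t,i)$ and splitting the variation in $i$ between the kernel factor and the polynomial factor. The only blemish is the Jacobian in your Riemann-sum display for the kernel oscillation term, which should carry a factor $nh_n^{j+1}$ rather than $nh_n^{j}$ (the points $(t-t_i)/h_n$ are spaced $\asymp 1/(nh_nf)$ apart); with that correction the middle expression indeed equals $O(h_n^{j})$ as you claim, matching the paper's bound.
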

\begin{proof}
Observe that \[S_j(t)= \int_0^n \left(F^{-1}(\frac{\lfloor 1+u \rfloor}{n})-t\right)^j K\left(\frac{F^{-1}(\lfloor 1+u \rfloor /n) - t}{h_n}\right)\ \text{d}u.\] Let $m_j =\int v^j K(v) \text{d}v$. Note that $m_0=1, m_1=0$ and $m_2=2\beta$. 
Consider the corresponding smoothed version 
\[\Tilde{S}_j(t)= \int_0^n \left(F^{-1}(\frac{u}{n})-t\right)^j K\left(\frac{F^{-1}(u/n) - t}{h_n}\right)\ \text{d}u.\]
Let $v=(F^{-1}(u/n) -t)/{h_n}$. Also let $g$ be such that $g(v)=(F^{-1}(\lfloor 1+u \rfloor /n) - t)/{h_n}$. Clearly, since $C_1 \leq f(t) \leq C_2$ for all $t$, therefore, $|g(v)-v|=O(n^{-1}h_n^{-1})$ for all $t$. Now, by change of variables techniques and noting that $t \in [\omega h_n, 1-\omega h_n]$, it holds that
\begin{align*}
  S_j(t) - \Tilde{S}_j(t) = nh_n^{j+1} \int_{-\omega}^{\omega} \left[ (g(v))^j K(g(v)) - v^j K(v) \right] f(vh_n+t) \text{d}v.
\end{align*}
For $j=0$, $S_j(t) - \Tilde{S}_j(t)=O(h_n^j)$ follows from \eqref{condn:regularity} directly. For $j=1,2$, note that $(g(v))^j - v^j=O(n^{-1}h_n^{-1})$ since $v \in [-\omega, \omega]$. Therefore, again invoking \eqref{condn:regularity} yields that 
\begin{align}\label{diffrate}
  S_j(t) - \Tilde{S}_j(t)=O(h_n^j). 
\end{align}
Finally, observing $f(vh_n+t)=f(t)+O(vh_n)$,
\begin{align}
  \Tilde{S}_j(t) &= nh_n^{j+1} \int_{-\omega}^{\omega} v^j K(v) f(vh_n+t) \nonumber\text{d}v = nh_n^{j+1}m_jf(t)+ O(nh_n^{j+2}) \label{imparg},
\end{align}
 since $c\leq f(t)\leq C$. Thus, using $h_n\to 0$ and $nh_n\to \infty$, it holds that 
  \begin{align*}
    S_{j}(t)&= \int_{0}^n (F^{-1}(\frac{u}{n})-t)^j K(\frac{F^{-1}(u/n) - t}{h_n}) \ \text{d} u + O(h_n^j) = n f(t) h_n^{j+1} (m_j + o(1)).
  \end{align*}
  which completes the proof of \eqref{11.1}. To observe $\Omega_n=O(n^{-1}h_n^{-1})$, note that for fixed $t$, $w_n(t,i)=0$ unless $t_i \in [t- \omega h_n, t+\omega h_n]$, and therefore in \eqref{eq:hat_mu}, $|t-t_i|=O(h_n)$. Putting the approximations of $S_j(t)$ in \eqref{eq:hat_mu}, and noting that $K$ is bounded, one obtains $|w_n(t,1)|=O(n^{-1}h_n^{-1})$. On the other hand, $ \sum_{i=2}^n |w_i(t)-w_{i-1}(t)|$ can be bounded by $O(n^{-1}h_n^{-1})+O(1/(nh_n)^2)$ by noting that $|\sum_{i=1}^n ((t-t_i) K((t-t_i)/h_n)-(t-t_{i-1}) K((t-t_{i-1})/h_n )|=O(h_n) $. This completes the proof.
\end{proof}
\color{black}
\section{Appendix E: Additional simulations for Section \ref{sec:simu}}\label{appendix:simu}
\subsection{Empirical accuracy of the Gaussian approximation with estimated variance}\label{subsec:simu1} For the strongly dependent settings with $\theta=-0.8$ and $0.8$, we will explore the finite-sample accuracy of our Gaussian approximation when the variance of the Brownian motion is estimated using bootstrap. For a particular model, we take $n=600$ and $m=\lfloor n^{1/3} \rfloor$, and simulate $B=1000$ many samples, each of size $n$ to estimate $U_1:= \max_{1 \leq i \leq n} S_i$. Next we randomly generate a data of size $n$ from that model, and simulate $B$ many bootstrap samples of \[\hat{U}_2:=\max_{1 \leq i \leq n}\mathbb{W}(\mathcal{T}_i), \, \hat{U}_3:= \max_{1 \leq i \leq n}\mathbb{W}(\mathcal{T}_{i}^{-}), \, \hat{U}_4 := \max_{1 \leq i \leq n} \mathbb{W}(\mathcal{T}_{i}^{\diamond}),\]
 where `` $\hat{}$ '' in $\hat{U}_i$, $i=2,3,4$ emphasize their dependence on the randomly generated data based on which bootstrap is performed. 
 Figures \ref{Fig:ar_nonsim_bootstrap} and \ref{Fig:ar_sq_nonsim_bootstrap_t} depict typical QQ-plots of $\hat{U}_2$, $\hat{U}_3$ and $\hat{U}_4$ against $U_X$ for $\varepsilon_t \sim N(0,1)$, where the ``typical'' is used to emphasize that $\hat{U}_i$'s are generated via bootstrap based on one typical draw of $(X_i)_{i=1}^n$ from the corresponding models. We note that as expected from our theoretical discussion, $\mathcal{T}_i$ yields much better approximation to the quantiles of $\max_{1 \leq i \leq n} S_i$ compared to $\mathcal{T}_i^{-}$ and $\mathcal{T}_i^{\diamond}$.
\begin{comment}
 \begin{figure}[!htbp]
\centering
\includegraphics[height=7cm, width=13cm]{New Figures 2/ar_nonsim_t.png} %File name
\caption{Comparison of theoretical quantiles with the bootstrap Gaussian approximation quantiles based on $X_1, \ldots, X_n \sim$ Model \eqref{ARnonsim} with $t_6$ innovations, with and without cross-product terms.}
\label{Fig:ar_nonsim_bootstrap_t}  %Figure number
\end{figure}
\end{comment}
\begin{figure}[!htbp]
\centering
\includegraphics[height=7cm, width=13cm]{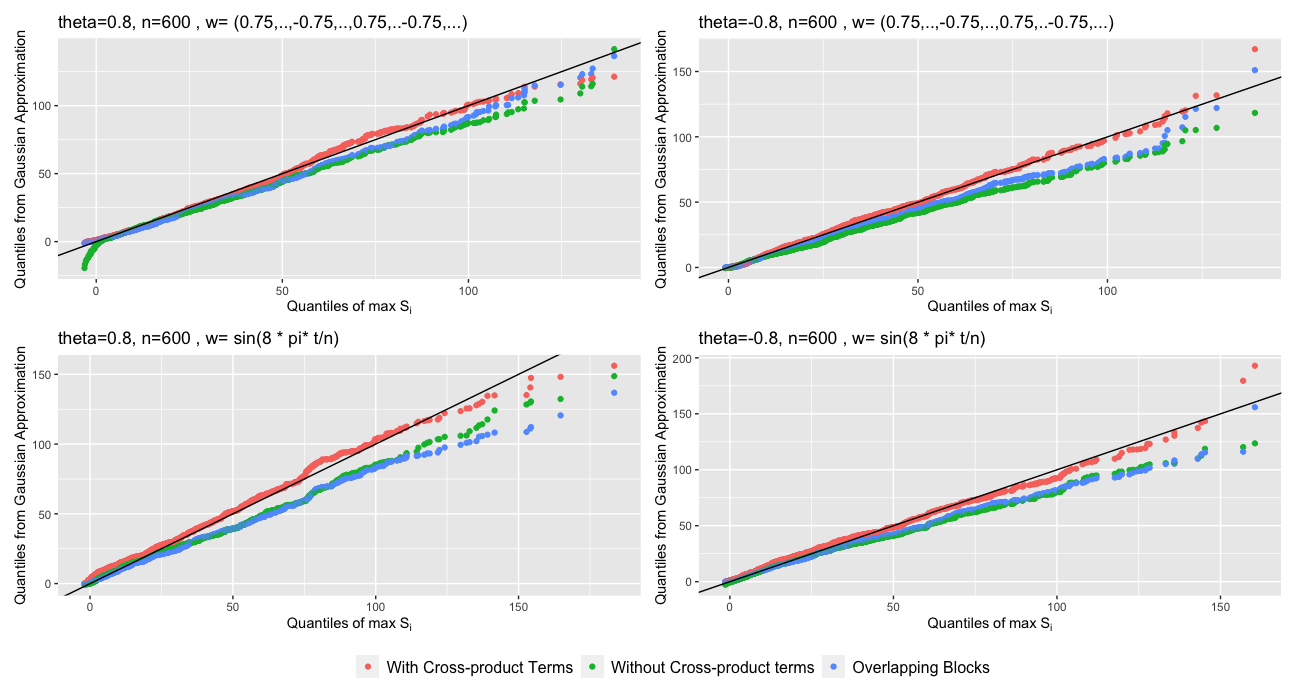} %File name
\caption{Comparison of theoretical quantiles with the bootstrap Gaussian approximation quantiles based on $X_1, \ldots, X_n \sim$ Model \ref{ARnonsim} with $N(0,1)$ innovations, with and without cross-product terms.}
\label{Fig:ar_nonsim_bootstrap}  %Figure number
\end{figure}

 \begin{figure}[!htbp]
\centering
\includegraphics[height=7cm, width=13cm]{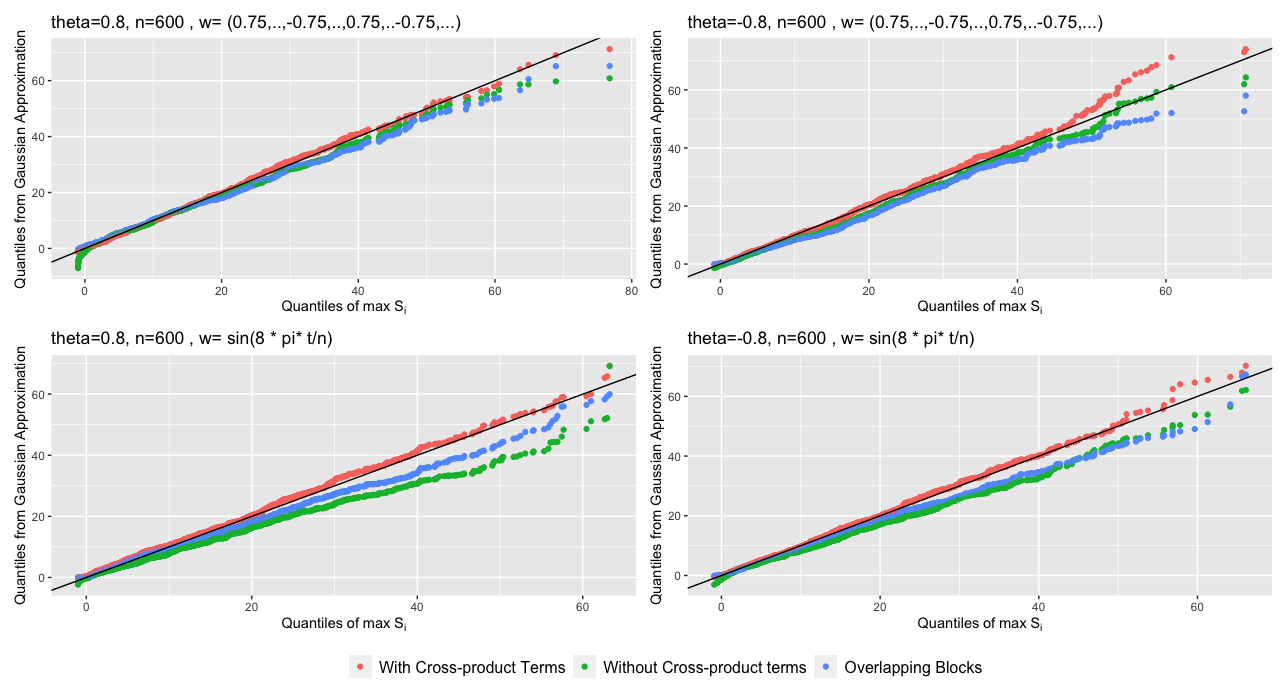} %File name
\caption{Comparison of theoretical quantiles with the bootstrap Gaussian approximation quantiles based on $X_1, \ldots, X_n \sim$ Model \ref{ARnonsimsq} with $N(0,1)$ innovations, with and without cross-product terms.}
\label{Fig:ar_sq_nonsim_bootstrap_t}  %Figure number
\end{figure}

\subsection{Empirical accuracy for Gaussian approximation}\label{extrasimu5.1}
In this section we further explore the performance of the Models \ref{ARnonsim} and \ref{ARnonsimsq}. In addition to $N(0,1)$ innovations, we will also consider suitably normalized $t_6$ innovations (subsequently we will omit "normalized" while describing the errors). 
Figures \ref{Fig:ar_sim_bootstrap_t}-\ref{Fig:ar_nonsim_sine_bootstrap_t} depict the ``typical'' Q-Q plots of $1000$ data-based bootstrap samples of $\IB(\mathcal{T}_i)$, $\mathcal{W}(\mathcal{T}_i^{-})$ and $\IB(\mathcal{T}_i^{\diamond})$ against the theoretical quantiles of $\max_{1 \leq i \leq n} S_i$ (based on 1000 monte carlo samples) for different settings. The conclusions reflect those of Figures \ref{Fig:ar_nonsim_bootstrap} and \ref{Fig:ar_sq_nonsim_bootstrap_t}, and justify our use of $\mathcal{T}_i$ as a plug-in estimate for $\IE(S_i^2)$.
\begin{figure}[!htbp]
\centering
\includegraphics[height=7cm, width=13cm]{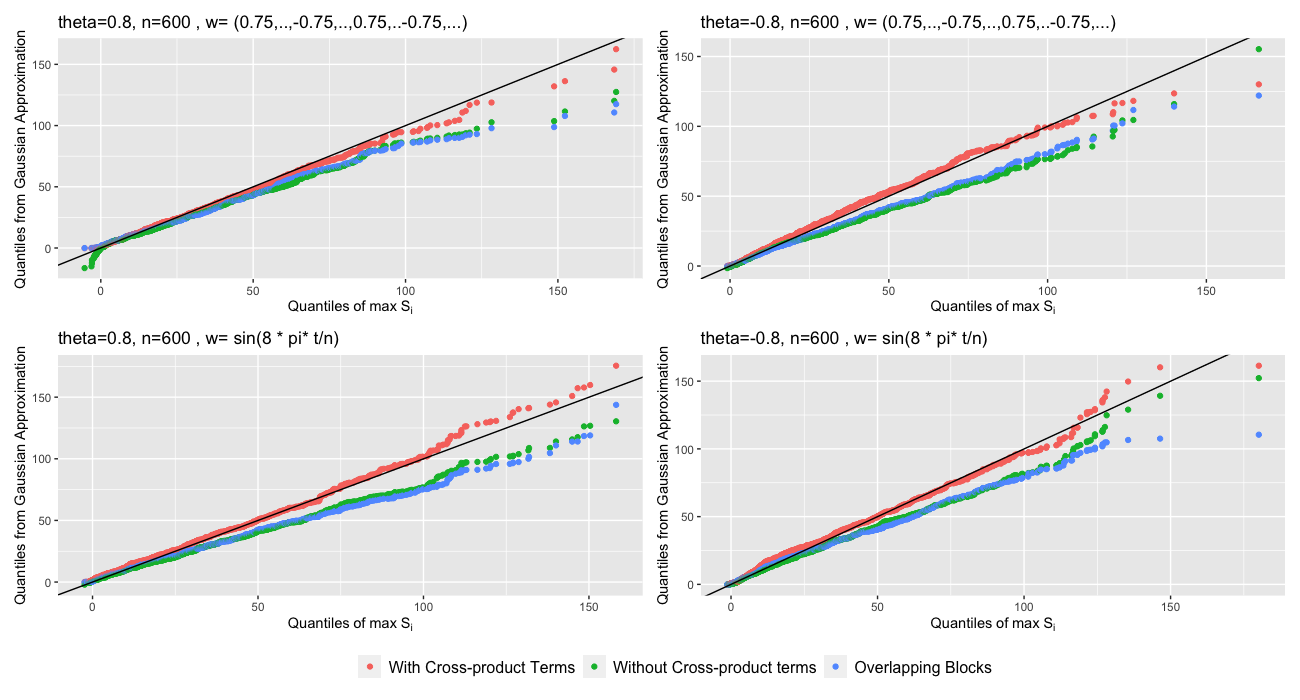} %File name
\caption{Comparison of theoretical quantiles with the bootstrap Gaussian approximation quantiles based on $X_1, \ldots, X_n \sim$ Model \eqref{ARnonsim} with $t_6$ innovations, with and without cross-product terms.}
\label{Fig:ar_sim_bootstrap_t}  %Figure number
\end{figure}
\begin{figure}[!htbp]
\centering
\includegraphics[height=5.5cm, width=13cm]{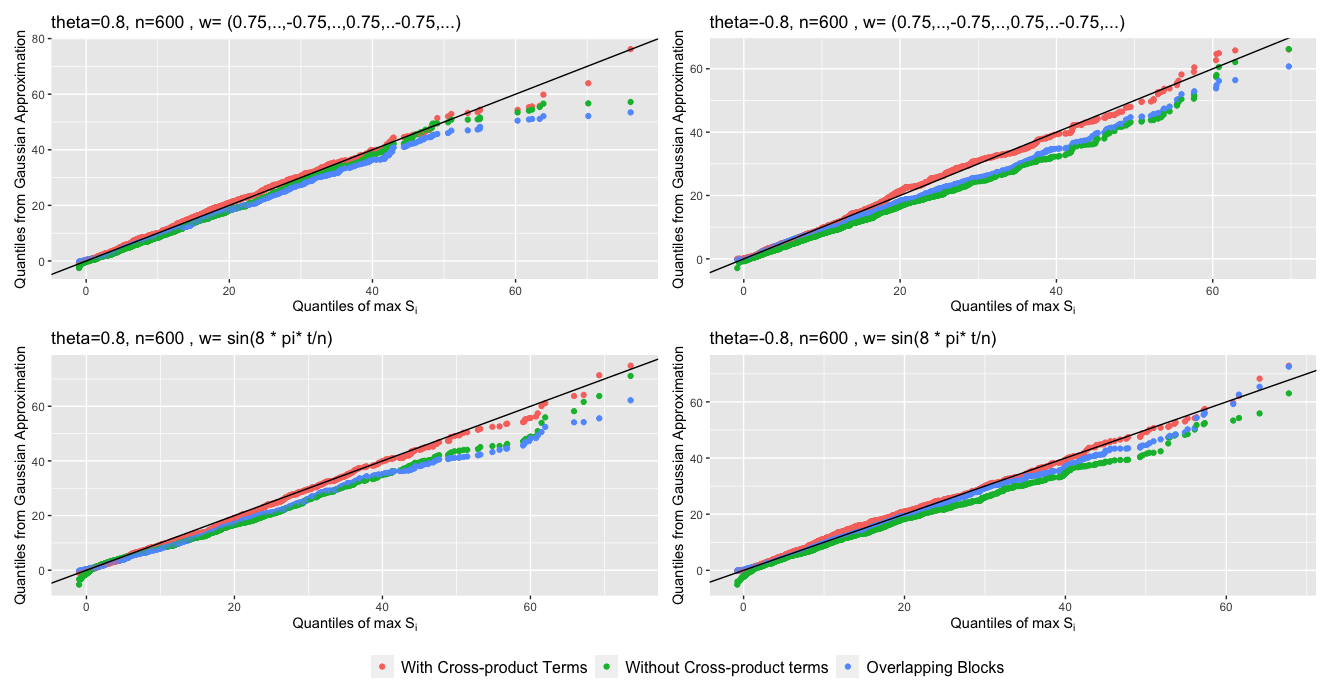} %File name
\caption{Comparison of theoretical quantiles with the bootstrap Gaussian approximation quantiles based on $X_1, \ldots, X_n \sim$ Model \eqref{ARnonsimsq} with $t_6$ innovations, with and without cross-product terms.}
\label{Fig:ar_nonsim_sine_bootstrap_t}  %Figure number
\end{figure}

%\vspace{-0.4 in}
\subsection{Change-point detection}
\label{subsec:simu2}
 In the simulation below, we consider the following model:
\saveenum
\begin{enumerate}\resetenum
\item \label{mod2} $ X_t=\delta_t+\varepsilon_t \ , \varepsilon_t \sim \text{Model \ref{ARnonsim}} \ , \ \delta_t=\delta \mathbb{I}\{t > n/2 \}.$
  \ignore{
  \item \textbf{Model 5.4}:
  \begin{equation}\label{mod4}
    X_t=\delta_t+\varepsilon_t^2 \ , \varepsilon_t \sim \text{Model \ref{ARnonsim}} \ , \ \delta_t=\delta \mathbb{I}\{t > n/2 \}.
  \end{equation}
  }
\end{enumerate}
 For power calculation, we vary $\delta \in \{0.1, 0.2,\ldots, 1\}$. Note that $\delta=0$ leads to the type-1-error. 
\vspace{-0.1in} 
\subsubsection{Simulation based on theoretical cut-offs}\label{subsec:theocp}
The discussion in Section \ref{subsec:cpt} enables us to define an approximately valid level-$\alpha$ test $\psi_{n}$, which we identify as an oracle test, as follows:
    \begin{equation*} \label{test_cp}
    \psi_{n}:= \mathbb{I}\{U_n > c_{\alpha}\}, \text{where }c_{\alpha}:= \inf_{r}\{\IP(V >r )\leq \alpha \},\text{and } V = \max_{1\leq i\leq n} { \big| \mathbb{B}(\IE(S_i^2)) - \frac{i}{n} \mathbb{B}(\IE(S_n^2))\big| \over \sqrt n}.
  \end{equation*}
In our simulation we theoretically calculate $(\IE(S_i^2))_{i=1}^n$ for i.i.d. $\varepsilon_t \sim N(0,1)$, and estimate $c_{\alpha}$ based on $1000$ Monte-Carlo simulations of $V$. For each $\delta$, power is estimated based on $1000$ many samples of each size $n$, where we vary $n \in \{300,600\}$. The type-I error and estimated power are shown in the Table \ref{tab:pow_theo1}. As expected, the type-I error of the test is at the nominal level, and even though $\psi_n$ seems slightly conservative, power grows quickly as $n$ and $\delta$ increases. Note that for $\theta=-0.8$ and $0.8$, the dependence is strong, and this results in slightly lesser power compared to other cases. \ignore{Note that the comparatively worse Gaussian approximation for Model \eqref{ARnonsimsq} materializes with lesser power for Model \ref{mod4} as in Table \ref{tab:pow_theo2}.} 
\begin{table}[H]
  \centering
  \resizebox{\columnwidth}{ 0.09\textheight}{%
\begin{tabular}{|l|c|c|c|c||c|c|c|c||c|c|c|c||c|c|c|c||}
 \hline& \multicolumn{8}{c||}{Weights : $w= 0.75, \ldots, -0.75, \ldots, 0.75, \ldots, -0.75,\ldots$} & \multicolumn{8}{c||}{Weights: $w=\sin(8\pi t/n)$}\\
\hline & \multicolumn{4}{c|}{$n=300$} & \multicolumn{4}{c||}{$n=600$} & \multicolumn{4}{c|}{$n=300$} & \multicolumn{4}{c||}{$n=600$}\\
\hline & $\theta=-0.8$ & $\theta=-0.4$ & $\theta=0.4$ & $\theta=0.8$ & $\theta=-0.8$ & $\theta=-0.4$ & $\theta=0.4$ & $\theta=0.8$ & $\theta=-0.8$ & $\theta=-0.4$ & $\theta=0.4$ & $\theta=0.8$ & $\theta=-0.8$ & $\theta=-0.4$ & $\theta=0.4$ & $\theta=0.8$\\ 
\hline \hline Cutoff & $2.409$ & $1.46$ & $1.614$& $2.414$ & $2.574$ & $1.551$ & $1.588$ & $2.482$ &$2.717$ & $1.508$ & $1.441$& $2.833$ & $2.86$ & $1.505$ & $1.518$ & $2.84$ \\
\hline \hline Type-1 error & $0.055$ & $0.066$ & $0.027$ & $0.047$ & $0.035$ & $0.051$ & $0.046$ & $0.042$ & $0.043$ & $0.044$ & $0.059$ & $0.024$ & $0.048$ & $0.036$ & $0.047$ & $0.048$ \\
\hline \hline Power: $\delta=0.1$ & $0.048$ & $0.120$ & $0.059$ & $0.057$& $0.065$ & $0.137$ & $0.117$ & $0.065$ & $0.046$ & $0.121$ & $0.117$ & $0.028$ & $0.056$ & $0.177$ & $0.150$ & $0.059$\\ 
\hline Power: $\delta=0.2$ & $0.110$ & $0.283$ & $0.200$ & $0.100$& $0.129$ & $0.435$ & $0.431$ & $0.195$ & $0.083$ & $0.263$ & $0.305$ & $0.073$ & $0.135$ & $0.486$ & $0.497$ & $0.149$ \\
\hline Power: $\delta=0.3$ & $0.244$ & $0.592$ & $0.419$ & $0.212$& $0.337$ & $0.838$ & $0.812$ & $0.418$ & $0.145$ & $0.515$ & $0.587$ & $0.128$ & $0.287$ & $0.851$ & $0.863$ & $0.287$\\
\hline Power: $\delta=0.4$  & $0.378$ & $0.810$ & $0.730$ & $0.358$& $0.601$ & $0.972$ & $0.979$ & $0.668$ & $0.280$ & $0.813$ & $0.819$ & $0.227$ & $0.491$ & $0.976$ & $0.984$ & $0.495$ \\
\hline Power: $\delta=0.5$  & $0.527$ & $0.944$ & $0.915$ & $0.571$& $0.852$ & $0.999$ & $0.999$ & $0.844$ & $0.425$ & $0.952$ & $0.953$ & $0.391$ & $0.705$ & $0.998$ & $0.999$ & $0.725$ \\
\hline Power: $\delta=0.6$  & $0.704$ & $0.994$ & $0.982$ & $0.735$& $0.946$ & $1$ & $1$ & $0.964$ & $0.617$ & $0.991$ & $0.994$ & $0.549$ & $0.875$ & $1$ & $1$ & $0.863$\\
\hline Power: $\delta=0.7$  & $0.847$ & $0.999$ & $0.999$ & $0.866$& $0.994$ & $1$ & $1$ & $0.994$ & $0.722$ & $0.998$ & $0.999$ & $0.693$ & $0.968$ & $1$ & $1$ & $0.965$\\
\hline Power: $\delta=0.8$  & $0.929$ & $1$ & $1$ & $0.954$& $0.998$ & $1$ & $1$ & $0.999$ & $0.838$ & $0.999$ & $1$ & $0.831$ & $0.991$ & $1$ & $1$ & $0.990$\\
\hline Power: $\delta=0.9$  & $0.977$ & $1$ & $1$ & $0.986$& $1$ & $1$ & $1$ & $1$ & $0.922$ & $1$ & $1$ & $0.908$ & $0.999$ & $1$ & $1$ & $0.998$\\
\hline Power: $\delta=1$  & $0.996$ & $1$ & $1$ & $0.996$& $1$ & $1$ & $1$ & $1$ & $0.967$ & $1$ & $1$ & $0.951$ & $0.999$ & $1$ & $1$ & $0.999$\\ \hline
\end{tabular}
}
\caption{Type-I error and power of test $\psi_n$ for $X_1, \ldots, X_n\sim$ Model \ref{mod2}.}
  \label{tab:pow_theo1}
\end{table}
\ignore{
\begin{table}[H]
  \centering
  \resizebox{\columnwidth}{!}{%
\begin{tabular}{|l|c|c|c|c|c|c||c|c|c|c|c|c||}
 \hline& \multicolumn{6}{c||}{Weights : $w_{t}=0.75 \mathbb{I}\{t \leq n/2\} -0.75 \mathbb{I}\{t>n/2\}$} & \multicolumn{6}{c||}{Weights: $w_t=\sin(2\pi t/n)$}\\
\hline & \multicolumn{3}{c|}{\rho=0.8} & \multicolumn{3}{c||}{\rho=-0.8} & \multicolumn{3}{c|}{\rho=0.8} & \multicolumn{3}{c||}{\rho=-0.8}\\
\hline & $n=100$ & $n=200$ & $n=300$ & $n=100$ & $n=200$ & $n=300$ & $n=100$ & $n=200$ & $n=300$ & $n=100$ & $n=200$ & $n=300$\\ 
\hline \hline Cutoff & 4.155 & 4.192& 4.208& 4.148 & 4.172&4.187 & 5.818 &5.93 &6.099 & 5.850 &5.867 &5.842 \\
\hline \hline Type-1 error & 0.062 &0.062 &0.055 & 0.057 &0.063 & 0.065& 0.060 &0.065 & 0.056& 0.053 & 0.060& 0.059 \\
\hline \hline $\delta=0.2$ & 0.071 & 0.084 &0.071 & 0.062 &0.07 &0.074 & 0.066 & 0.062 & 0.053 & 0.068 & 0.061 & 0.064 \\
\hline  $\delta=0.4$ & 0.073 &0.123 &0152 & 0.075 &0.128 & 0.149& 0.067 &0.072 &0.086 & 0.064 & 0.066& 0.086 \\
\hline $\delta=0.6$ & 0.111 & 0.189& 0.286& 0.115 &0.193 &0.284 & 0.082 & 0.091&0.106 & 0.062 &0.081 &0.112 \\
\hline $\delta=0.8$ & 0.142 &0.330 &0.461 & 0.163 & 0.313&0.458 & 0.074 &0.125 & 0.157& 0.073 & 0.127& 0.182\\
\hline  $\delta=1$ & 0.240 &0.498 &0.670 & 0.209 & 0.488& 0.695& 0.091 &0.166 &0.252 & 0.097 &0.176 &0.296 \\
\hline $\delta=1.2$ & 0.339 &0.599 &0.807 & 0.320 &0.640 &0.844 & 0.131 &0.220 & 0.376& 0.101 &0.272 &0.438 \\
\hline $\delta=1.4$ & 0.449 & 0.782&0.927 & 0.478 & 0.788& 0.931& 0.165 &0.386 & 0.531& 0.152 &0.338 & 0.598\\
\hline $\delta=1.6$ & 0.588 & 0.889&0.983 & 0.592 & 0.885&0.982 & 0.216 &0.490 & 0.708& 0.215 &0.499 &0.766 \\
\hline $\delta=1.8$ & 0.675 &0.953 &0.993 & 0.708 &0.963 &0.991 & 0.269 &0.640 & 0.829& 0.275 &0.645 & 0.845\\
\hline $\delta=2$ & 0.830 &0.981 &0.998 & 0.812 &0.976 & 1& 0.401 & 0.758&0.906 & 0.334 &0.753 &0.916\\ \hline
\end{tabular}
}
\caption{Type-1 error and power of test $\psi_n$ for $X_1, \ldots, X_n\sim$ Model \ref{mod4}.}
  \label{tab:pow_theo2}
\end{table}
}
\subsubsection{Simulation based on Bootstrap} \label{sec:simu_boot_cpt}
 Following our discussion in Section \ref{ssc:estvar} as well as 
Section \ref{subsec:simu1}, we can estimate $\IE(S_i^2)$ by $\mathcal{T}_i$ as in \eqref{eq:T_k}, $\mathcal{T}_i^{-}$ as in \eqref{T_noblock} and $\mathcal{T}_i^{\diamond}$ as in \eqref{Ti_Mies} respectively, to yield three bootstrap-based tests. 
We will numerically compare the efficacy of the these bootstrap procedures in approximating the CUSUM test statistics. In order to estimate the asymptotic distribution under $H_0$, we estimate $\mathcal{T}_i$, $\mathcal{T}_i^{-}$ and $\mathcal{T}_i^{\diamond}$ by plugging in $X_i - \hat{\mu}_i$ instead of $Z_i$, where $\hat{\mu}_i=\tau^{-1}\sum_{j=1}^{\tau} X_j \mathbb{I}\{i \leq \tau\} + (n-\tau)^{-1}\sum_{j=\tau+1}^{n} X_j \mathbb{I}\{i > \tau\}$, with $\tau = {\rm argmax}_t |\sum_{i = 1}^{t} (X_t - \bar{X})| / \sqrt{n} $. \ignore{A complete algorithm is provided below.

\begin{algorithm}[H]
	\renewcommand{\algorithmicrequire}{\textbf{Input:}}
	\renewcommand{\algorithmicensure}{\textbf{Output:}}
	\caption{Testing for the existence of change point based on CUSUM}
	\label{alg:change_point}
	\begin{algorithmic}[2]
		\REQUIRE Observed data $\{X_i\}_{i =1}^n$; $b$, the size of Bootstrap; $m$, the block size. 
	  \FORALL{$i = 1,2,\ldots,n$} 
		\STATE $\varepsilon_i \gets X_i -\Bar{X}$
		\ENDFOR 
		\STATE With $\{\varepsilon_i\}$, $b$, and $m$ as the input of Algorithm~\ref{alg:Boot_ns}, obtain Bootstrap sample $\{\mathbf{Y}_i \}_{i = 1}^b$.
		\FORALL{$j = 1,2,\ldots,b$} 
		\STATE $U_j \gets \max_{i\leq n } |\sum_{t = 1}^{i} (Y_{j,t} - \bar{\mathbf{Y}}_j)| / \sqrt{n}$
		\ENDFOR 
		\STATE $U_0 \gets \max_{i\leq n } |\sum_{t = 1}^{i} (X_t - \bar{X})| / \sqrt{n}$.
		\STATE $\hat{p} \gets \frac{1}{b+1} (1 + \sum_{h=1}^b \mathbb{I}\{ U_h > U_0 \})$
		\ENSURE Estimated p value $\hat{p}$.
	\end{algorithmic} 
\end{algorithm}}
\ignore{
We use 500 bootstrap samples to estimate the data-based cut-offs $c_{\alpha}^T$, $c_{\alpha}^{T-}$ and $c_{\alpha}^{T^{\diamond}}$. To calculate the type-1 error and power, we repeat the experiment 500 times. For the purpose of this simulation, we take $\varepsilon_t \sim t_6\sqrt{2/3}$ for $X_t \sim$ Model \ref{mod2} and \ref{mod4}. 
}
Similar to Figures \ref{Fig:ar_nonsim_bootstrap} and \ref{Fig:ar_sq_nonsim_bootstrap_t}, Figures \ref{Fig:cp_nonsim_bootstrap_norm} and \ref{Fig:cpsq_nonsim_bootstrap_norm} depict "typical" QQ-plots of the CUSUM test statistic calculated from bootstrap samples based on $\mathcal{T}_i$, $\mathcal{T}_i^{-}$ and $\mathcal{T}_i^{\diamond}$ respectively, against the CUSUM statistic calculated from original random sample $\{X_1, \ldots, X_n \}$, with $\mathcal{T}_i$ generally providing the best approximation in line with our arguments in Section \ref{sec:nocp}. 
\begin{figure}[!htbp]
\centering
\includegraphics[height=6cm, width=13cm]{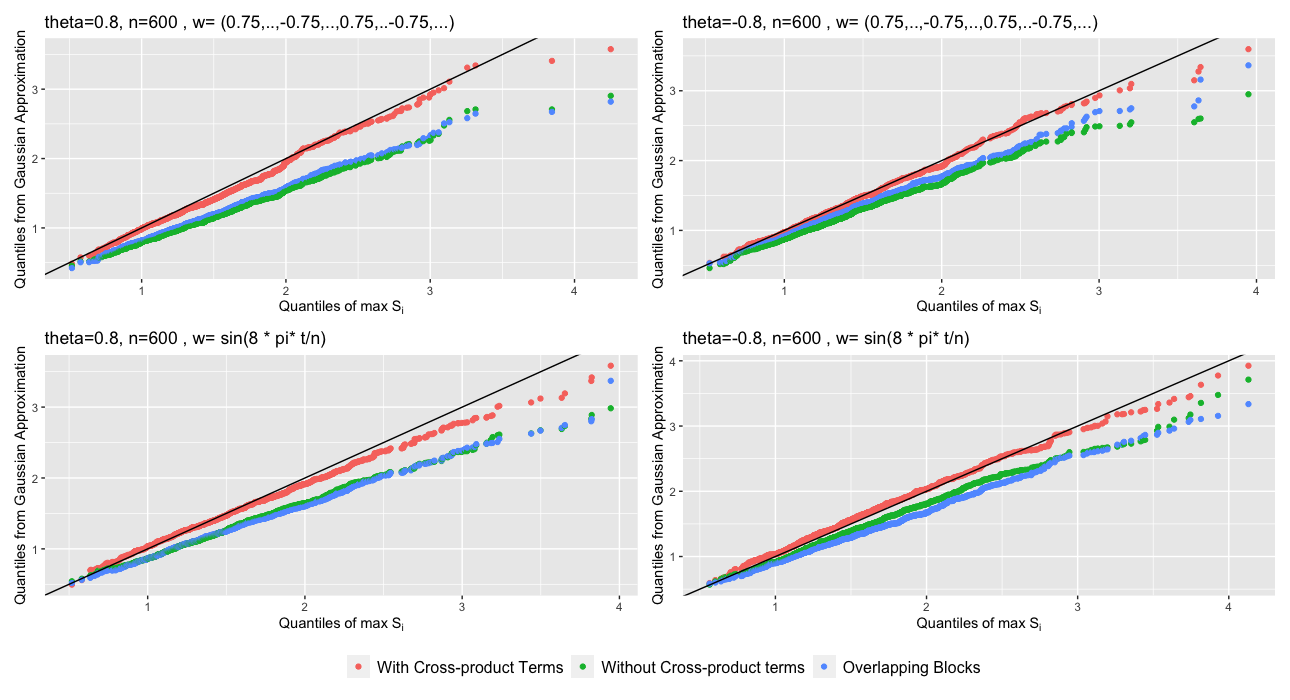} %File name
\caption{Comparison of theoretical quantiles of CUSUM statistic $U_n$ with quantiles of bootstrap Gaussian approximation of CUSUM quantiles based on $X_1, \ldots, X_n \sim$ Model \ref{ARnonsim} with $N(0,1)$ innovations, with and without cross-product terms.}
\label{Fig:cp_nonsim_bootstrap_norm}  %Figure number
\end{figure}
\begin{figure}[!htbp]
\centering
\includegraphics[height=6cm, width=13cm]{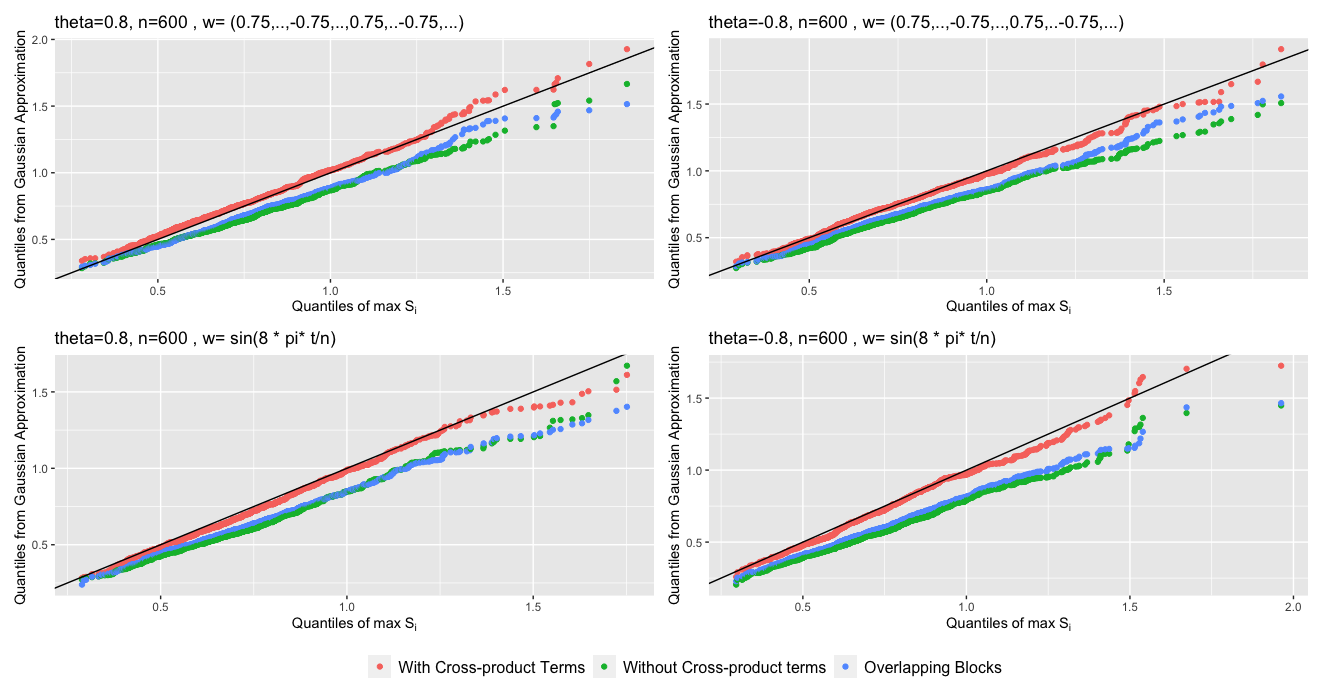} %File name
 \caption{Comparison of theoretical quantiles of CUSUM statistic $U_n$ with quantiles of bootstrap Gaussian approximation of CUSUM quantiles based on $X_1, \ldots, X_n \sim$ Model \ref{ARnonsimsq} with $N(0,1)$ innovations, with and without cross-product terms.}
\label{Fig:cpsq_nonsim_bootstrap_norm}  %Figure number
\end{figure}
\vspace{-0.1 in}

\subsection{Change-point detection: simulation based on Bootstrap}\label{extracpsimu5.1}
In this section we further explore the performance of the bootstrap-based test as described in Section \ref{sec:simu_boot_cpt} for $t_6$ innovations. We consider $X_t \sim$ Model \ref{ARnonsim} and \ref{ARnonsimsq} respectively. Figures \ref{Fig:cp_nonsim_bootstrap_t}-\ref{Fig:cp_nonsim_bootstrap_sin_t} show the plots corresponding to Figures \ref{Fig:cp_nonsim_bootstrap_norm} and \ref{Fig:cpsq_nonsim_bootstrap_norm} for each of the models \ref{ARnonsim} and \ref{ARnonsimsq}. 

\begin{figure}[!htbp]
\centering
\includegraphics[height=6cm, width=13cm]{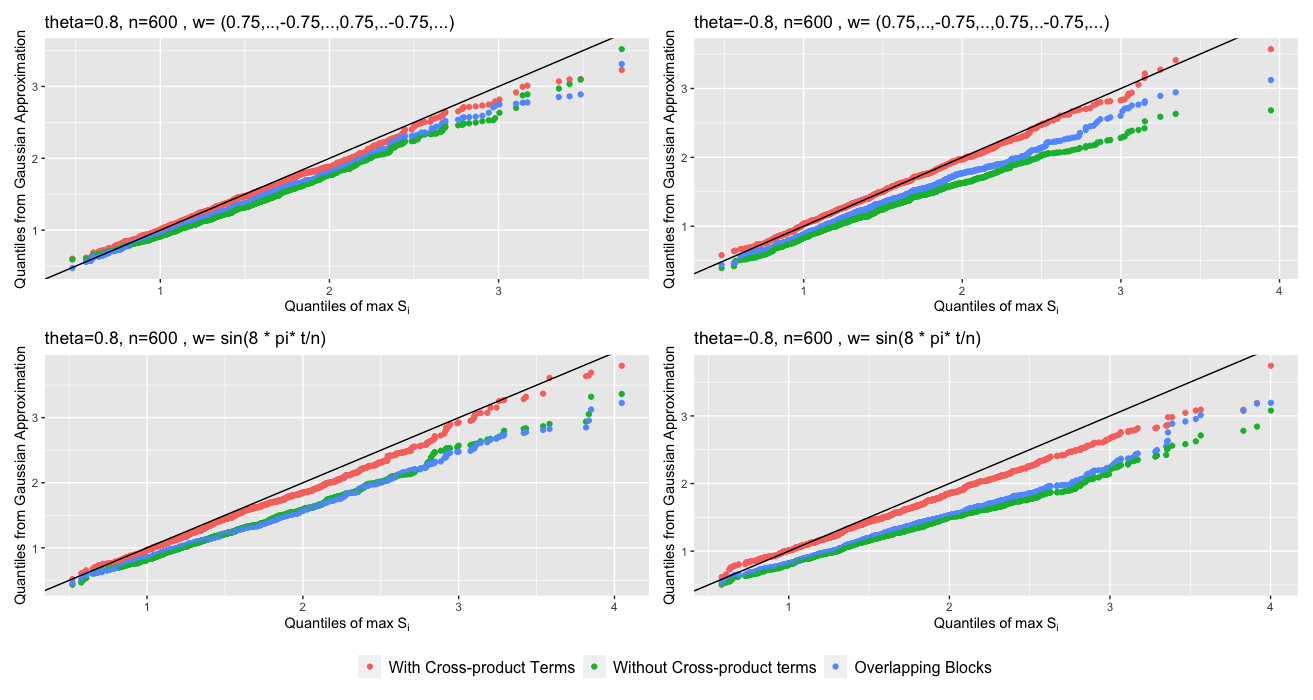} %File name
\caption{Comparison of theoretical quantiles of CUSUM statistic $U_n$ with quantiles of bootstrap Gaussian approximation of CUSUM quantiles based on $X_1, \ldots, X_n \sim$ Model \eqref{ARnonsim} with $t_6$ innovations, with and without cross-product terms.}
\label{Fig:cp_nonsim_bootstrap_t}  %Figure number
\end{figure}
\begin{figure}[!htbp]
\centering
\includegraphics[height=6cm, width=13cm]{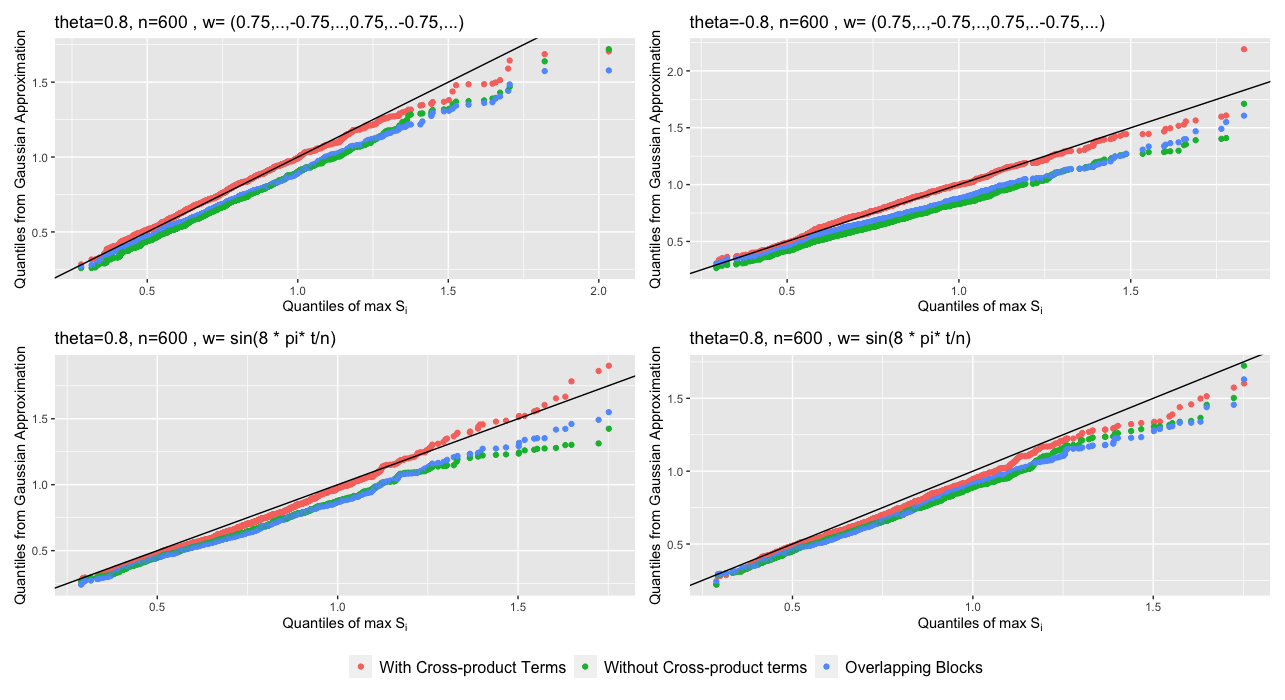} %File name
\caption{Comparison of theoretical quantiles of CUSUM statistic $U_n$ with quantiles of bootstrap Gaussian approximation of CUSUM quantiles based on $X_1, \ldots, X_n \sim$ Model \eqref{ARnonsimsq} with $t_6$ innovations, with and without cross-product terms.}
\label{Fig:cp_nonsim_bootstrap_sin_t}  %Figure number
\end{figure}

\end{document}